\documentclass[a4paper,leqno,12pt]{amsart}
%
%
%
\usepackage{amsmath,amsthm}
\usepackage{amssymb}
\usepackage[all]{xy}
\SelectTips{cm}{12}
\UseTips{}
\usepackage{xspace}
\usepackage{bm}
\usepackage{amstext}
\usepackage{amsfonts}
\usepackage[mathscr]{euscript}
\usepackage{amscd}
\usepackage{latexsym}
\usepackage{amssymb}
\usepackage{enumerate}
\setlength{\topmargin}{-20mm}
\setlength{\textheight}{10.4in}
\setlength{\oddsidemargin}{-0.4in}
\setlength{\evensidemargin}{-0.4in}
\setlength{\textwidth}{7.1in}
%
%
\theoremstyle{plain}
\swapnumbers
    \newtheorem{thm}{Theorem}[section]
    \newtheorem{prop}[thm]{Proposition}
    \newtheorem{lemma}[thm]{Lemma}
    \newtheorem{keylemma}[thm]{Key Lemma}
    \newtheorem{corollary}[thm]{Corollary}
    \newtheorem{subsec}[thm]{}
    \newtheorem*{thma}{Theorem A}
    \newtheorem*{thmb}{Theorem B}
    \newtheorem*{thmc}{Theorem C}
    \newtheorem*{thmd}{Theorem D}
\theoremstyle{definition}
    \newtheorem{defn}[thm]{Definition}
    \newtheorem{example}[thm]{Example}

\theoremstyle{remark}
        \newtheorem{remark}[thm]{Remark}
    \newtheorem{assume}[thm]{Assumption}
    \newtheorem{ack}[thm]{Acknowledgements}
%
%
\newenvironment{myeq}[1][]
{\stepcounter{thm}\begin{equation}\tag{\thethm}{#1}}
{\end{equation}}

\newcommand{\mydiagram}[2][]
{\stepcounter{thm}\begin{equation}
     \tag{\thethm}{#1}\vcenter{\xymatrix{#2}}\end{equation}}
\newcommand{\mysdiag}[2][]
{\stepcounter{thm}\begin{equation}
     \tag{\thethm}{#1}\vcenter{\xymatrix@R=25pt@C=-25pt{#2}}\end{equation}}
\newcommand{\mytdiag}[2][]
{\stepcounter{thm}\begin{equation}
     \tag{\thethm}{#1}\vcenter{\xymatrix@R=25pt@C=15pt{#2}}\end{equation}}
\newcommand{\myudiag}[2][]
{\stepcounter{thm}\begin{equation}
     \tag{\thethm}{#1}\vcenter{\xymatrix@R=27pt@C=12pt{#2}}\end{equation}}
%
\newenvironment{mysubsection}[2][]
{\begin{subsec}\begin{upshape}\begin{bfseries}{#2.}
\end{bfseries}{#1}}
{\end{upshape}\end{subsec}}
\newenvironment{mysubsect}[2][]
{\begin{subsec}\begin{upshape}\begin{bfseries}{#2\vsn.}
\end{bfseries}{#1}}
{\end{upshape}\end{subsec}}
\newcommand{\sect}{\setcounter{thm}{0}\section}
%
%
\newcommand{\wh}{\ -- \ }
\newcommand{\wwh}{-- \ }
\newcommand{\w}[2][ ]{\ \ensuremath{#2}{#1}\ }
\newcommand{\ww}[1]{\ \ensuremath{#1}}

\newcommand{\wwb}[1]{\ \ensuremath{(#1)}-}
\newcommand{\wb}[2][ ]{\ (\ensuremath{#2}){#1}\ }
\newcommand{\wref}[2][ ]{\ (\ref{#2}){#1}\ }

%
%
\newcommand{\hsp}{\hspace*{9 mm}}
\newcommand{\hs}{\hspace*{4 mm}}
\newcommand{\hsn}{\hspace*{1 mm}}
\newcommand{\hsm}{\hspace*{2 mm}}
\newcommand{\vsn}{\vspace{2 mm}}
\newcommand{\vs}{\vspace{7 mm}}
\newcommand{\vsm}{\vspace{4 mm}}
%

%
%
\newcommand{\hra}{\hookrightarrow}
\newcommand{\xra}[1]{\xrightarrow{#1}}
\newcommand{\xepic}[1]{\xrightarrow{#1}\hspace{-5 mm}\to}
\newcommand{\lra}[1]{\langle{#1}\rangle}
\newcommand{\llrra}[1]{\langle\langle{#1}\rangle\rangle}
\newcommand{\llrr}[2]{\langle\langle{#1}\rangle\rangle\sb{#2}}
\newcommand{\lrf}{\langle\langle f\lo{0,1}\rangle\rangle}
\newcommand{\lrfn}[1]{\lrf\sb{#1}}

\newcommand{\epic}{\to\hspace{-3.5 mm}\to}

\newcommand{\up}[1]{\sp{(#1)}}
\newcommand{\bup}[1]{\sp{[{#1}]}}
\newcommand{\lo}[1]{\sb{(#1)}}
\newcommand{\bp}[1]{\sb{[#1]}}
\newcommand{\rest}[1]{\lvert\sb{#1}}
%
%
\newcommand{\Hu}[3]{H\sp{#1}({#2};{#3})}

\newcommand{\HiQ}[1]{\Hu{\ast}{#1}{Q}}
\newcommand{\HiR}[1]{\Hu{\ast}{#1}{R}}
\newcommand{\HiT}[1]{H\sp{\ast}\sb{\Theta}{#1}}
%
%
\newcommand{\Cof}{\operatorname{Cof}}
\newcommand{\Coker}{\operatorname{Coker}}
\newcommand{\colim}{\operatorname{colim}}
\newcommand{\Cone}{\operatorname{Cone}}
\newcommand{\csk}[1]{\operatorname{csk}\sp{#1}}

\newcommand{\ev}{\operatorname{ev}}
\newcommand{\Fib}{\operatorname{Fib}}

\newcommand{\ho}{\operatorname{ho}}

\newcommand{\holim}{\operatorname{holim}}
\newcommand{\Hom}{\operatorname{Hom}}
\newcommand{\vth}{\vartheta}
\newcommand{\Ker}{\operatorname{Ker}}
\newcommand{\Id}{\operatorname{Id}}
\newcommand{\inc}{\operatorname{inc}}
\newcommand{\Obj}{\operatorname{Obj}\,}
\newcommand{\op}{\sp{\operatorname{op}}}
\newcommand{\prj}{\operatorname{pr}}
\newcommand{\proj}{\operatorname{proj}}

\newcommand{\sk}[1]{\operatorname{sk}\sb{#1}}
\newcommand{\Tot}{\operatorname{Tot}}

\newcommand{\Val}[1]{\operatorname{Val}({#1})}
%
%
\newcommand{\map}{\operatorname{map}}
\newcommand{\mapa}{\map\sb{\ast}}
%
%
\newcommand{\A}{\mathbf{A}}
\newcommand{\eA}{{\EuScript A}}
\newcommand{\B}{\mathcal{B}}

\newcommand{\C}{\mathcal{C}}

\newcommand{\E}{\mathcal{E}}
\newcommand{\F}{\mathcal{F}}

\newcommand{\wF}{\widetilde{F}}
\newcommand{\FQ}[1]{\F\sb{\QQ}\lra{#1}}
\newcommand{\G}{\mathcal{G}}

\newcommand{\Ink}[2]{I\sp{#1}\lo{#2}}
\newcommand{\K}{\mathcal{K}}
\newcommand{\LG}{\hat{L}\sb{\G}}

\newcommand{\Map}{{\EuScript Map}}
\newcommand{\MT}{\Map\sb{\bT}}

\newcommand{\PP}{\mathcal{P}}
\newcommand{\Pn}[1]{\PP\sp{#1}}

\newcommand{\U}{\mathcal{U}}
%
%
\newcommand{\ppp}{\hspace*{0.3mm}\sp{\prime}\hspace{-0.3mm}}
\newcommand{\pppp}{\hspace*{0.5mm}\sp{\prime}\hspace{-0.7mm}}
\newcommand{\ppppp}{\hspace*{0.5mm}\sp{\prime}\hspace{-0.3mm}}
%
%
\newcommand{\wa}{\widetilde{a}}
\newcommand{\ak}[1]{a\sp{#1}}

\newcommand{\hak}[1]{\widehat{a}\sp{#1}}

\newcommand{\uak}[1]{\underline{a}\sp{#1}}
\newcommand{\uhak}[1]{\underline{\widehat{a}}\sp{#1}}

\newcommand{\alk}[2]{a\sp{#1}\bp{#2}}

\newcommand{\Bk}[2]{B\sp{#1}\sb{#2}}
\newcommand{\Ck}[1]{C\sp{#1}}
\newcommand{\uetk}[1]{\underline{g}\sp{#1}}
\newcommand{\Fk}[1]{F\sp{#1}}

\newcommand{\Fkp}[1]{\pppp F\sp{#1}}
\newcommand{\hFk}[1]{\widehat{F}\sp{#1}}

\newcommand{\uFk}[1]{\underline{F}\sp{#1}}
\newcommand{\uhFk}[1]{\underline{\widehat{F}}\sp{#1}}

\newcommand{\Flk}[2]{F\sp{#1}\bp{#2}}

\newcommand{\ovp}{\overline{\varphi}}
\newcommand{\oph}[1]{\ovp\sp{#1}}
\newcommand{\ophl}[1]{\ovp\sb{#1}}

\newcommand{\ophin}[2]{\oph{#1}\sb{({#2})}}
\newcommand{\prn}[1]{\pi\sb{[{#1}]}}
\newcommand{\prnk}[2]{\pi\sb{[{#1}]}\sp{#2}}
\newcommand{\psn}[2]{\psi\sp{#1}\sb{[{#2}]}}
\newcommand{\orh}[1]{\overline{\rho}\sb{#1}}
\newcommand{\qk}[2]{q\sp{#1}\bp{#2}}
\newcommand{\Sn}[1]{\mathbf{s}\sp{#1}}
\newcommand{\Snk}[2]{\Sn{#1}\sb{\widehat{#2}}}
\newcommand{\varu}[1]{\vare\up{#1}}
\newcommand{\vare}{\varepsilon}
\newcommand{\varep}{\ppp\vare}

\newcommand{\vn}[1]{\vare\sb{[{#1}]}}
\newcommand{\svn}[1]{\overline{\vare\sb{[{#1}]}}}
\newcommand{\tvn}[1]{\widetilde{\vare}\sb{[{#1}]}}

%
%
\newcommand{\hy}[2]{{#1}\text{-}{#2}}
\newcommand{\Alg}[1]{\hy{#1}{\mbox{\sf Alg}}}
\newcommand{\Cha}{\mbox{\sf Ch}}
\newcommand{\Ch}[1]{\Cha\sp{#1}}
\newcommand{\Chn}[2]{\Ch{#1}\sb{\leq{#2}}}
\newcommand{\Chnm}[2]{\Ch{#1}\sb{-1\leq\ast\leq{#2}}}
\newcommand{\Mod}[1]{\hy{#1}{\mbox{\sf Mod}}}
\newcommand{\RM}{\Mod{R}}
\newcommand{\cS}{{\EuScript S}}
\newcommand{\Sa}{\cS\sb{\ast}}
\newcommand{\SQ}{\cS\sb{\QQ}}
\newcommand{\Set}{\mbox{\sf Set}}
\newcommand{\Seta}{\Set\sb{\ast}}
\newcommand{\DGA}{\mbox{\sf CDGA}}
\newcommand{\Grp}{\mbox{\sf Gp}}
\newcommand{\Top}{\mbox{\sf Top}}

\newcommand{\Del}{\mathbf{\Delta}}
\newcommand{\Deln}[1]{\Del[#1]}
\newcommand{\Delnk}[2]{\Del\sb{#2}[#1]}
\newcommand{\Dp}{\Del\sb{+}}
\newcommand{\Du}{\Del\sp{\bullet}}
\newcommand{\Htr}{{\mathcal H}}
\newcommand{\Htl}[1]{\Htr\sb{[{#1}]}}
\newcommand{\Htlp}[1]{\pppp\Htr\sb{[{#1}]}}
\newcommand{\Str}{{\mathcal S}}
\newcommand{\Stl}[1]{\Str\sb{[{#1}]}}
\newcommand{\Stlp}[1]{\pppp\Str\sb{[{#1}]}}

\newcommand{\bT}{\mathbf{\Theta}}

\newcommand{\TsR}{\bT\sb{R}}
\newcommand{\ThA}{\Theta\sb{\eA}}

\newcommand{\TQ}{\Theta\sb{\QQ}}
\newcommand{\TR}{\Theta\sb{R}}

%
%
\newcommand{\cW}{{\EuScript W}}
\newcommand{\cWp}{\ppp\!\cW}
\newcommand{\cWpp}{\ppp\cWp}
\newcommand{\cWpi}[1]{\ppppp\cW\up{#1}}

\newcommand{\ccWp}{\sp{\prime}\cW}
\newcommand{\cuW}[1]{\cW\up{#1}}

\newcommand{\Phip}[1]{\ppp\Phi\lo{#1}}
%
%
\newcommand{\FF}{\mathbb F}
\newcommand{\Fp}{\FF\sb{p}}

\newcommand{\NN}{\mathbb N}
\newcommand{\QQ}{\mathbb Q}

%
%
\newcommand{\bA}{{\mathbf A}}
\newcommand{\bB}{{\mathbf B}}

\newcommand{\bC}{{\mathbf C}}
\newcommand{\bD}{{\mathbf D}}
\newcommand{\bE}{{\mathbf E}}
\newcommand{\bv}{{\bm \vare}}
\newcommand{\bve}[1]{\bv\bp{#1}}
\newcommand{\bvep}[1]{\pppp\bv\bp{#1}}
\newcommand{\tbve}[1]{\widetilde{\bv}\bp{#1}}

\newcommand{\bG}{{\mathbf G}}

\newcommand{\bK}{{\mathbf K}}
\newcommand{\KP}[2]{\bK({#1},{#2})}
\newcommand{\KQ}[1]{\KP{\QQ}{#1}}
\newcommand{\KsQ}[1]{\Lambda[{#1}]}
\newcommand{\KR}[1]{\KP{R}{#1}}

\newcommand{\bP}{{\mathbf P}}
\newcommand{\bS}[1]{{\mathbf S}\sp{#1}}
\newcommand{\bU}{{\mathbf U}}

\newcommand{\bW}{{\mathbf W}}

\newcommand{\bX}{{\mathbf X}}
\newcommand{\bY}{{\mathbf Y}}

\newcommand{\bZ}{{\mathbf Z}}
\newcommand{\hbZ}{\widehat{\bZ}}
%
%
\newcommand{\bfp}{{\mathbf p}}
\newcommand{\obp}{\overline{\bfp}}
\newcommand{\oobp}{\overline{\obp}}
\newcommand{\bq}{{\mathbf q}}
\newcommand{\obq}{\overline{\bq}}
\newcommand{\oobq}{\overline{\obq}}
\newcommand{\br}{{\mathbf r}}
\newcommand{\obr}{\overline{\br}}
\newcommand{\oobr}{\overline{\obr}}
\newcommand{\bs}{{\mathbf s}}
\newcommand{\obs}{\overline{\bs}}
\newcommand{\oobs}{\overline{\obs}}
\newcommand{\bt}{{\mathbf t}}
\newcommand{\obt}{\overline{\bt}}
\newcommand{\oobt}{\overline{\obt}}
\newcommand{\bu}{{\mathbf u}}
\newcommand{\obu}{\overline{\bu}}

\newcommand{\oobu}{\overline{\obu}}
\newcommand{\bfv}{{\mathbf v}}
\newcommand{\obv}{\overline{\bfv}}

\newcommand{\bw}{{\mathbf w}}
\newcommand{\bx}{{\mathbf x}}
\newcommand{\by}{{\mathbf y}}
\newcommand{\bz}{{\mathbf z}}
%
%

%

%
%
\newcommand{\As}{A\sp{\ast}}

\newcommand{\cMl}[1]{C\sb{#1}}
\newcommand{\cM}[1]{C\sp{#1}}
\newcommand{\cMs}{\cM{\ast}}
\newcommand{\cMls}{\cMl{\ast}}
\newcommand{\bCs}{\bC\sp{\ast}}
\newcommand{\bDs}{\bD\sp{\ast}}
\newcommand{\uDs}[1]{\bD\sp{\ast}\lo{#1}}
\newcommand{\bEs}{\bE\sp{\ast}}
\newcommand{\uEs}[1]{\ppp\bEs\lo{#1}}
\newcommand{\bPs}{\bP\sp{\ast}}

\newcommand{\bKs}{\bK\sp{\ast}}
\newcommand{\uPs}[1]{\bPs\lo{#1}}
\newcommand{\cZ}[1]{Z\sp{#1}}
\newcommand{\cZl}[1]{Z\sb{#1}}
\newcommand{\dif}[1]{\delta\sp{#1}}

\newcommand{\dz}[1]{d\sp{0}\sb{#1}}
\newcommand{\od}{\overline{\partial}}
\newcommand{\ud}{\overline{d}}

\newcommand{\odz}[1]{\od\,\sp{#1}\sb{0}}

\newcommand{\udz}[1]{\ud\,\sp{0}\sb{#1}}

\newcommand{\td}{\widetilde{d}}
\newcommand{\tdz}[1]{\td\quad\hspace*{-4mm}\sp{0}\sb{#1}}
%
%
\newcommand{\cu}[1]{c({#1})\sp{\bullet}}
\newcommand{\cd}[1]{c({#1})\sb{\bullet}}
\newcommand{\Ad}{A\sb{\bullet}}
\newcommand{\Au}{A\sp{\bullet}}
\newcommand{\Bu}{B\sp{\bullet}}
\newcommand{\Bd}{B\sb{\bullet}}
\newcommand{\Cu}{\cM{\bullet}}
\newcommand{\Cd}{C\sb{\bullet}}
\newcommand{\Gd}{G\sb{\bullet}}
\newcommand{\Gu}{G\sp{\bullet}}
\newcommand{\oG}[1]{\overline{G}\sb{#1}}
\newcommand{\uuG}[1]{\overline{G}\sp{#1}}
\newcommand{\uG}[1]{\overline{\mathbf G}\sp{#1}}
\newcommand{\uH}{\underline{H}}

\newcommand{\Huk}[2]{H\sp{#1}\bp{#2}}
\newcommand{\Hupk}[2]{(H\sp{#1}\bp{#2})'}
\newcommand{\hHk}[1]{\widehat{H}\sp{#1}}
\newcommand{\vHn}[1]{\widehat{H}\bp{#1}}
\newcommand{\tHuk}[1]{\widetilde{H}\sp{#1}}
\newcommand{\hHuk}[2]{\hHk{#1}\bp{#2}}

\newcommand{\oU}[1]{\overline{U}\sb{#1}}
\newcommand{\Ud}{\bU\sb{\bullet}}

\newcommand{\Vd}{V\sb{\bullet}}
\newcommand{\Vpn}[1]{\ppp V\sb{#1}}
\newcommand{\Vdp}{\Vpn{\bullet}}

\newcommand{\Vud}[1]{\Vd\up{#1}}
\newcommand{\oV}[1]{\overline{V}\sb{#1}}
\newcommand{\oVp}[1]{\ppp\overline{V}\sb{#1}}
\newcommand{\ouS}[1]{\otimes\sp{\ast}S\sp{#1}}
\newcommand{\olS}[1]{\otimes\sb{\ast}S\sp{#1}}
\newcommand{\oW}[1]{\overline{\bW}\hspace{0.3mm}\sp{#1}}

\newcommand{\ooW}[2]{\overline{\Omega\sp{#1}\bW\sp{#2}}}
\newcommand{\PoW}[2]{\overline{P\Omega\sp{#1}\bW\sp{#2}}}

\newcommand{\ooWp}[2]{\overline{\Omega\sp{#1}\mbox{$\ppp\bW$}\sp{#2}}}
\newcommand{\PoWp}[2]{\overline{P\Omega\sp{#1}\mbox{$\ppp\bW$}\sp{#2}}}

\newcommand{\oWl}[1]{\overline{W}\sb{#1}}
\newcommand{\oWp}[1]{\ppp\overline{\bW}\sp{#1}}
\newcommand{\hWp}[1]{\ppp\widehat{\bW}\sp{#1}}

\newcommand{\ouV}[2]{\oV{#1}\up{#2}}
\newcommand{\Wu}{\bW\sp{\bullet}}
\newcommand{\Wd}{W\sb{\bullet}}
\newcommand{\hW}[1]{\widehat{\bW}\sp{#1}}

\newcommand{\Wln}[2]{W\sb{#1}\bup{#2}}
\newcommand{\Wn}[2]{\bW\sp{#1}\bp{#2}}
\newcommand{\Wni}[3]{\Wn{{#1}({#3})}{#2}}
\newcommand{\W}[1]{\Wn{\bullet}{#1}}
\newcommand{\Wi}[2]{\Wni{\bullet}{#1}{#2}}
\newcommand{\Wl}[1]{\Wln{\bullet}{#1}}
\newcommand{\tW}{\widetilde{\bW}}
\newcommand{\tve}[1]{\bv\bp{#1}}
\newcommand{\tWl}{\widetilde{W}}
\newcommand{\tWn}[2]{\tW\quad\hspace*{-4mm}\sp{#1}\bp{#2}}
\newcommand{\tWu}[1]{\tWn{\bullet}{#1}}

\newcommand{\tWln}[2]{\tWl\quad\hspace*{-4mm}\sb{#1}\bup{#2}}

\newcommand{\Wpn}[2]{\mbox{$\ppp\bW$}\sp{#1}\bp{#2}}
\newcommand{\Wp}[1]{\Wpn{\bullet}{#1}}
\newcommand{\vW}[1]{\widehat{\bW}\sp{#1}}
\newcommand{\vWn}[2]{\vW{#1}\bp{#2}}
\newcommand{\vWu}[1]{\vWn{\bullet}{#1}}

\newcommand{\oX}[1]{\overline{\bX}\sp{#1}}
\newcommand{\ooX}[2]{\overline{\Omega\sp{#1}\bX\sp{#2}}}

\newcommand{\Yu}{\bY\sp{\bullet}}

\newcommand{\Zu}{\bZ\sp{\bullet}}
%
%
\newcommand{\es}{e\sp{\#}}
\newcommand{\en}[1]{e\bp{#1}}

\newcommand{\eni}[2]{e\bp{#1}\up{#2}}
\newcommand{\enk}[2]{e\bp{#1}\sp{#2}}

\newcommand{\oen}[1]{\overline{e}\sp{#1}}

\newcommand{\oon}[2]{\overline{e}\sp{#1}\sb{#2}}
\newcommand{\Pon}[2]{\overline{Pe}\sp{#1}\sb{#2}}

\newcommand{\fu}[1]{f\sb{#1}}

\newcommand{\jnk}[2]{\overline{j}\bp{#1}\sp{#2}}

\newcommand{\pp}[1]{\overline{p}\sp{#1}}

\newcommand{\ip}[1]{\overline{\iota}\sp{#1}}
\newcommand{\rs}{r\sp{\#}}
\newcommand{\rn}[1]{r\bp{#1}}
\newcommand{\rni}[2]{\rn{#1}\up{#2}}
\newcommand{\rnk}[2]{\rn{#1}\sp{#2}}
\newcommand{\oar}[1]{\overline{r}\,\sp{#1}}
\newcommand{\oor}[2]{\overline{r}\,\sp{#1}\sb{#2}}
\newcommand{\Por}[2]{\overline{Pr}\,\sp{#1}\sb{#2}}

\newcommand{\snk}[2]{\overline{s}\bp{#1}\sp{#2}}

%
%

\newcommand{\Tal}[1][ ]{$\Theta$-algebra{#1}}
\newcommand{\pTal}[1][ ]{$\pi\sb{0}\bT$-algebra{#1}}
\newcommand{\TQal}[1][ ]{$\TQ$-algebra{#1}}
\newcommand{\TRal}[1][ ]{$\TR$-algebra{#1}}
\newcommand{\TAlg}{\Alg{\Theta}}

\newcommand{\TRA}{\Alg{\TR}}
%
%
\newcommand{\ma}[1][ ]{mapping algebra{#1}}

\newcommand{\Tma}[1][ ]{$\bT$-mapping algebra{#1}}

\newcommand{\lin}[1]{\{{#1}\}}

\newcommand{\fM}{\mathfrak{M}}
\newcommand{\fMT}{\fM\sb{\bT}}

\newcommand{\fX}{\mathfrak{X}}

\newcommand{\fY}{\mathfrak{Y}}

\newcommand{\bbk}{[\mathbf{k}]}
\newcommand{\bbn}{[\mathbf{n}]}
\begin{document}
%
%
\title{Higher cohomology operations and $R$-completion}
\author{David Blanc}
\author{Debasis Sen}
\address{Department of Mathematics\\ University of Haifa\\ 3498838 Haifa\\ Israel}
\email{blanc@math.haifa.ac.il}
\address{Department of Mathematics and Statistics\\
Indian Institute of Technology, Kanpur\\ Uttar Pradesh 208016\\India}
\email{debasis@iitk.ac.in}

\date{\today}

\subjclass[2010]{Primary: 55P60; \ secondary: 55P20, 55N99, 55P15}
\keywords{Higher cohomology operation, Toda bracket, cosimplicial resolution}

\begin{abstract}
Let $R$ be \w{\Fp} or a field of characteristic $0$.
For each $R$-good topological space $\bY$, we define a collection of higher
cohomology operations which, together with the cohomology algebra \w[,]{\HiR{\bY}}
suffice to determine $\bY$ up to $R$-completion. We also provide a similar
collection of higher cohomology operations which determine when two maps
\w{\fu{0},\fu{1}:\bZ\to\bY} between $R$-good spaces(inducing the same
algebraic homomorphism \w[)]{\HiR{\bY}\to\HiR{\bZ}} are $R$-equivalent.
\end{abstract}

\maketitle

\setcounter{section}{0}

%
%
\section*{Introduction}
\label{cint}

We describe complete sets of invariants for the $R$-homotopy types of $R$-good
topological spaces and maps between them, consisting of systems of higher
$R$-cohomology operations (where $R$ is either \w{\Fp} or a field of characteristic $0$).

Higher homotopy or cohomology operations
(see \cite{TodC,SpanS,SpanH,AdamsN,MaunCO}) should be thought of as
inductively-defined systems of obstructions to rectifying homotopy-commutative
diagrams.
In particular, an \emph{$n$-th order cohomology operation} is attached to
a diagram \w{F:I\to\ho\Top} indexed by a \emph{lattice} in the sense of
\cite[\S 2]{BMarkH} \wh that is, a directed category $I$ of length $n+1$
such that, for all but the initial object \w{i\sb{0}} in $I$, \w{F(i)} is
a product of $R$-module Eilenberg-Mac~Lane spaces (an \emph{$R$-GEM}).

\begin{example}\label{egtoda}
The simplest example is a Toda bracket (see \cite{BBGondH}), for a diagram
of the form
\mydiagram[\label{eqtoda}]{
\bY \ar@/^{2.0pc}/[rr]\sp{\ast} \ar[r]\sp{f} &
\bW\sb{0} \ar@/_{1.2pc}/[rr]\sb{\ast} \ar[r]\sp{g} &
\bW\sb{1} \ar[r]\sp{h} & \bW\sb{2}~,
}
\noindent with \w[,]{\bW\sb{0}} \w[,]{\bW\sb{1}} and \w{\bW\sb{2}} $R$-GEMs,
and each adjacent composition nullhomotopic (see \cite{AdamsN,HarpSC}).

This defines a secondary cohomology operation in the sense of \cite{AdamsN},
since $f$ represents a set of cohomology classes in \w[,]{\HiR{\bY}}
on which the set of primary $R$-cohomology operations
represented by $g$ vanish. The fact that \w{h\circ g\sim\ast} indicates a relation
among primary operations.

Our general strategy for rectification of any \w{F:I\to\ho\Top} is to inductively
rectify, and then make fibrant, longer and longer final segments of the given
diagram.
In our example, we first make \w{\bW\sb{2}} fibrant and change $h$ into a fibration
(so the subdiagram \w{\bW\sb{1}\xra{h}\bW\sb{2}} is fibrant). We then change $g$ up
to homotopy so that \w[,]{h\circ g=\ast} using \cite[Lemma 5.11]{BJTurnR}.
Factoring $g$ as \w[,]{\bW\sb{0}\xra{k'}\Fib(h)\stackrel{i}{\hra}\bW\sb{1}} and then
changing \w{k'} into a fibration $k$ makes
\w{\bW\sb{0}\xra{g}\bW\sb{1}\xra{h}\bW\sb{2}} fibrant. To simplify notation we
denote \w{i\circ k} simply by \w[.]{g:\bW\sb{0}\to\bW\sb{1}}

We think of the following solid diagram of vertical and horizontal fibration
sequences as the \emph{template} for our Toda bracket (depending only on
\w[):]{\bW\sb{0}\xra{g}\bW\sb{1}\xra{h}\bW\sb{2}}
\mydiagram[\label{eqtodasq}]{
\bY \ar@{.>}[rrd]\sp(0.7){\exists ?\psi} \ar@/_1.5pc/@{-->}[rrdd]\sb{f}
\ar@/^1.5pc/@{-->}[rrrrd]\sp{\varphi} && &&\\
&& \Fib(k) \ar@{^{(}->}[rr]\sb{\ell} \ar@{^{(}->}[d] &&
\Fib(\widetilde{g}) \ar@{^{(}->}[d] \ar@{->>}[rr]\sb{q} &&
\Omega\bW\sb{2} \ar@{^{(}->}[d] \\
&& \bW\sb{0} \ar@{->>}[d]\sp{k} \ar@{^{(}->}[rr]\sp{j}\sb{\simeq}
\ar[rrd]\sp{g} && \bW'\sb{0} \ar@{->>}[d]\sp{\widetilde{g}}
\ar@{->>}[rr]\sp{G} && P\bW\sb{2} \ar@{->>}[d]\sp{p} \\
&& \Fib(h) \ar@{^{(}->}[rr]\sp{i} && \bW\sb{1} \ar@{->>}[rr]\sp{h} && \bW\sb{2}
}
\noindent The nullhomotopy $G$ exists since \w{h\circ \widetilde{g} \circ j = g\circ h=\ast} and $j$ is
a weak equivalence.

Now, any map \w{f:\bY\to\bW\sb{0}} with \w{g\circ f\sim\ast}
factors up to homotopy through a map \w[.]{\varphi:\bY\to\Fib(\widetilde{g})}
To rectify \wref[,]{eqtoda} $\varphi$ and $f$ should induce a map
\w[.]{\psi:\bY\to\Fib(k)=\Fib(q)}
The obstruction to doing so \wh namely the homotopy class of the composite
\w{q\circ\varphi:\bY\to\Omega\bW\sb{2}} \wwh is called
the \emph{value} of the Toda bracket \w{\lra{f,g,h}} (for the given choices of
$k$ and $\varphi$).
\end{example}

\begin{mysubsection}{The basic construction}\label{sbasicc}
Our object in this paper is to associate to each $R$-good space $\bY$ a sequence
\w{\llrra{\bY}=(\llrr{\bY}{n})\sb{n=2}\sp{\infty}} of higher cohomology
operations which serve as a complete set of invariants for the $R$-homotopy
type of $\bY$, constructed roughly as follows\vsn:

\noindent\textbf{(a)} \ We start with \w{\HiR{\bY}} as a \emph{\TRal} \wh i.e., a
graded $R$-algebra with an action of the primary $R$-cohomology operations (such as
Steenrod squares). We then choose a \emph{CW resolution} of this \TRal[,] given
by an inductively defined simplicial \TRal \w[,]{\Vd} with the $n$-th stage obtained
from the \wwb{n-1}truncation by attaching a free \TRal \w{\oV{n}} along a map
\w{\odz{n}:\oV{n}\to V\sb{n-1}} (see \S \ref{dscwo} below).
Our goal is to realize \w{\Vd\to\HiR{\bY}} by a coaugmented cosimplicial
space \w[,]{\bY\to\Wu=\lim\sb{n}\,\W{n}} for which \w{\bY\to\Tot\Wu}
is the $R$-completion map of \cite{BKanH}\vsn.

\noindent\textbf{(b)} \ To carry out this program, we use a double induction:  if
\w{\W{n-1}} realizes \w{\Vd} through simplicial dimension \w[,]{n-1} we can choose
a map
\w{\udz{n-1}:\Wn{n-1}{n-1}\to\oW{n}} realizing \w{\odz{n}:\oV{n}\to V\sb{n-1}}
(with \w{\oW{n}} an $R$-GEM). Then \w{\udz{n-1}\circ d\sp{0}} is nullhomotopic by a
nullhomotopy \w[,]{\Fk{n-2}:\Wn{n-2}{n-1}\to P\oW{n}} with \w{\Fk{n-2}\circ d\sp{0}}
factoring through \w[,]{\ak{n-3}:\Wn{n-3}{n-1}\to\Omega\oW{n}} as in
Example \ref{egtoda} above. One can in fact choose \w{\Fk{n-2}} so that
\w{\ak{n-3}} is nullhomotopic\vsn.

\noindent\textbf{(c)} \ Step \textbf{(b)} can be repeated, and the $k$-th
obstruction \w{\ak{n-k}} again turns out to be nullhomotopic. We end up with an
$n$-th order cohomology operation \w{\llrr{\bY}{n}} with value
\w[.]{\ak{-1}:\bY\to\Omega\sp{n-1}\oW{n}} The fact that this too vanishes
allows us to extend \w{\W{n-1}} to a cosimplicial space \w{\W{n}} (which will be
$n$-coskeletal, up to homotopy). We call the system \w{\cW=(\W{n})\sb{n\in\NN}} a
\emph{sequential realization} of \w{\Vd} for the space $\bY$, and think of it as a
template for a sequence \w{\llrra{\bY}=(\llrr{\bY}{n})\sb{n=2}\sp{\infty}}
of higher cohomology operations\vsn.

\noindent\textbf{(d)} \ Finally, given a space $\bZ$ with
\w[,]{\HiR{\bY}\cong\HiR{\bZ}} the augmentation of \TRal[s]
\w{\vare:V\sb{0}\to\HiR{\bZ}} can be realized by a map \w[.]{\bve{0}:\bZ\to\Wn{0}{0}}
If we can extend this to a coaugmentation \w[,]{\bv:\bZ\to\Wu:=\lim\W{n}} we would
obtain an $R$-equivalence between $\bZ$ and $\bY$. The obstruction to extending
the \wwb{n-1}st approximation \w{\bve{n-1}:\bZ\to\W{n}} for $\bv$ to \w{\bve{n}} is
a map \w{\ak{-1}:\bZ\to\Omega\sp{n-1}\oW{n}} as above \wh the value
associated to $\bZ$ for the $n$-th order cohomology operation \w[.]{\llrr{\bY}{n}}
\end{mysubsection}

\begin{mysubsection}{Main results}\label{smainresults}
In order to apply the machinery described above, we need the following important
technical result:

\begin{thma}
Any CW resolution \w{\Vd} of the \TRal \w{\HiR{\bY}} can be realized by
a coaugmented cosimplicial space \w{\bY\to\Wu} with each
\w{\bW\sp{n}} an $R$-GEM, obtained as the limit of a sequential
realization as above.
\end{thma}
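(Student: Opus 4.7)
The plan is to construct $\Wu$ by induction on the simplicial dimension $n$, following the scheme described in \S\ref{sbasicc}. For the base case, I would choose an $R$-GEM $\bW\sp{0}$ realizing the free $\TR$-algebra $V\sb{0}$ together with a map $\bY\to\bW\sp{0}$ realizing the augmentation $\vare:V\sb{0}\to\HiR{\bY}$; this yields the $0$-coskeletal cosimplicial space $\W{0}$ coaugmented by $\bY$. For the inductive step, assume that $\W{n-1}$ is a (homotopy) $(n-1)$-coskeletal cosimplicial $R$-GEM realizing $\Vd$ through simplicial dimension $n-1$ and coaugmented by $\bY$. I would then pick an $R$-GEM $\oW{n}$ realizing $\oV{n}$ and a map $\udz{n-1}:\Wn{n-1}{n-1}\to\oW{n}$ realizing the attaching map $\odz{n}:\oV{n}\to V\sb{n-1}$.

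The core of the inductive step is to extend $\udz{n-1}$ into a full compatible system of obstructions in the style of Example \ref{egtoda}. Since $\Vd$ is a chain complex, the composite $\udz{n-1}\circ d\sp{0}$ is algebraically zero and hence nullhomotopic, and one selects a primary nullhomotopy $\Fk{n-2}:\Wn{n-2}{n-1}\to P\oW{n}$; precomposing with the next coface yields a secondary obstruction $\ak{n-3}:\Wn{n-3}{n-1}\to\Omega\oW{n}$, and iterating produces obstructions $\ak{n-k}:\Wn{n-k}{n-1}\to\Omega\sp{k-1}\oW{n}$ in the relevant range, culminating in a terminal map $\ak{-1}:\bY\to\Omega\sp{n-1}\oW{n}$ whose homotopy class is the value of the $n$-th order cohomology operation $\llrr{\bY}{n}$ associated to the template just constructed.

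The key technical point \wh and the main obstacle \wwh is to show that each of these obstructions can be made nullhomotopic, by suitably adjusting earlier choices when necessary. For the intermediate obstructions $\ak{n-k}$ whose source is an $R$-GEM, my intended approach exploits the fact that both source and target are then $R$-GEMs, so that homotopy classes of maps between them are faithfully captured by the $\TR$-algebra structure on $R$-cohomology. The cosimplicial identities among the $d\sp{i}$ translate exactly into the chain-complex identities in $\Vd$, and freeness of the $\oV{n}$ and $V\sb{j}$ provides enough flexibility in the choice of $\udz{n-1}$ and of the successive nullhomotopies to force each next obstruction to vanish on the nose after a previous readjustment, modelled on the Toda bracket rectification of Example \ref{egtoda}.

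The terminal obstruction $\ak{-1}$ has source $\bY$, which is not an $R$-GEM; here a different argument is required, and this is where the hypothesis that $\Vd$ is a resolution (rather than merely a chain complex) of $\HiR{\bY}$ becomes essential. Since the target $\Omega\sp{n-1}\oW{n}$ is still an $R$-GEM, the homotopy class of $\ak{-1}$ is detected by a specific algebraic invariant computed from the $\TR$-algebra $\HiR{\bY}$ together with the truncated resolution $V\sb{0}\leftarrow\cdots\leftarrow V\sb{n-1}\leftarrow\oV{n}$; exactness of $\Vd$ in positive degrees forces this invariant to vanish and hence $\ak{-1}$ to be nullhomotopic. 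This vanishing permits the extension of $\W{n-1}$ to an $n$-coskeletal $\W{n}$ coaugmented by $\bY$, completing the induction; passing to the inverse system $\cW=(\W{n})\sb{n\in\NN}$ produces the sought sequential realization $\bY\to\Wu=\lim\sb{n}\,\W{n}$ with each $\bW\sp{n}$ an $R$-GEM by construction.
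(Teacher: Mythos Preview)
Your overall architecture matches the paper's: a double induction with outer index $n$ (building $\W{n}$ from $\W{n-1}$) and an inner descending induction on $k$ producing nullhomotopies $\Fk{k}$ and obstructions $\ak{k-1}$. However, your explanation of \emph{why} the intermediate obstructions vanish has a gap, and this is precisely where the content of the argument lies.

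You attribute the vanishing of the intermediate $\ak{k-1}$ to freeness of the $\oV{n}$ and $V\sb{j}$ providing ``flexibility''. But freeness alone does not make these classes vanish; the paper (Step~\textbf{V} of the proof of Theorem~\ref{tresg}) uses acyclicity of $\Vd$ in exactly the same way you invoke it for the terminal obstruction. Concretely: the provisional class $[\hak{k-1}]$ is a Moore $(k-1)$-cycle in $\Vd\lin{\Omega\sp{n-k-1}\oW{n}}$ (this is where the cosimplicial identities and the identification $[\Wn{j}{n-1},\,\Omega\sp{\ell}\oW{n}]\cong V\sb{j}\lin{\Omega\sp{\ell}\oW{n}}$ enter), and since $\Vd\to\Gamma$ is a resolution there is a Moore $k$-chain $\gamma\sb{k}$ with $\partial\sb{0}\gamma\sb{k}=[\hak{k-1}]$. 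Freeness is then used only to realize $\gamma\sb{k}$ by an actual map $g\sp{k}:C\sp{k}\W{n-1}\to\Omega\sp{n-k-1}\oW{n}$ (via Lemma~\ref{lmoore}, which in turn needs Reedy cofibrancy of $\W{n-1}$). The essential correction step is to replace the provisional nullhomotopy $\hFk{k}$ by $\Fk{k}:=\hFk{k}\star(\iota\circ g\sp{k})\sp{-1}$, using the H-space action of loops on paths; the new $\Fk{k}$ still nullhomotopes $\ak{k}\circ w\sp{k}$, but the induced $\ak{k-1}$ is now nullhomotopic. This ``subtract off a boundary'' maneuver is the engine of the whole proof, and in case~(2) of Theorem~\ref{tresg} (the setting of Theorem~A) it works uniformly for all $k$, including $k=0$ and the coaugmentation step, once one sets $\Wn{-1}{n-1}:=\bY$ and uses surjectivity of $\vare:V\sb{0}\to\Gamma$. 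So your division into ``freeness for the intermediate obstructions, exactness for the terminal one'' is misleading: exactness of the resolution is the key input throughout.
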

\noindent See Theorem \ref{tres} below.  This allows us to produce various
templates for the system of higher cohomology operations \w[,]{\llrra{\bY}}
based on the algebraic resolution of our choice\vsn .

The sequence of higher cohomology operations presented
here is dual to the higher homotopy operations of \cite{BlaH,BJTurnHA},
which correspond to the Andr\'{e}-Quillen cohomology obstructions for
distinguishing between different realizations \w{\pi\sb{\ast}\bY} of a
$\Pi$-algebra (see \cite{BDGoeR}).
Such Andr\'{e}-Quillen classes appear also in the dual context of
distinguishing between different realizations of a given abstract \TRal
$\Gamma$ (see \cite{BlaS,BRStelR}). However, the higher cohomology operations
of \cite{BlaS} correspond to Andr\'{e}-Quillen \emph{cocycles} (for a specific
algebraic resolution \w{\Vd} of \w[).]{\HiR{\bY}} We therefore would like to collect
together the various higher order cohomology operations corresponding to a given
Andr\'{e}-Quillen cohomology class. We do this by means of suitable (split)
weak equivalences called \emph{comparison maps} between sequential realizations
for various algebraic resolutions, and show:

\begin{thmb}
Any two sequential realizations of two CW resolutions \w{\Vd}
for the same space $\bY$ are connected by a zigzag of comparison maps.
\end{thmb}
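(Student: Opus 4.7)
The plan is to reduce the claim to an algebraic comparison between the two CW resolutions, and then realize that comparison cosimplicially by the same double-induction obstruction theory used to prove Theorem A.

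First, let \w{\Vd} and \w{\Vdp} be the two CW resolutions of the common \TRal \w[.]{\HiR{\bY}} Standard resolution comparison in the model category of simplicial \TRal[s] yields a zigzag of maps of simplicial resolutions over the identity of \w[,]{\HiR{\bY}} whose arrows may be chosen levelwise to send free generators to free generators. It therefore suffices to show that each algebraic arrow \w{\phi:\Vdp\to\Vd} in this zigzag can be realized by a comparison map of sequential realizations \w{\Phi:\cW\to\cWp} in the opposite direction (as cohomology is contravariant), possibly after replacing \w{\cWp} by another sequential realization of \w{\Vdp} for \w{\bY} built to accommodate the lift of \w[.]{\phi}

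Next, I construct \w{\Phi} inductively on cosimplicial dimension, running the double induction of \S \ref{sbasicc}(b)--(c) inside each step. At level \w[,]{0} the algebra map \w{\phi\sb{0}:\ppp V\sb{0}\to V\sb{0}} between free \TRal[s] is contravariantly realized by a map of $R$-GEMs from \w{\bW\sp{0}} to the realization of \w[,]{\ppp V\sb{0}} which is compatible with the coaugmentations from \w{\bY} by the universal property of freeness. Inductively, given \w{\Phi} up to cosimplicial level \w[,]{n-1} the extension to level $n$ requires a map at level $n$ realizing \w{\phi\sb{n}} and compatible with the coface and codegeneracy structure on both sides, as well as with the rectifying nullhomotopies \w{\Fk{\bullet}} chosen for each template. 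The obstructions to making such a choice are higher cohomology operations of the same flavor as \w[,]{\llrr{\bY}{k}} and since both \w{\cW} and \w{\cWp} realize the same space \w[,]{\bY} these obstructions take values in the same homotopy groups as those that already vanished in the construction of \w{\cW} itself, so they may be made to vanish by possibly readjusting the auxiliary choices in \w{\cWp} along the way.

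The main obstacle will be ensuring that \w{\Phi} is in fact a comparison map, i.e., a split weak equivalence. Since \w{\phi} induces the identity on \w[,]{\HiR{\bY}} the construction makes \w{\Phi} induce the identity on the \w{E\sb{2}}-page of the Bousfield--Kan spectral sequence, and both towers converge to \w{\hat{R}\bY} by Theorem A, so \w{\Phi} is automatically a weak equivalence on \w[;]{\Tot} applying the same lifting procedure to a simplicial homotopy inverse of \w{\phi} yields the splitting. When \w{(\cW,\cWp)} as given is not directly compatible for a lift of \w[,]{\phi} the procedure forces one to pass through an auxiliary sequential realization of \w{\Vdp} for \w{\bY} built specifically to receive this lift, producing a short zigzag of comparison maps. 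Concatenating such zigzags along the algebraic zigzag of the first step produces the required zigzag connecting \w{\cW} and \w[.]{\cWp}
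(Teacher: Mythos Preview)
Your overall shape is right --- reduce to an algebraic comparison of resolutions, then lift it to the sequential realizations by induction --- and this is indeed what the paper does. But there are two genuine gaps in your execution.

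First, your treatment of the algebraic step is too loose. You say the zigzag of resolution maps ``may be chosen levelwise to send free generators to free generators,'' but the paper's notion of \emph{algebraic comparison map} (Definition~\ref{dacomp}) is much more rigid: each \w{\ophl{n}:\oV{n}\hra\oVp{n}} must be an inclusion of a coproduct summand, with a retraction \w[.]{\orh{n}} This is not automatic from the model structure on \w[;]{s\TRA} it is the content of Lemma~\ref{lcylind}, which builds an explicit ``algebraic $h$-cobordism'' CW resolution \w{\Vd} receiving summand inclusions from both \w{\Vud{0}} and \w[.]{\Vud{1}} Without this strict splitting at the algebraic level you cannot hope to get the strict splitting required of a comparison map of sequential realizations (the sections \w{\rn{n}} and the maps \w{\oor{k}{n}} in Definition~\ref{dcorresp}).

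Second, and more seriously, your argument for why the obstructions to lifting \w{\phi} vanish, and for why the resulting map is split, does not work. You write that the obstructions ``take values in the same homotopy groups as those that already vanished in the construction of \w{\cW} itself, so they may be made to vanish.'' Taking values in the same group is no reason for vanishing. The paper does not argue by obstruction vanishing here at all: in step~(i) it forms a pushout of cochain complexes \wref{eqpoatt} and then factors, so that the section \w{\rn{n}} and the compatibility squares \wref{eqcompf} come out of the construction for free; in step~(ii), for two realizations over the \emph{same} \w[,]{\Vd} it applies Lemma~\ref{lcylin}(b) in the under-category \w{\cMs\Wp{n-1}\backslash\Chn{\C}{n-1}} to produce an intermediate object with honest sections. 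Your proposal to obtain the splitting by ``lifting a simplicial homotopy inverse of \w{\phi}'' gives only a homotopy section, not the strict section with the commuting diagrams \wref{eqmapmodpathloop} and \wref{eqsplitfib} that the definition demands. Likewise, arguing that \w{\Phi} is a weak equivalence because both \w{\Tot}'s agree is beside the point: the definition requires each \w{\en{n}} to realize \w{\varphi} through dimension $n$ with the stated splittings, which is a levelwise condition.
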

\noindent See Theorem \ref{tcomp} below\vsm.

Our two main results may then be summarized as follows:

\begin{thmc}
For \w{R=\Fp} or a field of characteristic $0$, let $\bY$ and $\bZ$ be $R$-good
spaces and \w{\vth:\HiR{\bY}\to\HiR{\bZ}} an isomorphism of  \TRal[s.]
Then $\vth$ is realizable by a zigzag of $R$-equivalences between $\bY$ and $\bZ$
if and only if the system of higher operations associated to this initial data
vanishes.
\end{thmc}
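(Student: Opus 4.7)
The plan is to combine the inductive obstruction theory sketched in Section \ref{sbasicc}(d) with Theorems A and B. First, apply Theorem A to obtain a CW resolution \w{\Vd\to\HiR{\bY}} together with a sequential realization \w{\cW=(\W{n})\sb{n\in\NN}} of \w{\Vd} for $\bY$, so that \w{\bY\to\Wu:=\lim\sb{n}\W{n}} is the $R$-completion map of \cite{BKanH}. The composite \w{\vth\circ\vare:V\sb{0}\to\HiR{\bZ}} is realized by a map \w[,]{\bve{0}:\bZ\to\Wn{0}{0}} since \w{\Wn{0}{0}} is the $R$-GEM corresponding to the free \TRal \w[.]{V\sb{0}} The strategy is to extend \w{\bve{0}} to a full coaugmentation \w[,]{\bv:\bZ\to\Wu} after which the composite \w{\bZ\xra{\bv}\Wu\xleftarrow{\simeq\sb{R}}\bY} is a zigzag of $R$-equivalences realizing $\vth$.

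\textbf{Sufficiency.} Assume that the system \w{\llrra{\bY}} vanishes on $\bZ$ via $\vth$. Proceed by induction on $n$: given \w[,]{\bve{n-1}:\bZ\to\W{n-1}} run the construction of Section \ref{sbasicc}(b)-(c) verbatim with $\bZ$ in place of $\bY$. Successive choices of null-homotopies \w{\Fk{n-2},\Fk{n-3},\dotsc,\Fk{0}} for the composites arising from \w{\udz{n-1}\circ\bve{n-1}\circ d\sp{0}} lead to a final obstruction map \w[,]{\ak{-1}:\bZ\to\Omega\sp{n-1}\oW{n}} whose homotopy class is exactly the value of \w{\llrr{\bY}{n}} evaluated at $\bZ$ via $\vth$. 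By hypothesis this obstruction is nullhomotopic, so \w{\bve{n-1}} extends to \w[.]{\bve{n}:\bZ\to\W{n}} Passing to the limit gives \w[,]{\bv:\bZ\to\Wu} which by construction of \w{\bve{0}} induces $\vth$ on $R$-cohomology. Since $\bY$, $\bZ$, and \w{\Wu} are all $R$-good, this forces $\bv$ to be an $R$-equivalence, yielding the required zigzag.

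\textbf{Necessity.} Conversely, suppose $\vth$ is realized by some zigzag of $R$-equivalences between $\bZ$ and $\bY$. Applying the Bousfield-Kan $R$-completion functor, and using the $R$-goodness of both endpoints, produces a weak equivalence \w{\bZ\sp{\wedge}\sb{R}\simeq\bY\sp{\wedge}\sb{R}\simeq\Wu} compatible with $\vth$. Composing with the canonical map \w{\bZ\to\bZ\sp{\wedge}\sb{R}} gives a coaugmentation \w{\bv:\bZ\to\Wu} extending \w[;]{\bve{0}} truncating yields compatible lifts \w{\bve{n}:\bZ\to\W{n}} at every stage. Since the obstruction \w{\ak{-1}} constructed from \w{\bve{n-1}} must be nullhomotopic whenever such a lift \w{\bve{n}} exists, every higher operation \w{\llrr{\bY}{n}} vanishes on $\bZ$. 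Independence of this conclusion from the choice of CW resolution and sequential realization is ensured by Theorem B: a zigzag of split comparison maps between any two sequential realizations transports obstructions and their null-homotopies faithfully, so vanishing in one framework is equivalent to vanishing in the other.

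The principal technical obstacle is the identification, in the sufficiency step, of the obstruction \w{\ak{-1}} built from \w{\bve{n-1}} with the value of the higher operation \w{\llrr{\bY}{n}} defined using the template \w[.]{\cW} This requires checking that the ambient indeterminacy in successive choices of \w{\Fk{k}} for $\bZ$ aligns with the indeterminacy intrinsic to \w[,]{\llrr{\bY}{n}} so that the obstruction-vanishing condition is a well-defined invariant of the triple \w[.]{(\bY,\bZ,\vth)} Theorem B again plays a key organizing role here, since comparing choices across different sequential realizations is precisely what pins down this indeterminacy and makes the statement of Theorem C unambiguous.
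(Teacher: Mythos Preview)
Your sufficiency direction is essentially the paper's argument (Lemma \ref{lvanish} plus Proposition \ref{pnstep}), though your phrasing is imprecise: the nullhomotopies \w{\Fk{n-2},\dotsc,\Fk{0}} are \emph{part of the template} $\cW$ (built once and for all from the coaugmentation of $\bY$), not fresh choices made for $\bZ$.  Given the \wwb{n-1}strand \w[,]{\bve{n-1}} the value \w{\alk{-1}{n-1}} is already determined by \wref[;]{eqikakm} there is no ``running the construction of \S \ref{sbasicc}(b)--(c) with $\bZ$ in place of $\bY$''.  This is a matter of exposition rather than a mathematical error.

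Your necessity direction, however, has a genuine gap and differs substantively from the paper.  You produce a map \w{\bZ\to\Tot\Wu} by passing through the $R$-completions, and then assert that this yields a coaugmentation \w[.]{\bv:\bZ\to\Wu}  But a map \w{\bZ\to\Tot\Wu} corresponds by adjunction to a cosimplicial map \w[,]{\bZ\otimes\Du\to\Wu} \emph{not} to a map \w[.]{\cu{\bZ}\to\Wu}  There is a trivial Reedy fibration \w{\bZ\otimes\Du\to\cu{\bZ}} but no cosimplicial section, so the induced map \w{\bZ\to\Wn{0}{n}} need only equalize \w{d\sp{0}} and \w{d\sp{1}} up to homotopy, not strictly.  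Hence you do not obtain the strict coaugmentations \w{\bve{n}} required for an $\infty$-strand.

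The paper avoids this by never passing through \w[.]{\Tot}  Instead it reduces the zigzag to its constituent $R$-equivalences and treats the two directions separately.  For \w{f:\bZ\to\bY} one simply precomposes the given coaugmentations \w{\bve{n}:\bY\to\W{n}} with $f$.  The delicate case is \w[:]{g:\bY\to\bZ} here the paper takes a cofibrant sequential realization \w{\cWp} \emph{for $\bZ$} (which exists by Theorem~A applied to $\bZ$), observes that precomposing its coaugmentations with $g$ makes \w{\cWp} into a sequential realization \w{\cWpp} for $\bY$, and then notes that the original coaugmentations \w{\bve{n}:\bZ\to\Wp{n}} already constitute the desired $\infty$-strand for \w[.]{\cWpp}  This trick --- building the template from $\bZ$ and re-coaugmenting for $\bY$ --- is what makes the argument go through without any strictification issues, and it is what your proof is missing.
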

\noindent See Theorem \ref{tvanish} below\vsm.

By extending the ideas sketched in \S \ref{sbasicc}, one can use any
sequential realization for $\bY$ to define a system of higher
cohomology operations associated to any two maps \w{\fu{0},\fu{1}:\bZ\to\bY}
which induce the same map in cohomology; although the construction is more
complicated, these operations still take values in the groups
\w[,]{[\bZ,\,\Omega\sp{n-1}\oW{n}]} and we have:

\begin{thmd}
For \w{R=\Fp} or a field of characteristic $0$, let  \w{\fu{0},\fu{1}:\bZ\to\bY}
be two maps between  $R$-good spaces which induce the
same morphism of \TRal[s] \w[.]{\HiR{\bY}\to\HiR{\bZ}}
Then \w{\fu{0}} is $R$-equivalent to \w{\fu{1}}
if and only if the associated system of higher operations vanishes.
\end{thmd}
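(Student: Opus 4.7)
The plan is to mimic the proof of Theorem~C, except that instead of constructing a single coaugmentation \w{\bZ\to\Wu} one builds a cosimplicial homotopy between two prescribed ones. Fix a CW resolution $\Vd$ of \w{\HiR{\bY}} as a \TRal[] and a sequential realization $\cW=(\W{n})\sb{n\in\NN}$ with coaugmentation \w{\bv:\bY\to\Wu} supplied by Theorem~A. The composites \w{\bv\up{i}:=\bv\circ\fu{i}:\bZ\to\Wu} for $i=0,1$ are cosimplicial coaugmentations of $\bZ$ which, by hypothesis, coincide on $R$-cohomology. Since by Theorem~A \w{\Tot\Wu} is an $R$-completion of $\bY$, an $R$-equivalence \w{\fu{0}\sim\sb{R}\fu{1}} corresponds precisely to a cosimplicial homotopy joining $\bv\up{0}$ to $\bv\up{1}$.

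For \emph{necessity}: if $\fu{0}$ and $\fu{1}$ are $R$-equivalent, then because each \w{\bW\sp{n}} is an $R$-GEM one may choose, cosimplicially coherently, homotopies between the components of $\bv\up{0}$ and $\bv\up{1}$. These homotopies trivialize every stage of the obstruction tower dual to \S\ref{sbasicc}(b)--(c), so every value in the system of higher operations vanishes.

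For \emph{sufficiency}: assume all values vanish. We construct a cosimplicial homotopy between the two coaugmentations by double induction on the filtration $(\W{n})\sb{n\in\NN}$ and the internal cosimplicial degree. At the $n$-th inductive step, one extends the already-constructed homotopy through the free attachment \w{\odz{n}:\oV{n}\to V\sb{n-1}.} Dually to \S\ref{sbasicc}(b)--(c), this produces a descending chain of nullhomotopies $\Fk{n-2},\Fk{n-3},\ldots$ terminating in a class in \w[,]{[\bZ,\Omega\sp{n-1}\oW{n}]} which by construction equals the value of the $n$-th higher operation associated to the pair $(\fu{0},\fu{1})$. Its vanishing permits the extension to stage $n$. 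Passing to the limit and applying $\Tot$ yields a homotopy between the $R$-completions of $\fu{0}$ and $\fu{1}$, hence the required $R$-equivalence.

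The main technical obstacle is identifying the obstructions arising in the inductive homotopy extension with the \emph{a priori} defined system of higher operations for $(\fu{0},\fu{1})$, and verifying that the system is well-posed \wh independent of the chosen sequential realization \wwh which should follow from an adaptation of Theorem~B, applied to the diagram template \eqref{eqtodasq} enlarged so as to accommodate simultaneously both coaugmentations rather than a single space $\bY$.
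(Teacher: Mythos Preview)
Your overall strategy is correct and matches the paper's: an $\infty$-strand is precisely a map of cosimplicial spaces \w{\bZ\otimes I\otimes\Du\to\Wu} restricting to \w{\bv\circ\fu{i}} at the ends, and the adjunction with \w{\Tot} converts this into an honest homotopy between the $R$-completed maps. The paper's proof of (c)$\Rightarrow$(a) is exactly your ``necessity'' argument, done cleanly via the \w{\Tot}-adjunction rather than by hand.

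The genuine gap is in your ``sufficiency'' direction, and you name it yourself in the last paragraph. The step ``dually to \S\ref{sbasicc}(b)--(c), this produces a descending chain of nullhomotopies \w{\Fk{n-2},\Fk{n-3},\ldots} terminating in a class in \w{[\bZ,\Omega\sp{n-1}\oW{n}]}'' is not a straightforward dualization of the space case. For a single coaugmentation the obstruction is a single map \w{\bZ\to\ooW{n-1}{n}}. For a \emph{homotopy} of coaugmentations, the data at stage $n$ is a compatible family of higher homotopies \w{\hHuk{k}{n}:\bZ\otimes(I\times\Deln{k})\to\PoW{n-k-1}{n}} for \w[,]{0\leq k\leq n} subject to face conditions \wref{eqnewfaces} coming both from the cosimplicial structure and from the path-loop tower. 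The paper organizes these by introducing the \emph{folding polytope} \w{\Pn{n}} (Definition \ref{dfoldp}), an $n$-ball assembled from \w{n+1} $n$-cubes, and shows (Lemma \ref{lmapfoldp}) that the data of the \w{\hHuk{k}{n}} is equivalent to a single map \w{(\bZ\otimes I)\otimes\Pn{n}\to\oW{n}} with prescribed boundary. The obstruction is then the class of the restriction to \w[,]{\partial\Pn{n}\cong\bS{n-1}} landing after a half-smash argument in \w{[\bZ,\Omega\sp{n-1}\oW{n}]} (Definition \ref{dvalstrand}, Proposition \ref{pvanish}). This combinatorial packaging is the substantive content of the theorem and is not supplied by your sketch; without it you have neither a definition of the value of the strand nor the extension statement you need for the inductive step.

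The independence of the sequential realization (your final sentence) is handled by the paper exactly as you suggest, via Lemma \ref{lkeymap}, which is a direct transcription of the Key Lemma \ref{lkey} argument.
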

\noindent See Theorem \ref{tvanishmap} below.
\end{mysubsection}

\begin{mysubsection}{Organization}\label{sorg}
Section \ref{crmccs} provides some background material on (co)simplicial
constructions and on sketches and their algebras. In Section \ref{crstr} we
define and study sequential realizations of algebraic resolutions. In Section
\ref{ccsr} we show how any two such sequential realizations may be connected by
a zigzag of comparison maps.
In Section \ref{chhc} we construct the higher cohomology operations
used to distinguish between spaces, including a detailed rational example in
\S \ref{sratex}, while in Section \ref{chhim} we define the analogous
invariants for maps.

Appendix \ref{apfthm} reviews the notions of enriched sketches and their
mapping algebras, which are used to generalize and prove Theorem A.
\end{mysubsection}

\begin{ack}
We would like to thank the referee for his or her careful and pertinent comments.

The research of the first author was supported by Israel Science
Foundation grants 74/11 and 770/16, and that of the second author by INSPIRE
grant No.~IFA MA-12.
\end{ack}

%
%
\sect{Background}
\label{crmccs}

In this section we present some background material on (co)simplicial theory and
algebraic theories that will be used throughout the paper\vsm.

\begin{defn}\label{dscso}
Let $\Del$ denote the category of finite ordered sets and order-preserving
maps (see \cite[\S 2]{MayS}), and \w{\Dp} the subcategory with the same objects,
but only monic maps. A \emph{cosimplicial object} \w{\Gu} in a
category $\C$ is a functor \w[,]{\Del\to\C} and a \emph{restricted}
cosimplicial object is a functor \w[.]{\Dp\to\C} More concretely,  we write
\w{G\sp{n}} for the value of \w{\Gu} at the ordered set \w[.]{\bbn=(0<1<\dotsc<n)}
The maps in the diagram \w{\Gu} are generated by the \emph{coface} maps
\w{d\sp{i}=d\sp{i}\sb{n}:G\sp{n}\to G\sp{n+1}} \wb[,]{0\leq i \leq n+1} as well
as \emph{codegeneracy} maps \w{s\sp{j}=s\sp{j}\sb{n}:G\sp{n}\to G\sp{n-1}}
\wb{0 \leq j<n} in the non-restricted case, satisfying the usual cosimplicial
identities. Dually, a \emph{simplicial object} \w{\Gd} in $\C$ is a functor
\w[.]{\Del\op\to\C} The category of cosimplicial objects over
$\C$ will be denoted by \w[,]{\C\sp{\Del}} and the category of
simplicial objects over $\C$ will be denoted by \w[.]{\C\sp{\Del\op}}
However, the category of simplicial sets is denoted simply by
\w[,]{\cS=s\Set}and  that of pointed simplicial sets by \w[.]{\Sa=s\Seta}
By a \emph{space} we always mean a pointed simplicial set.

There are natural embeddings \w{\cu{-}:\C\to\C\sp{\Del}} and
\w[,]{\cd{-}:\C\to\C\sp{\Del\op}}
defined by letting \w{\cu{A}} denote the constant cosimplicial object
which is $A$ in every cosimplicial dimension, and similarly for \w[.]{\cd{A}}
\end{defn}

\begin{mysubsection}{Latching and matching objects}
\label{slmo}
For a cosimplicial object \w{\Gu\in\C\sp{\Del}} in a complete category $\C$,
the $n$-th \emph{matching object} for \w{\Gu} is
\begin{myeq}\label{eqmatch}
M\sp{n}\Gu~:=~\lim\sb{\phi:\bbn\to\bbk}\,G\sp{k}~,
\end{myeq}
\noindent where $\phi$ ranges over the non-identity surjective maps \w{\bbn\to\bbk} in
$\Del$. There is a natural map \w{\zeta\sp{n}:G\sp{n}\to M\sp{n}\Gu} induced
by the structure maps of the limit, and any iterated codegeneracy map
\w{s\sp{I}=\phi\sb{\ast}:G\sp{n}\to G\sp{k}} factors as
\begin{myeq}\label{equnivcod}
s\sp{I}~=~\proj\sb{\phi}\circ\zeta\sp{n}~,
\end{myeq}
\noindent where \w{\proj\sb{\phi}:M\sp{n}\Gu\to G\sp{k}} is the structure
map for the copy of \w{G\sp{k}} indexed by $\phi$ (see \cite[X,\S 4.5]{BKanH}).

Note that the inclusion \w{\Del\sb{+}\hra\Del} induces a forgetful functor
\w[,]{\U:\C\sp{\Del}\to\C\sp{\Del\sb{+}}} and its right adjoint
\w{\F:\C\sp{\Del\sb{+}}\to\C\sp{\Del}} is given by
\w[,]{(\F\Gu)\sp{n}=G\sp{n}\times M\sp{n}\Gu} with codegeneracies given by
\wref{equnivcod} and coface maps by the cosimplicial identities\vsn.

The $n$-th \emph{latching object} for \w{\Gu\in\C\sp{\Del}} is
\w[,]{L\sp{n}\Gu:=\colim\sb{\theta:\bbk\to\bbn}\,G\sp{k}}
where the maps $\theta$ are now non-identity injective maps, with
\w{\sigma\sp{n}:L\sp{n}\Gu\to G\sp{n}} defined by the structure maps.

These two constructions have analogues for a simplicial object \w{\Gd} over a
(co)comp\-lete category $\C$: the \emph{latching object}
\w[,]{L\sb{n}\Gd:=\colim\sb{\theta:\bbk\to\bbn}G\sb{k}} and the
\emph{matching object} \w[,]{M\sb{n}\Gd:=\lim\sb{\phi:\bbn\to\bbk} G\sb{k}}
equipped with the obvious canonical maps.
\end{mysubsection}

\begin{defn}\label{dmco}
Let $\C$ be a pointed category.  If it is complete, the $n$-th \emph{Moore chain}
object of \w{\Gd\in\C\sp{\Del}} is
\begin{myeq}\label{eqmoor}
\cMl{n}\Gd~:=~\cap\sb{i=1}\sp{n}\Ker\{d\sb{i}:G\sb{n}\to G\sb{n-1}\}~,
\end{myeq}
\noindent with differential \w[.]{\partial\sb{n}\sp{\Gd}=\partial\sb{n}:=
(d\sb{0})\rest{\cMl{n}\Gd}:\cMl{n}\Gd\to\cMl{n-1}\Gd}
The $n$-th \emph{Moore cycles} object is
\w[.]{\cZl{n}\Gd:=\Ker(\partial\sb{n}\sp{\Gd})}

Dually, if $\C$ is cocomplete, the $n$-th \emph{Moore cochain} object of
\w{\Gu\in\C\sp{\Del\op}} is
\begin{myeq}\label{eqmoorecc}
\cM{n}\Gu ~:=~\Coker(\coprod\sb{i=1}\sp{n-1}\,G\sp{n}~\xra{\bot\sb{i}\,d\sp{i}}
~G\sp{n})
\end{myeq}
\noindent with differential \w{\delta\sp{n-1}:\cM{n-1}\Gu\to\cM{n}\Gu}
induced by \w[,]{d\sp{0}\sb{n-1}} and structure map
\w[.]{v\sp{n}:G\sp{n}\to\cM{n}\Gu}

We denote the cofiber of \w{\delta\sp{n-1}} by \w[,]{\cZ{n}\Gu}
with structure map \w[.]{w\sp{n}:\cM{n}\Gu\to\cZ{n}\Gu}
\end{defn}

\begin{mysubsection}{Cochain complexes in $\C$}
\label{scchaincx}
In general, a \emph{cochain complex} in $\C$ is a commuting diagram \w{\As}
of the form
\mydiagram[\label{eqchaincx}]{
A\sp{0}\ar[r]\sp{\dif{0}} \ar[rd] |!{[d];[r]}\hole &
A\sp{1} \ar[r]\sp{\dif{1}} \ar[rd] |!{[d];[r]}\hole &
A\sp{2}\dotsc &
\dotsc A\sp{n-1} \ar[r]^(0.65){\dif{n-1}} \ar[rd] |!{[d];[r]}\hole &
A\sp{n}\ar[r]\sp{\dif{n}} \ar[rd] |!{[d];[r]}\hole &  A\sp{n+1}\dotsc\\
0 \ar[ru] & 0 \ar[ru] & 0\dotsc & \dotsc 0\ar[ru]  & 0\ar[ru]  & 0\dotsc
}
\noindent (so \w{\dif{n}\circ\dif{n-1}=0} for all $n$).

We let \w{\Ch{\C}} denote the category of non-negatively graded cochain complexes
over $\C$, and by \w{\Chn{\C}{n}} the category of \emph{$n$-truncated}
cochain complexes \w{\As} in $\C$ (for which \w{A\sp{i}=0} unless
\w[).]{0\leq i\leq n}

The category \w{\Cha\sb{\C}} of non-negatively graded chain complexes
over $\C$ is defined analogously.

The Moore cochain functor \w{\cMs:\C\sp{\Del\sb{+}}\to\Ch{\C}} has a right
adjoint (and left inverse) \w{\E:\Ch{\C}\to\C\sp{\Del\sb{+}}} with
\w[,]{(\E\A\sb{\ast})\sp{n}=A\sp{n}} \w[,]{d\sp{0}\sb{n}=\dif{n}} and
\w{d\sp{i}\sb{n}=0} for \w[.]{i\geq 1} This holds also for \w{\Chn{\C}{n}}
if we truncate \w[,]{\C\sp{\Del\sb{+}}} too.

When $\C$ is a model category, we have several possible model category structures on
\w[,]{\C\sp{\Del}} \w[,]{\C\sp{\Del\sb{+}}} \w[,]{\Ch{\C}} and
\w{\Chn{\C}{n}} (see, e.g., \cite[\S 15.3]{PHirM}, \cite{BousC}, and
\cite[\S 12]{CScheH}).

In particular, \w{\Chn{\C}{n}} has two different Reedy model
category structures, depending on how we choose the degrees in \wref[:]{eqchaincx}
in both cases, the weak equivalences are defined levelwise. In the \emph{right
Reedy} model structure, fibrations are also defined levelwise, and an
$n$-cochain complexes \w{\As} is cofibrant if for each \w{k\leq n} the natural map
\w{\cZ{k}\As:=\Cof(\dif{k-1})\to A\sp{k+1}} is a cofibration
(with \w[).]{A\sp{-1}:=\ast}
In the \emph{left Reedy} model structure, cofibrations are defined levelwise, and an
$n$-cochain complex \w{\As} is fibrant if for each \w{k\leq n} the natural map
\w{A\sp{k}\to\Ker(\dif{k+1})} is a fibration (with \w[).]{A\sp{-1}:=\ast}
Evidently, right Reedy cofibrancy implies left Reedy cofibrancy, and
left Reedy fibrancy implies right Reedy fibrancy.

Note that the Moore cochains functor \w{\cMs:\Sa\sp{\Del}\to\Ch{\Sa}}
preserves cofibrancy and weak equivalences among cofibrant objects in the
(right) Reedy model structures (see \cite[X, Proposition 6.3]{BKanH}).
\end{mysubsection}

By analogy with the usual fiber/cone construction for (co)chain complexes we have:

\begin{mysubsect}{Fibers and cones}
\label{dchaincof}

\begin{enumerate}
\renewcommand{\labelenumi}{(\roman{enumi})~}
\item For any map \w{f:\Au\to\Bu} in \w{\C\sp{\Del\sb{+}}}
we define the restricted cosimplicial object \w{\Cu=\Fib(f)}
by setting \w[,]{C\sp{n}:=A\sp{n}\times B\sp{n-1}} with coface maps:
$$
\xymatrix@C=6pt{
C\sp{n+1} & = & A\sp{n+1} & \times & B\sp{n} &&
C\sp{n+1} & = & A\sp{n+1} & \times & B\sp{n} && \quad\\
C\sp{n} \ar[u]\sp{d\sp{0}} & =  & A\sp{n} \ar[u]\sp{d\sp{0}} \ar[urr]\sp{f\sp{n}} &
\times & B\sp{n-1} &\quad \ar @{} [u] |{\mbox{and}}&
C\sp{n} \ar[u]\sp{d\sp{i}} & = & A\sp{n} \ar[u]\sp{d\sp{i}} &
\times & B\sp{n-1} \ar[u]\sp{d\sp{i-1}} &&
\quad \ar @{} [u] |{\mbox{for \ $i\geq 1$ }}
}
$$
\noindent and a natural projection \w[.]{\ell:\Cu\to\Au} For
\w{d\sp{j}\circ d\sp{0}:C\sp{n-1}\to C\sp{n+1}} we verify that
\begin{equation*}
\prj\sb{B\sp{n}}\circ d\sp{j}\circ d\sp{0}~=~
d\sp{j-1}\sb{B\sp{n-1}}\circ f\sp{n-1}\circ\prj\sb{A\sp{n-1}}~=~
f\sp{n}\circ d\sp{j-1}\sb{A\sp{n-1}}\circ \prj\sb{A\sp{n-1}}
~=~\prj\sb{B\sb{n}}\circ d\sp{0}\circ d\sp{j-1}
\end{equation*}
\noindent for all \w[,]{j>0} while clearly
\w{\prj\sb{B\sp{n}}(d\sp{j}\circ d\sp{i})=\prj\sb{B\sp{n}}(d\sp{i}\circ d\sp{j-1})}
for \w[\vsn.]{1\leq i<j}
\item Similarly, for a map \w{f:\Ad\to\Bd} in \w{\C\sp{\Del\op\sb{+}}}
we define the restricted simplicial object \w{\Cd=\Cone(f)}
by setting \w[,]{C\sb{n}:=A\sb{n-1}\amalg B\sb{n}} with
$$
d\sb{i}\sp{C\sb{n}}~:=~
\begin{cases} f\sb{n-1}\bot\, d\sb{0}\sp{B\sb{n}} & \text{if}~i=0\\
d\sb{i-1}\sp{A\sb{n-1}}\bot\, d\sb{i}\sp{B\sb{n}} & \text{if}~i\geq 1~,
\end{cases}
$$
\noindent and a natural inclusion \w[.]{m:\Bd\hra\Cd}
\end{enumerate}
\end{mysubsect}

\begin{mysubsection}{Simplicial CW objects}
\label{dscwo}
A simplicial object \w{\Gd\in\C\sp{\Del\op}} over a pointed category $\C$
is called a \emph{CW object} if it is equipped with a \emph{CW basis}
\w{(\oG{n})\sb{n\in\NN}} in $\C$ such that
\w[,]{G\sb{n}=\oG{n}\amalg L\sb{n}\Gd} and \w{d\sb{i}\rest{\oG{n}}=0}
for \w[.]{1\leq i\leq n} In this case
\w{\odz{G\sb{n}}:=d\sb{0}\rest{\oG{n}}:\oG{n}\to G\sb{n-1}} is called the
\emph{attaching map} for \w[.]{\oG{n}} By the simplicial identities \w{\odz{G\sb{n}}}
factors as
\begin{myeq}\label{eqattach}
\odz{G\sb{n}}:\oG{n}~\to~Z\sb{n-1}\Gd~\subset~G\sb{n-1}~.
\end{myeq}
\noindent Note that we have an explicit formula
\begin{myeq}\label{eqslatch}
L\sb{n}\Gd~:=~
\coprod\sb{0\leq k<n}~\coprod\sb{0\leq i\sb{1}<\dotsc<i\sb{n-k}\leq n-1}~
\oG{k}~,
\end{myeq}
\noindent where the iterated degeneracy map
\w[,]{s\sb{i\sb{n-k}}\dotsc s\sb{i\sb{2}}s\sb{i\sb{1}}} restricted to the
basis object \w[,]{\oG{k}} is the inclusion into the copy of \w{\oG{k}} indexed by
\w[\vsn.]{(i\sb{1},\dotsc,i\sb{n-k})}
\end{mysubsection}

\begin{remark}\label{rscwo}
Given \w[,]{\oG{}\in\C} define \w{\oG{}\olS{n}} in \w{\Cha\sb{\C}}
be the chain complex with $\oG{}$ in dimension $n$ and $\ast$ elsewhere.
A CW object \w{\Gd} over $\C$ with CW basis \w{(\oG{n})\sb{n=0}\sp{\infty}} is
the colimit of an inductively constructed sequence of skeleta
\w[,]{\sk{0}\Gd\hra\sk{1}\Gd\hra\dotsc} in the usual sense
(see \cite[VII, \S 1]{GJarS}), starting with
\w[.]{\sk{0}\Gd:=\cd{\oG{0}}}

To do so, note that the attaching map \w{\odz{G\sb{n}}:\oG{n}\to Z\sb{n-1}\Gd} defines
a chain map \w[,]{\phi:\oG{n}\olS{n-1}\to\cMls\sk{n-1}\Gd} which has an
adjoint \w[,]{\widetilde{\phi}:\E'(\oG{n}\olS{n-1})\to\U'\sk{n-1}\Gd}
where \w{\E':\Cha\sb{\C}\to\C\sp{\Del\op\sb{+}}} is left adjoint to the Moore
chain functor \w{\cMls:\C\sp{\Del\op\sb{+}}\to\Cha\sb{\C}} and
\w{\U':\C\sp{\Del\op}\to\C\sp{\Del\sb{+}\op}} is the forgetful functor
(compare \S \ref{scchaincx}).

Note that \w{\U'} has a left adjoint \w{\F':\C\sp{\Del\sb{+}\op}\to\C\sp{\Del\op}}
given by \w[,]{(\F'\Gd)\sb{n}=G\sb{n}\amalg L\sb{n}\Gd} (compare \S \ref{slmo}).
If \w{\vartheta:\F'\U'\to\Id} is the counit for the adjunction, we have
\w{\sk{n}\Gd} as the pushout in:
\mydiagram[\label{eqnskeleton}]{
\ar @{} [drr]|(0.71){\framebox{\scriptsize{PO}}}
\F'\U'\sk{n-1}\Gd \ar[d]\sb{\F'm} \ar[rr]\sp{\vartheta} &&
\sk{n-1}\Gd \ar[d] \\
\F'\Cone(\widetilde{\phi}) \ar[rr] && \sk{n}\Gd~,
}
\noindent for $m$ as in \S \ref{dchaincof}(b).
\end{remark}

\begin{mysubsection}{Cosimplicial CW objects}
\label{dccwo}
A cosimplicial CW object \w{\Gu\in\C\sp{\Del}} with CW basis
\w{(\uuG{n})\sb{n\in\NN}} may be defined analogously as the limit of a tower of
coskeleta \w{\dotsc\to\csk{2}\Gu\to\csk{1}\Gu\to\csk{0}\Gu} (see
\cite[\textit{loc.\ cit.}]{GJarS}), starting with
\w[,]{\csk{0}\Gu:=\cu{\uuG{0}}} by thinking of its attaching maps
as a cochain map \w{\varphi:\cMs\csk{n-1}\Gu\to\uuG{n}\ouS{n-1}}
(where \w{\uuG{n}\ouS{n-1}} is the cochain complex with \w{\uuG{n}} in dimension
\w{n-1} and zero elsewhere).

The map $\varphi$ has an adjoint
\w[,]{\widehat{\varphi}:\U\csk{n-1}\Gu\to\E(\uuG{n}\ouS{n-1})}
and we \w{\csk{n}\Gu} as the pullback in
\mydiagram[\label{eqncoskeleton}]{
\ar @{} [drr]|(0.24){\framebox{\scriptsize{PB}}}
\csk{n}\Gu \ar[d] \ar[rr] && \F\Fib(\widehat{\varphi})\ar[d]\sp{\F\ell} \\
\csk{n-1}\Gu \ar[rr]\sp{\theta} && \F\U \csk{n-1}\Gu~,
}
\noindent with $\theta$ the unit for \w{\F\U} and $\ell$ as in
\S \ref{dchaincof}(i), using the notation of \S \ref{slmo}.
\end{mysubsection}

\begin{mysubsect}{List of functors}
\label{slistfunc}

For the reader's convenience we list the main functors we have defined for simplicial
and cosimplicial objects in a category $\C$:

\begin{enumerate}
\renewcommand{\labelenumi}{(\alph{enumi})~}
\item The Moore cochain complex functor \w[,]{\cMs:\C\sp{\Del}\sb{+}\to\Ch{\C}}
and its right adjoint (and left inverse) \w[.]{\E:\Ch{\C}\to\C\sp{\Del\sb{+}}}
\item The Moore chain complex functor \w[,]{\cMls:\C\sp{\Del\op\sb{+}}\to\Cha\sb{\C}}
and its left adjoint (and right inverse) \w[.]{\E':\Cha\sb{\C}\to\C\sp{\Del\op\sb{+}}}
\item The forgetful functor
\w[,]{\U:\C\sp{\Del}\to\C\sp{\Del\sb{+}}} and its right
adjoint \w{\F:\C\sp{\Del\sb{+}}\to\C\sp{\Del}} (adding codegeneracies).
\item  The forgetful functor \w[,]{\U':\C\sp{\Del\op}\to\C\sp{\Del\op\sb{+}}}
and its left adjoint \w{\F':\C\sp{\Del\op\sb{+}}\to\C\sp{\Del\op}}
(adding degeneracies).
\end{enumerate}

When there is no danger of confusion, we denote \w{\cMs(\U(\Wu))} simply by
\w[,]{\cMs\Wu} and \w{\cMls(\U'(\Gd))} by \w[.]{\cMls\Gd}
\end{mysubsect}

\begin{defn}\label{dsmcat}
A \emph{simplicial} category $\C$ (in the sense of Quillen) is one
in which, for each (finite) \w{K\in\cS} and \w[,]{\bX\in\C} we have objects
\w{\bX\otimes K} and \w{\bX\sp{K}} in $\C$ equipped with appropriate
adjunction-like isomorphisms. In particular, such categories are simplicially
enriched.
A \emph{simplicial model category} is a simplicial category with a model category
structure satisfying axiom SM7 (see \cite[II, \S 1-2]{QuiH}). The basic examples
are $\cS$ and \w[.]{\Sa}
\end{defn}

\begin{assume}\label{amodel}
From now on $\C$ will be a pointed simplicial model category in which all objects
are cofibrant \wh so in particular it is left proper.

The main example we shall be concerned with is \w[,]{\C=\Sa} so we shall
sometimes refer to the objects of $\C$\wh denoted by boldface letters $\bX$, $\bY$,
and so on \wh as ``spaces''.
\end{assume}

\begin{mysubsection}{$\G$-resolution model structure}
\label{sgrmc}
Let $\G$ be a class of homotopy group objects in a model category $\C$ as above,
closed under loops.
A map \w{i:\bA\to\bB} in \w{\ho\C} is called $\G$-\emph{monic} if
\w{i\sp{\ast}:[\bB,\bG]\to [\bA,\bG]} is onto for each \w[.]{\bG \in \G}
An object $\bY$ in \w{\C} is called \emph{$\G$-injective} if
\w{i\sp{\ast}:[\bB,\bY]\to [\bA,\bY]} is onto for each $\G$-monic map
\w{i:\bA\to\bB} in \w[.]{\ho\C} A fibration in $\C$ is called
\emph{$\G$-injective} if it has the right lifting property for the
$\G$-monic cofibrations in $\C$.

The homotopy category \w{\ho\C} is said to have \emph{enough $\G$-injectives}
if each object is the source of a $\G$-monic map to a $\G$-injective target.
In this case $\G$ is called a class of \emph{injective models} in \w[.]{\ho\C}

Recall that a homomorphism in the category \w{s\Grp} of simplicial groups
is a weak equivalence or fibration when its underlying map in \w{\Sa} is such.

A map \w{f:\Wu\to\Yu} in \w{\C\sp{\Del}} is called a $\G$-\emph{equivalence} if
\w{f\sp{\ast}:[\Yu,\bG]\to[\Wu,\bG]} is a weak equivalence in \w{s\Grp} for
each \w[.]{\bG\in \G}
In \cite[Theorem 3.3]{BousC}, Bousfield showed that if $\G$ is a class of
injective models in \w[,]{\ho(\C)} then \w{\C\sp{\Del}} has a left proper pointed
simplicial model category structure with such maps as weak equivalences.
\end{mysubsection}

\begin{mysubsection}{$\G$-completion}
\label{sgcge}
Given a class $\G$ of injective models in $\C$ as above,
a \emph{$\G$-resolution} of an object \w{\bY\in\C} is a
$\G$-fibrant \w{\Wu} equipped with a $\G$-trivial cofibration
\w{\cu{\bY}\hra\Wu} (see \S \ref{dscso}). In this case \w{\LG\bY:=\Tot\Wu} is
called the \emph{$\G$-completion} of $Y$, where \w{\Tot\Wu\in\C} is constructed
as in \cite[\S 2.8]{BousC}.  Moreover, a map \w{f:\bY\to\bZ} in $\C$ is
a $\G$-equivalence if and only if \w{\LG f:\LG\bY\to\LG\bZ} is a weak equivalence
in $\C$. An object \w{\bY\in\C} is called $\G$-\emph{complete} if
\w{\bY\to\LG\bY} is a weak equivalence, and it is
\emph{$\G$-good} if \w{\bY\to\LG\bY} is a $\G$-equivalence \wh so \w{\LG\bY}
is $\G$-complete (see \cite[\S 8]{BousC}).
We say that two maps \w{\fu{0},\fu{1}:\bZ\to\bY} between $\G$-good objects in
$\C$ are \emph{$\G$-equivalent} if \w{\LG\fu{0}\sim\LG\fu{1}:\LG\bZ\to\LG\bY} are
homotopic (for a suitable fibrant and cofibrant model of the $\G$-completion).

A cosimplicial object \w{\Wu\in\C\sp{\Del}} is called \emph{weakly $\G$-fibrant} if
it is Reedy fibrant (see \cite[\S 15.3]{PHirM}), and every \w{\bW\sp{n}} is in
$\G$ \wb[.]{n\geq 0} A \emph{weak $\G$-resolution} of an object \w{\bY\in\C} is a weakly
$\G$-fibrant \w{\Wu} which is $\G$-equivalent to \w[.]{\cu{\bY}} In this case
there is a natural weak equivalence \w[,]{\LG\bY\to\Tot\Wu}
by \cite[Theorem 6.5]{BousC}.
\end{mysubsection}

\begin{example}\label{eggth}
The example we have in mind is the class \w{\G=\G\sb{R}} of all $R$-GEMs in
\w[,]{\C=\Sa} for some ring $R$. In this case the $\G$-completion is the
Bousfield-Kan $R$-completion (see \cite{BKanH}).
\end{example}

\begin{mysubsect}{Sketches and their algebras}
\label{salgsk}

An (FP-)\emph{sketch} in the sense of \cite{EhreET} (cf.\ also \cite{LawvF}) is a small
category $\Theta$ with a specified set $\PP$ of (finite) products. This generalizes
Lawvere's notion of a \emph{theory}, which requires that \w[.]{\Obj(\Theta)=\NN}
A \emph{\Tal} (or $\Theta$-model) is a functor \w{\Gamma:\Theta\to\Seta} preserving
the products in $\PP$, with natural transformations as model morphisms
(see \cite[\S 4.1]{BorcH2}). We write \w{\Gamma\lin{\bB}} for the value of $\Gamma$
at \w[.]{\bB\in\Theta} The category of \Tal[s] is denoted by \w[.]{\TAlg}
Examples of categories of such models include  varieties
of universal algebras, such as groups, rings, and modules over rings.
\end{mysubsect}

\begin{remark}\label{rskhtpy}
In homotopy theory such theories often arise by choosing a collection $\eA$ of
objects in a model category $\C$ (as in \S \ref{amodel}), and letting
\w{\Theta=\ThA} denote the full subcategory of the homotopy category \w{\ho\C}
consisting of objects of $\C$ which are representable as \emph{finite type}
products of objects in $\eA$ \wh that is, products of the form
\begin{myeq}\label{eqfintype}
\prod\sb{\bA\in\eA}\ \prod\sb{i\in I\sb{\bA}}\ \bA~,
\end{myeq}
\noindent where each indexing set \w{I\sb{\bA}} is finite.
When each \w{\bA\in\eA} is a homotopy group object in $\C$, \Tal[s] have a natural
underlying group structure; in such cases we call the sketch $\Theta$
\emph{algebraic}.

For every \w{\bY\in\C} we then have a \emph{realizable} \Tal
\w[,]{\Gamma=\HiT{\bY}} with \w{\Gamma\lin{\bB}=[\bY,\bB]\sb{\C}}
for every \w[.]{\bB\in\Theta} A \Tal is called \emph{free} if is isomorphic to
\w{\HiT{\bB}} for some \w[.]{\bB\in\Theta}

Note, however, that even though the forgetful functor
\w{U:\TAlg\to\Set\sp{\eA}} to $\eA$-graded sets has a left adjoint
\w[,]{F:\Set\sp{\eA}\to\TAlg} not all \Tal[s] in its image are free by our
definition: because $F$ preserves colimits, any \Tal in the image of $F$ is
a coproduct of \emph{monogenic} free \Tal[s] (those realizable by on object of
$\eA$), and conversely. For our purposes, however (see Theorem \ref{tres} below),
it is necessary that any free \Tal be realizable in $\C$.
\end{remark}

\begin{example}\label{egrtal}
For any commutative ring $R$, let \w{\eA:=\{\KR{n}\}\sb{n=1}\sp{\infty}} in
\w[:]{\C=\Sa} we then have an FP-sketch \w{\TR} in \w{\ho\Sa} whose objects are
``finite type'' $R$-GEMs of the form
\w[,]{\prod\sb{n=1}\sp{\infty}\,\KP{V\sb{n}}{n}} where
\w{V\sb{n}=R\sp{k\sb{n}}} is a free $R$-module of dimension \w[.]{k\sb{n}<\infty}
Since each \w{\bB\in\TR} is an $R$-module object, all \TRal[s] take values in
$R$-modules.

Note that the realizable \TRal \w{\Gamma=H\sp{\ast}\sb{\TR}\bY}
has \w[,]{\Gamma\lin{\KR{n}}=\Hu{n}{\bY}{R}} so we denote it simply
by \w[:]{\HiR{\bY}} it is the $R$-cohomology algebra of $\bY$, equipped with
the action of the primary $R$-cohomology operations.
When \w[,]{R=\Fp} \w{\TR} is the homotopy category
of such \ww{\Fp}-GEMs, and a \TRal is an unstable algebra over the
mod $p$ Steenrod algebra, as in \cite[\S 1.4]{SchwU}.
When \w[,]{R=\QQ} a \TRal is just a graded commutative $\QQ$-algebra.

More generally, for any limit cardinal $\lambda$, let \w{\RM\sp{\lambda}}
denote the set of isomorphism types of free $R$-modules of dimension $<\lambda$, and
let \w[.]{\eA\sp{\lambda}:=\{\KP{V}{n}~|\ V\in\RM\sp{\lambda},\ 1\leq n\leq\infty\}}
The FP-sketch \w{\TR\sp{\lambda}} then consists of the $R$-GEMs which are finite
type products of the form \w{\prod\sb{n=1}\sp{\infty}\ \KP{V\sb{n}}{n}} with
\w{V\sb{n}} a free $R$-module of dimension $<\lambda$.

Note that since finite products in \w{\RM} are also coproducts,
\w{\prod\sb{I\sb{n}}\,V\sb{i}} is itself in \w[,]{\RM\sp{\lambda}} since
\w{I\sb{n}} is finite and $\lambda$ is infinite.
However, we must be able to distinguish the generating set $\eA$ from the resulting
sketch \w[,]{\TR} in order to define what we mean by ``finite type'' products or
coproducts.
\end{example}

We note the following two standard facts about an algebraic sketch $\Theta$:

\begin{lemma}\label{lfreeta}
If $\Gamma$ is an \Tal and \w[,]{\bB\in\Theta} there is a natural isomorphism
\w[.]{\Hom\sb{\TAlg}(\HiT\bB,\,\Gamma)\cong\Gamma\lin{\bB}}
\end{lemma}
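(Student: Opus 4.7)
The plan is to recognize this as a Yoneda-type statement for the category of $\Theta$-algebras, where the free $\Theta$-algebra $\HiT{\bB}$ plays the role of the representable functor. Concretely, I would construct the isomorphism by sending a morphism of \Tal[s] $\eta\colon \HiT{\bB}\to\Gamma$ to its value on the identity,
\begin{equation*}
\Phi(\eta)\ :=\ \eta\lin{\bB}(\Id\sb{\bB})\ \in\ \Gamma\lin{\bB},
\end{equation*}
using that $\Id\sb{\bB}\in[\bB,\bB]\sb{\C}=\HiT{\bB}\lin{\bB}$.

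For the inverse, given $x\in\Gamma\lin{\bB}$, I would define $\Psi(x)\colon \HiT{\bB}\to\Gamma$ componentwise by
\begin{equation*}
\Psi(x)\lin{\bC}\colon [\bB,\bC]\sb{\C}\ \to\ \Gamma\lin{\bC},\qquad f\ \mapsto\ \Gamma(f)(x),
\end{equation*}
for each $\bC\in\Theta$. The naturality of $\Psi(x)$ as a transformation of functors $\Theta\to\Seta$ is a direct consequence of the functoriality of $\Gamma$: given $g\colon \bC\to\bC'$ in $\Theta$ and $f\colon \bB\to\bC$, one has $\Gamma(g)\Gamma(f)(x)=\Gamma(g\circ f)(x)$.

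The first genuine check is that $\Psi(x)$ is a morphism of \Tal[s] \wh i.e., that it respects the distinguished products in $\PP$. This is where the hypothesis on \TAlg enters: by definition $\Gamma$ preserves the products in $\PP$, and the corepresentable $[\bB,-]$ sends products in $\ho\C$ (in particular those in $\PP$, viewed inside $\Theta$) to products of sets; hence both source and target of $\Psi(x)$ evaluate such products as products, and the natural transformation $\Psi(x)$ automatically commutes with the projection maps. I expect this bookkeeping step \wh verifying that representables are themselves \Tal[s], and that any natural transformation out of a representable is automatically a $\Theta$-algebra map \wh to be the only nontrivial point.

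Finally, I would verify that $\Phi$ and $\Psi$ are mutually inverse: the computation $\Phi\Psi(x)=\Gamma(\Id\sb{\bB})(x)=x$ is immediate, while $\Psi\Phi(\eta)=\eta$ follows by unwinding naturality of $\eta$ at an arbitrary arrow $f\colon\bB\to\bC$, namely $\eta\lin{\bC}(f)=\eta\lin{\bC}(\HiT{\bB}(f)(\Id\sb{\bB}))=\Gamma(f)(\eta\lin{\bB}(\Id\sb{\bB}))$. Naturality of the bijection in $\Gamma$ (and in $\bB$) is then a direct consequence of the definitions, giving the claimed natural isomorphism.
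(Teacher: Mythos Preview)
Your argument is correct: this is precisely the Yoneda lemma for the functor category $\TAlg\subseteq\Seta^{\Theta}$, and the paper does not prove it, merely stating it as a standard fact. One minor simplification: since the paper defines morphisms of \Tal[s] to be natural transformations (with no further condition), your check that $\Psi(x)$ ``respects the distinguished products'' is unnecessary --- any natural transformation is automatically a morphism in \TAlg --- though you do still need, as you note, that the corepresentable $[\bB,-]$ is itself a \Tal[,] which follows since it preserves all limits.
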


\begin{prop}[see \protect{\cite[\S 6]{BPescF}}]\label{psimptal}
The category \w{s\TAlg} of simplicial
\Tal[s] has a model category structure, in which the weak equivalences
and fibrations are defined on the underlying (graded) simplicial groups.
\end{prop}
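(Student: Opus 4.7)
My plan is to apply a standard transfer theorem, lifting the model structure on graded simplicial groups to $s\TAlg$ along a free--forgetful adjunction. Since the sketch $\Theta$ is algebraic (see \S \ref{rskhtpy}), every \Tal has an underlying $\eA$-graded group structure, which gives a forgetful functor $U\colon\TAlg\to\Grp\sp{\eA}$; by Lemma \ref{lfreeta} this has a left adjoint $F$ constructed from coproducts of the monogenic free \Tal[s] $\HiT{\bA}$. Applying the adjunction levelwise yields an adjunction $F\dashv U$ between $s\TAlg$ and $(s\Grp)\sp{\eA}$, and the latter carries the standard cofibrantly generated product model structure coming from Quillen's structure on $s\Grp$, in which weak equivalences and fibrations are detected on underlying simplicial sets and all objects are fibrant.

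The second step is to invoke the transfer theorem (compare \cite[Theorem 11.3.2]{PHirM} or the original argument of \cite[II, \S 4]{QuiH}) to declare a map $f$ in $s\TAlg$ to be a fibration or weak equivalence exactly when $Uf$ is one in $(s\Grp)\sp{\eA}$, with cofibrations defined by the left lifting property against trivial fibrations. Since $\TAlg$ is the category of models of an FP-sketch, it is locally presentable, so the small object argument applies to $F(I)$ and $F(J)$, where $I,J$ are the generating (trivial) cofibrations of $(s\Grp)\sp{\eA}$, producing the functorial factorizations.

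The main technical hurdle, as usual for transfer arguments, is to verify that each relative $F(J)$-cell complex is sent by $U$ to a weak equivalence. The cleanest route is Quillen's path object argument: for any $\Gamma\in s\TAlg$, the cotensor $\Gamma\sp{\Deln{1}}$ provides a functorial path object, and since $U$ preserves limits it reduces after $U$ to the standard path object in $(s\Grp)\sp{\eA}$, factoring the diagonal $\Gamma\to\Gamma\times\Gamma$ as a weak equivalence followed by a fibration. Quillen's argument then shows that the transferred trivial cofibrations are closed under pushouts and transfinite compositions, completing the verification. The remaining axioms (two-of-three, retracts, and the identification of weak equivalences and fibrations with those on underlying graded simplicial groups) are immediate from the construction.
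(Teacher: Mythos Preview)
Your argument is the standard transfer argument and is correct in outline; this is essentially how such model structures are constructed (going back to \cite[II, \S 4]{QuiH}). Note, however, that the paper does not give its own proof of this proposition at all: it simply cites \cite[\S 6]{BPescF} and moves on. So there is nothing to compare your proof against in the paper itself; what you have written is a reasonable sketch of the kind of argument one finds in the cited reference.
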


\begin{defn}\label{dcwres}
For any algebraic sketch $\Theta$, a \emph{CW resolution} of a
\Tal $\Gamma$ is a cofibrant replacement \w{\vare:\Vd\xra{\simeq}\cd{\Gamma}}
equipped with a CW basis \w{(\oV{n})\sb{n\in\NN}} as in \S \ref{dscwo},
with each \w{\oV{n}} a free \Tal[.]
\end{defn}

\begin{remark}\label{rcwres}
In fact, any CW object \w{\Vd} for which each \w{\oV{n}} is a free \Tal[,]
and each attaching map \w{\odz{V\sb{n}}} \wb{n\geq 0} surjects onto
\w[,]{Z\sb{n-1}\Vd} is a CW resolution. Here we set \w{Z\sb{-1}\Vd:=\Gamma} and
\w[,]{\odz{V\sb{0}}:=\vare} so that
\begin{myeq}\label{eqaugmzero}
\vare\circ \odz{V\sb{1}}~=~0~.
\end{myeq}
\noindent Thus one can easily construct (non-functorial) CW resolutions of any
\Tal $\Gamma$.

Moreover, by dualizing \cite[Proposition 3.12]{BlaS} (see also
\cite[Proposition 12]{BlaCW}) we can choose a CW basis for any free
simplicial resolution in \w[,]{\TRA} when $R$ is a field. However, we shall
not make use of this fact.
\end{remark}

%
%
\sect{Realizing simplicial \Tal resolutions}
\label{crstr}

Let $\Theta$ be an algebraic sketch obtained as in \S \ref{rskhtpy} from a set $\eA$
of homotopy group objects in a model category $\C$. In this section we show how a
CW resolution \w{\Vd} of a realizable \Tal $\Gamma$ can itself be realized
over $\C$.

Any algebraic resolution \w{\Vd\to\Gamma} is clearly realizable by a cosimplicial
object \w{\widetilde{\Wu}\in c(\ho\C)} (for which the cosimplicial identities hold
up to homotopy), but it is not clear a priori that this can be rectified to a strict
cosimplicial object \w{\Wu} in $\C$. This is in fact possible in the two cases
described in Theorem \ref{tresg} of Appendix \ref{apfthm}.
However, for the purposes of this paper, we need not only the existence of this
realization \w[,]{\Wu} but also the particular form it takes, described as follows:

\begin{defn}\label{dscr}
Let $\Theta$ be a sketch in \w{\ho\C} as in \S \ref{rskhtpy}, and  let
\w{\Vd} be a simplicial CW resolution of the \Tal \w[,]{\HiT{\bY}} for
some \w[,]{\bY\in\C} with CW basis \w[.]{(\oV{n})\sb{n=0}\sp{\infty}}
A \emph{sequential realization}
\w{\cW=\lra{\W{n},\,\tWu{n}}\sb{n\in\NN}} of \w{\Vd} for $\bY$
consists of a tower of Reedy fibrant and cofibrant $\bY$-coaugmented
cosimplicial objects:
\begin{myeq}\label{eqtower}
\dotsc~\to~\W{n}~\xra{\prn{n}}~\W{n-1}~\xra{\prn{n-1}}~\W{n-2}~\to~\dotsc~
\to~\W{0}~,
\end{myeq}
\noindent in \w[,]{\C\sp{\Del}} with each \w{\prn{n}} a Reedy fibration. We call
\w{\W{n}} the  \emph{$n$-th stage} of $\cW$, and set
\w{\Wu:=\holim\sb{n}\ \W{n}} to be the limit of the tower
\wref[.]{eqtower} This tower must satisfy the following requirements for each
\w[:]{n\geq 0}

\begin{enumerate}
\renewcommand{\labelenumi}{(\alph{enumi})~}
\item The coaugmentation \w{\bve{n}:\bY\to\W{n}} realizes \w{\Vd\to\HiT{\bY}}
through simplicial dimension $n$ \wh that is, we have a natural isomorphism
\begin{myeq}\label{eqnatisom}
\HiT{\Wn{k}{n}}\lin{\bB}~\xra{\cong}~V\sb{k}\lin{\bB}
\hspace{5mm}\text{for all} \ \bB\in\Theta \ \text{and} \ -1\leq k\leq n~,
\end{myeq}
\noindent using the notation of \S \ref{egrtal}.
\item The coaugmentation \w{\bve{n-1}:\bY\to\W{n-1}} lifts along the
\w{\prn{n}:\W{n}\to\W{n-1}} to
\w[.]{\bve{n}:\bY\to\W{n}}
\item Each \w{\W{n}}  is obtained from \w{\W{n-1}} as follows:
assume given a fibrant realization \w{\oW{n}\in\C} of \w{\oV{n}}
and let \w{\bDs\in\Ch{\C}} be a left Reedy fibrant replacement of
\w[.]{\oW{n}\ouS{n-1}} Assume that we can realize the $n$-th attaching map
\w{\odz{V\sb{n}}:\oV{n}\to V\sb{n-1}} of \w[,]{\Vd} by a cochain map
\w[,]{F:\cMs\W{n-1}\to\bDs} in the category of coaugmented cochain complexes (that is,
defined in dimensions \ \w[).]{-1\leq k\leq n-1} Note that
\w[.]{(\oW{n}\ouS{n-1})\sp{-1}=0}

Let \w{\widetilde{F}:\U\W{n-1}\to\E\bDs} the
corresponding map of restricted cosimplicial objects  as in \S \ref{dccwo}, and set
\w{\tWu{n}:=\Fib(\widetilde{F})} as in \S \ref{dchaincof}, take a pullback:
\mydiagram[\label{eqncoskeletn}]{
\ar @{} [drr]|(0.24){\framebox{\scriptsize{PB}}}
\vWu{n} \ar[d] \ar[rr] && \F\tWu{n} \ar[d]\sp{\F\ell} \\
\W{n-1} \ar[rr]\sp{\theta} && \F\U\W{n-1}~,
}
\noindent as in \wref[,]{eqncoskeleton} with \w{\W{n}} a
Reedy fibrant and cofibrant replacement (see \cite[\S 15.3]{PHirM})
for \w[.]{\hW{n}}

We start the process with \w[.]{\W{0}:=\cu{\oW{0}}}
\item For reasons which will appear later, we sometimes require the
chain map \w{F:\cMs\W{n-1}\to\bDs} in step (c) to be a levelwise
(i.e., left Reedy) cofibration. If this is true for each \w[,]{n\geq 1} we say that
the sequential realization $\cW$ is \emph{cofibrant}.
\end{enumerate}
\end{defn}

\begin{mysubsect}{Understanding the passage from \w{\W{n-1}} to \ww{\W{n}}}
\label{senso}

We now give an explicit description of the construction of \w{\W{n}}
from \w[,]{\W{n-1}} also introducing some auxiliary notation\vsm:

\noindent\textbf{(i)~~What does a left Reedy fibrant replacement for
\w{\oW{n}\ouS{n-1}} look like\vsn?}

In order for the coaugmented cochain complex \w{\bDs} in \w{\Ch{\C}} to be left
Reedy fibrant, the structure map \w{\bD\sp{k-1}\to\Ker(\dif{k}\sb{\bD})} (into
the ``$k$-cocycles'' \wh not the same as \w{\cZ{k}\bDs} of \S \ref{dmco}) must
be a fibration for each $k$.
In addition, since \w{(\oW{n}\ouS{n-1})\sp{k}=0} for \w[,]{k\neq n-1}
\w{\bD\sp{k}} must be contractible.

It is natural for such a \w{\bDs} to be described by a downward induction,
starting with \w{\bD\sp{n}=0} and \w[.]{\bD\sp{n-1}=\oW{n}} Thus
\w[,]{\Ker(\dif{n-1}\sb{\bD})=\oW{n}} too, so \w{\bD\sp{n-2}} must be a contractible
object equipped with a fibration to \w{\oW{n}} \wwh i.e., it is a path object for
\w[,]{\oW{n}} in the sense of \cite[I, \S 2]{QuiH}. We could choose it to
be the standard path object \w{P\oW{n}} of \wref[,]{eqpathloop} but this may not be
a good choice if we also want \w{\Fk{k}:\cM{k}\W{n-1}\to\bD\sp{k}} to be
a cofibration for each $k$, as in \S \ref{dscr}(d).
Thus we let \w{\dif{n-2}\sb{\bD}:\bD\sp{n-2}\to\bD\sp{n-1}} be some fibration
\w{\pp{0}:\overline{P\bW\sp{n}}\to\oW{n}} with \w{\overline{P\bW\sp{n}}}
contractible.

At the stage \w{n-3} we see that \w{\Ker(\dif{n-2}\sb{\bD})} is the fiber
\w{\ooW{1}{n}} of \w[,]{\pp{0}} and again
\w{\dif{n-3}\sb{\bD}:\bD\sp{n-3}\to\bD\sp{n-2}} must be a fibration
\w{\pp{1}:\PoW{1}{n}\to\ooW{1}{n}} (with \w{\PoW{1}{n}} contractible), followed
by the inclusion \w[.]{\ip{1}:\ooW{1}{n}\hra\PoW{0}{n}}

Proceeding by induction, for each \w{-1\leq j\leq n-1} we obtain a
fibration sequence
\mydiagram[\label{eqmodpathloop}]{
\ooW{j+1}{n}~\quad \ar@{^{(}->}[rr]\sp{\ip{j+1}} &&
\PoW{j}{n} \ar@{->>}[rr]\sp{\pp{j}} && \ooW{j}{n}
}
\noindent in $\C$ called the \emph{$j$-th modified path-loop fibration},
with \w{\PoW{j}{n}} contractible.
By convention we set \w[,]{\oW{n}:=\ooW{0}{n}=\PoW{-1}{n}} with the identity map as
\begin{myeq}\label{eqconvent}
\ip{0}~:~\ooW{0}{n}~\xra{=}~\PoW{-1}{n}.
\end{myeq}

Thus \w{\bD\sp{k}=\PoW{n-k-2}{n}} for each \w[,]{-1\leq k\leq n-1} and the
differential \w{\dif{k}\sb{\bD}:\bD\sp{k}\to\bD\sp{k+1}} is the composite
\begin{myeq}\label{eqtau}
\PoW{n-k-2}{n}~\xra{\pp{n-k-2}}~\ooW{n-k-2}{n}~\xra{\ip{n-k-2}}~\PoW{n-k-3}{n}~.
\end{myeq}

For later use we may (and shall) assume (see Step \textbf{VIII} in proof of Theorem
\ref{tresg}) that for each
\w[,]{0\leq j\leq n-1} \wref{eqmodpathloop} fits into a commutative diagram
\mydiagram[\label{eqmodpathloopve}]{
\ooW{j+1}{n}~\quad \ar@{->>}[d]^<<<<{\sigma\sp{j+1}}_<<<<{\simeq}
\ar@{^{(}->}[r]\sp{\ip{j+1}} &
~\PoW{j}{n}~\ar@{->>}[d]^<<<<{P\tau\sp{j}}_<<<<{\simeq} \ar@{->>}[r]\sp{\pp{j}} &
~\ooW{j}{n} \ar@{->>}[d]^<<<<{=}\\
\Omega\ooW{j}{n}~\quad \ar@{^{(}->}[r]\sp{\iota} &~ P\ooW{j}{n}~
\ar@{->>}[r]\sp{p} & ~\ooW{j}{n}~.
}
\noindent in which the bottom sequence is the usual path-loop fibration
for \w[,]{\ooW{j}{n}} and the vertical maps are all trivial fibrations\vsm.

\noindent\textbf{(ii)~~The map \w{F:\cMs\W{n-1}\to\bDs} and its fiber\vsn:}

As in \S \ref{dscr}(c), the $n$-th attaching map for \w{\Vd}
is to be realized by a cochain map \w{F:\cMs\W{n-1}\to\bDs} in the category of
coaugmented cochain complexes given by maps
\w{\Fk{k}:\cM{k}\W{n-1}\to\bD\sp{k}=\PoW{n-k-2}{n}} for each
\w[,]{-1\leq k\leq n-1} so by Definition \ref{dchaincof} the restricted cosimplicial
object  \w{\tWu{n}=\Fib(\widetilde{F})} is given by
\begin{myeq}\label{eqdopb}
\tWn{k}{n}~:=~\Wn{k}{n-1}\times\PoW{n-k-1}{n}
\end{myeq}
\noindent in dimension \w[,]{0\leq k\leq n} while by \wref{eqconvent} we have
\begin{myeq}\label{eqtoptw}
\tWn{n}{n}~:=~\Wn{n}{n-1}\times\oW{n}~.
\end{myeq}
\noindent We denote the two structure maps for the product \wref{eqdopb} by
\begin{myeq}
\psn{k}{n}:\tWn{k}{n}\to\Wn{k}{n-1},~~
\qk{k}{n}:\tWn{k}{n}\to\PoW{n-k-1}{n},
\end{myeq}
 \noindent respectively.
By Definition \ref{dchaincof} we see that the coface maps
\w{\td\sp{i}\sb{k}:\tWn{k}{n}\to\tWn{k+1}{n}} are determined by:
\begin{myeq}\label{eqzerocoface}
\Fk{k}\circ v\sp{k}\circ\psn{k}{n}~=~\qk{k+1}{n}\circ\td\sp{0}\sb{k}~.
\end{myeq}
\noindent where \w{v\sp{k}:\Wn{k}{n-1}\to\cM{k}\W{n-1}} is the structure map
for \wref[.]{eqmoorecc}  For \w{i=1} we have:
\begin{myeq}\label{eqfirstcoface}
\dif{k-1}\sb{\bD}\circ\qk{k}{n}~=~
\ip{n-k-1}\circ\pp{n-k-1}\circ\qk{k}{n}~=~\qk{k+1}{n}\circ\td\sp{1}\sb{k}~,
\end{myeq}
\noindent while for \w{i\geq 2} we have
\begin{myeq}\label{eqjcoface}
\qk{k+1}{n}\circ\td\sp{i}\sb{k}~=~0~.
\end{myeq}
\noindent When \w{k=n-1} we have
\w[,]{\pp{0}\circ\qk{n-1}{n}~=~\qk{n}{n}\circ\td\sp{1}\sb{n-1}} in accordance
with \wref[.]{eqconvent}

We note also that if the given \w{\W{n-1}} is equipped with a coaugmentation
\w[,]{\bve{n-1}:\bY\to\W{n-1}} we also have a coaugmented version
\w{\tve{n}:\bY\to\tWu{n}} for the restricted cosimplicial object \w{\tWu{n}}, which is defined in the same way by setting \w[\vsm.]{\tWn{-1}{n-1}:=\bY}

\noindent\textbf{(iii)~~Making \w{\tWu{n}} into a full cosimplicial object
\w[\vsn:]{\W{n}}}

Given the restricted cosimplicial object \w{\tWu{n}} obtained
in step \textbf{(ii)}, we first define a full cosimplicial object
\w{\vWu{n}} by setting
\begin{myeq}\label{eqaddon}
\uG{k}~:=~\begin{cases}
\PoW{n-k-1}{n}& \ \text{if}\ k\leq n\\
\ast & \ \text{if} \ k>n
\end{cases}
\end{myeq}
\noindent (using \wref[).]{eqconvent} We then let:
\begin{myeq}\label{eqsmatch}
\vWn{r}{n}:=\tWn{r}{n}\times
\prod\sb{0< k<r}\prod\sb{0\leq i\sb{1}<\dotsc<i\sb{k}\leq r-1}\uG{r-k}~
=~\Wn{r}{n-1}~\times~
\prod\sb{0\leq k\leq r}\prod\sb{0\leq i\sb{1}<\dotsc<i\sb{k}\leq r}\uG{r-k}
\end{myeq}
\noindent be the construction denoted by \w{\csk{n}\Gu} in \wref[.]{eqncoskeleton}
See \S \ref{slmo} and \wref{eqdopb} (and compare \wref{eqslatch} in the
dual case).

The codegeneracy map \w{s\sp{t}:\vWn{r+1}{n}\to\vWn{r}{n}} is defined into
the factor \w{\uG{r-k}} of \w{\vWn{r}{n}} indexed by the $k$-tuple
\w{I=(i\sb{1},\dotsc,i\sb{k})} by projecting \w{\vWn{r+1}{n}} onto
the factor \w{\uG{r-k}} indexed by the unique
\wwb{k+1}tuple \w{J=(j\sb{1},\dotsc,j\sb{k+1})} satisfying the cosimplicial
identity \w[.]{s\sp{I}\circ s\sp{t}=s\sp{J}}
The coface maps of \w{\vWu{n}} are determined by those of \w{\tWu{n}}
and the cosimplicial identities, and we have a natural map of
restricted cosimplicial objects \w{g:\U\vWu{n}\to\tWu{n}}
which is a dimensionwise trivial fibration.
\end{mysubsect}

\begin{remark}\label{rcoffib}
Note that \w{\vWu{n}} is obviously $n$-coskeletal. Moreover, it is Reedy fibrant,
since the natural map
\w{\widehat{\zeta}\sp{r}:\vWn{r}{n}\to M\sp{r}\vWu{n}} (see \S \ref{slmo}) is just
the product of \w{\zeta\sp{r}:\Wn{r}{n-1}\to M\sp{r}\W{n-1}} (which is a fibration,
since \w{\W{n-1}} is Reedy fibrant) with the
projection onto the appropriate factors in \wref{eqsmatch} (which is a fibration
since all objects \w{\uG{k}} are fibrant). Moreover, the composites of
\w{\vWn{k}{n}~\xepic{g\sp{k}}~\tWn{k}{n}~\xepic{\psn{k}{n}}~\Wn{k}{n-1}} for each
$k$ fit together to define a map of cosimplicial objects
\w[,]{\ppp\prnk{n}{}:\vWu{n}\epic\W{n-1}} which is a Reedy fibration (for the
same reason).

Finally, we let \w{h\bp{n}:\W{n}\to\vWu{n}} be a (functorial) Reedy cofibrant
replacement (see \cite[\S 15]{PHirM}, and compare \cite[X,\S 4.2]{BKanH}), so
\w{h\bp{n}} is a trivial Reedy fibration, and we set
\w{\prnk{n}{}:\W{n}\to\W{n-1}} to be the composite \w{\ppp\prnk{n}{}\circ h\bp{n}}
\wwh again a Reedy fibration. Thus the full cosimplicial object \w{\W{n}} is
Reedy fibrant and cofibrant, and dimensionwise weakly equivalent to \w[.]{\vWu{n}}
Since the latter is $n$-coskeletal, we see that \w{\W{n}} is $n$-coskeletal
``up to homotopy''.
\end{remark}

\begin{lemma}\label{linducea}
Let \w{(\bCs,\dif{}\sb{\bC})} be a cochain complex and \w{(\bDs,\,\dif{}\sb{\bD})}
a left Reedy fibrant \wwb{n-1}truncated cochain complex as in
\S \ref{senso}\textbf{(i)} in a pointed model category $\C$, and let
\w{\cZ{j}\bCs:=\Coker(\dif{j-1}\sb{\bC})} with structure map
\w{w\sp{j}:\bC\sp{j}\to\cZ{j}\bCs} as in \S \ref{dmco}. Any cochain map
\w{\Fk{j}:\bC\sp{j}\to\bD\sp{j}} defined for \w{k\leq j<n} induces a unique map
\w{\ak{k-1}:\cZ{k-1}\bCs\to\ooW{n-k-1}{n}} with
\begin{myeq}\label{eqikak}
\ip{n-k-1}\circ\ak{k-1}\circ w\sp{k-1}~=~\Fk{k}\circ\delta\sp{k-1}\sb{\bC}~,
\end{myeq}
\noindent in the notation of \wref{eqmodpathloop} and \wref[.]{eqtau}
\end{lemma}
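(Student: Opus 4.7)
The plan is to construct $\ak{k-1}$ by two successive universal-property arguments: first factor $\Fk{k}\circ\dif{k-1}\sb{\bC}:\bC\sp{k-1}\to\bD\sp{k}$ through the kernel of $\dif{k}\sb{\bD}$, and then descend the result through the cofiber map $w\sp{k-1}$ defining $\cZ{k-1}\bCs$ as a cokernel.

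For the first step, by \wref{eqtau} the differential $\dif{k}\sb{\bD}:\bD\sp{k}=\PoW{n-k-2}{n}\to\bD\sp{k+1}$ factors as $\ip{n-k-2}\circ\pp{n-k-2}$. Since $\ip{n-k-2}$ is the inclusion of a fiber it is a categorical monomorphism, so $\Ker(\dif{k}\sb{\bD})=\Ker(\pp{n-k-2})$, which by the fibration sequence \wref{eqmodpathloop} is identified with $\ooW{n-k-1}{n}$ via $\ip{n-k-1}$. The cochain-map hypothesis at level $k$ (read trivially as $\Fk{n}=0$ when $k=n-1$, since $\bDs$ is $(n-1)$-truncated) together with $\dif{k}\sb{\bC}\circ\dif{k-1}\sb{\bC}=0$ shows that $\dif{k}\sb{\bD}\circ\Fk{k}\circ\dif{k-1}\sb{\bC}=0$, so the universal property of the kernel produces a unique $\tilde{\alpha}:\bC\sp{k-1}\to\ooW{n-k-1}{n}$ with $\ip{n-k-1}\circ\tilde{\alpha}=\Fk{k}\circ\dif{k-1}\sb{\bC}$.

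For the second step, I would descend $\tilde{\alpha}$ through $w\sp{k-1}:\bC\sp{k-1}\to\cZ{k-1}\bCs=\Coker(\dif{k-2}\sb{\bC})$. Since $\ip{n-k-1}$ is monic, this reduces to showing that $\ip{n-k-1}\circ\tilde{\alpha}\circ\dif{k-2}\sb{\bC}=\Fk{k}\circ\dif{k-1}\sb{\bC}\circ\dif{k-2}\sb{\bC}$ vanishes, which is immediate from $\bCs$ being a cochain complex; notably, this step does not invoke $\Fk{k-1}$, which is not assumed to exist. The universal property of the cokernel then yields the unique $\ak{k-1}:\cZ{k-1}\bCs\to\ooW{n-k-1}{n}$ satisfying \wref{eqikak}, with uniqueness inherited from both factorizations.

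There is no real obstacle; the argument is a short diagram chase in the pointed category underlying $\C$, using the kernel interpreted as the fiber of $\pp{n-k-2}$ and the cokernel as the pushout appearing in \wref{eqmoorecc}. In particular, neither the full left Reedy fibrancy of $\bDs$ nor any cochain-map compatibility of the $\Fk{j}$'s at levels below $k$ plays any role in the construction.
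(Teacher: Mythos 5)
Your proof is correct and follows essentially the same route as the paper: first factor $\Fk{k}\circ\dif{k-1}\sb{\bC}$ through the fiber $\ooW{n-k-1}{n}$ of $\pp{n-k-2}$ (using the cochain-map identity at level $k$, the factorization $\dif{k}\sb{\bD}=\ip{n-k-2}\circ\pp{n-k-2}$, and monicity of $\ip{n-k-2}$), then use monicity of $\ip{n-k-1}$ to kill the precomposition with $\dif{k-2}\sb{\bC}$ and descend through the cokernel $\cZ{k-1}\bCs$. The paper's diagram chase \wref{eqcochainmap} is exactly this two-step factorization, so there is nothing to add.Your proof is correct and follows essentially the same route as the paper: first factor $\Fk{k}\circ\dif{k-1}\sb{\bC}$ through the fiber $\ooW{n-k-1}{n}$ of $\pp{n-k-2}$ (using the cochain identity at level $k$, the factorization $\dif{k}\sb{\bD}=\ip{n-k-2}\circ\pp{n-k-2}$, and monicity of $\ip{n-k-2}$), then use monicity of $\ip{n-k-1}$ to kill the precomposition with $\dif{k-2}\sb{\bC}$ and descend through the cokernel $\cZ{k-1}\bCs$. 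The paper's diagram chase in \wref{eqcochainmap} is exactly this two-step argument.
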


\begin{proof}
By assumption, we have the following solid commuting diagram:
\mydiagram[\label{eqcochainmap}]{
\bC\sp{k+1}\ar[rrr]\sp{\Fk{k+1}} &&& \PoW{n-k-3}{n} & =~~\bD\sp{k+1} \\
& && \ooW{n-k-2}{n} \ar@{^{(}->}[u]\sp{\ip{n-k-2}} & \\
\bC\sp{k} \ar[uu]\sp{\dif{k}\sb{\bC}} \ar[rrr]\sp{\Fk{k}} &&&
\PoW{n-k-2}{n} \ar@{->>}[u]\sp{\pp{n-k-2}} &
=~~\bD\sp{k} \ar[uu]\sb{\dif{k}\sb{\bD}} \\
& \cZ{k-1}\bCs \ar[ul]\sb{\ud\sp{0}} \ar@{.>}[rr]\sp{\ak{k-1}}
&& \ooW{n-k-1}{n} \ar@{^{(}->}[u]\sp{\ip{n-k-1}} \ar@/_{3.9pc}/[uu]\sb{0} & \\
\bC\sp{k-1} \ar[ur]\sp{w\sp{k-1}} \ar@{.>}[rrru]\sb{a} \ar[uu]\sp{\dif{k-1}\sb{\bC}}
\ar@{.>}[rrr]\sb{\Fk{k-1}} &&& \PoW{n-k-1}{n} \ar@{->>}[u]\sp{\pp{n-k-1}} &
=~~\bD\sp{k-1} \ar[uu]\sb{\dif{k-1}\sb{\bD}}
}
\noindent Then
$$
\ip{n-k-2}\circ\pp{n-k-2}\circ\Fk{k}\circ\dif{k-1}\sb{\bC}~=~
\dif{k}\sb{\bD}\circ\Fk{k}\circ\dif{k-1}\sb{\bC}~=~
\Fk{k+1}\circ\dif{k}\sb{\bC}\circ\dif{k-1}\sb{\bC}~=~0~.
$$
\noindent Since \w{\ip{n-k-2}} is a monomorphism, in fact
\w[.]{\pp{n-k-2}\circ\Fk{k}\circ\dif{k-1}\sb{\bC}=0}
Therefore, since \wref{eqmodpathloop} is a fibration sequence,
\w{\Fk{k}\circ\dif{k-1}\sb{\bC}} factors through \w{a:\bC\sp{k-1}\to\ooW{n-k-1}{n}}
as indicated in \wref[.]{eqcochainmap} Moreover, since
\w{\ip{n-k-1}\circ a\circ\dif{k-2}\sb{\bC}=
\Fk{k}\circ\dif{k-1}\sb{\bC}\circ\dif{k-2}\sb{\bC}=0}
and \w{\ip{n-k-1}} is a monomorphism, too, the map $a$ factors through
\w{\ak{k-1}:\cZ{k-1}\bCs\to\ooW{n-k-1}{n}} as in \wref[,]{eqcochainmap}
satisfying \wref[.]{eqikak}
\end{proof}

We next note the following technical fact about Moore chain objects:

\begin{lemma}\label{lmoore}
Let \w{\Wu\in\C\sp{\Del}} be a Reedy cofibrant cosimplicial object over a
model category $\C$ as in \S \ref{amodel}, and $\bB$ a homotopy group object
in $\C$. Then for any  Moore chain \w{\alpha\in C\sb{n}[\Wu,\bB]} for the
simplicial group \w[:]{[\Wu,\bB]}
\begin{enumerate}
\renewcommand{\labelenumi}{(\alph{enumi})~}
\item $\alpha$ can be realized by a map \w{a:\bW\sp{n}\to\bB} with
\w{a\circ d\sp{i}\sb{n-1}=0} for all \w[,]{1\leq i\leq n} and thus
induces a map \w{\bar{a}:C\sp{n}\Wu\to\bB} with
\w[.]{\bar{a}\circ v\sp{n}=a}
\item If $\alpha$ is a Moore \emph{cycle}, we can choose a nullhomotopy
\w{H:\bW\sp{n-1}\to P\bB\subseteq \bB\sp{[0,1]}} for \w{a\circ \dz{n-1}} such
that \w{H\circ d\sp{j}\sb{n-2}=0} for \w[,]{1\leq j\leq n-1} and thus
induces a map \w{\bar{H}:C\sp{n}\Wu\to P\bB} with
\w[.]{\bar{H}\circ v\sp{n}=H}
\end{enumerate}
\end{lemma}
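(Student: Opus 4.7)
The proof will combine three standard ingredients: Reedy cofibrancy of $\Wu$ — which turns each individual coface $d^i:\bW^{n-1}\to\bW^n$, and more generally every sub-latching inclusion $L\hookrightarrow\bW^n$ obtained as a union of iterated cofaces, into a cofibration in $\C$ — together with fibrancy of the homotopy group object $\bB$ (so that $\C(-,\bB)$ carries cofibrations to Kan fibrations of simplicial sets), and the H-space structure on $\bB$, which equips each mapping space $\C(\bW^k,\bB)$ with a homotopy group structure allowing one to absorb loop-valued discrepancies when choosing null-homotopies.

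For part (a), I would induct on $i=1,\dots,n$, producing at each stage a representative $\alpha_i$ of the given class $\alpha$ with $\alpha_i\circ d^j=\ast$ strictly for $1\le j\le i$. The base case $i=0$ is any representative. For the inductive step, the Moore chain hypothesis gives $\alpha_{i-1}\circ d^i\simeq\ast$; the cosimplicial identity $d^i d^k=d^k d^{i-1}$ (for $k<i$) combined with the inductive vanishing $\alpha_{i-1}\circ d^k=\ast$ shows that $\alpha_{i-1}\circ d^i$ already vanishes \emph{strictly} on a sub-latching $S\subseteq\bW^{n-1}$ constructed from the $d^{i-1}$-preimages of $L_{i-1}:=\bigcup_{k<i}d^k(\bW^{n-1})$. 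One first selects a null-homotopy of $\alpha_{i-1}\circ d^i$ that is constant at $\ast$ on $S$ (any residual $\Omega\bB$-valued discrepancy between an arbitrary such null-homotopy and the constant one on $S$ is absorbed using the group structure), and then invokes the simplicial Homotopy Extension Property for the cofibration $L_{i-1}\cup d^i(\bW^{n-1})\hookrightarrow\bW^n$ to extend it, rel~$L_{i-1}$, to a homotopy $\alpha_{i-1}\simeq\alpha_i$ in $\C(\bW^n,\bB)$ with $\alpha_i\circ d^i=\ast$. Setting $a:=\alpha_n$, the strict vanishing $a\circ d^j=\ast$ for all $j\ge 1$ means $a$ coequalizes the cofaces $d^j$ and so factors uniquely as $a=\bar a\circ v^n$ through the Moore cochain $C^n\Wu$.

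For part (b), assume moreover that $\alpha$ is a Moore cycle, so $a\circ d^0\simeq\ast$. The cosimplicial identity $d^0d^k=d^{k+1}d^0$ (for $k\ge 0$), together with the strict vanishing $a\circ d^{k+1}=\ast$ from part (a), yields $a\circ d^0\circ d^k=\ast$ strictly for every $k=0,\dots,n-2$; in particular, $a\circ d^0$ vanishes strictly on the sub-latching $L'\subseteq\bW^{n-1}$ generated by $d^1,\dots,d^{n-1}$, which is a Reedy cofibration. An entirely analogous inductive HEP argument — now iteratively arranging $H\circ d^j=\ast$ (the constant path at $\ast$ in $P\bB$) for $j=1,\dots,n-1$, at each step modifying the current null-homotopy rel the already-achieved vanishing — produces the desired null-homotopy $H:\bW^{n-1}\to P\bB$ of $a\circ d^0$; since $H$ coequalizes the $d^j$ with $j\ge 1$, it descends through $v^{n-1}$ to the required $\bar H$.

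The main obstacle in both parts is arranging, at each inductive step, that the null-homotopy chosen for HEP is itself constant on the overlap sub-latching: the cosimplicial identities force pointwise strict vanishing on this overlap, but eliminating the residual loop-valued class requires simultaneously using the homotopy group structure on $\bB$ (to reinterpret the discrepancy as a map into $\Omega\bB$) and Reedy cofibrancy of $\Wu$ (to extend its inverse along $S\hookrightarrow\bW^{n-1}$). Making this coherent against the filtration of $L_i$ by iterated cofaces is the key technical point.
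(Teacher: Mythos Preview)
Your inductive rectification argument is correct in outline and would eventually yield the result, but it takes a substantially more laborious route than the paper. The paper's proof is essentially two lines: since $\Wu$ is Reedy cofibrant, the simplicial space $\Ud=\mapa(\Wu,\bB)$ is Reedy fibrant, and then one invokes the known isomorphism $\pi\sb{i}C\sb{n}\Ud\cong C\sb{n}\pi\sb{i}\Ud$ (Bousfield--Kan, \cite[X, Proposition 6.3]{BKanH}). Part~(a) is then immediate, since $\alpha\in C\sb{n}\pi\sb{0}\Ud$ lifts to an element of $C\sb{n}\Ud$, which is precisely a map $a:\bW\sp{n}\to\bB$ with $a\circ d\sp{i}=\ast$ strictly for $i\geq 1$. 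For part~(b), the cycle condition says $[a\circ d\sp{0}]$ vanishes in $\pi\sb{0}C\sb{n-1}\Ud$, so a nullhomotopy exists already in $PC\sb{n-1}\mapa(\Wu,\bB)=C\sb{n-1}\mapa(\Wu,P\bB)$, which is exactly the required $H$.

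Your approach is in effect a direct hands-on proof of (the relevant case of) the Bousfield--Kan proposition. It has the advantage of being self-contained and making explicit where the group structure on $\bB$ enters (to absorb the $\Omega\bB$-valued discrepancies on overlaps), but the bookkeeping with sub-latching objects and iterated HEP is delicate --- the overlap analysis you sketch is correct, but carrying it out cleanly requires tracking exactly which cosimplicial identities force strict vanishing on which intersections. The paper's approach trades this for a single citation and the conceptual observation that Moore chains commute with $\pi\sb{\ast}$ for Reedy fibrant simplicial spaces.
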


\begin{proof}
Since \w{\Wu} is Reedy cofibrant, the simplicial space
\w{\Ud=\mapa(\Wu,\bB)\in s\Sa} is Reedy fibrant, so we have an isomorphism
\begin{myeq}\label{eqcommmoor}
\iota\sb{\star}~:~\pi\sb{i}C\sb{n}\Ud~\to~C\sb{n}\pi\sb{i}\Ud\hsp
\text{for all}\ i\geq 0
\end{myeq}
\noindent (see \cite[X, Proposition 6.3]{BKanH}). Thus we can represent
\w{\alpha\in C\sb{n}\pi\sb{0}\Ud} by a map \w[,]{a\in C\sb{n}\Ud} which implies
(i)\vsm.

If $\alpha$ is a cycle, then \w{\partial\sb{n}(\alpha)=[a\circ \dz{n-1}]}
vanishes in \w[,]{\pi\sb{0}C\sb{n-1}\Ud} so we have a nullhomotopy $H$ for
\w{a\circ \dz{n-1}} in
$$
PC\sb{n-1}\mapa(\Wu,\bB)=C\sb{n-1}\mapa(\Wu,P\bB)~
\subseteq~\mapa(\bW\sp{n-1},P\bB)~,
$$
\noindent which implies (ii).
\end{proof}

From the description in \S \ref{senso} we can actually deduce:

\begin{prop}\label{pnstep}
Any \w{\W{n}} obtained from \w{\W{n-1}} as in Definition \ref{dscr}(c)
will satisfy \S \ref{dscr}((a)-(b), and the limit \w{\Wu} of \wref{eqtower}
is a Reedy fibrant cosimplicial object over $\C$ which realizes the given
algebraic resolution \w[.]{\Vd\to\HiT{\bY}}
\end{prop}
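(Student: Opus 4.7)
The plan is to proceed by induction on $n$, with base case $\W{0}=\cu{\oW{0}}$: here $V_0=\oV{0}$ is realized by $\oW{0}$, and (by \wref{eqaugmzero}) the given coaugmentation $\bY\to\oW{0}$ realizes $\vare:V_0\to\HiT{\bY}$, so conditions \S \ref{dscr}(a)-(b) hold trivially at stage $0$.

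For the inductive step, I would first observe that by Remark \ref{rcoffib} the object $\vWu{n}$ assembled in \S \ref{senso}(iii) is Reedy fibrant and $n$-coskeletal, with Reedy fibration $\ppp\prnk{n}{}:\vWu{n}\to\W{n-1}$; since $\W{n}$ is a Reedy cofibrant replacement of $\vWu{n}$ via the trivial Reedy fibration $h\bp{n}$, it follows that $\W{n}$ is Reedy fibrant and cofibrant and that $\prn{n}:\W{n}\to\W{n-1}$ is a Reedy fibration.

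To verify \S \ref{dscr}(a), namely $\HiT{\Wn{k}{n}}\lin{\bB}\cong V_k\lin{\bB}$ for $-1\leq k\leq n$, I would exploit the dimensionwise weak equivalence $h\bp{n}:\W{n}\xra{\simeq}\vWu{n}$ together with the explicit formula \wref{eqsmatch}. An extra factor $\uG{j}=\PoW{n-j-1}{n}$ appearing there is contractible whenever $0\leq j<n$, while it equals $\oW{n}$ precisely when $j=n$. Hence for $k<n$ every extra factor in $\vWn{k}{n}$ is contractible, so $\vWn{k}{n}\simeq\Wn{k}{n-1}$ and the inductive hypothesis delivers the required isomorphism. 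For $k=n$, the only non-contractible extra factor is the single copy of $\uG{n}=\oW{n}$ contributed by $\tWn{n}{n}=\Wn{n}{n-1}\times\oW{n}$ (see \wref{eqtoptw}), yielding $\vWn{n}{n}\simeq\Wn{n}{n-1}\times\oW{n}$. Because every $\bB\in\Theta$ is a finite product of homotopy group objects in $\eA$, the contravariant functor $\HiT{-}\lin{\bB}$ converts this product of spaces into the \Tal coproduct $\HiT{\Wn{n}{n-1}}\amalg\HiT{\oW{n}}$; the $(n-1)$-coskeletality up to homotopy of $\W{n-1}$, combined with the duality under $\HiT{-}$ exchanging matching objects on the space side with latching objects on the algebraic side, identifies $\HiT{\Wn{n}{n-1}}$ with the latching \Tal $L_n\Vd$, so we recover $V_n=\oV{n}\amalg L_n\Vd$. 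Condition \S \ref{dscr}(b) is then handled by assembling the coaugmentation $\tve{n}:\cu{\bY}\to\tWu{n}$ of \S \ref{senso}(ii) — built from the datum $\Fk{-1}:\bY\to\PoW{n-1}{n}$ implicit in the coaugmented cochain map $F$ — with $\bve{n-1}$ across the pullback square \wref{eqncoskeletn}, producing a map $\cu{\bY}\to\vWu{n}$ that lifts along the trivial Reedy fibration $h\bp{n}$ because $\cu{\bY}$ is Reedy cofibrant.

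Finally, the limit $\Wu:=\lim_n\W{n}$ of \wref{eqtower} is Reedy fibrant as an inverse limit along Reedy fibrations, and at each fixed cosimplicial level $k$ the map $\Wn{k}{n+1}\to\Wn{k}{n}$ is a weak equivalence once $n\geq k$, since its fiber is a product of contractible objects $\PoW{j}{n+1}$ (with $j\geq 0$). Hence the tower $(\Wn{k}{n})_n$ eventually consists of weak equivalences between fibrant objects, so $\bW\sp{k}\simeq\Wn{k}{k}$ realizes $V_k$ compatibly with the cosimplicial structure, proving that $\Wu$ realizes the entire algebraic resolution $\Vd\to\HiT{\bY}$. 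The main technical obstacle is the level-$n$ cohomology computation: it hinges both on transporting the product $\Wn{n}{n-1}\times\oW{n}$ to a coproduct of \Tal[s] under $\HiT{-}\lin{\bB}$ (which requires the hypothesis that $\bB$ is built from homotopy group objects) and on correctly identifying $\HiT{\Wn{n}{n-1}}$ with $L_n\Vd$ via the coskeletal structure of $\W{n-1}$.
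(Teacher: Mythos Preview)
Your proof is correct and takes essentially the same approach as the paper: both arguments verify condition \S \ref{dscr}(a) by appealing to the explicit formulas \wref{eqaddon} and \wref{eqsmatch} (you unpack these into the contractibility of the path-object factors for $k<n$ and the product/coproduct identification at $k=n$, whereas the paper simply asserts ``From \wref{eqaddon} and \wref{eqsmatch}, we then see that \w{\W{n}} realizes \w{\Vd} through simplicial dimension $n$''), and both handle the limit by observing that the tower maps $\prn{n}$ restrict to trivial fibrations $\prnk{n}{k}$ in each fixed cosimplicial dimension $k<n$. The one point the paper adds that you omit is an appeal to Lemma \ref{lmoore}(a) to see that the top component $\Fk{n-1}$ of $F$ may be chosen to represent the algebraic attaching map $\odz{n}$; since Definition \ref{dscr}(c) already \emph{assumes} that $F$ realizes this attaching map, your omission is not a gap --- the paper is simply indicating why the hypothesis is satisfiable.
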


\begin{proof}
In the setting of \S \ref{dscr}, with \w{\bCs=\cMs\W{n-1}} for Reedy cofibrant
\w[,]{\W{n-1}} we can use Lemma \ref{lmoore}(a) to represent the attaching map
\w{\odz{n}:\oV{n}\to\cZl{n-1}\Vd} for the CW resolution \w{\Vd} by a map
\w{\Fk{n-1}:\cM{n-1}\W{n-1}\to\oW{n}} (see Step \textbf{IV} in the proof of
Theorem \ref{tresg} below).
From \wref{eqaddon} and \wref[,]{eqsmatch} we then see that \w{\W{n}} realizes
\w{\Vd} through simplicial dimension $n$.

Moreover, \w{\Wu} is as stated because the maps \w{\prn{n}} restrict to trivial
fibrations \w{\prnk{n}{k}:\Wn{k}{n}\to\Wn{k}{n-1}} for each \w[,]{0\leq k<n}
so \w{\bW\sp{k}=\holim\sb{n}\,\Wn{k}{n}} in $\C$.
\end{proof}

\begin{mysubsection}{Higher cohomology operations}
\label{shcos}
When \w{\Fk{k-1}} also exists making \wref{eqcochainmap}
commute, we have
\begin{myeq}\label{eqak}
\ak{k-1}\circ w\sp{k-1}~=~\pp{n-k-1}\circ\Fk{k-1}~,
\end{myeq}
\noindent so \w{\Fk{k-1}} is a nullhomotopy for
\w{\ak{k-1}\circ w\sp{k-1}} in the sense of \cite[I, \S 2]{QuiH}.
Note that Lemma \ref{linducea} also makes sense for \w[,]{k=n-1} where
\w[.]{\Fk{n}=0}

If we choose \w{\Fk{n-1}} as in the proof of Proposition \ref{pnstep},
the map \w[,]{\Fk{n-1}\circ\dif{n-2}} which induces
\w[,]{\ak{n-2}\circ w\sp{n-2}} is nullhomotopic, with nullhomotopy
\w[.]{\Fk{n-2}} We can then think of
\w{\ak{n-3}\circ w\sp{n-3}:\cM{n-2}\to\ooW{1}{n}}
as the value of the secondary cohomology operation corresponding to the diagram
\mydiagram[\label{eqtodacoh}]{
\bC\sp{n-3} \ar@/^{2.0pc}/[rr]\sp{0} \ar[r]\sp{\dif{n-3}} &
\bC\sp{n-2}  \ar@/_{1.2pc}/[rr]\sb{0} \ar[r]\sp{\dif{n-2}} &
\bC\sp{n-1}  \ar[r]\sp{\Fk{n-1}} & \oW{n}
}
\noindent as in \wref[.]{eqtoda} Only \w{\oW{n}} is an $R$-GEM, but this suffices
to let us think of each value \w{\ak{n-3}\circ w\sp{n-3}} of this Toda bracket
in \w{[\bC\sp{n-3},\,\Omega\oW{n}]}
as a collection of cohomology classes for \w[.]{\bC\sp{n-3}}
This nevertheless qualifies as a higher cohomology operation as described in
the Introduction, if we use a truncation of \w{\Del\sb{+}} as our indexing
category $I$.

By what we say above, if \w{\ak{n-3}\circ w\sp{n-3}\sim\ast} \wwh that is, the
secondary operation corresponding to \wref{eqtodacoh} vanishes \wh then the
choice of a nullhomotopy \w{\Fk{n-3}} yields a value
\w{\ak{n-4}\circ w\sp{n-4}:\bC\sp{n-4}\to\ooW{2}{n}} for the corresponding third
order operation, and so on. This observation is the key to what we are doing in
this paper.
\end{mysubsection}

\begin{defn}\label{dallowsk}
Let $\Theta$ be an  algebraic sketch as in \S \ref{rskhtpy},
associated to \w[,]{\eA\subseteq\Obj\C} so that by definition
any \w{\bB\in\Theta} is of the form
\w{\bB:=\prod\sb{\bA\in\eA}\ \prod\sb{i\in I\sb{\bA}}\ \bA}
with each \w{I\sb{\bA}} a finite indexing set (see \wref[).]{eqfintype}

We then say  that $\Theta$ is  \emph{allowable} if the natural map
\begin{myeq}\label{eqprodcoprod}
\coprod\sb{\bA\in\eA}\ \coprod\sb{i\in I\sb{\bA}}\ \HiT{\bA}~\to~\HiT{\bB}~
\end{myeq}
\noindent is an isomorphism for any such \w[.]{\bB\in\Theta}
\end{defn}

\begin{remark}\label{rallow}
Note that if we write \w{I:=\coprod\sb{A\in\eA}\ I\sb{\bA}} and
denote the copy of $\bA$ indexed by \w{i\in I\sb{\bA}} by \w[,]{\bB\sb{i}}
we have \w[.]{\bB=\prod\sb{i\in I}\,\bB\sb{i}}
For any \Tal $\Gamma$ we then have by
Lemma \ref{lfreeta} and \wref[:]{eqprodcoprod}
\begin{myeq}\label{eqextyoneda}
\begin{split}
\prod\sb{i\in I}\,\Gamma\lin{\bB\sb{i}}~=&~
\prod\sb{i\in I}\,\Hom\sb{\TAlg}(\HiT{\bB\sb{i}},\,\Gamma)\\
~=&~\Hom\sb{\TAlg}(\coprod\sb{i\in I}\,\HiT{\bB\sb{i}},~\Gamma)~=~
\Hom\sb{\TAlg}(\HiT{\bB},~\Gamma)~.
\end{split}
\end{myeq}
\end{remark}

\begin{lemma}\label{lallow}
If \w{R=\Fp} or a field of characteristic $0$ and $\lambda$ is any limit cardinal,
the FP-sketch \w{\TR\sp{\lambda}} of \S \ref{egrtal} is allowable.
\end{lemma}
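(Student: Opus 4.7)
The plan is to identify both sides of \wref{eqprodcoprod} with a common graded tensor product of \TRal[s]. The argument rests on three classical ingredients: the computation of \w{H\sp{\ast}(\KP{V}{n};R)} as a free \TRal[,] the identification of coproducts in \w{\TRA} with graded tensor products, and the K\"{u}nneth formula for products of Eilenberg--Mac\,Lane spaces.

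First I would reorganize $\bB$. Grouping the finitely many factors \w{\KP{V}{n}\in\eA} sharing a fixed $n$ and using that \w{\RM\sp{\lambda}} is closed under finite direct sums (since $\lambda$ is a limit cardinal), we may write \w[,]{\bB\simeq\prod\sb{n\geq 1}\KP{V\sb{n}}{n}} with each \w[,]{V\sb{n}\in\RM\sp{\lambda}} putting $\bB$ in the canonical form of \S \ref{egrtal}. Next I would identify \w{\HiT{\bA}} for each generator \w[:]{\bA=\KP{V}{n}\in\eA} over $\QQ$, a \TRal is just a graded-commutative $\QQ$-algebra and Cartan's theorem shows that \w{\Hu{\ast}{\KP{V}{n}}{\QQ}} is the free such algebra on \w{V\sp{\ast}} in degree $n$; over \w[,]{\Fp} a \TRal is an unstable algebra over the Steenrod algebra \w{\mathcal{A}\sb{p}} and Serre--Cartan identifies \w{\Hu{\ast}{\KP{V}{n}}{\Fp}} with the free unstable \ww{\mathcal{A}\sb{p}}-algebra on \w{V\sp{\ast}} in degree $n$ (cf.\ \cite[\S 1.4]{SchwU}). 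Combined with the standard fact that coproducts in \w{\TRA} are graded tensor products (with Steenrod action determined by the Cartan formula in the \w{\Fp}-case), this identifies the left-hand side of \wref{eqprodcoprod} with \w[.]{\bigotimes\sb{n\geq 1}\Hu{\ast}{\KP{V\sb{n}}{n}}{R}}

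For the right-hand side, a degreewise argument handles the infinite product: for each $d$, the projection \w{\bB\to\bB\sp{(d)}:=\prod\sb{n\leq d}\KP{V\sb{n}}{n}} induces an isomorphism on \w[,]{H\sp{d}(-;R)} since the remaining factors \w{\KP{V\sb{n}}{n}} with \w{n>d} have reduced cohomology concentrated in degrees \w[.]{>d} The ordinary field-coefficient K\"{u}nneth formula applied to the finite product \w{\bB\sp{(d)}} then identifies \w{\HiR{\bB}} with the same tensor product \w[,]{\bigotimes\sb{n\geq 1}\Hu{\ast}{\KP{V\sb{n}}{n}}{R}} as a \TRal[,] with the structure maps induced by the projections to the individual factors. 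Since the natural map \wref{eqprodcoprod} is precisely the comparison map assembled from those same projections, it is the desired isomorphism.

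The only delicate point is ensuring the infinite tensor product carries the correct \TRal structure (not merely the ring structure): for \w{R=\Fp} this amounts to the compatibility of the Cartan formula with the K\"{u}nneth isomorphism, which is a standard multiplicativity check; for \w{R=\QQ} nothing beyond graded commutativity needs to be verified. All other steps are routine bookkeeping once the three classical ingredients above are in place.
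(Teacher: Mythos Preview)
Your argument is correct and takes the same route as the paper, which simply records the identity \w{\HiR{\bB}=\coprod\sb{n}\coprod\sb{I\sb{n}}\HiR{\KP{V\sb{i}}{n}}} by citing \cite[Lemma 4.17]{BSenH}. You have unpacked that citation into its standard ingredients: Cartan--Serre for \w[,]{\Hus{\KP{V}{n}}{R}} the identification of coproducts in \w{\TRA} with graded tensor products, and a degreewise K\"{u}nneth argument to handle the infinite product over $n$.
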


\begin{proof}
Every \w{\bB\in\TR\sp{\lambda}} has the form
\w{\bB=\prod\sb{n=1}\sp{\infty}\,\prod\sb{I\sb{n}}\,\KP{V\sb{i}}{n}}
for \w{V\sb{i}\in\RM\sp{\lambda}} and finite indexing sets \w[.]{I\sb{n}}
Then
\w{\HiR{\bB}=\coprod\sb{n=1}\sp{\infty}\,\coprod\sb{I\sb{n}}\,\HiR{\KP{V\sb{i}}{n}}}
(see \cite[Lemma 4.17]{BSenH}).
\end{proof}

We are now in a position to state our first important technical result:

\begin{thm}\label{tres}
Let $\Theta$ be an allowable algebraic sketch in $\C$ and let \w{\Vd} be any CW
resolution of the realizable
\Tal \w[.]{\HiT{\bY}} Then there is a cofibrant sequential realization
\w{\cW=\lra{\W{n},\,\tWu{n}}\sb{n\in\NN}} of \w{\Vd} for $\bY$, with each
\w{\Wn{k}{n}} in $\Theta$.
\end{thm}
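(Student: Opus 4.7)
The plan is to construct the sequential realization $\cW$ by induction on $n$, following the recipe laid out in Definition \ref{dscr} and the detailed description in \S \ref{senso}. At each stage we will (i) produce $\oW{n}$, (ii) build a left Reedy fibrant replacement $\bDs$ of $\oW{n}\ouS{n-1}$, (iii) realize the $n$-th attaching map of $\Vd$ as a cochain map $F\colon\cMs\W{n-1}\to\bDs$ which is a levelwise cofibration, and (iv) form the pullback \wref{eqncoskeletn} and replace it by $\W{n}$. Throughout we will use the allowability of $\Theta$ (Definition \ref{dallowsk}) to ensure that the new matching-type products appearing in \wref{eqsmatch} still lie in $\Theta$, and that the natural map \wref{eqnatisom} is an isomorphism.

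For the base case $n=0$, the free \Tal $\oV{0}=V\sb{0}$ is realizable (by our standing assumption) as $\HiT{\oW{0}}$ for some $\oW{0}\in\Theta$, and the augmentation $\vare\colon V\sb{0}\to\HiT{\bY}$ corresponds under Lemma \ref{lfreeta} to a map $\bve{0}\colon\bY\to\oW{0}$; we set $\W{0}:=\cu{\oW{0}}$, with coaugmentation $\bve{0}$. For the inductive step, assume $\W{n-1}$ has been constructed satisfying Definition \ref{dscr}(a)--(d). Pick a realization $\oW{n}\in\Theta$ of the free \Tal $\oV{n}$ and build $\bDs$ inductively from $\bD\sp{n-1}=\oW{n}$ by choosing, at each stage $j$, a factorization of $\PoW{j-1}{n}\to\ooW{j-1}{n}$ into a weak equivalence followed by a fibration, with $\PoW{j}{n}$ contractible; this gives the sequence \wref{eqmodpathloop} and, by applying Brown factorization to the standard path-loop fibration, the comparison square \wref{eqmodpathloopve}.

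The central step \textbf{(iii)} is where the main work lies. The attaching map $\odz{V\sb{n}}\colon\oV{n}\to V\sb{n-1}$ lands in $\cZl{n-1}\Vd$ by \wref{eqattach}. Using the inductive identification \wref{eqnatisom} of $V\sb{n-1}$ with $\HiT{\Wn{n-1}{n-1}}$, and Lemma \ref{lfreeta} applied to the free \Tal $\oV{n}\cong\HiT{\oW{n}}$, the map $\odz{V\sb{n}}$ corresponds to an element of $C\sb{n-1}[\Wu,\oW{n}]$ which is a Moore cycle. Lemma \ref{lmoore}(a) then produces a map $\Fk{n-1}\colon\cM{n-1}\W{n-1}\to\oW{n}=\bD\sp{n-1}$ realizing it, and Lemma \ref{lmoore}(b) gives a nullhomotopy $\Fk{n-2}\colon\cM{n-2}\W{n-1}\to P\bD\sp{n-1}$, which we may lift through the trivial fibration $P\tau\sp{n-2}$ of \wref{eqmodpathloopve} to a map into $\PoW{n-2}{n}=\bD\sp{n-2}$. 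Iterating this argument (using that each $\Fk{k}\circ\dif{k-1}\sb{\bC}$ induces via Lemma \ref{linducea} an obstruction class $\ak{k-1}\circ w\sp{k-1}$ into $\ooW{n-k-1}{n}$ which must be nullhomotopic because $\Vd$ is acyclic in these dimensions), we extend $F$ all the way down to dimension $-1$, ending with $\Fk{-1}\colon\bY\to\PoW{n-1}{n}$, which provides the lift of the coaugmentation required by Definition \ref{dscr}(b). At each stage we factor the chosen map into a cofibration followed by a trivial fibration and replace the corresponding $\bD\sp{k}$ by its cofibrant model inside the modified path-loop tower; this secures the cofibrancy requirement \ref{dscr}(d). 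The hard part is precisely this inductive descent: showing that the obstructions vanish compatibly and that the resulting maps can simultaneously be made into levelwise cofibrations while still fitting in \wref{eqmodpathloopve}.

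It remains to verify \ref{dscr}(a) for the newly constructed $\W{n}$. By Remark \ref{rcoffib}, the map $\prn{n}\colon\W{n}\to\W{n-1}$ is dimensionwise a weak equivalence below level $n$, so the isomorphism \wref{eqnatisom} is inherited in dimensions $k\leq n-1$. In cosimplicial dimension $n$, \wref{eqtoptw} and \wref{eqsmatch} show that $\Wn{n}{n}$ is weakly equivalent to a finite product of $\oW{n}$ with copies of the $\PoW{j}{n}$ and with $\Wn{n}{n-1}$; since the path objects $\PoW{j}{n}$ are contractible and $\Theta$ is allowable, passing to $\HiT{-}$ sends this product to the coproduct $\oV{n}\amalg L\sb{n}\Vd=V\sb{n}$, as required, with the structure maps matching those of $\Vd$ by construction of the coface maps via \wref{eqzerocoface}--\wref{eqjcoface}. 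The fact that each $\Wn{k}{n}$ lies in $\Theta$ follows because the construction only uses finite products of objects of the form $\Wn{k}{n-1}$ (in $\Theta$ by induction) and $\ooW{j}{n}$ (products of $R$-GEMs of the form $\Omega\sp{j}\oW{n}$, in $\Theta$ since $\Theta$ is closed under loops). Finally, Proposition \ref{pnstep} packages the passage to the limit, completing the induction.
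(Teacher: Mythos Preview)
Your outline correctly identifies the double-inductive structure and the key ingredients (Lemmas \ref{lmoore} and \ref{linducea}, acyclicity of $\Vd$), and it matches the paper's overall approach. However, there is a genuine gap at the heart of the descending induction. You write that the obstruction class $\ak{k-1}\circ w\sp{k-1}$ ``must be nullhomotopic because $\Vd$ is acyclic in these dimensions.'' This is not correct as stated. The class $[\hak{k-1}]$ induced by a chosen nullhomotopy $\hFk{k}$ is indeed a Moore $(k-1)$-cycle in the simplicial group $[\W{n-1},\,\ooW{n-k-1}{n}]$, and acyclicity of $\Vd$ (for $k-1\geq 1$) implies it is a \emph{boundary}: there is a Moore $k$-chain $\gamma\sb{k}$ with $\partial\gamma\sb{k}=[\hak{k-1}]$. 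But being a boundary in the simplicial group is not the same as the map $\hak{k-1}\circ w\sp{k-1}$ being nullhomotopic in $\C$. The step you are missing (which is the heart of the paper's argument; see \S \ref{shcos} and Step~\textbf{V} of the proof of Theorem~\ref{tresg}) is: represent $\gamma\sb{k}$ by $g\sp{k}\colon C\sp{k}\W{n-1}\to\ooW{n-k-1}{n}$ via Lemma~\ref{lmoore}, and \emph{replace} $\hFk{k}$ by $\Fk{k}:=\hFk{k}\star(\iota\circ g\sp{k})\sp{-1}$ using concatenation in the path space. The newly induced $\ak{k-1}$ then satisfies $\ak{k-1}\circ w\sp{k-1}\sim(\hak{k-1}\circ w\sp{k-1})\star(g\sp{k}\circ\delta\sp{k-1})\sp{-1}\sim\ast$, and only now can the next nullhomotopy $\hFk{k-1}$ be chosen. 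Without this correction there is no reason for the descent to continue past $k=n-2$.

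A secondary point: your plan to secure cofibrancy by factoring ``at each stage'' and replacing the corresponding $\bD\sp{k}$ is not what the paper does, and it is unclear it can work, since altering individual $\bD\sp{k}$ mid-induction may destroy the fibration-sequence structure \wref{eqmodpathloop} that Lemma~\ref{linducea} relies on. The paper instead first builds the entire (non-cofibrant) cochain map $F\colon\cMs\W{n-1}\to\bDs$ using the standard path-loop fibrations, and only afterward (Step~\textbf{VIII}) factors $F$ as a whole in the left Reedy model structure on coaugmented truncated cochain complexes as a cofibration $G$ followed by a trivial fibration $T$; it is this $T$ that produces the comparison squares \wref{eqmodpathloopve}.
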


We defer the proof to Appendix \ref{apfthm}, where we actually prove a
more general result (which is needed elsewhere).

\begin{remark}\label{rcardinal}
If we want to use the allowability of \w{\TR\sp{\lambda}} in Lemma \ref{lallow} for
Theorem \ref{tres}, the choice of the cardinal $\lambda$ may depend on the
size of graded $R$-vector space \w{\HiR{\bY}} (see \cite[\S 3]{BSenH}).
However, in the most commonly
encountered case, \w{\HiR{\bY}} will be of finite type, and we may choose the
CW resolution \w{\Vd} to be finite type in each simplicial dimension, too.
In this case we can make do with the original \w{\TR=\TR\sp{\omega}} of
\S \ref{egrtal}.
\end{remark}

%
%
\sect{Comparing cosimplicial resolutions}
\label{ccsr}

Cosimplicial resolutions of the type constructed in Section \ref{crstr} play a
central role in our theory of higher cohomology operations, but they depend on
many particular choices.  In this section we shall show how any two such
cosimplicial objects are related by a zigzag of maps of a particularly simple
form.

 Although many of the results hold more generally, from now on we
restrict attention to the algebraic sketch \w{\TR} for \w{R=\Fp} or a field of
characteristic $0$ (see \S \ref{egrtal}), with \w[.]{\C=\Sa} This allows us
to assume for convenience that all the objects in each stage of our
sequential realizations are simplicial $R$-modules (though the
maps between them need not be strict simplicial homomorphisms).

We shall also assume from here on that all spaces are connected.
The modifications needed for the non-connected case should be clear\vsm.

We first note the following general facts about model categories:

\begin{lemma}\label{lcylin}
Let $X$ and $Y$ be two weakly equivalent fibrant and cofibrant objects in a
simplicial model category $\C$.
\begin{enumerate}
\renewcommand{\labelenumi}{(\alph{enumi})~}
\item There is a diagram of weak equivalences:
\mydiagram[\label{eqtwotriang}]{
&& Z \ar@/_1pc/[dll]_{s} \ar@/^1pc/[drr]^{t}  && \\
X \ar[rru]\sb{h} \ar[rrrr]^{f} &&&& Y \ar[llu]\sp{j}
\ar@/^1pc/[llll]^{g}
}
\noindent where \w[,]{h:=f'\circ i\sb{0}}  \w[,]{s\circ j=g} \w[,]{t\circ h=f}
\w[,]{s\circ h=\Id\sb{X}} \w[,]{t\circ j=\Id\sb{Y}} and the notation \w{f', i\sb{0}} is as in the proof.
\item There are maps \w[,]{X\amalg Y~\xra{F}~Z~\xra{G}~X\times Y} with $F$ a
cofibration which is a trivial cofibration on each summand,
$G$ a fibration which is a trivial fibration onto each factor, and the induced maps
\w{X\to X} and \w{Y\to Y} are identities.
\end {enumerate}
\end{lemma}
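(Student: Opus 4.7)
The plan is to construct a single object $Z$ witnessing both parts simultaneously by stacking two model-categorical factorizations with a lifting step in between. First I fix a weak equivalence $f:X\to Y$, which exists by hypothesis.

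I then factor the map $(f,\Id_Y):X\amalg Y\to Y$ as a cofibration $f':X\amalg Y\hra Z_0$ followed by a trivial fibration $t_0:Z_0\epic Y$. Let $i_0:X\hra X\amalg Y$ denote the coproduct inclusion, which is a cofibration because $Y$ is cofibrant, and set $h_0:=f'\circ i_0$. Then $h_0$ is a cofibration, and $t_0\circ h_0=f$ is a weak equivalence, so by two-out-of-three $h_0$ is a trivial cofibration; the analogous map $j_0:=f'\circ i_1:Y\hra Z_0$ is likewise a trivial cofibration. Using that $X$ is fibrant, I lift $\Id_X$ along $h_0$ to obtain a retraction $s_0:Z_0\to X$ with $s_0\circ h_0=\Id_X$.

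Next I factor the induced map $(s_0,t_0):Z_0\to X\times Y$ as a trivial cofibration $\alpha:Z_0\hra Z$ followed by a fibration $G:Z\epic X\times Y$, and set $F:=\alpha\circ f'$, $h:=\alpha\circ h_0$, $j:=\alpha\circ j_0$, $s:=\pi_X\circ G$, $t:=\pi_Y\circ G$, and $g:=s\circ j$. The map $F$ is a cofibration whose restriction to each summand is the composite of two trivial cofibrations, giving the left half of (b). Because $Y$ is fibrant the projection $\pi_X:X\times Y\to X$ is a fibration (as the pullback of $Y\to\ast$), so $s$ is a composition of fibrations; similarly $t$ is a fibration. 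The identities $s\circ h=s_0\circ h_0=\Id_X$, $t\circ j=t_0\circ j_0=\Id_Y$, and $t\circ h=t_0\circ h_0=f$ all follow directly from the construction, so two-out-of-three forces $s$ and $t$ to be trivial fibrations, yielding the right half of (b) and all the weak equivalences in (a).

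The main subtlety to anticipate is that neither a bare mapping-cylinder construction, which produces $h,j,t$ but no retraction $s$ compatible with a homotopy inverse $g$, nor a bare path-object construction, which produces $s,t$ but not cofibrations $h,j$, suffices on its own. Stacking the two factorizations with the intermediate lifting step is exactly what is needed to arrange simultaneously that $F$ be a cofibration trivial on each summand and that $G$ be a fibration trivial on each factor.
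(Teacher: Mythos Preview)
Your argument is correct and cleanly verifies every clause of both (a) and (b). It differs genuinely from the paper's proof, however. The paper treats the two parts separately and produces two different objects $Z$: for (a) it uses the simplicial cylinder $X\otimes\Delta[1]$ and forms $Z$ as the pushout of $f:X\to Y$ along the end-inclusion $i_1:X\hra X\otimes\Delta[1]$, with $s$ and $t$ coming from a chosen homotopy $H:g\circ f\sim\Id_X$ and the projection $p:X\otimes\Delta[1]\to X$; for (b) it first factors $f$ itself as a trivial cofibration followed by a trivial fibration, extracts a retraction and a section by lifting, and then performs three further factorizations to manufacture $Z$. Your construction, by contrast, uses only the model-category axioms (no simplicial cylinder), stacks just two factorizations with one lifting in between, and yields a single $Z$ that serves both parts at once. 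This is more elementary and more economical. The trade-off is that the paper's explicit pushout description of $Z$ in part (a) is later unpacked in Lemma~\ref{lcylind} to read off a CW basis for the ``algebraic $h$-cobordism'' resolution; your $Z$, arising from abstract factorizations, would not feed directly into that computation without further work.
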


\begin{proof}
\ \ \textbf{(a)} \ By \cite[I, \S 1]{QuiH}) we have homotopy equivalences
\w{f:X\to Y} and \w{g:Y\to X} with a homotopy \w{H:g\circ f\sim\Id\sb{X}} fitting
into a commutative diagram:
\mydiagram[\label{eqhcobordism}]{
\ar @{} [drr]|<<<<<<<<<<<<<<<<<<<{\framebox{\scriptsize{PO}}}
X \ar@{^{(}->}[rr]^{i\sb{1}} \ar[d]_{f} \ar@/^5pc/[drrrr]^{=} &&
X\otimes\Delta[1] \ar[d]_{f'} \ar[rdd]^{H} \ar[rrd]^{p} &&
X \ar@{_{(}->}[ll]_{i\sb{0}}  \ar[d]^{=} \\
Y \ar@{^{(}->}[rr]^<<<<{j} \ar[rrrd]_{g} \ar@/_3pc/[drrrrr]_{=}&&
Z \ar@{.>}[rd]_<<<<<<{s}  \ar@{.>}[rrrd]^{t}  && X \ar[rd]^{f}\\
&&& X && Y
}
\noindent with all maps weak equivalences, where $Z$ is the pushout, the maps
\w{i\sb{0}} and \w{i\sb{1}} are induced by the inclusions
\w{\Delta[0]\hra \Delta[1]} and $p$ is induced by
\w[\vsm .]{\Delta[1]\epic\Delta[0]}

\noindent \textbf{(b)} \ Choose a weak equivalence \w[,]{f:X\to Y} and factor it
as \w[,]{X\xra{k}\hat{Z}\xra{\ell} Y} with $k$ a trivial cofibration and $\ell$ a
trivial fibration. By the LLP and fibrancy of $X$ we have a retraction
\w{r:\hat{Z}\to X} for $k$, and by RLP and cofibrancy of $Y$ we have a section
\w{u:Y\to\hat{Z}} for $\ell$, both weak equivalences. Set
\w[.]{\phi:=(\Id\sb{X}\top(r\circ u))\bot(f\top\Id\sb{Y}):X\amalg Y\to X\times Y}

Factor \w{k\bot u:X\amalg Y\to\hat{Z}} as
\w{X\amalg Y\xra{k'\bot u'} Z'\xra{p}\hat{Z}} (a cofibration followed by
a trivial fibration), and \w{r\top\ell:\hat{Z}\to X\times Y} as
\w{\hat{Z}\xra{i}Z''\xra{r'\top\ell'}X\times Y} (a trivial cofibration followed
by a fibration).   Finally, factor \w{i\circ p:Z'\xra{\simeq} Z''} as
\w{Z'\xra{e}Z\xra{q}Z'} (a trivial cofibration followed by a trivial fibration):

\mydiagram[\label{eqmodelcat}]{
X \ar[dd]^{\Id} \ar@{^{(}->}[rr]^{\inc}
\ar[rrrrd]_(0.3){k}_(0.7){\simeq} |!{[rr];[rrdd]}\hole &&
X\amalg Y \ar[dd]_{\phi} \ar[rrd]\sp{k\bot u} \ar@{^{(}->}[rrrr]\sp{k'\bot u'}
\ar@/^{4.0pc}/[rrrrrrd]_{F} &&&&
Z' \ar@{->>}[lld]^{p}_{\simeq}
\ar[dd]\sb{i\circ p}\sp{\simeq}\ar@{^{(}->}[rrd]\sb{e}\sp{\simeq}&&\\
&&&& \hat{Z} \ar@{^{(}->}[rrd]\sp{i}\sb{\simeq} \ar[lld]\sp{r\top\ell}
\ar[lllld]_(0.3){\simeq}_(0.7){r} |!{[llu];[lld]}\hole &&&&
Z \ar@{->>}[lld]\sb{q}\sp{\simeq} \ar@/^{4.0pc}/[lllllld]_{G} \\
X && X\times Y \ar@{->>}[ll]\sp{\proj} &&&&
Z'' \ar@{->>}[llll]\sp{r'\top\ell'}\sb{\simeq} \vsm &
}
\vsm\quad

\noindent Then \w{F:=e\circ(k'\bot u')} is a cofibration,
\w{G:=(r'\top\ell')\circ q} is a fibration, and the claim follows by tracking the
weak equivalences in \wref{eqmodelcat} (and similarly for $Y$).
\end{proof}

\begin{defn}\label{dacomp}
Given two CW resolutions \w{\vare:\Vd\to\Gamma} and \w{\varep:\Vdp\to\Gamma} of
a \TRal $\Gamma$, with CW bases \w{(\oV{n})\sb{n\in\NN}} and
\w[,]{(\oVp{n})\sb{n\in\NN}} an \emph{algebraic comparison map}
\w{\Psi:\Vd\to\Vdp} is a system
\begin{myeq}\label{eqacomp}
\Psi~=~\lra{\varphi,\,\rho,\,(\ophl{n},\,\orh{n})\sb{n\in\NN}}~,
\end{myeq}
\noindent where \w{\varphi:\Vd\to\Vdp} is a split monic weak equivalence of
simplicial \TRal[s] with retraction \w{\rho:\Vdp\to\Vd}
(with \w[),]{\varep\circ\ophl{0}=\vare} induced by inclusions of coproduct summands
\w{\ophl{n}:\oV{n}\hra\oVp{n}} with retractions \w{\orh{n}} for each \w[.]{n\geq 0}
\end{defn}

\begin{lemma}\label{lcylind}
Any two CW resolutions \w{\varu{0}:\Vud{0}\to\Gamma} and
\w{\varu{1}:\Vud{1}\to\Gamma} of the same \TRal $\Gamma$ have a common
``algebraic $h$-cobordism'' CW resolution \w[,]{\vare:\Vd\to\Gamma} with
algebraic comparison maps \w{\Psi\up{i}:\Vud{i}\to\Vd} \wb[.]{i=0,1}
\end{lemma}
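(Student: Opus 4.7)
The plan is to construct $\Vd$ inductively on simplicial dimension, giving its CW basis the form $\oV{n}:=\oV{n}\up{0}\amalg\oV{n}\up{1}\amalg\oVp{n}$, where $\oVp{n}$ is a free $\TR$-algebra added solely to guarantee that the combined attaching map surjects onto $Z\sb{n-1}\Vd$; by Remark \ref{rcwres} this will make $\Vd$ a CW resolution of $\Gamma$. The comparison maps $\ophl{n}\up{i}:\oV{n}\up{i}\hookrightarrow\oV{n}$ will be the coproduct inclusions, assembling to split monic inclusions $\varphi\up{i}:\Vud{i}\hookrightarrow\Vd$ compatible with the augmentations in the sense that $\vare\circ\varphi\up{i}=\vare\up{i}$.

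For the base case, take $\oV{0}:=\oV{0}\up{0}\amalg\oV{0}\up{1}$ with $\vare:=\vare\up{0}\bot\vare\up{1}$, which is surjective since each $\vare\up{i}$ is. For the inductive step, given $\sk{n-1}\Vd$ together with simplicial $\TR$-algebra retractions $\rho\up{i}:\sk{n-1}\Vd\to\sk{n-1}\Vud{i}$ of the inclusions $\varphi\up{i}$ at that level, pick a free $\TR$-algebra $\oVp{n}$ equipped with a surjection $\odz{\oVp{n}}:\oVp{n}\twoheadrightarrow Z\sb{n-1}\Vd$, set $\oV{n}:=\oV{n}\up{0}\amalg\oV{n}\up{1}\amalg\oVp{n}$, and define the attaching maps on the $\oV{n}\up{j}$ summands to be the given $\odz{V\sb{n}\up{j}}$ (composed with the inclusions $V\sb{n-1}\up{j}\hookrightarrow V\sb{n-1}$) and $\odz{\oVp{n}}$ on the last summand. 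The combined $\odz{V\sb{n}}$ then surjects onto $Z\sb{n-1}\Vd$ as required.

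To extend the retractions, set $\rho\up{i}\rest{\oV{n}\up{i}}$ to be the tautological inclusion into $V\sb{n}\up{i}$ and $\rho\up{i}\rest{\oV{n}\up{1-i}}:=0$; inductively this forces $\rho\up{i}$ to vanish on the entire subcomplex $V\sb{n-1}\up{1-i}\subset V\sb{n-1}$, which takes care of $d\sb{0}$-compatibility on the $\oV{n}\up{1-i}$ summand. The crucial point is defining $\rho\up{i}\rest{\oVp{n}}$: observe that since $\rho\up{i}$ is a simplicial $\TR$-algebra homomorphism it preserves Moore cycles, so the composite $\sigma\up{i}:=\rho\up{i}\circ\odz{\oVp{n}}:\oVp{n}\to V\sb{n-1}\up{i}$ in fact takes values in $Z\sb{n-1}\Vud{i}$. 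Since $\Vud{i}$ is itself a CW resolution, its attaching map $\odz{V\sb{n}\up{i}}:\oV{n}\up{i}\twoheadrightarrow Z\sb{n-1}\Vud{i}$ is surjective, and freeness of $\oVp{n}$ yields a $\TR$-algebra lift $\lambda\up{i}:\oVp{n}\to\oV{n}\up{i}$ of $\sigma\up{i}$. Setting $\rho\up{i}\rest{\oVp{n}}:=\lambda\up{i}$ (viewed inside $V\sb{n}\up{i}$) ensures $d\sb{0}\circ\rho\up{i}\rest{\oV{n}}=\rho\up{i}\circ\odz{V\sb{n}}$, and the relations $d\sb{j}\rest{\oV{n}}=0$ for $j\geq 1$ hold tautologically on both sides, so $\rho\up{i}$ extends to a simplicial $\TR$-algebra map on $\sk{n}\Vd$.

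Passing to the limit $\Vd:=\colim\sb{n}\sk{n}\Vd$ assembles the $\rho\up{i}$ into simplicial $\TR$-algebra retractions of the split inclusions $\varphi\up{i}:\Vud{i}\hookrightarrow\Vd$; since $\vare\circ\varphi\up{i}=\vare\up{i}$ and both augmentations are weak equivalences, two-out-of-three forces each $\varphi\up{i}$ to be a weak equivalence as well, and $\Psi\up{i}:=\lra{\varphi\up{i},\rho\up{i},(\ophl{n}\up{i},\orh{n}\up{i})\sb{n}}$ is the desired algebraic comparison map, where $\orh{n}\up{i}$ is the restriction of $\rho\up{i}$ to $\oV{n}$. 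The main technical obstacle is extending the retractions over the extra generators $\oVp{n}$ in a face-map-compatible way; this is resolved by the Moore-cycle preservation property of $\rho\up{i}$ together with the resolution property of $\Vud{i}$.
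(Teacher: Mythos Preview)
Your proof is correct, but it takes a genuinely different route from the paper's.

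The paper proceeds by first invoking Lemma~\ref{lcylin}(a) to obtain an abstract cylinder object \w{Z} as a pushout of \w{X\otimes\Delta[1]} along one end, and then works to extract a CW basis from the natural decomposition of \w[.]{Z\sb{n}} The difficulty there is that the obvious basis candidates \w{\oVp{n}} in \wref{eqvn} contain summands with more than one nonvanishing face map, so the paper must apply the inductive modification \w{v\bup{n}:=v\bup{n-1}-s\sb{0}d\sb{1}v\bup{n-1}} to turn them into genuine Moore chains, and then argue that freeness is preserved under this change of generators. The retractions come implicitly from the maps $s$ and $t$ of Lemma~\ref{lcylin}(a).

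Your construction bypasses the cylinder entirely: you build \w{\Vd} directly by induction, declaring \w{\oV{n}=\ouV{n}{0}\amalg\ouV{n}{1}\amalg\oVp{n}} and adding the extra free summand \w{\oVp{n}} solely to force surjectivity onto \w[.]{Z\sb{n-1}\Vd} The retractions are then produced by lifting through the surjective attaching maps \w{\odz{V\sb{n}\up{i}}} of the given resolutions, using freeness of \w[.]{\oVp{n}} This is more elementary (no appeal to Lemma~\ref{lcylin}, no basis-modification argument) and makes the role of the resolution hypothesis on \w{\Vud{i}} very transparent. The paper's approach, on the other hand, yields a \w{\Vd} with a genuine homotopy-theoretic interpretation as a cylinder and gives an explicit formula for \w{\oV{n}} in terms of the original CW bases without introducing an auxiliary \w[,]{\oVp{n}} which can be useful when one wants finer control over the size or structure of the intermediate resolution.
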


\begin{proof}
Let \w{(\ouV{n}{i})\sb{n\in\NN}} be CW bases for \w{\Vud{i}} \wb[.]{i=0,1}
Since \w{X=\Vud{0}} and \w{Y=\Vud{1}} are fibrant and cofibrant in \w{s\TRA}
(see Proposition \ref{psimptal}),
they have homotopy equivalences \w{f:X\to Y} and \w{g:Y\to X} as in
Lemma \ref{lcylin}(a). We make explicit the construction of the Lemma
by producing a CW basis \w{(\oV{n})\sb{n\in\NN}} for \w{Z=\Vd} in
\wref[,]{eqhcobordism} together with inclusions of coproduct summands
\w{\ophin{n}{i}:\ouV{n}{i}\hra\oV{n}} for each \w{n\geq 0} as in Definition \ref{dacomp}.

If we write \w{e\sb{0},e\sb{1}\in\Delta[1]\sb{0}} and
\w{\sigma\in\Delta[1]\sb{1}} for the non-degenerate simplices, we have
\w[,]{V\sb{n}=\oVp{n}\amalg L\sb{n}\Vd} with:
\begin{myeq}\label{eqvn}
\begin{split}
\oVp{n}~:=&~~~\ouV{n}{0}\otimes(\Sn{n-1}e\sb{0})~~~~\amalg~~~~
\coprod\sb{k=0}\sp{n-1}~~
[\ouV{n}{0}\otimes(\Snk{n-1}{k}\sigma)]\\
&~\amalg~~\coprod\sb{k=0}\sp{n-1}~[s\sb{k}\ouV{n-1}{0}\otimes(\Snk{n-1}{k}\sigma)]\\
&~\amalg~~[\ouV{n}{1}\otimes(\Sn{n-1}e\sb{1})]~.
\end{split}
\end{myeq}
\noindent Here \w{\Sn{n}} is the iterated degeneracy map \w{s\sb{n}\dotsc s\sb{0}}
and
\w[.]{\Snk{n}{k}:=s\sb{n}\dotsc s\sb{k+1}\widehat{s\sb{k}}s\sb{k-1}\dotsc s\sb{0}}

The face maps are calculated as usual on each factor of
\w[,]{a\otimes b} except that
\begin{myeq}\label{eqfface}
d\sb{n}(s\sb{n-1}u\otimes(\Sn{n-2}\sigma))~=~
f\sb{n-1}u\otimes(\Sn{n-2}e\sb{1})~\in~[\ouV{n-1}{1}\otimes(\Sn{n-2}e\sb{1})],
\end{myeq}
\noindent for \w{k=n-1} in the second line of \wref[,]{eqvn} by
\wref[,]{eqhcobordism} where \w{f:\Vud{0}\to\Vud{1}} is the chosen
homotopy equivalence.

Note that \w{\oVp{n}} is not a CW basis object for \w[,]{\Vd}
since the summands in the second line of \wref{eqvn} always have at least two
non-vanishing face maps.

However, for any \w{v\in\oVp{n}} we can define
\w{v\bup{0}:=v} and \w{v\bup{k+1}:=v\bup{k}-s\sb{n-k-1}d\sb{n-k}v\bup{k}}
by induction on $k$, and find that \w{d\sb{i}v\bup{k}=0} for \w[,]{n-k<i\leq n}
so \w{v\bup{n}} is a Moore chain. Note that for \w{v\in U} in summands $U$ on
the first and third lines of \wref{eqvn} we have simply \w[.]{v=v\bup{n}}

Explicitly, we replace each generator \w{v=s\sb{k}u\otimes(\Snk{n-1}{k}\sigma)}
of a summand in the second line of \wref{eqvn} for \w{\oVp{n}} by
\begin{myeq}\label{eqlastsummand}
v\bup{n}=\begin{cases}
\sum\sb{i=0}\sp{k}(-1)\sp{i}\left[s\sb{k-i}u\right]\otimes
\left[(\Snk{n-1}{k}\sigma)-(\Snk{n-1}{k+1}\sigma)\right]&\text{if}~k<n-1\\
\sum\sb{i=1}\sp{n}\,(-1)\sp{i}
\left[\,s\sb{n-i}f\sb{n-1}u\otimes(\Sn{n-2}e\sb{1})
-s\sb{n-i}u\otimes(\Sn{n-2}\sigma)\,\right]& \text{if}~k=n-1.
\end{cases}
\end{myeq}
\noindent Thus \w{v\bup{n}} always has the form \w[,]{v+\sum\sb{i=1}\sp{k}\,u\sb{i}}
with the elements \w{u\sb{i}} all degenerate.

The \TRal retraction \w{\rho':V\sb{n}\to\oVp{n}} onto the summand \w{\oVp{n}}
therefore takes \w{v\bup{n}} to $v$. This implies that if
\w{\{v\sb{i}\}\sb{i\in I}} are
generators of \w[,]{\oVp{n}} the new elements \w{\{v\bup{n}\sb{i}\}\sb{i\in I}}
still generate a free sub-\TRal of \w[,]{V\sb{n}} because any relation of the form
\w{\psi(v\bup{n}\sb{1},\dotsc,v\bup{n}\sb{k})=0} in \w[,]{V\sb{n}}
where $\psi$ is some primary $R$-cohomology operation, implies that also
$$
0~=~\rho'(\psi(v\bup{n}\sb{1},\dotsc,v\bup{n}\sb{k}))~=~
\psi(\rho'(v\bup{n}\sb{1}),\dotsc,\rho'(v\bup{n}\sb{k}))~=~
\psi(v\sb{1},\dotsc,v\sb{k})
$$
\noindent which can hold only if \w{\psi\equiv 0} since the elements \w{v\sb{i}} are
generators of a free \TRal[.]

Therefore,  if we write \w{\oV{n}} for the sub-\TRal of \w{V\sb{n}} generated
by all elements \w[,]{v\bup{n}} as $v$ varies over a set of generators for (each
summand of) \w[,]{\oVp{n}} we still have a coproduct of free \TRal[s]
\w[,]{V\sb{n}=\oV{n}\amalg L\sb{n}\Vd} where \w{\oV{n}} now serves as an $n$-th
CW basis object for \w[.]{\Vd}

Moreover, we still have inclusions of coproduct summands
\w{\ophin{n}{i}:\ouV{n}{i}\hra\oV{n}} inducing split trivial cofibrations of
simplicial \TRal[s] \w{\varphi\up{i}:\Vud{i}\to\Vd} \wb[.]{i=0,1}
We can use these to further write
\w[.]{\oV{n}:=\oU{n}~\amalg~\ouV{n}{0}~\amalg~\ouV{n}{1}}
\end{proof}

\begin{defn}\label{dcorresp}
Given an algebraic comparison map
\w[,]{\Psi=\lra{\varphi,\,\rho,\,(\ophl{n},\,\orh{n})\sb{n\in\NN}}}
between two CW resolutions \w{\vare:\Vd\to\Gamma} and \w{\varep:\Vdp\to\Gamma}
of a realizable \TRal $\Gamma$ (see \S \ref{dacomp}), and sequential
realizations $\cW$ and \w{\ccWp} of \w{\Vd} and \w[,]{\Vdp} respectively,
a \emph{comparison map \w{\Phi:\cW\to\,\ccWp} over $\Psi$} is a system
\begin{myeq}\label{eqcorresp}
\Phi~=~\lra{\en{n},\,\rn{n},~(\Pon{k}{n})\sb{k=0}\sp{n-1},~
(\oon{k}{n})\sb{k=0}\sp{n-1},~(\Por{k}{n})\sb{k=0}\sp{n-1},~
(\oor{k}{n})\sb{k=0}\sp{n-1}\,}\sb{n\in\NN}
\end{myeq}
\noindent consisting of:
\begin{enumerate}
\renewcommand{\labelenumi}{(\roman{enumi})~}
\item Split fibrations of the modified path-loop fibrations of \wref{eqmodpathloop}
fitting into a diagram:
\mydiagram[\label{eqmapmodpathloop}]{
\ooWp{k+1}{n} \ar@{->>}[d]\sp{\oon{k+1}{n}}  \ar@{^{(}->}[rr]\sp{\ppp\ip{k+1}} &&~
\PoWp{k}{n} \ar@{->>}[d]\sp{\Pon{k}{n}} \ar@{->>}[rr]\sp{\ppp\pp{k}} &&
\ooWp{k}{n} \ar@{->>}[d]\sp{\oon{k}{n}}\\
\ooW{k+1}{n}  \ar@/^{1.1pc}/[u]\sp{\oor{k+1}{n}} \ar@{^{(}->}[rr]\sp{\ip{k+1}} &&
\PoW{k}{n} \ar@/^{1.1pc}/[u]\sp{\Por{k}{n}} \ar@{->>}[rr]\sp{\pp{k}} &&
\ooW{k}{n} \ar@/^{1.1pc}/[u]\sp{\oor{k}{n}}
}
\noindent for each \w[,]{0\leq k<n} in which both upward and downward squares
commute, as well as:
\begin{myeq}\label{eqsplitfib}
\Pon{k}{n}\circ\Por{k}{n}=\Id\hsp\text{and}\hsp
\oon{k}{n}\circ \oor{k}{n}=\Id\hs \text{for all}\hs 0\leq k < n~.
\end{myeq}
\noindent We require that for all \w[,]{0\leq k< n} the maps \w{\oon{k}{n}}
realize \w{\Omega\sp{k}\ovp\sb{n}} and the maps \w{\oor{k}{n}} realize
\w[.]{\Omega\sp{k}\orh{n}}
\item A cosimplicial map \w{\en{n}:\Wp{n}\to\W{n}}
realizing \w{\varphi:\Vd\to\Vdp} through simplicial dimension $n$, with section
\w{\rn{n}:\W{n}\to\Wp{n}} realizing $\rho$, such that
for each \w[,]{0\leq k<n} both squares in the following diagram commute:
\mytdiag[\label{eqcompf}]{
C\sp{k}(\Wp{n-1}) \ar[d]\sb{\Fkp{k}} \ar@{->>}[rr]\sp{C\sp{k}(\en{n-1}\sp{k})} &&
C\sp{k}(\W{n-1})\ar[d]\sp{\Fk{k}} & C\sp{k}(\Wp{n-1}) \ar[d]\sb{\Fkp{k}} &&
C\sp{k}(\W{n-1})\ar[d]\sp{\Fk{k}} \ar@{_{(}->}[ll]\sb{C\sp{k}(\rnk{n-1}{k})} \\
\PoWp{n-k-2}{n} \ar@{->>}[rr]\sb{\Pon{n-k-2}{n}} && \PoW{n-k-2}{n} &
\PoWp{n-k-2}{n} && \PoW{n-k-2}{n}. \ar@{_{(}->}[ll]\sp{\Por{n-k-2}{n}}
}
\end{enumerate}

When each map \w{\oon{k}{n}:\ooWp{k}{n}\epic\ooW{k}{n}}
and \w{\en{n}\sp{k}:\Wpn{k}{n}\epic\Wn{k}{n}} is a trivial fibration (and
thus each map  \w{\oor{k}{n}:\ooW{k}{n}\hra\ooWp{k}{n}} and
\w{\rnk{n}{k}:\Wpn{k}{n}\hra\Wn{k}{n}} is a trivial cofibration in \w[),]{\Sa}
we say that $\Phi$ is a \emph{trivial} comparison map.

If we only have
\begin{myeq}\label{eqncorresp}
\Phi~=~\lra{\en{n},\,\rn{n},~(\Pon{k}{n})\sb{k=0}\sp{n-1},~
(\oon{k}{n})\sb{k=0}\sp{n-1},~(\Por{k}{n})\sb{k=0}\sp{n-1},~
(\oor{k}{n})\sb{k=0}\sp{n-1}\,}\sb{n=0}\sp{N}
\end{myeq}
\noindent as above, we say that \w{\Phi:\cW\to\,\ccWp} is an \emph{$N$-stage
comparison map} over $\Psi$.
\end{defn}

\begin{remark}\label{rcomp}
If we let \w{\jnk{n}{k}:\ooX{k}{n}\hra\ooWp{k}{n}} denote the inclusion of
the fiber of \w[,]{\oon{k}{n}:\ooWp{k}{n}\epic\ooW{k}{n}} we see that the splitting
\w{\oor{k}{n}:\ooW{k}{n}\hra\ooWp{k}{n}} induces a retraction
\w{\snk{n}{k}:\ooWp{k}{n}\epic\ooX{k}{n}} for \w[,]{\jnk{n}{k}}
defined \w[,]{x\mapsto x-\oor{k}{n}\oon{k}{n}(x)} and thus a map
\mydiagram[\label{eqwkprod}]{
\ooWp{k}{n} \ar[rr]_<<<<<<<<<{\simeq}^<<<<<<<<<{\snk{n}{k}\top\oon{k}{n}} &&
\quad~~\ooX{k}{n}\times\ooW{k}{n}
}
\noindent which is a weak equivalence for each \w{0\leq k< n}
(using the abelian group structure on all spaces). As we shall see,
in many cases we can assume \wref{eqwkprod} is actually an equality.
\end{remark}

\begin{defn}\label{dzigzag}
A \emph{zigzag of comparison maps} between two sequential
realizations \w{\cuW{0}} and \w{\cuW{1}} of a realizable \TRal $\Gamma$
is a (possibly infinite) sequence of cospans of comparison maps
starting from \w{\cuW{0}} and ending at \w[,]{\cuW{1}} which is
\emph{locally finite} in the sense that for each
\w[,]{n\geq 0} only finitely many of the comparison maps in the zigzag between the
$n$-th stages \w{(\W{n})\up{0}} and \w{(\W{n})\up{1}} are not the identity map.

We say that two abstract sequential realizations \w{\cuW{0}} and
\w{\cuW{1}} (of arbitrary spaces \w{\bY\up{0}} and \w[)]{\bY\up{1}}
are \emph{weakly equivalent} if they are related by a zigzag
of comparison maps.

Similarly, if only the $n$-th stages \w{(\W{n})\up{0}} and \w{(\W{n})\up{1}} of
two such sequential realizations \w{\cuW{0}} and \w{\cuW{1}} are
related by a zigzag of comparison maps, we say that  \w{\cuW{0}} and \w{\cuW{1}}
are \emph{$n$-equivalent}, or that \w{(\W{n})\up{0}} and \w{(\W{n})\up{1}} are
\emph{weakly equivalent}\vsm.
\end{defn}

\begin{thm}\label{tcomp}
Any two cofibrant sequential realizations \w{\cuW{0}} and \w{\cuW{1}} of $\bY$
are weakly equivalent.
\end{thm}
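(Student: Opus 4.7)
The plan is to exhibit a single cospan $\cuW{0}\to\cW\leftarrow\cuW{1}$ of comparison maps by building a common ``cylinder'' sequential realization $\cW$. First I would apply Lemma \ref{lcylind} to the underlying CW resolutions $\Vud{0}$ and $\Vud{1}$ of $\HiR{\bY}$ (with CW bases $\ouV{n}{0}$ and $\ouV{n}{1}$), obtaining a CW resolution $\vare:\Vd\to\HiR{\bY}$ with algebraic comparison maps $\Psi\up{i}:\Vud{i}\to\Vd$. The crucial feature inherited from that construction is the coproduct decomposition
\[
\oV{n}~=~\oU{n}~\amalg~\ouV{n}{0}~\amalg~\ouV{n}{1}
\]
of its CW basis; this is what will allow all the required splittings on the topological side to be produced tautologically.

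Next I would construct the stages $\W{n}$ of a cofibrant sequential realization of $\Vd$, \emph{together} with the data of comparison maps $\Phi\up{i}:\cuW{i}\to\cW$, by induction on $n$. Assume we have produced $\W{n-1}$ with comparison maps through stage $n-1$. Choose the realization $\oW{n}$ of $\oV{n}$ as a product
\[
\oW{n}~:=~\oouW{n}{0}\times\oU{n}'\times\oouW{n}{1},
\]
where $\oouW{n}{i}$ is the realization of $\ouV{n}{i}$ used in $\cuW{i}$ and $\oU{n}'$ is any (fibrant) realization of $\oU{n}$ in $\Sa$. Since everything in sight is a simplicial $R$-module, one can take the modified path-loop fibration sequence of $\oW{n}$ to be the product of the chosen sequences for the three factors, inheriting a standard one for the middle factor. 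The projections and inclusions of these product decompositions are then the maps $\oon{k}{n}$ and $\oor{k}{n}$ of diagram \wref{eqmapmodpathloop}, and the splittings \wref{eqsplitfib} hold by construction.

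The core of the induction is then the construction of the cochain map $F:\cMs\W{n-1}\to\bDs$ realizing the $n$-th attaching map of $\Vd$, in a way compatible with the $F\up{i}$ already present in $\cuW{i}$ in the sense of \wref{eqcompf}. Using the already-constructed cosimplicial maps $\en{n-1}\up{i}$ and the coproduct decomposition of $\oV{n}$, I would factor $F$ componentwise through the three factors of $\bDs$: the $\ouV{n}{i}$-component is forced to be the composite $F\up{i}\circ\cMs(\en{n-1}\up{i})$, while the $\oU{n}$-component has to be chosen so as to realize the $\oU{n}$-part of $\odz{n}$. The cofibrancy hypothesis \S \ref{dscr}(d) on $\cuW{i}$ (each $F\up{i}$ is a left Reedy cofibration) combined with Lemma \ref{linducea} and Lemma \ref{lmoore} ensures that this choice can be made to be itself a left Reedy cofibration, so that $\cW$ is again cofibrant. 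Then the pullback \wref{eqncoskeletn} decomposes compatibly, and defining $\en{n}\up{i}$ as the projection of $\W{n}$ onto the ``$\cuW{i}$-factor'' (followed by Reedy fibrant-cofibrant replacement) automatically makes \wref{eqcompf} commute.

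The step I expect to be the principal obstacle is precisely this last coherent construction of $F$: even though the algebraic decomposition of $\oV{n}$ suggests a unique way to split $F$, one must verify that the topological splittings can be chosen so that the composite with each $\cMs(\en{n-1}\up{i})$ genuinely equals $F\up{i}$ on the nose and not merely up to homotopy, and that the resulting $F$ is a left Reedy cofibration. This is where the cofibrancy of $\cuW{0}$ and $\cuW{1}$ is used, via standard lifting and factorization arguments in the left Reedy model structure on $\Ch{\Sa}$ from \S \ref{scchaincx}. Once the inductive step is complete, the tower $(\W{n})$ assembles into a cofibrant sequential realization $\cW$ of $\Vd$, and the $\Phi\up{i}$ assemble into comparison maps $\cuW{i}\to\cW$ whose algebraic data are the locally finite inclusions $\Psi\up{i}$, giving the desired (one-step) zigzag.
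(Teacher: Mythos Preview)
There is a genuine gap in your handling of the sections. You take the maps \w{\oor{k}{n}} for \w{\Phi\up{i}} to be the inclusions of the $i$-th factor in your product decomposition of \w[,]{\ooW{k}{n}} and under \w{\HiT} these realize the coproduct \emph{projection} \w[.]{\oV{n}\to\ouV{n}{i}} But Definition \ref{dcorresp}(i) requires \w{\oor{k}{n}} to realize \w{\Omega\sp{k}\orh{n}} for an algebraic comparison map as in Definition \ref{dacomp}, and no such \w{\Psi\up{i}} exists with \w{\orh{n}} equal to the projection: the simplicial retraction \w{\rho\up{i}:\Vd\to\Vud{i}} it would induce satisfies \w[,]{\rho\up{i}\circ\varphi\up{1-i}=0} whereas Lemma \ref{lcylin}(a) forces \w{\rho\up{0}\circ\varphi\up{1}=g} and \w[.]{\rho\up{1}\circ\varphi\up{0}=f} More intrinsically, a simplicial retraction of the cylinder \w{\Vd} onto \w{\Vud{i}} vanishing on the image of \w{\Vud{1-i}} would, via the pushout \wref[,]{eqhcobordism} furnish a nullhomotopy of \w[,]{\Id\sb{\Vud{i}}} which is absurd.

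This is not cosmetic; it is precisely the obstruction to the right-hand square of \wref[.]{eqcompf} Even if you replace the product inclusions by sections realizing the genuine \w[,]{\orh{n}} the two required identities (for \w{i=0} and \w[)]{i=1} become simultaneous strict constraints on all three components of $F$, coupled through the nonzero composites \w[,]{\en{n-1}\up{1-i}\circ\rn{n-1}\up{i}} and you give no argument that both can be met on the nose together with the left squares. The paper circumvents this entanglement by decoupling the two sides: in Step \textbf{(i)} it enlarges each \w{\cuW{i}} along its own \w{\Psi\up{i}} to a realization \w{\cWpi{i}} of the common algebraic resolution, using a pushout in \w{\Chn{\C}{n}} so that only one pair of squares \wref{eqpba} need commute at a time; only then, in Step \textbf{(ii)}, does it compare \w{\cWpi{0}} and \w{\cWpi{1}} over \w[,]{\Id\sb{\Vd}} applying Lemma \ref{lcylin}(b) in the under-category \w{\cMs\Wp{n-1}\backslash\Chn{\C}{n-1}} to produce the common target.
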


\begin{proof}
We prove the Theorem in two main steps\vsm:

\noindent\textbf{(i)~~Different algebraic resolutions\vsn:}

We first show that, given an algebraic comparison map \w{\Psi:\Vd\to\Vdp} for $\bY$
and a cofibrant sequential realization $\cW$ of \w[,]{\Vd} there is a
cofibrant sequential realization
\w{\ccWp} of \w{\Vdp} with a comparison map \w{\Phi:\cW\to\,\ccWp} over $\Psi$,
constructed (with the maps \w{\en{n}:\Wp{n}\to\W{n}}
and sections \w[)]{\rn{n}:\W{n}\to\Wp{n}} by induction on \w[:]{n\geq 0}

At the $n$-th stage, we may assume by Lemma \ref{lcylin} that \w{\en{n-1}}
is a fibration and \w{\rn{n-1}} is a cofibration in the resolution model
category \w[,]{\Chn{\C}{n}} so in particular \w{\enk{n-1}{j}} is
a fibration and \w{\rnk{n-1}{j}} a cofibration for \w{0\leq j\leq n-1}
(see \cite[\S 3.2]{BousC}).

Since \w{\oV{n}} is a coproduct summand in \w[,]{\oVp{n}=\oV{n}\amalg\oU{n}}
the map \w{\ophl{n}:\oV{n}\hra\oVp{n}} is simply the inclusion, while
\w{\orh{n}:\oVp{n}\hra\oV{n}} has the form \w[.]{\Id\bot\zeta}
If we realize \w{\oV{n}} by \w{\oW{n}} and \w{\oU{n}} by \w{\oX{n}}
then \w{\oVp{n}} is realized by \w[.]{\hWp{n}:=\oX{n}\times\oW{n}}
By Definition \ref{dscr}, the $n$-th stage of $\cW$ is determined
by the choice of left Reedy fibrant replacement \w{\bDs}
of \w[,]{\oW{n}\ouS{n-1}} equipped with a left Reedy cofibration
\w{F:\cMs\W{n-1}\to\bDs} realizing the given attaching map
\w[.]{\odz{n}:\oV{n}\to C\sb{n-1}\Vd}

If \w{\bKs} is similarly a left Reedy fibrant replacement for
\w[,]{\oX{n}\ouS{n-1}} the attaching map
\w{\ppp\odz{n}:\oVp{n}\to C\sb{n-1}\Vdp} has the form \w[,]{\odz{n}\bot\tau}
and we may realize \w{\tau:\oU{n}\to C\sb{n-1}\Vdp} by
\w{T:\cMs\Wp{n-1}\to\bKs} (not a cofibration) and \w{\zeta:\oU{n}\to\oV{n}} by
\w[.]{Z:\bDs\to\bKs}

Consider the following diagram in the left Reedy model category of $n$-truncated
cochain complexes over $\C$, in which \w{\bPs} is the pushout of the upper
left square, and the map $p$ with section  $r$ is induced by \w[:]{\cMs\rn{n-1}}

\mydiagram[\label{eqpoatt}]{
\ar @{} [drr]|(0.71){\framebox{\scriptsize{PO}}}
\cMs\W{n-1} \ar@{^{(}->}[d]\sp{F} \ar@{^{(}->}[rr]\sb{\cMs\rn{n-1}}
&& \cMs\Wp{n-1} \ar@{^{(}->}[d]\sp{j} \ar@/_1.1pc/[ll]\sb{\cMs\en{n-1}}
\ar@/^1.0pc/[rrd]\sp{T} && \\
\bDs  \ar@/_2.5pc/[rrrr]\sp{Z} \ar@{^{(}->}[rr]\sp{r} &&
\bPs \ar@/^0.9pc/[ll]\sp{p} \ar@{.>}[rr]\sb{S} && \bKs~,
}

Since by Definition \ref{dacomp}
\w[,]{C\sb{n-1}\rho\circ\ppp\odz{n}=\odz{n}\circ\orh{n}} also
\w[,]{C\sb{n-1}\rho\circ\tau=\odz{n}\circ\orh{n}\rest{\oU{n}}=\odz{n}\circ\zeta}
so the outer square in \wref{eqpoatt} commutes up to homotopy.  Since
$F$ is a cofibration, we may change $Z$ up to homotopy to make it commute on the
nose by \cite[Lemma 5.11]{BJTurnR}. The maps $Z$ and $T$ then induce $S$ as
indicated. This allows us to extend \wref{eqpoatt} to a commuting diagram
\mydiagram[\label{eqpoat}]{
\cMs\W{n-1} \ar@{^{(}->}[d]\sp{F} \ar@{^{(}->}[rr]\sb{\cMs\rn{n-1}}
&& \cMs\Wp{n-1} \ar@{^{(}->}[d]\sp{j} \ar@/_1.1pc/[ll]\sb{\cMs\en{n-1}}
\ar@/^1.0pc/[rrd]\sp{T\top(p\circ j)} && \\
\bDs  \ar@{^{(}->}[rr]\sp{r} &&
\bPs \ar@/^0.9pc/[ll]\sp{p} \ar[rr]\sp{S\top p} &&
\bKs\times\bDs \ar@/^2.5pc/[llll]\sb{\proj}~,
}
\noindent We now factor \w{S\top p} as a cofibration \w{G':\bPs\hra\bEs}
followed by a trivial fibration \w{t:\bEs\epic\bKs\times\bDs}
(in the left Reedy model structure on truncated cochain complexes).
If we set \w{G:\cMs\Wp{n-1}\to\bEs} equal to
\w[,]{G'\circ j} \w{\overline{r}:\bEs\to\bDs} equal to \w[,]{\proj\circ t}
and \w{\overline{e}:\bDs\to\bEs} equal to the cofibration \w[,]{G'\circ e}
we see that \w{\bEs} is a left Reedy fibrant replacement for
\w{\hWp{n}\ouS{n-1}} (since \w{\bKs\times\bDs} is a product of
fibrant objects), $G$ is a left Reedy cofibration, and they fit into a diagram
\mydiagram[\label{eqpba}]{
\cMs\W{n-1} \ar@{^{(}->}[d]\sp{F} \ar@{^{(}->}[rr]\sb{\cMs\rn{n-1}} &&
\cMs\Wp{n-1} \ar@{^{(}->}[d]\sp{G} \ar@/_1.1pc/[ll]\sb{\cMs\en{n-1}} \\
\bDs \ar@{^{(}->}[rr]\sp{\overline{e}} && \bEs \ar@/^1.7pc/[ll]\sb{\overline{r}}
}
\noindent in which both the left and right squares commute, and
\w[.]{\overline{r}\circ\overline{e}=\Id}

Applying the functorial procedure of \S \ref{dscr}(c) to the two vertical
arrows in \wref{eqpba} again we obtain $n$-stage comparison map
\w{\Phi:\cW\to\cWp} extending the given \wwb{n-1}stage comparison map\vsm.

\noindent\textbf{(ii)~~Reducing to the case of one  algebraic resolution\vsn:}

Assume \w{\cuW{0}} and \w{\cuW{1}} are associated respectively to the two
CW resolutions \w{\Vud{0}} and  \w{\Vud{1}} of the \TRal
\w[,]{\Gamma=\HiR{\bY}} with CW bases
\w{(\ouV{n}{i})\sb{n\in\NN}} for \w[\vsm.]{i=0,1}

By Lemma \ref{lcylind}, there is a third CW resolution
\w[,]{\ppp\vare:\Vdp\to\Gamma} with CW basis \w[,]{(\oVp{n})\sb{n\in\NN}}
equipped with algebraic comparison maps \w{\Psi\up{i}:\Vud{i}\to\Vdp} \wb[.]{i=0,1}
By Step \textbf{(i)}, there are then two sequential
realizations \w{\cWpi{i}} of \w[,]{\Vdp\to\Gamma}  for \w[,]{i=0,1}
each equipped with a comparison map \w{\Psi\up{i}:\cuW{i}\to\cWpi{i}}
over \w[.]{\Psi\up{i}}
Thus we are reduced to dealing with the case where the two (cofibrant) sequential
realizations \w{\cuW{0}} and \w{\cuW{1}} (i.e., the \w{\cWpi{i}}
just constructed) are of the same CW resolution
\w[,]{\Vd\to\Gamma} with CW basis \w[.]{(\oV{n})\sb{n\in\NN}}
We construct a zigzag of comparison maps between them, by induction on
\w[:]{n\geq 0}

We assume by induction the existence of a cospan of \wwb{n-1}stage trivial
comparison maps \w{\Phi\up{i}:\cuW{i}\to\cW} \wb{i=0,1} over \w[.]{\Id\sb{\Vd}}
For \w{n=0} this is simply \w[.]{\Wi{-1}{0}=\cu{\bY}=\Wi{-1}{1}}

By Definition \ref{dscr}(c), the $n$-th stage for \w{\cuW{i}} is determined
by the choice of left Reedy fibrant replacements \w{\uDs{i}}
of \w{\oW{n}\ouS{n-1}} (where \w{\oW{n}} is some realization of the $n$-th
algebraic CW basis object \w[),]{\oV{n}} together with left Reedy cofibrations
\w{F\up{i}:\cMs\Wi{n-1}{i}\to\uDs{i}} \wb{i=0,1} realizing the given
attaching map \w[.]{\odz{n}:\oV{n}\to C\sb{n-1}\Vd}

For \w[,]{i=0,1} consider the following diagram in the left Reedy model category
\w[,]{\Chn{\C}{n}} in which \w{\bPs} is again the pushout of the upper left square
\mydiagram[\label{eqpoattach}]{
\ar @{} [drr]|(0.74){\framebox{\scriptsize{PO}}}
\cMs\Wi{n-1}{i} \ar@{^{(}->}[d]\sp{F\lo{i}}
\ar@{^{(}->}[rr]\sb{\cMs\rn{n-1}}\sp{\simeq}
&& \cMs\Wp{n-1} \ar@{^{(}->}[d]\sp{j\lo{i}}
\ar@/_1.1pc/[ll]\sb{\cMs\en{n-1}}  \ar@{.>}[rrd]\sp{G\lo{i}}  &&&&\\
\uDs{i}  \ar@{^{(}->}[rr]\sp{r\lo{i}}\sb{\simeq} &&
\uPs{i} \ar@/^1.5pc/[ll]\sb{p\lo{i}} \ar@{^{(}->}[rr]\sp{S\lo{i}}\sb{\simeq} &&
\uEs{i} \ar@/^3.0pc/[llll]\sb{\pi\lo{i}}\sp{\simeq} \ar@{->>}[r] & \ast
}
\noindent The retraction \w{p\lo{i}} for the trivial cofibration \w{r\lo{i}}
is induced by the retraction \w{\cMs\eni{n-1}{i}} for the trivial cofibration
\w[,]{\cMs\rni{n-1}{i}} so \w{p\lo{i}} is a weak equivalence.
Factor \w{p\lo{i}} as a trivial cofibration \w{k\lo{i}:\uPs{i}\to\uEs{i}}
followed by a fibration \w{\pi\lo{i}:\uEs{i}\to\uDs{i}} (also a weak
equivalence), so \w{\uEs{i}} is in particular a fibrant replacement for
\w{\uPs{i}} since \w{\uEs{i}} is fibrant.

Now set \w{G\lo{i}:=S\lo{i}\circ j\lo{i}:\cMs\Wp{n-1}\hra\uEs{i}} (a cofibration).
Because the maps \w{\pi\sb{i}\circ G\lo{i}=F\lo{i}\circ\cMs\en{n-1}} realize
the same algebraic attaching map \w{\phi:\oV{n}\olS{n-1}\to\cMs\Vd} for
\w[,]{i=0,1} they are weakly equivalent in the arrow category of
\w[.]{\Chn{\C}{n-1}} Thus \w{G\up{0}} and \w{G\up{1}} are weakly equivalent
fibrant and cofibrant objects in the under category
\w{\cMs\Wp{n-1}\backslash\Chn{\C}{n-1}} with its standard model category
structure (see \cite[Theorem 7.6.5(a)]{PHirM}). We can therefore apply
Lemma \ref{lcylin}(b) to obtain an intermediate object $G$ fitting into the
following diagram, in which all four triangles commute, and
\w{s\lo{i}\circ f\lo{i}=\Id} \wb[:]{i=0,1}

\mydiagram[\label{eqmaparrows}]{
&& \cMs\Wp{n-1} \ar@{^{(}->}[d]\sp{G} \ar@{^{(}->}[lld]\sb{G\lo{0}}
\ar@{^{(}->}[rrd]\sp{G\lo{1}} && \\
\uEs{0} \ar@{_{(}->}[rr]\sp{f\lo{0}}\sb{\simeq}  &&
\ppp\bEs \ar@/_1.5pc/[rr]\sb{s\lo{1}}
\ar@/^1.5pc/[ll]\sp{s\lo{0}} &&
\uEs{1} \ar@{^{(}->}[ll]\sb{f\lo{1}}\sp{\simeq}\\
}

Again applying the functors of \S \ref{dscr}(c) to all three downward arrows
of \wref{eqmaparrows} yields a new $n$-stage sequential
realization \w{\cWp} (corresponding to \w[),]{G:\cMs\W{n-1}\hra \ppp\bEs}
with two new $n$-stage trivial comparison maps
\w{\Phip{i}:\cWpi{i}\to\cWp} \wb[.]{i=0,1}

The two composites:
\mydiagram[\label{eqnewcospan}]{
\cuW{0} \ar[r]\sp{\Phi\up{0}} & \cWpi{0} \ar[r]\sp{\Phip{0}} & \cWp &
\cWpi{1} \ar[l]\sb{\Phip{1}} & \cuW{1} \ar[l]\sb{\Phi\up{1}}
}
\noindent then yield the required cospan of $n$-stage comparison maps.
\end{proof}
%
%
\sect{Higher cohomology operations}
\label{chhc}

The notion of secondary and higher cohomology operations has a long history in
homotopy theory, going back to  the 1950's, but there is no completely
satisfactory general theory of such operations.  Here we follow the point of
view taken in \cite{BMarkH,BJTurnHH}, where they are subsumed under the notion
of general pointed higher homotopy operations.

We want to think of a cofibrant sequential realization $\cW$ for a
space $\bY$ as providing a template for an infinite sequence of operations of order
$n$ \wb[,]{n=2,3,\dotsc} potentially acting on any space $\bZ$ with
\w{\HiR{\bZ}} (abstractly) isomorphic to \w[.]{\HiR{\bY}} The operation of
order $n$ is defined only when specific choices have been made inductively for all
lower order operations in such a way that they all vanish.

The sequence of such choices is called a ``strand'' of the higher cohomology
operation associated to the given sequential realization $\cW$,
and the corresponding ``system of higher cohomology operations'' will be an
equivalence class of strands under comparison maps.

\begin{defn}\label{dshho}
Let  \w{\Gamma=\HiR\bY} be the \TRal associated to a space $\bY$, and
\w{\vare:\Vd\to\Gamma} a CW resolution with CW basis \w[.]{(\oV{n})\sb{n\in\NN}}
Assume we are also given \emph{initial data} consisting of a sequential
realization \w{\cW=\lra{\W{n},\,\tWu{n}}\sb{n\in\NN}}
of \w[,]{\Vd} and an isomorphism of  \TRal[s] \w{\vth:\Gamma\to\HiR{\bZ}}
for some space $\bZ$.

An \emph{$n$-strand} \wb{0\leq n\leq\infty}
\w{\Stl{n}=(\bve{0},\bve{1},\dotsc,\bve{n})} for \w{(\cW,\bZ,\vth)} consists of
a compatible collection of coaugmentations \w{\bve{k}:\bZ\to\W{k}}
\wb{k=0,\dotsc,n} realizing \w{\vth\circ\vare:\Vd\to\HiR{\bZ}} through simplicial
dimension $n$.
Compatibility means that \w{\bve{k-1}=\prn{k}\circ\bve{k}} (see \S \ref{dscr}(c)).
In particular, an \emph{$\infty$-strand} for \w{(\cW,\bZ,\vth)} is an infinite
sequence \w{\Stl{\infty}:=(\bve{0},\dotsc,\bve{n},\dotsc)} of such compatible
coaugmentations.

Given an \wwb{n-1}strand \w{\Stl{n-1}} for \w[,]{(\cW,\bZ,\vth)}
consider the composite $\xi$ of
$$
\bZ~\xra{\bve{n-1}}~\Wn{0}{n-1}~\xra{\Flk{0}{n-1}}~\PoW{n-2}{n}~
\xra{\pp{n-2}}~\ooW{n-2}{n}~\xra{\ip{n-2}}~\PoW{n-3}{n}~,
$$
\noindent which by \wref{eqfirstcoface} represents the component
\w{\qk{2}{n-1}\circ d\sp{1}\circ d\sp{0}\circ \bve{n-1}} of the iterated coface map
from $\bZ$ into \w[.]{\PoW{n-3}{n}} As in Lemma \ref{linducea}, since
\w{d\sp{1}\circ d\sp{0}\circ \bve{n-1}=d\sp{2}\circ d\sp{1}\circ \bve{n-1}} and
\w{\qk{2}{n-1}\circ d\sp{2}=0} by \wref[,]{eqjcoface} we see that $\xi$ is the
zero map. Since \w{\ip{n-2}} is monic, this means that the composite
\w{\pp{n-2}\circ\Flk{0}{n-1}\circ\bve{n-1}} is already zero, so
\w{\Flk{0}{n-1}\circ\bve{n-1}} factors through the fiber
\w{\ooW{n-1}{n}} of \w[.]{\pp{n-2}} We denote the resulting map by
\w[,]{\alk{-1}{n-1}:\bZ~\to~\ooW{n-1}{n}} with
\begin{myeq}\label{eqikakm}
\ip{n-1}\circ\alk{-1}{n-1}~=~\Flk{0}{n-1}\circ \bve{n-1}~,
\end{myeq}
\noindent as in \wref[.]{eqikak}

Note that $\vth$ induces an isomorphism
\w{[\bZ,~\ooW{n-1}{n}]\cong\Gamma\lin{\ooW{n-1}{n}}}
so the homotopy class \w{[\alk{-1}{n-1}]} may be identified with an element
\begin{myeq}\label{eqvalue}
\Val{\Stl{n-1}}\in\Gamma\lin{\ooW{n-1}{n}}~\cong~\Gamma\lin{\Omega\sp{n-1}\oW{n}}~,
\end{myeq}
\noindent called the \emph{value} of the \wwb{n-1}strand \w[.]{\Stl{n-1}}
\end{defn}

\begin{remark}\label{rshho}
We do not need the full sequential realization $\cW$ to define
\w[,]{\Val{\Stl{n}}} but only the restricted cosimplicial set \w{\tWu{n}}
of its \wwb{n-1}st stage \w[.]{\W{n-1}}
Thus a \emph{$0$-strand} is completely determined by a choice of a realization
\w{\bv:\bZ\to\Wn{0}{0}} of \w[.]{\vth\circ\vare:V\sb{0}\to\HiR{\bZ}} Such a $\bv$
always exists, and is unique up to homotopy.

Note also that Definition \ref{dshho} can be stated purely in the language
of \ww{\TsR}-\ma[s] \wh see \cite{BBlaC} and Appendix \ref{apfthm} below.
\end{remark}

\begin{lemma}\label{lvanish}
Given an \wwb{n-1}strand \w{\Stl{n-1}=(\bve{0},\bve{1},\dotsc,\bve{n-1})}
for \w[,]{(\cW,\bZ,\vth)} the coaugmentation \w{\bve{n-1}:\bZ\to\W{n-1}}
extends to a coaugmentation \w{\bve{n}:\bZ\to\W{n}} if and only if
\w{\Val{\Stl{n-1}}=0} in \w[.]{\Gamma\lin{\Omega\sp{n-1}\oW{n}}}
\end{lemma}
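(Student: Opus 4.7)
The plan is to reduce the extension of coaugmentations to a single lifting problem against the fibration $\pp{n-1}: \PoW{n-1}{n} \epic \ooW{n-1}{n}$ and then to exploit the contractibility of $\PoW{n-1}{n}$.

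First I would use the construction of $\W{n}$ recalled in \S\ref{senso} and Remark \ref{rcoffib}. Since $h\bp{n}: \W{n} \to \vWu{n}$ is a trivial Reedy fibration and $\cu{\bZ}$ is Reedy cofibrant (because $\bZ$ is cofibrant in $\Sa$ by Assumption \ref{amodel}), lifting cosimplicial maps out of $\cu{\bZ}$ along $h\bp{n}$ is unobstructed. Hence $\bve{n-1}$ extends along $\prn{n}$ if and only if it extends along $\ppp\prnk{n}{}: \vWu{n} \to \W{n-1}$. Using the pullback description \wref{eqncoskeletn} of $\vWu{n}$ together with the $\F \dashv \U$ adjunction, such an extension is in turn equivalent to producing a coaugmentation of the restricted cosimplicial object $\tWu{n} = \Fib(\widetilde{F})$ whose composition with $\psn{k}{n}$ recovers $\U\bve{n-1}$. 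A coaugmentation of a (restricted) cosimplicial object is determined by its value in dimension $0$ together with the single equality $\td\sp{0}\phi\sp{0} = \td\sp{1}\phi\sp{0}$ in dimension $1$, all higher identities $\td\sp{i}\phi\sp{k} = \td\sp{j}\phi\sp{k}$ for $k\geq 1$ being forced by the cosimplicial relations, so only dimensions $0$ and $1$ matter.

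Next I would unwind this dimension-zero condition. Using the product formula $\tWn{0}{n} = \Wn{0}{n-1}\times\PoW{n-1}{n}$ from \wref{eqdopb}, the sought coaugmentation has the form $(\bve{n-1}, g)$ for some auxiliary map $g: \bZ \to \PoW{n-1}{n}$. Projecting $\td\sp{0}(\bve{n-1},g) = \td\sp{1}(\bve{n-1},g)$ through $\psn{1}{n}$ just recovers the fact that $\bve{n-1}$ is already a coaugmentation of $\W{n-1}$; projecting through $\qk{1}{n}$, the identities \wref{eqzerocoface}--\wref{eqjcoface} together with the formula $\dif{-1}\sb{\bD} = \ip{n-1}\circ\pp{n-1}$ implied by \wref{eqtau} turn the condition into
\[
\Flk{0}{n-1}\circ\bve{n-1} \;=\; \ip{n-1}\circ\pp{n-1}\circ g.
\]
By the defining relation \wref{eqikakm} the left side equals $\ip{n-1}\circ\alk{-1}{n-1}$, and since $\ip{n-1}$ is a monomorphism (fibre inclusion of \wref{eqmodpathloop}), the problem becomes that of finding $g$ with $\pp{n-1}\circ g = \alk{-1}{n-1}$.

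Finally I would solve this lifting problem via contractibility. If a lift $g$ exists, then $\alk{-1}{n-1} = \pp{n-1}\circ g$ is null-homotopic (its factor $g$ lands in the contractible space $\PoW{n-1}{n}$), so $\Val{\Stl{n-1}} = [\alk{-1}{n-1}] = 0$. Conversely, given a null-homotopy $H: \bZ\otimes\Delta[1]\to\ooW{n-1}{n}$ of $\alk{-1}{n-1}$, the fibration $\pp{n-1}$ admits a lift of $H$ starting from the basepoint of $\PoW{n-1}{n}$, and its endpoint supplies $g$; then walking the reductions back up yields $\bve{n}$ with $\prn{n}\circ\bve{n} = \bve{n-1}$. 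The main point to take care over is the bookkeeping in step one \wwh namely that the higher coaugmentation identities are automatic from the $k=0$ case \wwh but this is a direct consequence of the cosimplicial relations together with the fact that $F$ is a chain map, which is precisely what makes $\widetilde{F}$ a map of restricted cosimplicial objects and $\tWu{n}$ a bona fide restricted cosimplicial object.
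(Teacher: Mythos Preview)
Your argument is correct and follows essentially the same route as the paper: reduce to finding a lift $g:\bZ\to\PoW{n-1}{n}$ of $\alk{-1}{n-1}$ along $\pp{n-1}$, and observe that this is possible precisely when $[\alk{-1}{n-1}]=0$ because $\PoW{n-1}{n}$ is contractible. The paper's proof is terser, delegating the reduction through $\vWu{n}$ and $\tWu{n}$ to \S\ref{senso} and the proof of Theorem~\ref{tresg}, whereas you spell out the pullback--adjunction bookkeeping and the fact that a coaugmentation of a restricted cosimplicial object is determined by its dimension-zero value together with the single identity $\td^0\phi^0=\td^1\phi^0$; but the underlying idea is the same.
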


\begin{proof}
If the value is zero, we can choose a nullhomotopy
\w{\Flk{-1}{n}:\bZ\to\PoW{n-1}{n}} for \w[,]{\alk{-1}{n-1}} (in the sense of
\cite[I, \S 2]{QuiH}), with
\w[.]{\pp{n-1}\circ\Flk{-1}{n}=\alk{-1}{n-1}} This extends \w{\bve{n-1}} to a
coaugmentation \w{\tbve{n}:\bZ\to\tWn{0}{n}} as in the proof of Theorem
\ref{tresg}, which also shows how to extend \w{\tbve{n}} to a coaugmentation
for \w[,]{\tWu{n}} and thus (after making it Reedy cofibrant) for \w[.]{\W{n}}

Conversely, since \w{\Wn{0}{n}=\tWn{0}{n}=\Wn{0}{n-1}\times\PoW{n-1}{n}}
by \wref{eqdopb} and step \textbf{(iii)} of \S \ref{senso}, given an
extension \w{\bve{n}:\bZ\to\Wn{0}{n}} of \w[,]{\bve{n-1}} we can compose it
with the projection \w{\qk{0}{n}:\tWn{0}{n}\to\PoW{n-1}{n}} to obtain a
nullhomotopy \w{\Flk{-1}{n}} for \w[.]{\alk{-1}{n-1}}
\end{proof}

\begin{mysubsection}{Correspondence of strands}
\label{scorrespstr}
Note that if \w{\Psi:\Vd\to\Vdp} is an algebraic comparison map between two
CW resolutions for a \TRal $\Gamma$, as in \wref[,]{eqacomp} the mutually inverse
weak equivalences \w{\varphi:\Vd\to\Vdp}  and \w{\rho:\Vdp\to\Vd} induce mutually
inverse isomorphisms of \TRal[s]
\w[.]{\varphi\sb{\#}:\pi\sb{0}\Vd\cong\Gamma\cong\pi\sb{0}\Vdp:\rho\sb{\#}}

On the other hand, if \w{\Phi:\cW\to\,\ccWp} is an
$n$-stage comparison map between two sequential realizations over
$\Psi$, as in \wref[,]{eqncorresp} then
\w{(\oar{n-1})\sb{\ast}:\Gamma\lin{\Omega\sp{n-1}\oW{n}}\hra
\Gamma\lin{\Omega\sp{n-1}\oWp{n}}} is just a split inclusion, with retraction
\w[.]{(\oen{n-1})\sb{\ast}:\Gamma\lin{\Omega\sp{n-1}\oWp{n}}\epic
\Gamma\lin{\Omega\sp{n-1}\oW{n}}}

Let \w{\vth:\Gamma\cong\HiR{\bZ}} be an isomorphism of \TRal[s,] and let
\w{\Stl{n}} and \w{\Stlp{n}} be two $n$-strands for the initial
data \w{(\cW,\bZ,\vth)} and \w[,]{(\ccWp,\bZ,\vth)} respectively.
If \w{\Phi=\lra{\en{n},\,\rn{n},~(\Pon{k}{n})\sb{k=0}\sp{n-1},~
(\oon{k}{n})\sb{k=0}\sp{n-1},~(\Por{k}{n})\sb{k=0}\sp{n-1},~
(\oor{k}{n})\sb{k=0}\sp{n-1}\,}\sb{n=0}\sp{N}} is an $n$-stage comparison map
as above, we write
\w{\Stlp{n}=\rs(\Stl{n})} if \w{\bvep{k}=\rnk{k}{0}\circ\bve{k}} for
each \w[,]{0\leq k\leq n} and \w{\Stl{n}=\es(\Stlp{n})} if
\w{\bve{k}=\enk{k}{0}\circ\bvep{k}} for each \w[.]{0\leq k\leq n}

From \wref[,]{eqmapmodpathloop} \wref[,]{eqcompf} and \wref{eqikakm} we see that
\begin{myeq}\label{eqcorrvals}
\Val{\rs(\Stl{n})}=\oar{n}\sb{\ast}(\Val{\Stl{n}})\hsp \text{and}\hsp
\Val{\es(\Stlp{n})}=\oen{n}\sb{\ast}(\Val{\Stlp{n}})~.
\end{myeq}
\noindent Therefore, given an $n$-stage comparison map
\w{\Phi:\cW\to\,\ccWp} as above, an $n$-strand \w{\Stl{n}} for $\cW$ and an
$n$-strand \w{\Stlp{n}} for \w[,]{\cWp} we see that:
\begin{myeq}\label{eqvancorr}
\begin{array}{l}
(a)\hsn\Val{\Stl{n}}=0\hsn \text{if and only if}\hsn \Val{\rs(\Stl{n})}=0~.\\
(b)\hsm\text{If}\hsn \Val{\Stlp{n}}=0\hsn\text{then}\hsn \Val{\es(\Stlp{n})}=0,
\text{but not necessarily conversely}.
\end{array}
\end{myeq}

This explains the need for the following:
\end{mysubsection}

\begin{defn}\label{dequivstr}
Given spaces $\bY$ and $\bZ$ with \w[,]{\vth:\HiR{\bY}\xra{\cong}\HiR{\bZ}}
we define two equivalence relations $\sim$ and $\approx$ on $n$-strands for $\bZ$
(with respect to various sequential realizations):

The \emph{weak equivalence} relation of strands $\sim$ is generated by
\w{\es(\Stlp{n})\sim\Stlp{n}} for any $n$-strand \w{\Stlp{n}} for
\w{(\cWp,\bZ,\vth)} and any comparison map \w[.]{\Phi:\cW\to\,\ccWp} We
denote the corresponding equivalence class by \w[.]{[\Stl{n}]}

The \emph{strong equivalence} relation of strands $\approx$ is generated by the
relation \w{\es(\Stlp{n})\approx\Stlp{n}} for any $n$-strand \w{\Stlp{n}}
for \w{(\cWp,\bZ,\vth)} and any comparison map \w{\Phi:\cW\to\,\ccWp}
satisfying:
\begin{myeq}\label{eqpoval}
(\snk{n+1}{n})\sb{\#}(\Val{\Stlp{n}})~=~0~,
\end{myeq}
\noindent in the notation of \S \ref{rcomp}. We denote strong equivalence
classes by \w[.]{[[\Stl{n}]]}
\end{defn}

\begin{remark}\label{requivstr}
Clearly \w{\Stl{n}\approx\Stlp{n}} implies that \w[,]{\Stl{n}\sim\Stlp{n}}
and both notions coincide if the comparison map \w{\Phi:\cW\to\,\ccWp} is
trivial \wh that is, if in the underlying algebraic comparison map $\Psi$,
\w{\Vd} and \w{\Vdp} have isomorphic CW bases.

Note also that \w{\Stl{n}\approx\rs(\Stl{n})} (and thus
\w[)]{\Stl{n}\sim\rs(\Stl{n})} for \emph{any} comparison map
\w[,]{\Phi:\cW\to\,\ccWp} since \w{\es\rs(\Stl{n})=\Stl{n}} and
\w{(\snk{n+1}{n})\sb{\#}(\Val{\rs(\Stl{n})})=
(\snk{n+1}{n})\sb{\#}(\oar{n})\sb{\ast}(\Val{\Stl{n}}=0}
by \wref{eqcorrvals} and \wref[.]{eqwkprod}
\end{remark}

\begin{lemma}\label{lmutvanish}
When \w[,]{\Stl{n}\approx\Stlp{n}} \w{\Val{\Stl{n}}=0} if and only if
\w[.]{\Val{\Stlp{n}}=0}
\end{lemma}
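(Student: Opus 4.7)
My plan is to reduce to the generators of the equivalence relation $\approx$ and then exploit the product decomposition provided by \eqref{eqwkprod}. By transitivity it suffices to verify the biconditional one step at a time along a chain $\Stl{n} \approx \cdots \approx \Stlp{n}$ built from these generators, which come in two types: the automatic relation $\Stl{n} \approx \rs(\Stl{n})$, valid for every comparison map by Remark~\ref{requivstr}, and the defining relation $\es(\Stlp{n}) \approx \Stlp{n}$ associated to a comparison map $\Phi : \cW \to \,\ccWp$ that satisfies $(\snk{n+1}{n})\sb{\#}(\Val{\Stlp{n}}) = 0$.

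For generators of the first type the statement is already recorded in \eqref{eqvancorr}(a): $\Val{\Stl{n}} = 0$ if and only if $\Val{\rs(\Stl{n})} = 0$. For generators of the second type, writing $\Stl{n} := \es(\Stlp{n})$, I would apply the representable functor $[\bZ,-]$, identified with $\Gamma\lin{-}$ via $\vth$, to the weak equivalence of simplicial $R$-modules
\[
\snk{n+1}{n} \top \oen{n} : \ooWp{n}{n+1} \xra{\simeq} \ooX{n}{n+1} \times \ooW{n}{n+1}
\]
coming from \eqref{eqwkprod}. Since finite products and coproducts of $R$-modules agree, this yields an isomorphism of abelian groups
\[
\Gamma\lin{\ooWp{n}{n+1}} \;\cong\; \Gamma\lin{\ooX{n}{n+1}} \oplus \Gamma\lin{\ooW{n}{n+1}}
\]
carrying $\Val{\Stlp{n}}$ to the pair whose first coordinate is $(\snk{n+1}{n})\sb{\#}(\Val{\Stlp{n}})$, which vanishes by the hypothesis \eqref{eqpoval}, and whose second coordinate is $(\oen{n})\sb{\#}(\Val{\Stlp{n}}) = \Val{\es(\Stlp{n})} = \Val{\Stl{n}}$ by \eqref{eqcorrvals}. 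Hence $\Val{\Stlp{n}} = 0$ precisely when $\Val{\Stl{n}} = 0$, as required.

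The conceptual point is that the condition \eqref{eqpoval} has been engineered exactly to kill the ``extra'' summand $\Gamma\lin{\ooX{n}{n+1}}$ that otherwise obstructs the converse direction of \eqref{eqvancorr}(b), so nothing deep is happening. I expect the only real friction to be notational: one must keep track of the fact that $\oen{n}$ and $\oon{n}{n+1}$ denote the same vertical map appearing in \eqref{eqmapmodpathloop}, and confirm that \eqref{eqpoval} is being applied at precisely the stage $n+1$ where $\Val{\Stlp{n}}$ lives.
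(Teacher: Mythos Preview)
Your argument is correct and is essentially the paper's own proof, spelled out in more detail: the paper compresses everything into the observation that under \eqref{eqpoval} the element \w{\Val{\Stlp{n}}} is uniquely determined by its image \w{\oen{n}\sb{\ast}(\Val{\Stlp{n}})}, which is exactly your use of the splitting \eqref{eqwkprod}. Your separate treatment of the ``first type'' generator \w{\Stl{n}\approx\rs(\Stl{n})} is harmless but redundant, since Remark~\ref{requivstr} already derives it as a special case of the defining generator.
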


\begin{proof}
This follows from \wref[,]{eqcorrvals} since by \wref{eqpoval} we see that
\w{\Val{\Stlp{n}}\in\Gamma\lin{\Omega\sp{n}\oWp{n+1}}} is uniquely determined by
its image \w{\oen{n}\sb{\ast}(\Val{\Stlp{n}})} in
\w{\Gamma\lin{\Omega\sp{n}\oW{n+1}}} under the projection \w[.]{\oen{n}\sb{\ast}}
\end{proof}

\begin{defn}\label{dnoco}
Given a space $\bY$ with \w[,]{\Gamma:=\HiR{\bY}} we want to think of each
sequential realization $\cW$ for $\bY$ as a template for a countable sequence
\w{\llrra{\bY}=(\llrr{\bY}{n})\sb{n=2}\sp{\infty}} of higher operations, where
for each \w[,]{n\geq 2} we define the \emph{universal $n$-th order cohomology
operation} \w{\llrr{\bY}{n}} associated to the space $\bY$ to be the function
that assigns to every \wwb{n-1}strand \w{\Stl{n-1}} for \w{(\cW,\bZ,\vth)}
the class
$$
\llrr{\bY}{n}(\Stl{n-1})~:=~\Val{\Stl{n-1}}\in \Gamma\lin{\Omega\sp{n-1}\oW{n}}~.
$$

We say that the operation \w{\llrr{\bY}{n}} \emph{vanishes for $\bZ$} if
there is a cofibrant sequential realization $\cW$ and an \wwb{n-1} strand
\w{\Stl{n-1}} for \w{(\cW,\bZ,\vth)} such that \w[.]{\llrr{\bY}{n}(\Stl{n-1})=0}
By Lemma \ref{lmutvanish}, this notion of the vanishing depends only the strong
equivalence classes \w{[[\Stl{n-1}]]} of the strand.
\end{defn}

\begin{defn}\label{dcohervan}
Given spaces $\bY$ and $\bZ$ with \w{\vth:\HiR{\bY}\xra{\cong}\HiR{\bZ}} and a
sequential realization $\cW$ of $\bY$, we say
that \w{\llrra{\bY}} \emph{vanishes coherently for} \w{(\cW,\bZ,\vth)}
if there is an $\infty$-strand \w{\Stl{\infty}} for $\cW$ as in \S \ref{dshho}:
that is, for each \w[,]{n\geq 2} we have a given \wwb{n-1}strand \w{\Stl{n-1}}
for \w{(\cW,\bZ,\vth)} such that \w[,]{\Val{\Stl{n-1}}=0}
which extends to the next $n$-strand \w{\Stl{n}} using Lemma \ref{lvanish}.
\end{defn}

\begin{example}\label{eqcohervan}
For any sequential realization $\cW$ of a space
$\bY$, the sequence \w{\llrra{\bY}} vanishes coherently for \w{(\cW,\bY,\Id\sb{\Gamma})}
since then we have a given coaugmentation \w[,]{\bv:\bY\to\Wu}
which we can then project to each \w{\W{n}} (see Proposition \ref{pnstep})
to yield \w{\bve{n}} showing that \w{\Val{\Stl{n-1}}=0} for the corresponding
\wwb{n-1}strand \w[,]{\Stl{n-1}} by Lemma \ref{lvanish}.
\end{example}

\begin{remark}\label{rvanish}
Note that \emph{a priori}, each individual strand \w{\Stl{n-1}} has a different
template for \w{\llrra{\bY}} \wwh namely, the restricted cosimplicial set
\w{\tWu{n}} of the \wwb{n-1}st stage \w{\W{n-1}} of $\cW$.
However, the following result, which follows from Theorem \ref{tcomp}, shows that
we can in fact use any one cofibrant sequential realization to calculate
\w[:]{\llrr{\bY}{n}}
\end{remark}

\begin{keylemma}\label{lkey}
Given $\bY$ and \w{\vth:\HiR{\bY}\xra{\cong}\HiR{\bZ}} as above,
\w{\llrr{\bY}{n}} vanishes for $\bZ$ if and only if for \emph{every} $n$-stage
cofibrant sequential realization $\cW$ of $\bY$, there is an \wwb{n-1}strand
\w{\Stl{n-1}} for \w{(\cW,\bZ,\vth)} such that \w[.]{\Val{\Stl{n-1}}=0}
\end{keylemma}

\begin{proof}
By definition, \w{\llrr{\bY}{n}} vanishes for $\bZ$ if there is \emph{some}
cofibrant $n$-stage sequential realization \w{\cWp} of $\bY$
and an \wwb{n-1}strand \w{\Stl{n-1}'} for \w{(\cWp,\bZ,\vth)} such that
\w[.]{\Val{\Stl{n-1}'}=0} By Theorem \ref{tcomp} we know that there is a finite
zigzag of cospans of comparison maps connecting \w{\cWp} to $\cW$, say
$$
\Phip{1}:\cuW{0}=\cWp\to\cuW{1},\hsp \Phip{2}:\cuW{2}\to\cuW{1},\hsp
\Phip{3}:\cuW{2}\to\cuW{3}~,
$$
\noindent and so on until \w[.]{\Phip{N}:\cuW{N-1}\to\cuW{N}=\cW} If
\w{\Phip{1}=\lra{\en{k},\,\rn{k},\dotsc\,}\sb{k=0}\sp{n}} as in \wref[,]{eqncorresp}
we set \w{\Stl{n-1}\up{1}:=\rs(\Stl{n-1}')} (an \wwb{n-1}strand for \w[),]{\cuW{1}}
and see from \wref{eqcorrvals} that \w[.]{\Val{\Stl{n-1}\up{1}}=0}
Similarly, if \w{\Phip{2}=\lra{\ppp\en{k},\,\ppp\rn{k},\dotsc\,}\sb{k=0}\sp{n}}
we set \w{\Stl{n-1}\up{2}:=\ppp\es(\Stl{n-1}\up{1})} (an \wwb{n-1}strand for
\w[),]{\cuW{2}} and again see from \wref{eqcorrvals} that
\w[.]{\Val{\Stl{n-1}\up{2}}=0} Continuing in this way we finally obtain
an \wwb{n-1}strand \w{\Stl{n-1}=\Stl{n-1}\up{N}} for \w{\cuW{N}=\cW}
with \w[,]{\Val{\Stl{n-1}}=0} as required.

The converse follows from the fact that \w{\HiR{\bY}} has at least one
CW resolution by \S \ref{rcwres}, and thus there is at least one cofibrant sequential
realization for $\bY$ by Theorem \ref{tres}.
\end{proof}

Clearly, the \wwb{n-1}strands  \w{\Stl{n-1}'} and \w{\Stl{n-1}} are weakly
equivalent.  However, they are not necessarily strongly equivalent, since
there is no reason for \wref{eqpoval} to hold for the even-numbered comparison
maps above \w[,]{\Phip{2}} \w[,]{\Phip{4}} and so on.

\begin{thm}\label{tvanish}
For \w{R=\Fp} or a field of characteristic $0$, let $\bY$ and $\bZ$ be $R$-good
spaces with isomorphic \TRal[s.]  Then the following are equivalent:
\begin{enumerate}
\renewcommand{\labelenumi}{(\alph{enumi})~}
\item The system of higher cohomology operations \w{\llrra{\bY}} vanishes coherently
for \w{(\cW,\bZ,\vth)} for \emph{some} cofibrant sequential realization $\cW$
of $\bY$ and some $\vth$
\item \w{\llrra{\bY}} vanishes coherently for \emph{every} cofibrant sequential
realization of $\bY$
\item $\bY$ and $\bZ$ are $R$-equivalent.
\end{enumerate}
\end{thm}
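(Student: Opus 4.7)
The plan is to prove the cyclic chain \textbf{(b)} $\Rightarrow$ \textbf{(a)} $\Rightarrow$ \textbf{(c)} $\Rightarrow$ \textbf{(a)} $\Rightarrow$ \textbf{(b)}, combining Theorems A and B with standard Bousfield--Kan $R$-completion arguments. The implication \textbf{(b)} $\Rightarrow$ \textbf{(a)} is immediate, since Theorem \ref{tres} produces at least one cofibrant sequential realization of $\bY$.

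For \textbf{(a)} $\Rightarrow$ \textbf{(b)} I would start with any two cofibrant sequential realizations $\cW$ and $\cWp$ of $\bY$, together with an $\infty$-strand $\Stl{\infty}=(\bve{n})\sb{n\in\NN}$ witnessing coherent vanishing of \w{\llrra{\bY}} for \w[.]{(\cW,\bZ,\vth)} By Theorem \ref{tcomp} a locally finite zigzag of comparison maps connects $\cW$ to $\cWp$, and I would transport $\Stl{\infty}$ inductively through this zigzag using the operators \ww{\rs} and \ww{\es} of \S \ref{scorrespstr}, exactly as in the proof of Key Lemma \ref{lkey}. Local finiteness guarantees that each stage $n$ is affected by only finitely many comparison maps, so the process terminates at each level; the compatibility $\bvep{k-1}=\prn{k}\circ\bvep{k}$ is preserved because each comparison map is a morphism of the towers \wref[,]{eqtower} and vanishing of values is preserved by \wref{eqcorrvals} together with Remark \ref{requivstr}.

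For \textbf{(c)} $\Rightarrow$ \textbf{(a)} I would invoke Example \ref{eqcohervan}: the coaugmentation $\bv:\bY\to\Wu$ produced by Theorem \ref{tres} gives the tautological $\infty$-strand for $(\cW,\bY,\Id)$, with all values zero. To transfer this to $\bZ$, note that each $\Wn{k}{n}$ lies in $\G\sb{R}$, so $\Wu$ is a $\G\sb{R}$-resolution of $\bY$ in the sense of \S \ref{sgcge}. A zigzag of $R$-equivalences between the $R$-good spaces $\bY$ and $\bZ$ realizing $\vth$ lifts through this resolution (using the universal property of $\G\sb{R}$-completion) to produce compatible coaugmentations $\bve{n}:\bZ\to\W{n}$ realizing $\vth$; coherent vanishing then follows from Lemma \ref{lvanish}.

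The main content, and the step I expect to be most delicate, is \textbf{(a)} $\Rightarrow$ \textbf{(c)}. Given an $\infty$-strand $(\bve{n})\sb{n\in\NN}$ for $(\cW,\bZ,\vth)$, the compatibility relation $\bve{n-1}=\prn{n}\circ\bve{n}$ assembles the $\bve{n}$ into a single cosimplicial map $\bZ\to\Wu=\holim\sb{n}\W{n}$. Taking $\Tot$ and applying Theorem \ref{tres} together with \S \ref{sgcge} yields a map $\bZ\to\Tot\Wu\simeq\LG\bY$. The main obstacle is to identify the induced homomorphism $\HiR{\LG\bY}\to\HiR{\bZ}$ with $\vth$; I would argue this from the natural isomorphism \wref{eqnatisom} at $k=0$ (which forces $\bve{0}:\bZ\to\oW{0}$ to realize $\vth\circ\vare$ on $V\sb{0}$) together with the Bousfield--Kan cohomology spectral sequence for $\Tot\Wu$, whose $E\sb{2}$-term is computed from the algebraic resolution $\Vd$ and converges to $\HiR{\bY}$. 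Since $R=\Fp$ or a field of characteristic zero, a map between $R$-good spaces inducing an isomorphism on $R$-cohomology is an $R$-equivalence, so $\bZ\to\LG\bY$ is an $R$-equivalence, and composing with the $R$-equivalence $\bY\to\LG\bY$ gives the required zigzag.
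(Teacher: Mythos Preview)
Your overall plan matches the paper's: (a)$\Leftrightarrow$(b) via the transport-along-zigzag argument behind Key Lemma \ref{lkey}, and (a)$\Rightarrow$(c) by assembling the compatible $\bve{n}$ into a coaugmentation $\bv:\bZ\to\Wu$, passing to $\Tot$, and using $R$-goodness of $\bY$. (The Bousfield--Kan spectral sequence you propose for identifying the induced map with $\vth$ is more than is needed; the paper simply observes that the coaugmentation realizes $\vth\circ\vare$ by the defining condition on strands, so the induced map on cohomology is automatically $\vth$.)

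The step that is genuinely underspecified is (c)$\Rightarrow$(a). Invoking ``the universal property of $\G\sb{R}$-completion'' does not by itself produce coaugmentations $\bve{n}:\bZ\to\W{n}$: a map $\bZ\to\LG\bY\simeq\Tot\Wu$ adjoints to a cosimplicial map $\bZ\otimes\Du\to\Wu$, not to a map $\cu{\bZ}\to\Wu$, and the passage from a \emph{zigzag} of $R$-equivalences to an honest map $\bZ\to\LG\bY$ already requires inverting weak equivalences. The paper bypasses all of this by handling the zigzag one edge at a time with an asymmetric argument. For an $R$-equivalence $f:\bZ\to\bY$, one simply precomposes the given coaugmentations $\bY\to\W{n}$ with $f$. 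For an $R$-equivalence $g:\bY\to\bZ$ pointing the other way, the trick is to start instead from a cofibrant sequential realization $\cWp$ of $\bZ$, precompose \emph{its} coaugmentations with $g$ to regard the same tower (now coaugmented from $\bY$) as a sequential realization $\cWpp$ of $\bY$, and then observe that the original coaugmentations $\bZ\to\Wp{n}$ are tautologically an $\infty$-strand for $(\cWpp,\bZ,\vth)$. This direction-reversal manoeuvre, which lets one stay entirely at the level of strict coaugmentations, is the concrete content your sketch is missing.
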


\begin{proof}
(a)$\Leftrightarrow$(b) by  Key Lemma \ref{lkey}\vsn.

\noindent (a)$\Rightarrow$(c):\hsp Assume that \w{\llrra{\bY}} vanishes coherently
\wh that is,  there is an
$\infty$-strand \w{\Stl{\infty}} for some \w{(\cW,\bZ,\vth)} (where $\cW$ need not
be cofibrant), and thus coaugmentations \w{\bve{n}:\bZ\to\W{n}} for
all \w[.]{n\geq 0} These fit together to define a coaugmentation \w{\bv:\bZ\to\Wu}
for \w[,]{\Wu:=\holim\W{n}} which induces an isomorphism
\w[.]{\HiR{\Tot\Wu}\to\HiR{\bZ}}
Since $\bY$ is $R$-good, it is $R$-equivalent to the total space
\w{\Tot\Wu\simeq\LG\bY} (see \S \ref{sgcge}), and thus the map
\w{f:\bZ\to\Tot\Wu} induced by the coaugmentation $\bv$ realizes $\vth$, so
$\bY$ and $\bZ$ are related by a cospan of $R$-equivalences\vsn.

\noindent (c)$\Rightarrow$(a):\hsp Conversely, if $\bY$ and $\bZ$ are
$R$-equivalent, we have a
zigzag of $R$-equivalences from $\bY$ to $\bZ$ inducing an isomorphism of \TRal[s]
\w[,]{\vth:\HiR{\bY}\to\HiR{\bZ}} so it suffices to consider the
following two special cases:

\begin{enumerate}
\renewcommand{\labelenumi}{(\roman{enumi})~}
\item Given a $R$-equivalence \w[,]{f:\bZ\to\bY} and some (not necessarily
cofibrant) sequential realization $\cW$ for $\bY$, by precomposing the
coaugmentations \w{\bve{n}:\bY\to\W{n}} with $f$ we obtain coaugmentations
\w[,]{\bve{n}\circ f:\bZ\to\W{n}} still realizing
\w[,]{\Vd\to\Gamma} since \w{f\sp{\#}:\Gamma\to\HiR{\bZ}} is an
isomorphism. This yields an $\infty$-strand for \w{(\cW,\bZ,\vth)}
by Lemma \ref{lvanish}.
\item On the other hand, given a $R$-equivalence \w{g:\bY\to\bZ} and any
cofibrant sequential realization \w{\cWp} for $\bZ$,
by precomposing the coaugmentations \w{\bve{n}:\bZ\to\Wp{n}}
with $g$ as in (a) we obtain coaugmentations \w{\bve{n}\circ g:\bY\to\Wp{n}}
realizing \w[,]{\Vd\to\Gamma} and thus making \w{\cWp} itself with the
new coaugmentations into a cofibrant sequential realization \w{\cWpp}
for $\bY$. The coaugmentations \w{\bve{n}:\bZ\to\Wp{n}} form an
$\infty$-strand \w{\Stl{\infty}} for \w[,]{ \cWpp} showing that
\w{\llrra{\bY}} vanishes coherently for \w[.]{(\cWpp,\bZ,\vth)}
\end{enumerate}
\noindent This completes the proof.
\end{proof}

\begin{corollary}\label{csecoper}
If $\bY$ and \w{\bY'} are $R$-equivalent $R$-good spaces, any
cofibrant sequential realization \w{\cW=\lra{\W{n},\,\tWu{n}}\sb{n\in\NN}}
for $\bY$ is also a (cofibrant) sequential realization for \w[.]{\bY'}
\end{corollary}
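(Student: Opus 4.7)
The plan is to reduce the corollary directly to Theorem \ref{tvanish}. First I would note that any $R$-equivalence between the $R$-good spaces $\bY$ and \w{\bY'} induces an isomorphism of \TRal[s] \w[,]{\vth:\HiR{\bY}\xra{\cong}\HiR{\bY'}} so the CW resolution \w{\Vd\to\HiR{\bY}} realized by $\cW$ can equally be viewed, via $\vth$, as a CW resolution of \w[.]{\HiR{\bY'}}

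Next I would apply the implication (c)$\Rightarrow$(b) of Theorem \ref{tvanish} to the triple \w[:]{(\bY,\bY',\vth)} since $\bY$ is $R$-equivalent to \w[,]{\bY'} the system \w{\llrra{\bY}} vanishes coherently for \emph{every} cofibrant sequential realization of $\bY$, and in particular for the given $\cW$. By Definition \ref{dcohervan} this produces an $\infty$-strand for \w[,]{(\cW,\bY',\vth)} namely a compatible family of coaugmentations \w{\bvp{n}:\bY'\to\W{n}} realizing \w{\vth\circ\vare} through each simplicial dimension $n$. Together with the unchanged tower \w[,]{(\W{n},\tWu{n})\sb{n\in\NN}} these coaugmentations furnish $\cW$ with the structure of a sequential realization for \w[.]{\bY'}

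Finally, I would observe that the cofibrancy condition of \S \ref{dscr}(d) concerns only the attaching chain maps \w[,]{F:\cMs\W{n-1}\to\bDs} which are intrinsic to the tower and independent of the choice of coaugmentation; hence the new sequential realization for \w{\bY'} inherits cofibrancy from the original $\cW$. There is no serious obstacle beyond Theorem \ref{tvanish} itself: the content of the corollary is essentially the remark that this theorem produces coaugmentations on the \emph{same} tower, rather than replacing the tower as in the proof of case (ii) of (c)$\Rightarrow$(a).
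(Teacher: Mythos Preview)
Your proposal is correct and follows essentially the same approach as the paper's proof: both invoke Theorem \ref{tvanish} (specifically the implication (c)$\Rightarrow$(b)) to produce an $\infty$-strand for \w[,]{(\cW,\bY',\vth)} which furnishes the required coaugmentations. Your version is somewhat more detailed, in particular making explicit the observation that cofibrancy depends only on the tower and not on the coaugmentation, which the paper leaves implicit.
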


\begin{proof}
The $R$-equivalence implies there is an isomorphism
\w[,]{\vth:\HiR{\bY}\cong\HiR{\bY'}} so by the Theorem there is an
$\infty$-strand \w{\Stl{\infty}} for \w[,]{(\cW,\bY',\vth)} and thus a
coaugmentation \w[.]{\bv':\bY'\to\Wu}
\end{proof}

\begin{mysubsection}{Low dimensional cases}
\label{rstrand}
As noted in Remark \wref[,]{rshho} given a simplicial set $\bZ$ equipped with an isomorphism of \TRal[s]
\w{\vth:\Gamma\to\HiR{\bZ}} and a sequential realization $\cW$
for $\Gamma$ as above,  we can always define a coaugmentation
\w{\bve{0}:\bZ\to\Wn{0}{0}} realizing \w[,]{\phi= \vth\circ\vare:V\sb{0}\to\HiR{\bZ}} which is unique up to homotopy,
as in the proof of Lemma \wref[.]{lvanish}
Moreover, if  \w{\udz{0}:\oW{0}\to\oW{1}} realizes the first attaching map
\w[,]{\odz{1}:\oV{1}\to V\sb{0}=\oV{0}} then
\w{\alk{-1}{0}:=\udz{0}\circ\bve{0}} is nullhomotopic, since it
realizes \w{\vare\circ\odz{1}} (see \wref[).]{eqwone}
Thus we can always choose a nullhomotopy \w{\Flk{-1}{0}} for \w[,]{\alk{-1}{0}}
and use it do define \w[,]{\bve{1}:\bZ\to\W{1}} as in the proof of the Lemma.
Note however that while the map \w{\bve{0}} is unique up to homotopy, the map
\w{\bve{1}} depends on our choice of  \w[.]{\Flk{-1}{0}}

This explains why our definition of $n$-th order cohomology operations only makes
sense for \w[.]{n\geq 2}

Thus the first case of interest is \w[.]{n=2} Since \w{\TsR} consists of
abelian group objects, we can replace our $2$-truncated
restricted cosimplicial diagram \w{\W{1}\to\oW{2}} by:
\begin{myeq}\label{eqtwotoda}
\bZ~\xra{\bve{1}}~\Wn{0}{1}~\xra{d\sp{0}-d\sp{1}}~\Wn{1}{1}~
\xra{\udz{1}}~\oW{2}
\end{myeq}
 \noindent where the fact that \w{d\sp{0}\circ\bve{1}=d\sp{1}\circ\bve{1}}
means that the first composite is nullhomotopic,  while the fact that
\w{\udz{1}\circ d\sp{0}} is nullhomotopic and \w{\udz{1}\circ d\sp{1}=0} means that
 the second composite is also nullhomotopic.

 In particular, the first map in \wref{eqtwotoda} represents (a collection of)
$R$-cohomology classes $\alpha$, while the remaining  two represent
$R$-cohomology operations $\xi$ and $\zeta$, with  \w{\xi(\alpha)=0} and
\w[.]{\zeta\circ\xi=0} Thus our universal secondary operation is just
a (collection of) secondary cohomology operations in the sense of Adams
(see \cite{AdamsN}), taking value in
\w[.]{[\bZ,\,\Omega\oW{2}]=\Hu{\ast-1}{\bZ}{R}}
\end{mysubsection}

\begin{remark}\label{rhighercoh}
Our higher cohomology operations are modeled on Adams' (stable) secondary
cohomology operations (see also \cite{HarpSC}). However, the more delicate
questions involving the prerequisites for an \wwb{n+1}st order operation to be
defined, and the dependence on various choices made, are hidden here in the two
components of the $n$-strand \w[,]{\Stl{n}} consisting of:
\begin{enumerate}
\renewcommand{\labelenumi}{(\alph{enumi})}
\item The $n$-th stage \w{\W{n}} in the sequential approximation $\cW$
encodes a preliminary choice of nullhomotopies for that part of the diagram
consisting only of spaces in \w{\TsR} (the representing spaces for cohomology).
\item The data associated to the specific simplicial set $\bZ$ consists of the
coaugmentation \w[,]{\bve{n}:\bZ\to\W{n}} which itself is determined by:
\begin{enumerate}
\renewcommand{\labelenumii}{\roman{enumii}.~}
\item  The coherent system of earlier choices made, encoded in the \wwb{n-1}strand
\w{\Stl{n-1}=(\bve{0},\bve{1},\dotsc,\bve{n-1})} (essentially, the single map
\w[);]{\bve{n-1}}
\item The  single choice of the nullhomotopy \w{\Flk{-1}{n}} for
\w[,]{\alk{-1}{n}} (i.e.,  the value of the previously defined $n$-th order
operation, which must necessarily vanish in order to proceed to the \wwb{n+1}st
step).
\end{enumerate}
\end{enumerate}
\end{remark}

\begin{mysubsection}{Models for rational homotopy theory}
\label{smrht}
When working with \w{R=\QQ} it is convenient to use some of the known
models for rational homotopy theory (see \cite{QuiR}). In particular,
finite type simply connected rational spaces \w{\bY\in\SQ} can be
modelled in the category \w{\DGA} of differential graded commutative
$\QQ$-algebras (CDGAs), using a suitable Sullivan model \w{(A\sp{\ast},d)\in\DGA}
for $\bY$ (see \cite[\S 12]{FHThR}).

The equivalence of homotopy categories \w{\ho\SQ\to\ho\DGA}
is contravariant and takes products to coproducts and path or loop spaces to
cone or suspension objects.  Thus if we try to apply Theorem \ref{tres} to
the model \w{(A\sp{\ast},d)\in\DGA} directly, rather than to \w[,]{\bY\in\SQ}
we will end up with a \emph{simplicial} CDGA \w[,]{\Wd} obtained as the
homotopy colimit of sequential simplicial realization (see \cite{BJTurnHI} for
full details of the simplicial version). We could in fact replace this
simplicial CDGA by a bi-graded CDGA (see \cite{FelM}, and compare \cite{BlaHR}).

In particular, we have an adjunction
\w{\Lambda:\Ch{\ast}\sb{\QQ}\rightleftharpoons\DGA:U} between cochain complexes
and CDGAs, where $U$ is the forgetful functor and \w{\Lambda(V\sp{\ast},d)}
the free graded commutative algebra on the graded vector space \w[,]{V\sp{\ast}}
with \w{d\sp{i}:V\sp{i}\to V\sp{i+1}} extended as a derivation.
Note that each \w{\Lambda(V\sp{\ast},d)} is a Sullivan algebra, and thus a
cofibrant CDGA (see \cite[\S 1]{HessR}).

The functor $\Lambda$ yields formal minimal models for each $\QQ$-GEM, as well
as their cylinders, cones, and suspensions. For example, if \w{V\sp{\ast}} is a
graded $\QQ$-vector space which is degree-wise finite-dimensional, then
\w{\Lambda(V\sp{\ast},0)} is a minimal model for
\w[.]{\prod\sb{i=0}\sp{\infty}\,\KP{V\sp{i}}{i}} Similarly, if
\w{A\sp{\ast}:=\Lambda(V\sp{\ast},d)} is a Sullivan model for some space $\bY$,
and \w{i:(V\sp{\ast},d)\hra C(V\sp{\ast},d)} is the inclusion into the cone
(see \cite[\S 1.5]{WeibHA}), then
\w[,]{i\sb{\ast}:\Lambda(V\sp{\ast},d)\to\Lambda(C(V\sp{\ast},d))} the
corresponding cone inclusion in \w[,]{\DGA} is a CDGA model for the path
fibration \w{p:P\bY\to\bY} (see \cite[\S 14]{FHThR}).
\end{mysubsection}

\begin{mysubsect}{A rational example}
\label{sratex}

Even though the above discussion was stated in terms of the sketch \w{\TR}
in \w{\ho\C} (for \w[),]{\C=\Sa} when \w[,]{R=\QQ} as we just pointed out, we can
also apply it (\emph{mutatis mutandis}) to the corresponding CDGA models.

For example, let \w{\bY} be the simply-connected $\QQ$-local finite type space
represented by the free CDGA \w{(A\sp{\ast},d)} with
\begin{enumerate}
\renewcommand{\labelenumi}{\roman{enumi}.~}
\item \w{A\sp{n}=\QQ\lra{x,y,z}} with \w[.]{dx=dy=dz=0}
\item \w{A\sp{2n-1}=\QQ\lra{u,v}} with \w{du=xy} and \w[.]{dv=xz}
\item \w{A\sp{3n-2}=\QQ\lra{q,r,s,t}} with \w[,]{dq=xu} \w[,]{dr=xv} \w[,]{ds=yu}
and \w[.]{dt=zv}
\item For \w{i>3n} \w{A\sp{i}} is then chosen so that \w{H\sp{i}(A\sp{\ast})=0} for
\w[.]{i\geq 3n}
\end{enumerate}
Here \w{n>1} is odd.

Thus \w{(A\sp{\ast},d)} has rational cohomology \w{\Gamma=\HiQ{\bY}} (as a \TQal
\wh that is, a graded $\QQ$-algebra) with
\begin{enumerate}
\renewcommand{\labelenumi}{\roman{enumi}.~}
\item \w[.]{\Gamma\sp{n}=\QQ\lra{[x],[y],[z]}}
\item \w[.]{\Gamma\sp{2n}=\QQ\lra{[y]\cdot[z]}}
\item \w{\Gamma\sp{3n-1}=\QQ\lra{\omega}} where $\omega$ is represented in
\w{A\sp{\ast}} by \w[.]{zu+yv}
\item \w{\Gamma\sp{i}=0} for \w[.]{i\neq n,2n,3n-1}
\end{enumerate}

Note that the (formal) rational space
\w{\bZ:=(\bS{n}\vee(\bS{n}\times\bS{n})\vee\bS{3n-1})\sb{\QQ}}
also has \w{\HiQ{\bZ}\cong\Gamma} (as \TQal[s).]
It is represented by the Sullivan model \w{(B\sp{\ast},d)} with
\begin{enumerate}
\renewcommand{\labelenumi}{\roman{enumi}.~}
\item \w{B\sp{n}=\QQ\lra{x,y,z}} with \w[.]{dx=dy=dz=0}
\item \w{B\sp{2n-1}=\QQ\lra{u,v}} with \w{du=xy} and \w[.]{dv=xz}
\item \w{B\sp{3n-2}=\QQ\lra{p,q,r,s,t}} with \w[,]{dp=zu+yv} \w[,]{dq=xu}
\w[,]{dr=xv} \w[,]{ds=yu} and \w[.]{dt=zv}
\item \w{B\sp{3n-1}=\QQ\lra{w}} with \w[,]{dw=0} where $\omega$ is
represented in \w{B\sp{\ast}} by $w$.
\item Again, for \w{i>3n} \w{B\sp{i}} is then chosen so that
\w{H\sp{i}(B\sp{\ast})=0} for \w[.]{i\geq 3n}
\end{enumerate}

Let us denote by
\w{\FQ{x\sb{n\sb{1}},\dotsc,x\sb{n\sb{k}}}} the free
\TQal generated by elements \w{x\sb{n\sb{i}}} in degree \w{n\sb{i}}
\wb{i=1,\dotsc,k} \wwh so that
\w{\FQ{x\sb{n\sb{1}},\dotsc,x\sb{n\sb{k}}}~\cong~
\Hu{\ast}{\prod\sb{i=1}\sp{k}~\KQ{n\sb{i}}}{\QQ}} (see \S \ref{smrht})\vsm .

We may choose a (minimal) CW resolution of \TQal[s] \w{\Vd\to\Gamma} with
CW basis \w{(\oV{n})\sb{n\in\NN}} as follows:
\begin{enumerate}
\renewcommand{\labelenumi}{(\alph{enumi})~}
\item \w[,]{\oV{0}=\FQ{x\sb{n},y\sb{n},z\sb{n},w\sb{3n-1}}} with the
obvious augmentation \w[.]{\vare:\oV{0}\to\Gamma}
\item \w[,]{\oV{1}=\FQ{u\sb{2n},v\sb{2n}}\amalg\oU{1}} where \w{\oU{1}} is a
free \TQal with generators in degrees $>3n$. \ The attaching map
\w{\odz{}:\oV{1}\to V\sb{0}=\oV{0}} is defined by
\w{u\sb{2n}\mapsto x\sb{n}y\sb{n}} and \w[.]{v\sb{2n}\mapsto x\sb{n}z\sb{n}}
\item \w[,]{\oV{2}=\FQ{p\sb{3n},q\sb{3n},r\sb{3n},s\sb{3n},t\sb{3n}}\amalg\oU{2}}
where \w{\oU{2}} is again a free \TQal with generators in degrees $>3n$. \

The attaching map
\w{\odz{}:\oV{2}\to V\sb{1}=\oV{1}\amalg s\sb{0}\oV{0}} is defined by
\begin{myeq}\label{eqattachone}
\begin{split}
p\sb{3n}~\mapsto&~
(s\sb{0}z\sb{n}) u\sb{2n}+(s\sb{0}y\sb{n})v\sb{2n},\hsp
q\sb{3n}~\mapsto~(s\sb{0}x\sb{n})u\sb{2n},\\
r\sb{3n}~\mapsto&~(s\sb{0}x\sb{n})v\sb{2n},\hsm
s\sb{3n}~~\mapsto~(s\sb{0}y\sb{n})u\sb{2n},\hsm
t\sb{3n}~~\mapsto~(s\sb{0}z\sb{n})v\sb{2n}~.
\end{split}
\end{myeq}
\item For \w{k\geq 3} the basis \TQal \w{\oV{k}} has generators
in degrees $>3n$.
\end{enumerate}

Denote by \w{\KsQ{\bx\sb{n}}=\Lambda(M\sp{\ast},0)} the formal free CDGA model
for \w{\KQ{n}} (where \w{M\sp{\ast}} is the graded vector space
concentrated in degree $n$ with basis \w{\{\bx\sb{n}\}} (see \S \ref{smrht}).

We can realize \w{\Vd\to\Gamma} through (co)simplicial dimension $2$ and
degree \w{3n} by an augmented simplicial CDGA \w{\Wl{2}\to B\sp{\ast}} with CW
basis \w{(\oWl{n})\sb{n\in\NN}} constructed as follows\vsm:

\noindent\textbf{Step A.} \
First, we construct the simplicial analogue of \w{\Wl{1}} through simplicial
dimension $1$:

\begin{enumerate}
\renewcommand{\labelenumi}{(\alph{enumi})~}
%
\item We let
$$
\oWl{0}~=~
\KsQ{\bx\sb{n},\by\sb{n},\bz\sb{n},\bw\sb{3n-1}}~:=~
\KsQ{\bx\sb{n}}~\amalg~\KsQ{\by\sb{n}}~
\amalg~\KsQ{\bz\sb{n}}~\amalg~\KsQ{\bw\sb{3n-1}}
$$
\noindent with augmentation \w{\bv:\oWl{0}\to A\sp{\ast}} defined by
$$
\bx\sb{n}\mapsto x,\hsp
\by\sb{n}\mapsto y,\hsp\bz\sb{n}\mapsto z,\hsp
\bw\sb{3n-1}\mapsto zu+yv~.
$$
%
\item We let
$$
\tWln{1}{1}~=~\oWl{1}~:=~\KsQ{\bu\sb{2n},\bfv\sb{2n}}~,
$$
\noindent where the untruncated version of \w{\oWl{1}} has an additional free
CDGA coproduct summand \w{D\sb{2}} with generators in degrees $>3n$. \
The attaching map \w{\odz{}:\oWl{1}\to\Wln{0}{2}} is defined
\w[,]{\bu\sb{2n}\mapsto\bx\sb{n}\by\sb{n}}
\w[,]{\bfv\sb{2n}\mapsto\bx\sb{n}\bz\sb{n}} and all other generators sent to $0$.
%
\item Dually to Step \textbf{III} in the proof of Theorem \ref{tres}, we must
add a coproduct summand \w{C\oWl{1}} to obtain
\w[,]{\tWln{0}{1}:=\oWl{0}\amalg C\oWl{1}}
where the cone on the formal CDGA
\w[,]{\oWl{1}=\Lambda[\bu\sb{2n},\bfv\sb{2n}]} which models the
path space \w[,]{P\oW{1}} is the CDGA
\w{C\oWl{1}=\Lambda(\QQ(\bu'\sb{2n},\bfv'\sb{2n},\obu\sb{2n-1},\obv\sb{2n-1}),d)}
with differential \w{d(\obu\sb{2n-1})=-\bu'\sb{2n}} and
\w{d(\obv\sb{2n-1})=-\bfv'\sb{2n}} (see \S \ref{smrht}). We will denote this
simply by \w[.]{\Lambda(\bu'\sb{2n},\bfv'\sb{2n},\obu\sb{2n-1},\obv\sb{2n-1})}

The augmentation \w{\bv:C\oWl{1}\to A\sp{\ast}} is given by
$$
\bu'\sb{2n}\mapsto xy,\hsm \bfv'\sb{2n}\mapsto xz,\hsm
\obu\sb{2n-1}\mapsto u,\hsm \obv\sb{2n-1}\mapsto v~.
$$
%
\item Finally, we must add the degeneracies to \w{\tWln{\bullet}{1}} (as in
Step \textbf{iii} in \S \ref{senso}) \wh that is, we add
two coproduct summands
\w{s\sb{0}\oWl{0}=
\KsQ{s\sb{0}\bx\sb{n},s\sb{0}\by\sb{n},s\sb{0}\bz\sb{n},s\sb{0}\bw\sb{3n-1}}} and
\w{s\sb{0}C\oWl{1}=\Lambda(s\sb{0}\bu'\sb{2n},s\sb{0}\bfv'\sb{2n},
s\sb{0}\obu\sb{2n-1},s\sb{0}\obv\sb{2n-1})} \wwh to obtain the $1$-truncation
of the augmented simplicial CDGA \w{\Wl{1}} (dual to \wref[)]{eqwone} in
degrees $\leq 3n$, depicted in Figure \ref{fig2}.
%
%
%
\begin{figure}[htbp]
\begin{center}
\xymatrix@C=10pt{
\Wln{1}{1}~~= \ar@/_{0.2pc}/[d]\sb{d\sb{0}} \ar@/^{0.2pc}/[d]\sp{d\sb{1}} &
s\sb{0}\oWl{0} \ar@/_{0.2pc}/[d]\sb{\Id}\ar@/^{0.2pc}/[d]\sp{\Id} &
\amalg & \oWl{1}=\Lambda[\bu,\bfv]
\ar[dll]_(0.6){\odz{}} \ar[d]\sp{d\sb{1}=\iota} & \amalg &
s\sb{0}C\oWl{1} \ar@/_{0.2pc}/[dll]\sb{\Id}\ar@/^{0.2pc}/[dll]\sp{\Id}\\
\Wln{0}{1}~~=\ar[d]\sb{\bv} & \oWl{0}=\Lambda[\bx,\by,\bz,\bw] &\amalg &
C\oWl{1}= \Lambda(\bu',\bfv',\obu,\obv)\\
A\sp{\ast}~~= & \Lambda[x,y,z,u,v,q,r,s,t]
}
\end{center}
\caption[fig2]{$\Wl{1}$ \ in degrees $\leq 3n$}
\label{fig2}
\end{figure}
\end{enumerate}

\noindent\textbf{Step B.} \ \
To extend \w{\Wl{1}} to \w[,]{\Wl{2}} we proceed as follows:

\begin{enumerate}
\renewcommand{\labelenumi}{(\alph{enumi})~}
%
\item First, we set
$$
\tWln{2}{2}~=~\oWl{2}~:=~
\KsQ{\bfp\sb{3n},\bq\sb{3n},\br\sb{3n},\bs\sb{3n},\bt\sb{3n}}~,
$$
\noindent (where again we omit generators in degrees $>3n$). \

As a first approximation, we would like to use \wref{eqattachone} to determine
\w[.]{\odz{\oWl{2}}:\oWl{2}\to\Wln{1}{2}} However, although
this will guarantee that \w[,]{d\sb{0}\circ\odz{\oWl{2}}=0} we would then not
have \w{d\sb{1}\circ\odz{\oWl{2}}=0} (see Step \textbf{VII} in the proof of
Theorem \ref{tres}).

We therefore define \w{\odz{\oWl{2}}} by
\begin{myeq}\label{eqattachonec}
\begin{split}
\bfp\sb{3n}~\mapsto&~
(s\sb{0}\bz\sb{n})\bu\sb{2n}+(s\sb{0}\by\sb{n})\bfv\sb{2n}
-s\sb{0}(\bz\sb{n}\bu'\sb{2n})-s\sb{0}(\by\sb{n}\bfv'\sb{2n}),\\
\bq\sb{3n}~\mapsto&~(s\sb{0}\bx\sb{n})\bu\sb{2n}-s\sb{0}(\bx\sb{n}\bu'\sb{2n}),\hsm
\br\sb{3n}~\mapsto~(s\sb{0}\bx\sb{n})\bfv\sb{2n}-s\sb{0}(\bx\sb{n}\bfv'\sb{2n}),\\
\bs\sb{3n}~\mapsto&~(s\sb{0}\by\sb{n})\bu\sb{2n}-s\sb{0}(\by\sb{n}\bu'\sb{2n}),\hsm
\bt\sb{3n}~\mapsto~(s\sb{0}\bz\sb{n})\bfv\sb{2n}-s\sb{0}(\bz\sb{n}\bfv'\sb{2n})~.
\end{split}
\end{myeq}
\noindent with \w{d\sb{1}:\oWl{2}\to\tWln{1}{2}} the inclusion into the new cone
summand
$$
C\oWl{2}~=~\Lambda(\bfp'\sb{3n},\bq'\sb{3n},\br'\sb{3n},\bs'\sb{3n},\bt'\sb{3n},
\obp\sb{3n-1},\obq\sb{3n-1},\obr\sb{3n-1},\obs\sb{3n-1},\obt\sb{3n-1})
$$
\noindent in \w{\tWln{1}{2}=\Wln{1}{1}\amalg C\oWl{2}} (with
\w[,]{d(\obp\sb{3n-1})=\bfp'\sb{3n}} and so on).

The map \w{d\sb{0}=F\sb{1}:C\oWl{2}\to\Wln{0}{1}} is given by
\begin{myeq}\label{eqattachonef}
\begin{split}
\bfp'\sb{3n}~\mapsto&~-\bz\sb{n}\bu'\sb{2n}-\by\sb{n}\bfv'\sb{2n},\hsp
\bq'\sb{3n}~\mapsto~-\bx\sb{n}\bu'\sb{2n},\\
\br'\sb{3n}~\mapsto&~-\bx\sb{n}\bfv'\sb{2n},\hsm
\bs'\sb{3n}~\mapsto~-\by\sb{n}\bu'\sb{2n},\hsm
\bt'\sb{3n}~\mapsto~-\bz\sb{n}\bfv'\sb{2n}\\
\obp\sb{3n-1}~\mapsto&~
\bz\sb{n}\obu\sb{2n-1}+\by\sb{n}\obv\sb{2n-1}-\bw\sb{3n-1},\hsp
\obq\sb{3n-1}~\mapsto~\bx\sb{n}\obu\sb{2n-1},\\
\obr\sb{3n-1}~\mapsto&~\bx\sb{n}\obv\sb{2n-1},\hsm
\obs\sb{3n-1}~\mapsto~\by\sb{n}\obu\sb{2n-1},\hsm
\obt\sb{3n-1}~\mapsto~\bz\sb{n}\obv\sb{2n-1}~.
\end{split}
\end{myeq}
\noindent as in \wref[.]{eqattachone}
%
\item In dimension $0$ we have \w[,]{\tWln{0}{2}=\Wln{0}{1}\amalg C\Sigma\oWl{2}}
where
$$
C\Sigma\oWl{2}~=~
\Lambda(\obp'\sb{3n-1},\obq'\sb{3n-1},\obr'\sb{3n-1},\obs'\sb{3n-1},\obt'\sb{3n-1},
\oobp\sb{3n-2},\oobq\sb{3n-2},\oobr\sb{3n-2},\oobs\sb{3n-2},\oobt\sb{3n-2})
$$
\noindent (with \w[,]{d(\oobp\sb{3n-2})=\obp'\sb{3n-1}} and so on).

The face map \w{d\sb{1}:\tWln{1}{2}\to\tWln{0}{2}} is defined on the new summand
\w{C\oWl{2}} to be the quotient \w{C\oWl{2}\epic\Sigma\oWl{2}} followed by
the inclusion \w[,]{\Sigma\oWl{2}\hra C\Sigma\oWl{2}} which is given by
\w[,]{\bfp\sb{3n}\mapsto 0} \w[,]{\obp\sb{3n-1}\mapsto-\obp'\sb{3n-1}} and so on.

The augmentation \w{\bv:C\Sigma\oWl{2}\to A\sp{\ast}} is given by
\begin{myeq}\label{eqattachoneg}
\begin{split}
\obp'\sb{3n-1}~\mapsto&~0,\hsm \obq'\sb{3n-1}~\mapsto~x\sb{n}u\sb{2n-1},\hsm
\obr'\sb{3n-1}~\mapsto~x\sb{n}v\sb{2n-1},\\
\obs'\sb{3n-1}~\mapsto&~y\sb{n}u\sb{2n-1},\hsm
\obt'\sb{3n-1}~\mapsto~z\sb{n}v\sb{2n-1},\\
\oobp\sb{3n-2}~\mapsto&~0,\hsm
\oobq\sb{3n-2}~\mapsto~-q\sb{3n-2},\hsm
\oobr\sb{3n-2}~\mapsto~-r\sb{3n-2},\\
\oobs\sb{3n-2}~\mapsto&~-s\sb{3n-2},\hsm
\oobt\sb{3n-2}~\mapsto~-t\sb{3n-2}~.
\end{split}
\end{myeq}
\end{enumerate}

The $2$-truncation of \w{\tWln{\bullet}{2}} in degrees $\leq 3n$ \ is depicted in
Figure \ref{fig3}.
%
%
%
\begin{figure}[htbp]
\begin{center}
\xymatrix@C=5pt{
\oWl{2}=\Lambda[\bfp,\bq,\br,\bs,\bt]
\ar[d]\sb{\odz{}} \ar[drr]\sp{\odz{}} \ar@/^{0.8pc}/[drrrr]\sp{\odz{}}
\ar@/^{2.0pc}/[drrrrrr]\sp{d\sb{1}}\\
s\sb{0}\oWl{0} \ar@/_{0.2pc}/[d]\sb{=}\ar@/^{0.2pc}/[d]\sp{=} &
\amalg & \oWl{1}=\Lambda[\bu,\bfv]
\ar[dll]_>>>>>>>>>>{\odz{}} \ar[d]\sp{d\sb{1}} & \amalg &
s\sb{0}C\oWl{1} \ar@/_{0.2pc}/[dll]\sb{=}\ar@/^{0.2pc}/[dll]^>>>>>>>>>>>>{=} &
\amalg & C\oWl{2}\ar@/_{0.2pc}/[dllll]\sp{d\sb{0}}\ar@/^{0.2pc}/[dll]\sp{d\sb{1}} \\
\oWl{0}=\Lambda[\bx,\by,\bz,\bw]\ar[d]^{\bv} &\amalg &
C\oWl{1}= \Lambda(\bu',\bfv',\obu,\obv) \ar[lld]_{\bv} & \amalg & C\Sigma\oWl{2}\\
A\sp{\ast}= \Lambda[x,y,z,u,v,q,r,s,t]
}
\end{center}
\caption[fig3]{$\Wl{2}$ \ in degrees $\leq 3n$}
\label{fig3}
\end{figure}

\noindent\textbf{Step C.} \ \
When we try to map \w{\Wl{2}} to the CDGA \w[,]{B\sp{\ast}} we must modify the
augmentation \w{\bv:\tWln{0}{2}\to A\sp{\ast}} as follows:

In order to realize \w{\vare:V\sb{0}\to\Gamma} we must have
\w{\bv(\bw\sb{3n-1})=w} and otherwise $\bv$ is the same as in Step \textbf{A}.
Therefore,
$$
\bv(\obp'\sb{3n-1})~=~\bv(d\sb{1}(\obp\sb{3n-1}))~=~
\bv(\bz\sb{n}\obu\sb{2n-1}+\by\sb{n}\obv\sb{2n-1}-\bw\sb{3n-1})~=~
zu+yv-w~,
$$
\noindent by \wref[.]{eqattachonef} Therefore, we must map \w{\oobp\sb{3n-2}} to
an element $a$ in \w{B\sp{3n-2}} with \w[.]{d(a)=zu+yv-w} Since \w{d(p)=zu+yv} but
$w$ represents a non-zero element in \w[,]{\Gamma=\Hu{3n-1}{\bZ}{\QQ}} no such $a$
exists. Thus the two rational spaces $\bY$ and $\bZ$ are
not weakly equivalent by Corollary \ref{csecoper}.

Intuitively, the element \w{\omega\in\Gamma\sp{3n-1}} is represented
by the Massey product \w{[zu+yv]=\lra{[y],[x],[z]}} in \w[,]{\Hu{3n-1}{\bY}{\QQ}}
while $\omega$ is not a Massey product in \w{\Hu{3n-1}{\bZ}{\QQ}} since $\bZ$ is
formal. In our language these facts are represented by the nullhomotopies
\w{\obp\sb{3n-1}\mapsto\bz\sb{n}\obu\sb{2n-1}+\by\sb{n}\obv\sb{2n-1}-\bw\sb{3n-1}}
in \w[,]{\bY\to\Wu} where in the analogous construction for $\bZ$ we would have had
\w[.]{\obp\sb{3n-1}\mapsto\bz\sb{n}\obu\sb{2n-1}+\by\sb{n}\obv\sb{2n-1}}
\end{mysubsect}

%
%
\sect{Higher cohomology invariants for maps}
\label{chhim}

The system of higher cohomology operations associated to a \TRal
\w{\HiR{\bY}} described in the previous section may be thought
of as a sequence of obstructions to realizing an algebraic isomorphism
\w{\vth:\HiR{\bY}\xra{\cong}\HiR{\bZ}} by a map \w{f:\bZ\to\bY} (necessarily an
$R$-equivalence) \wh as well as constituting a complete collection
of higher invariants for the  weak $R$-homotopy type of spaces.

In this section we address the analogous problem for arbitrary maps: given two
maps \w{\fu{0},\fu{1}:\bZ\to\bY} which induce the same morphism of \TRal[s]
\w[,]{\psi:\HiR{\bY}\to\HiR{\bZ}} we define a sequence of higher cohomology
operations which vanish coherently if and only if \w{\fu{0}} and \w{\fu{1}} are
$R$-equivalent.

\begin{mysubsection}{Obstructions for lifting homotopies}
\label{solhot}
We start with the \emph{initial data} \w[,]{(\cW,\fu{0},\fu{1})} consisting of
a sequential realization $\cW$ for $\bY$, and two maps \w{\fu{0}} and \w{\fu{1}} as above.
Since \w{\fu{0}} and \w{\fu{1}} induce the same map of \TRal[s,]  there is
a homotopy \w{\Huk{0}{0}:\bZ\otimes I\to\oW{0}=\Wn{0}{0}} between
\w{\bve{0}\circ\fu{0}} and \w[,]{\bve{0}\circ\fu{1}} so
\begin{myeq}\label{eqhukone}
\bve{0}\circ(\fu{0}\bot\fu{1})~=~\Huk{0}{0}\circ(i\sb{0}\bot i\sb{1})
\end{myeq}
\noindent We call \w{\Htl{0}=(\Huk{0}{0})} a $0$-\emph{strand} for
\w[.]{(\cW,\fu{0},\fu{1})}

Recall that the standard cosimplicial space \w{\Du} is given by
the diagram of $n$-simplices with the standard maps between them,
%
%
\noindent where \w{\eta\sp{i}:\Deln{k-1}\hra\Deln{k}} is the inclusion of the $i$-th
face, and \w{\sigma\sp{j}:\Deln{k}\epic\Deln{k-1}} is the $j$-th collapse map
(see \cite[X, 2.2]{BKanH}). Applying the simplicial structure operation
\w{-\otimes\Du} to a fixed space \w{\bZ}
yields a cosimplicial space \w[.]{\bZ\otimes\Du}
\end{mysubsection}

\begin{defn}\label{dstrandmap}
An $n$-\emph{strand}
\w{\Htl{n}=(\Huk{}{0},\dotsc,\Huk{}{m},\dotsc,\Huk{}{n})}
for \w{(\cW,\fu{0},\fu{1})} is a compatible sequence of maps of cosimplicial
spaces \w{\Huk{}{m}:\bZ\otimes I\otimes\Du\to\W{m}} \wb[,]{m=0,\dotsc,n}
each determined by the collection of $k$-homotopies
\w{\Huk{k}{m}:\bZ\otimes(I\times\Del[k])\to\Wn{k}{m}} for \w{0\leq k\leq m}
such that all the downward and upward squares in the following diagram
\mydiagram[\label{eqhukn}]{
\bZ\amalg\bZ \ar[rrr]\sp{\fu{0}\bot\fu{1}} \ar[d]\sp{i\sb{0}\bot i\sb{1}}
&&& \bY \ar[d]\sp{\bve{m}} \\
\bZ\otimes I   ~\ar[rrr]\sp{\Huk{0}{m}}\ar@{..}[d] &&& \Wn{0}{m}\ar@{..}[d] \\
\bZ\otimes (I\times\Deln{k-1})~
\ar[d]\sb{\eta\sp{i}\sb{\ast}}\ar[rrr]\sp{\Huk{k-1}{m}} &&&
\Wn{k-1}{m} \ar[d]\sb{d\sp{i}}\\
\bZ\otimes (I\times\Deln{k})~\ar@/_{1.5pc}/[u]\sb{\sigma\sp{j}\sb{\ast}}
\ar[rrr]\sp{\Huk{k}{m}} &&& \Wn{k}{m} \ar@/_{1.5pc}/[u]\sb{s\sp{j}}
}
\noindent commute for all choices of \w{0\leq i\leq k\leq m} and
\w[.]{0\leq j\leq k-1}

More precisely, the choices of \w{\Huk{k}{m}} for \w{0\leq k\leq m}
uniquely determine a map of cosimplicial spaces
\w[,]{\vHn{m}:\bZ\otimes I\otimes\Du\to\vWu{m}} since the target is
$m$-coskeletal. We then use the left lifting property for the solid
commuting square of cosimplicial spaces:
\mydiagram[\label{eqllpcs}]{
\ast \ar[rr] \ar@{^{(}->}[d] && \W{m} \ar@{->>}[d]\sb{\simeq}\sp{h\bp{m}} \\
\bZ\otimes I\otimes\Du \ar[rr]\sb{\vHn{m}} \ar@{.>}[rru]\sp{\Huk{}{m}} && \vWu{m}
}
\noindent to obtain the required map \w[,]{\Huk{}{m}} unique up to weak equivalence,
using the fact that \w{\bZ\otimes I\otimes\Du} is cofibrant and
the map \w{h\bp{m}:\W{m}\to\vWu{m}} is a trivial fibration by Remark \ref{rcoffib}.

We say that an $n$-strand \w{\Htl{n}} \emph{extends} a given
\wwb{n-1}strand \w{\Htl{n-1}} if
\begin{myeq}\label{eqexthstrand}
\Huk{}{n-1}~=~\prn{n}\circ\Huk{}{n}~:~\bZ\otimes(I\times\Du)\to\W{n-1}
\end{myeq}
\noindent (see \wref[).]{eqtower}

An $\infty$-\emph{strand} is a sequence
\w{\Htl{\infty}:=(\Huk{}{n})\sb{n=1}\sp{\infty}} satisfying \wref{eqexthstrand}
for each \w[.]{n>0}
\end{defn}

\begin{remark}\label{rfoldpoly}
In order to extend a given \wwb{n-1}strand \w{\Htl{n-1}} for
a sequential realization $\cW$ of $\bY$ to an $n$-strand, we need to
produce maps \w{\hHuk{k}{n}:\bZ\otimes(I\times\Deln{k})\to\PoW{n-k-1}{n}}
for \w{0\leq k\leq n} satisfying
\begin{myeq}\label{eqnewfaces}
\begin{cases}
~\hHuk{k}{n}\circ \eta\sp{0}\sb{\ast}~=&~
\Fk{k-1}\circ v\sp{k-1}\circ\Huk{k-1}{n-1}~,\\
~\hHuk{k}{n}\circ \eta\sp{1}\sb{\ast}~=&~\dif{k-2}\circ\hHuk{k-1}{n}~,\\
~\hHuk{k}{n}\circ \eta\sp{i}\sb{\ast}~=&~0\hsm\text{for}\hsm i\geq 2~,
\end{cases}
\end{myeq}
\noindent where \w{\dif{j}=\dif{j}\sb{\bD}} is the differential of \w{\bDs} as in
\wref[.]{eqtau}

Thus, in the following diagram we are given the solid \wwb{n-1}strand for
\w[,]{\W{n-1}} which we  wish to extend by the dashed maps to the (given)
restricted cosimplicial space \w[:]{\tWu{n}}
\mydiagram[\label{eqexttothree}]{
\bZ\amalg\bZ \ar[d]\sp{(i\sb{0}\bot i\sb{1})} \ar[rrrr]\sp{(\fu{0}\bot\fu{1})}
 &&&& \bY \ar[ld]\sb{\Fk{-1}} \ar[d]\sb{\bv} \\
\bZ\otimes(I\times\Deln{0}) \ar@{-->}[rrr]\sp(0.6){\hHuk{0}{n}}
 \ar@/^{2.0pc}/[rrrr]\sp{\Huk{0}{n-1}}
 \ar@<-0.5ex>[d]\sb{\eta\sp{0}\sb{\ast}} \ar@<0.5ex>[d]\sp{\eta\sp{1}\sb{\ast}}
&&&
\PoW{n-1}{n}\hspace*{4mm}\times \ar[d]\sp(0.3){\dif{-1}}
& \Wn{0}{n-1}   \ar[ld]\sb{\Fk{0}\circ v\sp{0}}
\ar@<-0.5ex>[d]\sb{d\sp{0}} \ar@<0.5ex>[d]\sp{d\sp{1}}&
\hspace*{-8mm}= \tWn{0}{n} \\
\bZ\otimes(I\times\Deln{1}) \ar@{-->}[rrr]\sp(0.65){\hHuk{1}{n}}
 \ar@/^{2.0pc}/[rrrr]\sp{\Huk{1}{n-1}}
 \ar@<-2.5ex>[d]\sb{\eta\sp{0}\sb{\ast}}
\sp{\eta\sp{1}\sb{\ast}} \ar[d] \ar@<1ex>[d]\sp{\eta\sp{2}\sb{\ast}}
&&&
\PoW{n-2}{n}\hspace*{4mm}\times \ar[d]\sp(0.3){\dif{0}}
& \Wn{1}{n-1}  \ar[ld]\sb{\Fk{1}\circ v\sp{1}}
\ar@<-2.5ex>[d]\sb{d\sp{0}}\sp{d\sp{1}} \ar[d] \ar@<2ex>[d]\sp{d\sp{2}}&
\hspace*{-8mm}= \tWn{1}{n} \\
\bZ\otimes(I\times\Deln{2}) \ar@{-->}[rrr]\sp(0.65){\hHuk{2}{n}}
 \ar@ /^{2.0pc}/[rrrr]\sp{\Huk{2}{n-1}} \ar@{.}[d]
&& &
\PoW{n-3}{n}\hspace*{4mm}\times \ar@{.}[d]&
\Wn{2}{n-1} \ar@{.}[d]&
\hspace*{-8mm}= \tWn{2}{n}\\
\bZ\otimes (I\times\Deln{n-1}) \ar@{-->}[rrr]\sp(0.65){\hHuk{n-1}{n}}
 \ar@/^{2.0pc}/[rrrr]\sp{\Huk{n-1}{n-1}}
 \ar@<-2.5ex>[d]\sb{\eta\sp{0}\sb{\ast}}\sp{\dotsc}
 \ar@<2.5ex>[d]\sp{\eta\sp{n}\sb{\ast}}
&&&
\PoW{0}{n}\hspace*{4mm}\times \ar[d]\sb(0.4){p}\sp(0.4){\dif{n-2}}
& \Wn{n-1}{n-1}  \ar[ld]\sp{\udz{n}}  &
\hspace*{-8mm}= \tWn{n-1}{n}\\
\bZ\otimes(I\times\Deln{n}) \ar@{-->}[rrr]\sp{\hHuk{n}{n}}
&&& \oW{n} &
}
\end{remark}

\begin{mysubsection}{Folding polytopes}
\label{sfoldp}
Consider the iterated trivial fibration of \wref[:]{eqmodpathloopve}
\begin{myeq}\label{eqxij}
\xi\sp{j}:=
P\Omega\sp{j-1}\sigma\sp{1}\circ\cdots\circ P\Omega\sigma\sp{j-1}\circ P\sigma\sp{j}\circ P\tau\sp{j}:
\PoW{j}{n}\epic P\Omega\sp{j}\oW{n}~.
\end{myeq}
\noindent If we identify the $k$-simplex \w{\Deln{k}} with (a quotient of)
the $k$-cube \w[,]{I\sp{k}} each map
\w[,]{\hHuk{k}{n}:\bZ\otimes(I\times\Deln{k})\to\PoW{n-k-1}{n}} after
post-composing with
\w[,]{\xi\sp{n-k-1}:\PoW{n-k-1}{n}\to P\Omega\sp{n-k-1}\oW{n}}
can be identified by adjunction with a pointed map
\w{\tHuk{k}:(\bZ\otimes I)\otimes I\sp{n}\to\oW{n}} taking certain facets of
\w{I\sp{n}} to the base point.

Moreover, the compatibility conditions of \wref{eqnewfaces} translate into
requirements that the restrictions of the maps \w{\tHuk{k}} to certain facets of
\w{I\sp{n}} match up in an appropriate way. This information can be encoded
by gluing together \w{n+1} $n$-cubes (corresponding to cosimplicial dimensions
\w[)]{0,1,\dotsc,n} along their facets to obtain a single $n$-dimensional cubical
complex, as follows:
\end{mysubsection}

\begin{defn}\label{dfoldp}
The barycentric subdivision, as a triangulation of the standard $n$-simplex
\w[,]{\Deln{n}} exhibits it as a PL-cone on its boundary
\w[.]{\partial \Deln{n}} We may similarly define by induction a triangulation
of the standard $n$-cube \w{I\sp{n}=[0,1]\sp{n}} obtained as the cone on its
boundary \w{\partial I\sp{n}} (more precisely, the join of the
barycenter of \w{I\sp{n}} with the inductively-defined triangulation of
\w[).]{\partial I\sp{n}}

This allows us to define PL-homeomorphisms
\w[,]{\zeta\sp{n}:I\sp{n}\to\Deln{n}} starting with the obvious isomorphism for
\w[,]{n=1} taking boundary to boundary, and extending
to the interior by applying the join with the respective barycenters.

For each \w{n\geq 2}, we consider the
corner $C$ of \w{\partial I\sp{n}} consisting of all \wwb{n-1}facets
\w{(E\sb{k})\sb{k=0}\sp{n-1}} incident with the fixed vertex \w{v=(0,\dotsc,0)}
where \w[.]{E\sb{k}=\{(t\sb{1},\dots,t\sb{n})\in I\sp{n}~:\  t\sb{k+1}=0\}}
 We use the previously defined \w{\zeta\sp{n-1}} to identify
 \w{E\sb{k}\cong I\sp{n-1}} with the $k$-th face \w{\Delnk{n}{k}}
 of \w[.]{\Deln{n}} For the complementary corner \w{C'} (incident with the
 vertex  \w{v'=(1,\dotsc,1)} opposite $v$), we use the orthogonal projection
from the last  vertex of \w{\Deln{n}} onto the face \w{\Delnk{n}{n}}
 opposite it to obtain a subdivision of \w{\Delnk{n}{n}} into
 $n$ \wwb{n-1}dimensional simplices, which we identify with the
 $n$ \wwb{n-1}dimensional facets of \w{\C'} using \w[.]{(\zeta\sp{n-1})\sp{-1}}

 Now for each \w{n\geq 1} consider \w{n+1} standard $n$-cubes
 \w[,]{\Ink{n}{0},\dotsc,\Ink{n}{n}} where we have a PL-isomorphism
 \w{\zeta\sp{n}\sb{k}} as follows:
 $$
 \Ink{n}{k}~\cong~I\sp{k}\times I\sp{n-k}~\xra{\zeta\sp{k}\times\Id}~
 \Deln{k}\times I\sp{n-k}~.
 $$
 \noindent For \w{k<n} we think of the first direction
 of \w{I\sp{n-k}} as the \emph{path} direction, and the remaining \w{n-k-1}
 directions as \emph{loop} directions. This allows us to represent a pointed map
 \w{h:\hbZ\otimes \Deln{k}\to P\Omega\sp{n-k-1}\oW{}} by a map
 \w[.]{\widehat{h}:\hbZ\otimes \Ink{n}{k}\to\oW{}} in a canonical way (sending
 certain facets of \w{\Ink{n}{k}} to the base-point).

  For any \w{1\leq k\leq n} we have two \wwb{k-1}faces \w{\Delnk{k}{0}}
 and  \w{\Delnk{k}{1}} of \w[,]{\Deln{k}} and the \wwb{n-1}dimensional prisms
 \w{\Delnk{k}{0}\times I\sp{n-k}}
 and  \w{\Delnk{k}{1}\times I\sp{n-k}} are identified under the map
 \w{(\zeta\sp{n}\sb{k})\sp{-1}} with two \wwb{n-1}dimensional facets of
 \w[,]{\Ink{n}{k}} which we denote by \w{\Bk{k}{0}} and
 \w[,]{\Bk{k}{1}} respectively. By our convention, if \w{0\leq k<n} we have
 another special \wwb{n-1}dimensional facet of \w[,]{\Ink{n}{k}} denoted by
 $$
 \Ck{k}:=\{(t\sb{1},\dotsc,t\sb{n})\in I\sp{k}\times I\sp{n-k}~|\ t\sb{k+1}=0\}
 $$
 \noindent (the zero-face in the "path direction").

 We now define the $n$-th \emph{folding polytope} \w[,]{\Pn{n}} for each
\w[,]{n\geq 2} by taking the disjoint union of the
\w{n+1} $n$-cubes \w[,]{\Ink{n}{0},\dotsc,\Ink{n}{n}} and identifying
 \w{\Bk{k}{1}} with \w{\Ck{k-1}} for each \w[.]{1\leq k\leq n}
\end{defn}

\begin{lemma}\label{lfoldp}
For each \w[,]{n\geq 2} the folding polytope \w{\Pn{n}} is homeomorphic to
an $n$-ball, with boundary \w{\partial\Pn{n}} homeomorphic to an \wwb{n-1}-sphere.
\end{lemma}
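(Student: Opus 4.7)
The plan is to prove the statement by induction on the number of cubes glued, filtering $\Pn{n}$ by the subspaces obtained after attaching the first $m+1$ cubes. Specifically, for $0\leq m\leq n$ let $Q\sb{m}\subseteq\Pn{n}$ be the image of $\Ink{n}{0}\amalg\dotsb\amalg\Ink{n}{m}$ under the quotient map defining $\Pn{n}$. I claim each $Q\sb{m}$ is PL-homeomorphic to an $n$-ball; taking $m=n$ gives $\Pn{n}$, and the boundary of a PL $n$-ball is automatically a PL $(n-1)$-sphere.

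The base case is immediate since $Q\sb{0}=\Ink{n}{0}$ is a standard $n$-cube. For the inductive step, $Q\sb{m}$ is obtained from $Q\sb{m-1}$ by attaching $\Ink{n}{m}$ along the identification $\Bk{m}{1}\sim\Ck{m-1}$; both of these are PL $(n-1)$-balls (as facets of $n$-cubes, up to the PL-homeomorphism $\zeta\sp{n}\sb{k}$). The standard PL gluing result\wh that joining two PL $n$-balls along PL $(n-1)$-balls in their boundaries produces another PL $n$-ball\wwh then finishes the induction, provided $\Ck{m-1}$ sits as a PL $(n-1)$-ball inside $\partial Q\sb{m-1}$.

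The key combinatorial verification is therefore that the earlier gluings leading to $Q\sb{m-1}$ do not spoil $\Ck{m-1}$. Under the identification $\Ink{n}{m-1}\cong I\sp{m-1}\times I\sp{n-m+1}$, the only part of $\partial\Ink{n}{m-1}$ used in the construction of $Q\sb{m-1}$ (for $m\geq 2$) is $\Bk{m-1}{1}$, which lies entirely in the first factor as $(\zeta\sp{m-1})\sp{-1}(\Delnk{m-1}{1})\times I\sp{n-m+1}$; whereas $\Ck{m-1}$ is the facet given by $t\sb{m}=0$, lying entirely in the $I\sp{n-m+1}$ factor. These two facets of $\Ink{n}{m-1}$ are ``perpendicular'', meeting only in a subcomplex of codimension two, so the interior of $\Ck{m-1}$ (as a facet of $\Ink{n}{m-1}$) is untouched by the earlier identification. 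Hence $\Ck{m-1}$ persists as a PL $(n-1)$-ball in $\partial Q\sb{m-1}$, and the inductive step goes through.

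The main obstacle is precisely this bookkeeping of facet identifications\wh confirming that at each stage, the facet $\Ck{m-1}$ we use to attach the next cube still appears as a codimension-$1$ PL subball in the boundary of what has been built so far, and is not fragmented or partially absorbed by a previous gluing. Once this disjointness-up-to-codimension-two is verified, the rest of the argument reduces to iterated application of the standard PL gluing lemma, and the description of $\partial\Pn{n}$ as an $(n-1)$-sphere then follows at once from the fact that the boundary of a PL $n$-ball is a PL $(n-1)$-sphere.
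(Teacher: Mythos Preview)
The paper states this lemma without proof, so there is nothing to compare against directly. Your inductive argument via the filtration \w{Q\sb{0}\subset Q\sb{1}\subset\dotsb\subset Q\sb{n}=\Pn{n}} together with the standard PL gluing lemma (two PL $n$-balls glued along PL \wwb{n-1}balls in their boundaries yield a PL $n$-ball) is correct and supplies exactly the missing details. The bookkeeping you single out is the right point: in \w[,]{\Ink{n}{m-1}\cong I\sp{m-1}\times I\sp{n-m+1}} the facet \w{\Bk{m-1}{1}} used in the previous gluing is determined by the first factor, while \w{\Ck{m-1}=\{t\sb{m}=0\}} is determined by the second, so they meet only in codimension two; hence \w{\Ck{m-1}} survives as an embedded PL \wwb{n-1}ball in \w[,]{\partial Q\sb{m-1}} and the induction goes through. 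The boundary statement then follows automatically.
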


\begin{remark}\label{rfoldp}
 Note that all the faces \w{(\Bk{k}{1})\sb{k=1}\sp{n}} and
 \w{(\Ck{k})\sb{k=0}\sp{n-1}} are now interior to \w[,]{\Pn{n}} while
 the remaining facets  of the cubes \w[,]{\Ink{n}{k}} including
 \w[,]{(\Bk{k}{0})\sb{k=1}\sp{n}} constitute the boundary
 \w[.]{\partial\Pn{n}}
\end{remark}

\begin{example}\label{egfoldp}
The four constituent $3$-cubes of \w{\Pn{3}} are illustrated in
Figure \ref{fig4}, with the dotted arrows indicating glued faces.
Note that the two faces \w{\Bk{k}{0}} and \w{\Bk{k}{1}} are adjacent
for \w[,]{2\leq k\leq n} while \w{\Bk{1}{0}} and \w{\Bk{1}{1}} are
opposite each other (since the same is true of \w{\Delnk{k}{0}} and
\w{\Delnk{k}{1}} in \w[).]{\Deln{k}} On the other hand, \w{\Ck{k}} is always
adjacent to both \w{\Bk{k}{0}} and \w[.]{\Bk{k}{1}}

%
%
\begin{figure}[htbp]
\begin{center}
\begin{picture}(405,130)(0,-15)
%
%
\put(47,110){$\Ink{3}{0}$}
%
%
\put(0,0){\circle*{3}}
\put(0,0){\line(1,0){90}}
\put(0,0){\line(1,1){30}}
\put(0,0){\line(0,1){90}}
\put(90,0){\circle*{3}}
\put(90,0){\line(-1,1){30}}
\put(90,0){\line(0,1){90}}
\put(0,90){\circle*{3}}
\put(0,90){\line(1,-1){30}}
\put(0,90){\line(1,0){90}}
\put(90,90){\circle*{3}}
\put(90,90){\line(-1,-1){30}}
%
%
\put(30,30){\circle*{3}}
\put(30,30){\line(1,0){30}}
\put(30,30){\line(0,1){30}}
\put(60,30){\circle*{3}}
\put(60,30){\line(0,1){30}}
\put(30,60){\circle*{3}}
\put(30,60){\line(1,0){30}}
\put(60,60){\circle*{3}}
\put(68,43){\scriptsize $\Ck{0}$}
\bezier{30}(80,29)(105,19)(130,9)
\put(84,28){\vector(-4,1){10}}
\put(128,9){\vector(3,-1){10}}
%
%
\put(152,110){$\Ink{3}{1}$}
%
%
\put(105,0){\circle*{3}}
\put(105,0){\line(1,0){90}}
\put(105,0){\line(1,1){30}}
\put(105,0){\line(0,1){90}}
\put(195,0){\circle*{3}}
\put(195,0){\line(-1,1){30}}
\put(195,0){\line(0,1){90}}
\put(105,90){\circle*{3}}
\put(105,90){\line(1,-1){30}}
\put(105,90){\line(1,0){90}}
\put(195,90){\circle*{3}}
\put(195,90){\line(-1,-1){30}}
\put(144,73){\scriptsize $\Bk{1}{0}$}
\put(144,13){\scriptsize $\Bk{1}{1}$}
%
%
\put(135,30){\circle*{3}}
\put(135,30){\line(1,0){30}}
\put(135,30){\line(0,1){30}}
\put(165,30){\circle*{3}}
\put(165,30){\line(0,1){30}}
\put(135,60){\circle*{3}}
\put(135,60){\line(1,0){30}}
\put(165,60){\circle*{3}}
\put(173,43){\scriptsize $\Ck{1}$}
\bezier{6}(195,45)(200,45)(205,45)
\put(195,45){\vector(-1,0){8}}
\put(207,45){\vector(1,0){8}}
%
%
\put(257,110){$\Ink{3}{2}$}
%
%
\put(210,0){\circle*{3}}
\put(210,0){\line(1,0){90}}
\put(210,0){\line(1,1){30}}
\put(210,0){\line(0,1){90}}
\put(300,0){\circle*{3}}
\put(300,0){\line(-1,1){30}}
\put(300,0){\line(0,1){90}}
\put(210,90){\circle*{3}}
\put(210,90){\line(1,-1){30}}
\put(210,90){\line(1,0){90}}
\put(300,90){\circle*{3}}
\put(300,90){\line(-1,-1){30}}
\put(248,73){\scriptsize $\Bk{2}{0}$}
\put(252,43){\scriptsize $\Ck{2}$}
%
%
\put(240,30){\circle*{3}}
\put(240,30){\line(1,0){30}}
\put(240,30){\line(0,1){30}}
\put(270,30){\circle*{3}}
\put(270,30){\line(0,1){30}}
\put(240,60){\circle*{3}}
\put(240,60){\line(1,0){30}}
\put(270,60){\circle*{3}}
\put(221,43){\scriptsize $\Bk{2}{1}$}
\bezier{25}(270,45)(293,45)(316,45)
\put(273,45){\vector(-1,0){8}}
\put(312,45){\vector(1,0){8}}
%
%
\put(362,110){$\Ink{3}{3}$}
%
%
\put(315,0){\circle*{3}}
\put(315,0){\line(1,0){90}}
\put(315,0){\line(1,1){30}}
\put(315,0){\line(0,1){90}}
\put(405,0){\circle*{3}}
\put(405,0){\line(-1,1){30}}
\put(405,0){\line(0,1){90}}
\put(315,90){\circle*{3}}
\put(315,90){\line(1,-1){30}}
\put(315,90){\line(1,0){90}}
\put(405,90){\circle*{3}}
\put(405,90){\line(-1,-1){30}}
\put(357,73){\scriptsize $\Bk{3}{0}$}
%
%
\put(345,30){\circle*{3}}
\put(345,30){\line(1,0){30}}
\put(345,30){\line(0,1){30}}
\put(375,30){\circle*{3}}
\put(375,30){\line(0,1){30}}
\put(345,60){\circle*{3}}
\put(345,60){\line(1,0){30}}
\put(375,60){\circle*{3}}
\put(325,43){\scriptsize $\Bk{3}{1}$}
\end{picture}
\end{center}

\caption[fig4]{The four $3$-cubes of $\Pn{3}$}
\label{fig4}
\end{figure}
\end{example}

\begin{lemma}\label{lmapfoldp}
Given two maps \w{\fu{0},\fu{1}:\bZ\to\bY}
which induce the same algebraic homomorphism of \TRal[s]  \w{\HiR{\bY}\to\HiR{\bZ}}
and a sequential realization $\cW$ for $\bY$, let
\w{\Huk{}{n-1}:\bZ\otimes I\otimes\Du\to\W{n-1}} be an \wwb{n-1}strand as
in \S \ref{solhot}. Then there is a one-to-one correspondence between
collections of  maps
$$
\hHuk{k}{n}:\bZ\otimes(I\times\Deln{k})\to\PoW{n-k-1}{n}
\hsp \text{for}\hsm 0\leq k\leq n
$$
\noindent as in \S \ref{rfoldpoly} (satisfying \wref[),]{eqnewfaces} and maps
\w{h:(\bZ\otimes I)\otimes\Pn{n}\to\oW{n}} such that
\begin{myeq}\label{eqmapfoldp}
h\rest{(\bZ\otimes I)\otimes \Bk{k}{0}}=
\xi\sp{n-k-1}\circ\Fk{k-1}\circ v\sp{k-1}\circ\Huk{k-1}{n-1}\hsm
\text{for}\hsm 1\leq k\leq n \hs\text{and}\hsm
h\rest{(\bZ\otimes I)\otimes E}=\ast,
\end{myeq}
\noindent where
\w[.]{E:=\partial\Pn{n}\setminus\bigcup\sb{k=1}\sp{n}\,\Bk{k}{0}}
\end{lemma}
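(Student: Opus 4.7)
The plan is to unwind the data of a compatible collection $\{\hHuk{k}{n}\}\sb{k=0}\sp{n}$ into a single map out of the folding polytope by using adjunction to replace each $\PoW{n-k-1}{n}$-valued map by an $\oW{n}$-valued map on an $n$-cube, and then checking that the facet identifications defining $\Pn{n}$ correspond precisely to the compatibility conditions \wref[.]{eqnewfaces}

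First, for each \w[,]{0\leq k\leq n} I would post-compose $\hHuk{k}{n}$ with the trivial fibration $\xi\sp{n-k-1}$ of \wref{eqxij} and then apply the standard cube/path adjunction, using the fact that $P\Omega\sp{n-k-1}\oW{n}$ is one path direction wrapped inside $n-k-1$ loop directions. Under the PL-homeomorphism $\zeta\sp{n}\sb{k}:\Ink{n}{k}\xra{\cong}\Deln{k}\times I\sp{n-k}$ of Definition \ref{dfoldp}, the result is a pointed map $\tHuk{k}:(\bZ\otimes I)\otimes\Ink{n}{k}\to\oW{n}$ whose restriction to every boundary facet of $\Ink{n}{k}$ other than \w[,]{\Bk{k}{0}} \w[,]{\Bk{k}{1}} and $\Ck{k}$ is the constant basepoint map: the loop and path-endpoint-$1$ facets of $I\sp{n-k}$ by the basepoint convention built into the $P$ and $\Omega$ constructions, and the facets $\Delnk{k}{i}\times I\sp{n-k}$ for $i\geq 2$ by the vanishing clause of \wref[.]{eqnewfaces}

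Second, I would verify the gluing. On $\Bk{k}{1}\cong\Delnk{k}{1}\times I\sp{n-k}$ the map $\tHuk{k}$ is the adjoint of $\xi\sp{n-k-1}\circ\hHuk{k}{n}\circ\eta\sp{1}\sb{\ast}$, which by \wref{eqnewfaces} equals $\xi\sp{n-k-1}\circ\ip{n-k-1}\circ\pp{n-k-1}\circ\hHuk{k-1}{n}$; inspection of \wref{eqmodpathloopve} shows this is exactly the adjoint of the restriction of $\hHuk{k-1}{n}$ to the path-coordinate-zero face, i.e.\ the restriction of $\tHuk{k-1}$ to $\Ck{k-1}$. Hence the collection $\{\tHuk{k}\}$ agrees on identified facets and assembles into a single map $h:(\bZ\otimes I)\otimes\Pn{n}\to\oW{n}$. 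The remaining $i=0$ clause of \wref{eqnewfaces} fixes the value of $h$ on $\Bk{k}{0}\cong\Delnk{k}{0}\times I\sp{n-k}$ to be the adjoint of $\xi\sp{n-k-1}\circ\Fk{k-1}\circ v\sp{k-1}\circ\Huk{k-1}{n-1}$, which is exactly the content of \wref[;]{eqmapfoldp} every other facet of every $\Ink{n}{k}$ remains in $\partial\Pn{n}\setminus\bigcup\sb{k=1}\sp{n}\Bk{k}{0}=E$ and is sent to the basepoint. Conversely, from any $h$ satisfying \wref{eqmapfoldp} one recovers each $\hHuk{k}{n}$ by restricting $h$ to $\Ink{n}{k}$ and inverting the adjunction; the identifications $\Bk{k}{1}\equiv\Ck{k-1}$ and the basepoint condition on $E$ translate back, via \wref[,]{eqtau} into the three clauses of \wref[.]{eqnewfaces} Naturality of the cube/path adjunction makes these two constructions mutually inverse.

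The main obstacle is combinatorial bookkeeping: one must match the facets of each $\Ink{n}{k}$ with the appropriate cosimplicial and path/loop operations so that $\pp{n-k-1}$-evaluation on paths in $\PoW{n-k-1}{n}$ corresponds exactly to the face of $\Ink{n}{k-1}$ identified with $\Bk{k}{1}$ of $\Ink{n}{k}$, that the loop/path basepoint constraints built into the adjunction line up with the non-gluing facets on $\partial\Pn{n}$, and that the $i\geq 2$ vanishing in \wref{eqnewfaces} matches the remaining $\Delnk{k}{i}$-prisms still on $\partial\Pn{n}$. Once the adjunction is set up with the correct orientations of the path and loop coordinates, the verification of the bijection is formal.
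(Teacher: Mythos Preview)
Your forward direction matches the paper's argument and is essentially correct: post-composing with \w{\xi\sp{n-k-1}} and passing to the cube adjunction gives the maps \w[,]{\tHuk{k}} and your analysis of the facets \w[,]{\Bk{k}{0}} \w[,]{\Bk{k}{1}} \w{\Ck{k-1}} is the same as the paper's.

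There is, however, a genuine gap in your converse direction. Restricting $h$ to \w{\Ink{n}{k}} and ``inverting the adjunction'' only produces maps
\w[,]{\ppp\hHuk{k}{n}:\bZ\otimes(I\times\Deln{k})\to P\Omega\sp{n-k-1}\oW{n}}
not maps into \w[.]{\PoW{n-k-1}{n}} Since in the forward direction you yourself composed with the trivial fibration \w[,]{\xi\sp{n-k-1}} you cannot simply ``invert'' to get back to \w[:]{\PoW{n-k-1}{n}} you must \emph{lift} along \w[.]{\xi\sp{n-k-1}} The paper does exactly this, by an induction on \w{k\geq 0} using the LLP for the cofibration
\w{\bZ\otimes(I\times\partial\Deln{k})\hra\bZ\otimes(I\times\Deln{k})}
against the trivial fibration \w[.]{\xi\sp{n-k-1}} The point of the induction is that the boundary map \w{L:\bZ\otimes(I\times\partial\Deln{k})\to\PoW{n-k-1}{n}} needed for the lifting square is assembled from the \emph{already lifted} \w{\hHuk{k-1}{n}} (via \w{\dif{n-k}} on the $1$-face), the given \w{\Fk{k-1}\circ v\sp{k-1}\circ\Huk{k-1}{n-1}} (on the $0$-face), and zero (on the $i\geq 2$ faces); commutativity of the lifting square uses the previously verified relations \wref{eqnewfaces} at level \w[.]{k-1} Without this inductive lifting you do not obtain maps into the correct target, and the compatibility \wref{eqnewfaces} for the lifts is not automatic.

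For the same reason, your final sentence---that the two constructions are ``mutually inverse'' by naturality---is not correct as stated: composing with \w{\xi\sp{n-k-1}} is not injective, and the lifts in the converse direction involve choices. The lemma's ``one-to-one correspondence'' should be read as an existence correspondence (which is what is used in Proposition~\ref{pvanish}), and the paper's proof establishes precisely that.
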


\begin{proof}
Given a map
\w[,]{\hHuk{k}{n}:\bZ\otimes I\otimes\Deln{k}\to\PoW{n-k-1}{n}}
we obtain a map \w[,]{\tHuk{k}\lo{n}:(\bZ\otimes I)\otimes I\sp{n}\to\oW{n}}
where we identify \w{I\sp{k}} with \w{\Deln{k}} using
\w[,]{\zeta\sp{k}} and taking the \wwb{k+1}st coordinate for the path direction and
the remaining \w{n-k-1} coordinates for the loop directions, as in \S \ref{dfoldp}.

The first condition in \wref{eqnewfaces} says that on \w{\Bk{k}{0}} (corresponding
to the $0$-face of \w[),]{\Deln{k}} \w{\hHuk{k}{n}} equals
\w[.]{\Fk{k-1}\circ v\sp{k-1}\circ\Huk{k-1}{n-1}} The second condition there says
that on \w{\Bk{k}{1}} (corresponding to the $1$-face of \w[),]{\Deln{k}}
\w{\hHuk{k}{n}} equals  \w{\dif{n-k}\circ\hHuk{k-1}{n}}
(where \w{\delta\sp{n-k}} is defined in \wref[),]{eqtau} which stated in terms
of cubes means that it coincides with \w{\hHuk{k-1}{n-1}}
restricted to \w[.]{\Ck{k-1}} Since the coface maps \w{d\sp{i}} into
\w{P\Omega\sp{n-k-1}\oW{n}} vanish for \w[,]{i\geq 2} and
\w{\hHuk{k-1}{n}} also vanishes at the other end of the path direction, and at both
ends of the loop directions, we obtain the description above.

Conversely, given such a map $\tHuk{}$, we use its restrictions to the $n$-cubes
\w{\Ink{0}{n},\dotsc,\Ink{n}{n}} to define the maps \w[,]{\tHuk{k}} and thus maps
\w{\ppp\hHuk{k}{n}:\bZ\otimes(I\times\Deln{k})\to P\Omega\sp{n-k-1}\oW{n}}
satisfying
\begin{myeq}\label{eqnewfacesm}
\begin{cases}
~\ppp\hHuk{k}{n}\circ \eta\sp{0}\sb{\ast}~=&~
\xi\sp{n-k-1}\circ\Fk{k-1}\circ v\sp{k-1}\circ\Huk{k-1}{n-1}~,\\
~\ppp\hHuk{k}{n}\circ \eta\sp{1}\sb{\ast}~=&~
\xi\sp{n-k-1}\circ\dif{n-k}\circ\ppp\hHuk{k-1}{n}~,\\
~\ppp\hHuk{k}{n}\circ \eta\sp{i}\sb{\ast}~=&~0\hsm\text{for}\hsm i\geq 2
\end{cases}
\end{myeq}
\noindent for \w{\xi\sp{j}} as in \wref[.]{eqxij} We now show by induction
on \w{0\leq k} that these lift to maps
\w{\hHuk{k}{n}:\bZ\otimes(I\times\Deln{k})\to\PoW{n-k-1}{n}} satisfying
\wref[,]{eqnewfaces} and
\begin{myeq}\label{eqnewold}
\ppp\hHuk{k}{n}~=~\xi\sp{n-k-1}\circ\hHuk{k}{n}
\end{myeq}

Indeed, the inductively-defined lift \w{\hHuk{k-1}{n}} induces a map
\w{L:\bZ\otimes(I\times\partial\Deln{k})\to\PoW{n-k-1}{n}} fitting into
the following solid commutative diagram:
\mytdiag[\label{eqliftingh}]{
\bZ\otimes(I\times\Deln{k-1}) \ar@{^{(}->}[dd]\sb{\eta\sp{0}\sb{\ast}}
\ar@/^{2.5pc}/[rrr]\sp{\Huk{k-1}{n-1}} &
\bZ\otimes(I\times\Deln{k-1}) \ar@{^{(}->}[ldd]\sb{\eta\sp{1}\sb{\ast}}
\ar[rd]\sp{\hHuk{k-1}{n}} && \Wn{k-1}{n-1} \ar[d]\sp{v\sp{k-1}} \\
&\bZ\otimes(I\times\Deln{k-1})
\ar@{^{(}->}[ld]\sb{\eta\sp{i}\sb{\ast}}\sp{(i\geq 2)}
\ar[rrd]\sb{0} & \PoW{n-k}{n} \ar[rd]\sp{\dif{n-k}} &
C\sp{k-1}\W{n-1} \ar[d]\sp{\Fk{k-1}} \\
\bZ\otimes(I\times\partial\Deln{k}) \ar[rrr]\sp{L} \ar@{^{(}->}[d]\sb{\inc\sb{\ast}}
&&& \PoW{n-k-1}{n} \ar@{->>}[d]\sb{\simeq}\sp{\xi\sp{n-k-1}} \\
\bZ\otimes(I\times\Deln{k}) \ar[rrr]\sb{\ppp\hHuk{k}{n}}
\ar@{.>}[rrru]\sb{\hHuk{k}{n}} &&& P\Omega\sp{n-k-1}\oW{n}
}
\noindent where the upper squares fit together to define $L$ by induction,
using \wref[,]{eqnewfaces} and the bottom solid square then commutes
by \wref{eqnewfacesm} and \wref[.]{eqnewold}

Since \w{\bZ} is cofibrant in \w{\C} by Assumption \wref[,]{amodel}  the map
\w{\inc\sb{\ast}} is a cofibration   cf. \cite[ II, \S 2]{QuiH}. Moreover
\w{\xi\sp{n-k-1}} is a trivial fibration, so we have the lifting \w{\hHuk{k}{n}}
by the LLP. The fact that \wref{eqliftingh} commutes implies that
\wref{eqnewfaces} holds for $k$, too. To start the induction for \w[,]{k=0}
we just need the fact that \w{\xi\sp{n-1}} is a trivial fibration and \w{Z\otimes I}
is cofibrant with \w[,]{L=0} since \wref{eqnewfaces} is then vacuous.
\end{proof}

\begin{defn}\label{dvalstrand}
Assume given initial data \w{(\cW,\fu{0},\fu{1}:\bZ\to\bY)} with a
corresponding \wwb{n-1}strand \w[,]{\Htl{n-1}=(\Huk{}{m})\sb{m=0}\sp{n-1}}
as in \S \ref{solhot}. We associate to this  a map
\w{g:(\bZ\otimes I)\otimes\partial\Pn{n}\to\oW{n}} which sends
\w{(\bZ\otimes I)\otimes\Bk{k}{0}} to \w{\oW{n}} by
\w{\Fk{k}\circ\Huk{k}{n}} for each \w[,]{1\leq k\leq n} and all other
\wwb{n-1}cubes of \w{\partial\Pn{n}} to the base-point.
Here we use the convention of \wref[,]{eqconvent} so \w[.]{\Fk{n}=\udz{n}}

Since at most two additional \wwb{n-1}facets of \w{I\sp{n}\lo{k}} are
identified with \wwb{n-1}facets of \w[,]{I\sp{n}\lo{k\pm 1}} we may think of
\w{P\Omega\sp{n-k}\oW{n}} as contained in
\w[,]{\mapa(d\sb{1}\sp{0}\Ink{n}{k},\oW{n})} so the map induced by
\w{\Huk{k-1}{n}\circ\Fk{k}} is well-defined. Moreover, these maps are
compatible for adjacent values of $k$ by \wref[.]{eqnewfaces}

By Lemma \ref{lfoldp} we can think of  $g$ as a map
\w[,]{(\bZ\otimes I)\otimes\bS{n-1}\to\oW{n}} and because all maps are pointed,
this actually factors through the half-smash
$$
(\bZ\otimes I)\ltimes\bS{n-1}~:=~
((\bZ\otimes I)\times\bS{n-1})/(\ast\times\bS{n-1})~,
$$
\noindent which can be canonically identified with
\w{\Sigma\sp{n-1}(\bZ\otimes I)\vee (\bZ\otimes I)} (see \cite{BJiblSL}).
Moreover, the map \w{(\bZ\otimes I)\to \oW{n}} in question is nullhomotopic
for \w[,]{n\geq 1} so we may restrict attention to the factor
\w[,]{g':\Sigma\sp{n-1}(\bZ\otimes I)\to \oW{n}}
and define the \emph{value} of the \wwb{n-1}strand \w{\Htl{n-1}}
to be the class
\begin{myeq}\label{eqvalmapstr}
\Val{\Htl{n-1}}~:=~[g']\in[\Sigma\sp{n-1}(\bZ\otimes I),\,\oW{n}]~\cong~
[\bZ,\,\Omega\sp{n-1}\oW{n}]~,
\end{myeq}
\noindent so it consists of a set of cohomology classes for $\bZ$.
\end{defn}

\begin{prop}\label{pvanish}
Under the assumptions of Lemma \ref{lmapfoldp}, \w{\Val{\Htl{n-1}}=0}
if and only \w{\Htl{n-1}} extends to an $n$-strand \w[.]{\Htl{n}}
\end{prop}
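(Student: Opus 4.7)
The plan is to derive Proposition \ref{pvanish} as a direct consequence of the folding polytope description of Lemma \ref{lmapfoldp}, together with the fact that $\Pn{n}$ is an $n$-ball. The key translation is that extending an $(n-1)$-strand to an $n$-strand is the same as filling in a map on the boundary of $\Pn{n}$ to a map on all of $\Pn{n}$.

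More precisely, I would proceed in three steps. First, I would note that by Definition \ref{dstrandmap} and Remark \ref{rfoldpoly}, extending $\Htl{n-1}$ to an $n$-strand $\Htl{n}$ amounts (via the adjunction of \wref{eqllpcs} and the coskeletality of $\vWu{n}$) to producing the maps $\hHuk{k}{n}:\bZ\otimes(I\times\Deln{k})\to\PoW{n-k-1}{n}$ of \wref{eqnewfaces} for $0\leq k\leq n$. By Lemma \ref{lmapfoldp}, such a collection is in bijection with maps $h:(\bZ\otimes I)\otimes\Pn{n}\to\oW{n}$ whose restriction to $\partial\Pn{n}$ is the map $g$ defined in \S\ref{dvalstrand}: on each facet $\Bk{k}{0}$ we get $\xi\sp{n-k-1}\circ\Fk{k-1}\circ v\sp{k-1}\circ\Huk{k-1}{n-1}$ (which, after folding the path/loop directions back into $\oW{n}$, is exactly the boundary value used to define $\Val{\Htl{n-1}}$), while on the remaining facets of $\partial\Pn{n}$ both $h$ and $g$ vanish.

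Second, by Lemma \ref{lfoldp} the pair $(\Pn{n},\partial\Pn{n})$ is PL-homeomorphic to $(D\sp{n},\bS{n-1})$, and the canonical PL-subdivisions used in \S\ref{dfoldp} exhibit $\Pn{n}$ as the cone on $\partial\Pn{n}$ (with the interior facets collapsed to the cone point under the identifications). Since $\bZ$ is cofibrant and all the facets of $\partial\Pn{n}\setminus\bigcup\sb{k=1}\sp{n}\Bk{k}{0}$ are sent to the base point, the map $g$ factors through the half-smash $(\bZ\otimes I)\ltimes\bS{n-1}\simeq\Sigma\sp{n-1}(\bZ\otimes I)\vee(\bZ\otimes I)$, and the summand $(\bZ\otimes I)\to\oW{n}$ is null because $n\geq 1$. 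Thus the existence of a filling $h:(\bZ\otimes I)\otimes\Pn{n}\to\oW{n}$ rel base-point is equivalent to the existence of a nullhomotopy of the reduced component $g':\Sigma\sp{n-1}(\bZ\otimes I)\to\oW{n}$, using the standard cofibration $\partial\Pn{n}\hookrightarrow\Pn{n}$ together with the fibrancy of $\oW{n}$.

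Third, by the adjunction $[\Sigma\sp{n-1}(\bZ\otimes I),\oW{n}]\cong[\bZ,\Omega\sp{n-1}\oW{n}]$, the nullhomotopy class $[g']$ is precisely $\Val{\Htl{n-1}}$ of \wref{eqvalmapstr}. Hence $h$ exists if and only if $\Val{\Htl{n-1}}=0$, which by the first step is equivalent to extending $\Htl{n-1}$ to an $n$-strand $\Htl{n}$. The one step requiring real care is the second: namely, verifying that the cube-to-simplex identifications $\zeta\sp{k}$ and the path/loop coordinate conventions from \S\ref{dfoldp} really do send the boundary prescription for $h$ to the same map $g$ used to define the value, with all orientations and base-point conditions matching; once this bookkeeping is settled, the result follows from elementary obstruction theory on the pair $(D\sp{n},\bS{n-1})$.
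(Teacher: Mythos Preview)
Your proposal is correct and follows essentially the same route as the paper's own proof: reduce the extension of $\Htl{n-1}$ to an $n$-strand to the existence of the maps $\hHuk{k}{n}$ of \wref{eqnewfaces}, invoke Lemma~\ref{lmapfoldp} to rephrase this as filling the boundary map $g$ on $(\bZ\otimes I)\otimes\partial\Pn{n}$ across $(\bZ\otimes I)\otimes\Pn{n}$, and then use Lemma~\ref{lfoldp} together with the half-smash splitting of Definition~\ref{dvalstrand} to identify this with the vanishing of $[g']=\Val{\Htl{n-1}}$. The paper's proof is simply a terser version of your three steps.
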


\begin{proof}
The \wwb{n-1}strand \w{\Htl{n-1}} extends to an $n$-strand \w{\Htl{n}} if
and only if we have a collection of maps
\w{\hHuk{k}{n}:\bZ\otimes(I\times\Deln{k})\to\PoW{n-k-1}{n}}
\wb{0\leq k\leq n} as in \S \ref{rfoldpoly}, satisfying \wref[,]{eqnewfaces}
and by Lemma \ref{lmapfoldp} this corresponds to a map
\w{(\bZ\otimes I)\otimes\Pn{n}\to\oW{n}} whose restriction to
\w{(\bZ\otimes I)\otimes\partial\Pn{n}} is the map $g$ determined by
\w{\Htl{n-1}} as in Definition \ref{dvalstrand}. The map $g$ extends to
\w{(\bZ\otimes I)\otimes\Pn{n}} if and only if \w{g'} is nullhomotopic.
\end{proof}

\begin{mysubsection}{Correspondence of strands for maps}
\label{scorrespstrmap}
Given \w{\fu{0},\fu{1}:\bZ\to\bY} with
\w[,]{\fu{0}\sp{\ast}=\fu{1}\sp{\ast}:\HiR{\bY}\to\HiR{\bZ}}
an $n$-stage comparison map \w{\Phi:\cW\to\,\ccWp} between two sequential
realizations for $\bY$ as in \wref[,]{eqncorresp} and two
$n$-strands \w{\Htl{n}} and \w{\Htlp{n}} for $\cW$ and \w[,]{\ccWp} respectively,
we write \w{\Htlp{n}=\rs(\Htl{n})} if
\w{\Hupk{k}{m}=\rnk{m}{k}\circ\Huk{k}{m}:\bZ\otimes(I\times\Deln{k})\to\Wpn{k}{m}}
and \w{\Htl{n}=\es(\Htlp{n})} if
\w{\Huk{k}{m}=\enk{m}{k}\circ\Hupk{k}{m}:\bZ\otimes(I\times\Deln{k})\to\Wn{k}{m}}
for each \w{0\leq k\leq m\leq n} (compare \S \ref{scorrespstr}).

By comparing \wref{eqmapmodpathloop} and \wref{eqcompf} with \wref{eqexttothree}
and Definition \ref{dvalstrand}, we see that
\begin{myeq}\label{eqcorrvalsmap}
\Val{\rs(\Htl{n})}=\oar{n}\sb{\ast}(\Val{\Htl{n}})\hsp \text{and}\hsp
\Val{\es(\Htlp{n})}=\oen{n}\sb{\ast}(\Val{\Htlp{n}})~,
\end{myeq}
\noindent as in \wref[,]{eqcorrvals} so
\begin{myeq}\label{eqvancorrmap}
\begin{array}{l}
(a)\hs \Val{\Htl{n}}=0\hsm \text{if and only if}\hsn \Val{\rs(\Htl{n})}=0\\
(b)\hs \text{If}\hsn\Val{\Htlp{n}}=0\hsm \text{then}\hsn \Val{\es(\Htlp{n})}=0~,
\end{array}
\end{myeq}
\noindent as in \wref[.]{eqvancorr}

We define weak and strong equivalences relations on strands as
in \S \ref{dequivstr}.
\end{mysubsection}

\begin{defn}\label{dvanish}
Given two maps \w{\fu{0},\fu{1}:\bZ\to\bY} inducing the same homomorphism of
\TRal[s]  \w[,]{\phi:\HiR{\bY}\to\HiR{\bZ}} the associated
\emph{universal $n$-th order cohomology operation} \w{\lrfn{n}} which assigns
to an \wwb{n-1}strand \w{\Htl{n-1}} for this data the value:
$$
\lrfn{n}(\Htl{n-1})~:=~\Val{\Htl{n-1}}~\in \Gamma'\lin{\Omega\sp{n-1}\oW{n}}~,
$$
\noindent where \w[.]{\Gamma':=\HiR{\bZ}}
We say that the operation \w{\lrfn{n}} \emph{vanishes} if there is
a cofibrant $\cW$ with an \wwb{n-1}strand \w{\Htl{n-1}} for \w{(\cW,\fu{0},\fu{1})}
such that \w[.]{\lrfn{n}(\Htl{n-1})=0} Note that this depends only on the strong
equivalence class of \w[.]{\Htl{n-1}}

As in Definition  \ref{dcohervan}, we then say that the system
\w{\lrf=(\lrfn{n})\sb{n=2}\sp{\infty}} of $n$-th
order cohomology operations for \w{(\fu{0},\fu{1})} \emph{vanishes coherently}
for \w{(\cW,\fu{0},\fu{1})} if there is an $\infty$-strand \w{\Htl{\infty}} for
this data \wh that is, for each \w[,]{n\geq 1} we have an $n$-strand
\w{\Htl{n}} for \w{(\cW,\fu{0},\fu{1})} such that \w[,]{\Val{\Htl{n}=0}}
which extends to the \wwb{n+1}strand \w{\Htl{n+1}} using
Proposition \ref{pvanish}.
\end{defn}

The proof of Key Lemma \ref{lkey} shows also:

\begin{lemma}\label{lkeymap}
Given \w{\fu{0},\fu{1}:\bZ\to\bY} as above, \w{\lrfn{n}}
vanishes if and only if for \emph{every} $n$-stage cofibrant sequential
realization $\cW$ of $\bY$, there is an \wwb{n-1}strand
\w{\Htl{n-1}} for \w{(\cW,\fu{0},\fu{1})} with \w[.]{\Val{\Htl{n-1}}=0}
\end{lemma}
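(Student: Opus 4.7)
The plan is to mirror the proof of Key Lemma \ref{lkey}, replacing strands for a single space by strands for a pair of maps $\fu{0},\fu{1}:\bZ\to\bY$. One direction is essentially a tautology: if $\lrfn{n}$ vanishes for every cofibrant $n$-stage sequential realization $\cW$ of $\bY$, then in particular it vanishes for some such $\cW$, which is precisely Definition \ref{dvanish}. (Such a $\cW$ exists by Theorem \ref{tres} together with the existence of CW resolutions guaranteed by Remark \ref{rcwres}.)

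For the nontrivial direction, assume $\lrfn{n}$ vanishes, so there is some cofibrant $n$-stage sequential realization $\cWp$ of $\bY$ and an $(n-1)$-strand $\Htl{n-1}'$ for $(\cWp,\fu{0},\fu{1})$ with $\Val{\Htl{n-1}'}=0$. Given any other cofibrant $n$-stage sequential realization $\cW$ of $\bY$, Theorem \ref{tcomp} yields a finite zigzag of cospans of comparison maps
\[
\Phip{1}:\cuW{0}=\cWp\to\cuW{1},\quad \Phip{2}:\cuW{2}\to\cuW{1},\quad \Phip{3}:\cuW{2}\to\cuW{3},\ \dotsc,\ \Phip{N}:\cuW{N-1}\to\cuW{N}=\cW.
\]

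I would then transport the vanishing strand $\Htl{n-1}'$ through this zigzag, one comparison map at a time, using the pair of operations $(\rs,\es)$ defined for map-strands in \S\ref{scorrespstrmap}. For a forward-pointing map $\Phip{2i+1}:\cuW{2i}\to\cuW{2i+1}$ I set $\Htl{n-1}\up{2i+1}:=\rs(\Htl{n-1}\up{2i})$; by \wref{eqvancorrmap}(a), vanishing of the value transfers (in both directions) along $\rs$. For a backward-pointing map $\Phip{2i}:\cuW{2i}\to\cuW{2i-1}$ I set $\Htl{n-1}\up{2i}:=\es(\Htl{n-1}\up{2i-1})$; by \wref{eqvancorrmap}(b), $\es$ preserves vanishing of the value. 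Iterating through the (locally finite) zigzag of Theorem \ref{tcomp} yields an $(n-1)$-strand $\Htl{n-1}=\Htl{n-1}\up{N}$ for $(\cuW{N},\fu{0},\fu{1})=(\cW,\fu{0},\fu{1})$ with $\Val{\Htl{n-1}}=0$, as required.

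The key technical input is the correspondence formula \wref[,]{eqcorrvalsmap} which plays exactly the same role here as \wref{eqcorrvals} did in the proof of Key Lemma \ref{lkey}. Its verification follows from comparing the diagrams \wref{eqmapmodpathloop} and \wref{eqcompf} defining a comparison map with the diagram \wref{eqexttothree} and the folding-polytope description of the value in Definition \ref{dvalstrand} and Lemma \ref{lmapfoldp}: the maps $\oon{k}{n}$ and $\enk{m}{k}$ (resp.\ their sections) are compatible with both the modified path-loop structure and the cochain maps $\Fk{k}$, so post-composing or pre-composing the folded map $(\bZ\otimes I)\otimes\partial\Pn{n}\to\oW{n}$ of Definition \ref{dvalstrand} with them yields the corresponding folded map for $\rs(\Htl{n})$ or $\es(\Htlp{n})$. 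The only subtle point—as in the proof of Key Lemma \ref{lkey}—is the one-sided nature of \wref[:]{eqvancorrmap}(b) the resulting strand $\Htl{n-1}$ is only weakly, not strongly, equivalent to $\Htl{n-1}'$, but this weaker equivalence is exactly what is needed to transfer vanishing of the value, and that is the only property we require.
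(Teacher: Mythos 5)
Your proof is correct and takes exactly the approach the paper intends; the paper itself gives no separate argument but simply remarks that ``the proof of Key Lemma \ref{lkey} shows also'' the result, and your transport of a vanishing strand through the zigzag from Theorem \ref{tcomp}, alternating $\rs$ and $\es$ and invoking \wref{eqvancorrmap} in place of \wref[,]{eqvancorr} together with your converse via Theorem \ref{tres} and Remark \ref{rcwres}, is precisely that adaptation. Your closing observation about the one-sidedness of \wref{eqvancorrmap}(b) matches the remark the paper makes immediately after Key Lemma \ref{lkey}.
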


Moreover, if \w{\lrfn{n}} vanishes at the \wwb{n-1}strand \w{\Htl{n-1}} for
\w[,]{(\cW,\fu{0},\fu{1})} then for any other $n$-stage cofibrant sequential
realization \w{\cWp} of $\bY$ we can choose the \wwb{n-1}strand
\w{\Htlp{n-1}} for \w{\cWp} to be weakly equivalent to \w[.]{\Htl{n-1}}

In analogy to Theorem \ref{tvanish} we therefore have:

\begin{thm}\label{tvanishmap}
For \w{R=\Fp} or a field of characteristic $0$, let \w{\fu{0},\fu{1}:\bZ\to\bY}
be two maps between  $R$-good spaces which induce the
same map of \TRal[s] \w[.]{\HiR{\bY}\to\HiR{\bZ}} Then the following are equivalent:
\begin{enumerate}
\renewcommand{\labelenumi}{(\alph{enumi})~}
\item Then the system of higher cohomology operations \w{\lrf}
vanishes coherently for \emph{some} cofibrant sequential realization $\cW$
of $\bY$.
\item \w{\lrf} vanishes coherently for \emph{every} cofibrant sequential
realization of $\bY$.
\item The maps \w{\fu{0}} and \w{\fu{1}} are $R$-equivalent (see \S \ref{sgcge}).
\end{enumerate}
\end{thm}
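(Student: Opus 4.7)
The plan is to imitate the proof of Theorem \ref{tvanish}, adapted to the relative setting of a pair of maps. First, I would establish (a)$\Leftrightarrow$(b) as Lemma \ref{lkeymap}, by the same zigzag argument as Key Lemma \ref{lkey}: given an \wwb{n-1}strand with vanishing value on one cofibrant sequential realization, I would transfer it along the finite zigzag of comparison maps supplied by Theorem \ref{tcomp}, using the formulas \eqref{eqcorrvalsmap} to preserve vanishing at each stage of the zigzag.

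For (a)$\Rightarrow$(c), I would take an $\infty$-strand $\Htl{\infty}=(\Huk{}{n})_{n\geq 0}$ for $(\cW,\fu{0},\fu{1})$ and use the compatibility \eqref{eqexthstrand} to assemble the $\Huk{}{n}$ into a single cosimplicial homotopy $\mathbf{H}^\bullet\colon\bZ\otimes I\otimes\Du\to\Wu$, where $\Wu=\lim_n\W{n}$ is well behaved since each $\prn{n}$ is a Reedy fibration. By the defining adjunction of $\Tot$, $\mathbf{H}^\bullet$ corresponds to a homotopy $\bZ\otimes I\to\Tot\Wu$ between the composites $\bZ\xra{\fu{i}}\bY\xra{\bv}\Tot\Wu$. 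Since $\bY$ is $R$-good and $\cW$ is a sequential realization, $\Tot\Wu\simeq\LG\bY$, so this yields $\LG\fu{0}\sim\LG\fu{1}$, i.e., $\fu{0}$ and $\fu{1}$ are $R$-equivalent.

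Conversely, for (c)$\Rightarrow$(a), a homotopy $K\colon\bZ\otimes I\to\LG\bY\simeq\Tot\Wu$ between $\LG\fu{0}$ and $\LG\fu{1}$ corresponds, under the $\Tot$ adjunction (using cofibrancy of $\bZ\otimes I$ and Reedy fibrancy of $\Wu$), to a cosimplicial map $\mathbf{H}^\bullet\colon\bZ\otimes I\otimes\Du\to\Wu$ extending $\bv\circ(\fu{0}\bot\fu{1})$ along $i_0\bot i_1$. Composing with the projections $\prn{n}\colon\Wu\to\W{n}$ produces maps $\Huk{}{n}:=\prn{n}\circ\mathbf{H}^\bullet$ which satisfy \eqref{eqhukn} and \eqref{eqexthstrand}, hence form an $\infty$-strand; by Proposition \ref{pvanish} each $\Val{\Htl{n-1}}$ vanishes, so $\lrf$ vanishes coherently for $(\cW,\fu{0},\fu{1})$. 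The hard part will be this last step, where one must verify that the adjunction really produces a strict cosimplicial map whose projections are bona fide $n$-strands in the sense of Definition \ref{dstrandmap}; this requires invoking the lifting property in \eqref{eqllpcs} stage by stage, together with the Reedy fibrancy and cofibrancy of each $\W{n}$.
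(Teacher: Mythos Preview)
Your proposal is correct and follows essentially the same route as the paper's proof. Two small remarks: in (c)$\Rightarrow$(a) you write ``a homotopy $K\colon\bZ\otimes I\to\LG\bY$ between $\LG\fu{0}$ and $\LG\fu{1}$'', but $\LG\fu{i}$ has domain $\LG\bZ$, not $\bZ$; the paper closes this gap by observing that $\Tot\Wu$ is an $R$-complete Kan complex, so the $R$-equivalent maps $\bv_\ast\circ\fu{0},\,\bv_\ast\circ\fu{1}\colon\bZ\to\Tot\Wu$ are already genuinely homotopic (citing \cite[I, Lemma 5.5]{BKanH}). Also, what you call the ``hard part'' is in fact immediate: the adjoint $\widetilde{F}\colon\bZ\otimes I\otimes\Du\to\Wu$ already lands in the full cosimplicial objects $\W{n}$ after composing with the structure maps of the tower, so no appeal to \eqref{eqllpcs} is needed at this stage.
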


\begin{proof}
(a)$\Leftrightarrow$(b) by Lemma \ref{lkeymap}\vsn.

(a)$\Rightarrow$(c): \ Note that the projection
\w{p\sb{\bX}:\bX\otimes\Du\to\cu{\bX}} is a trivial Reedy fibration for any
\w[.]{\bX\in\C} Since
\w{\bZ\amalg\bZ\xra{i\sb{0}\bot i\sb{1}}\bZ\otimes I\xra{\sigma}\bZ} is a
cylinder object in $\C$ (see \cite[I, \S 1]{QuiH}, the same is true after
applying \w[.]{(-)\otimes\Du} An $\infty$-strand \w{\Htl{\infty}} for
a sequential realization $\cW$ (with associated \w[)]{\bv:\bY\to\Wu}
defines a map \w[,]{H:(\bZ\otimes I)\otimes\Du\to\Wu} fitting into a commutative
diagram of cosimplicial spaces:
\mydiagram[\label{eqpathobject}]{
\bZ\otimes\Du \ar@{->>}[rr]\sb{\simeq}\sp{p\sb{\bZ}}
\ar[d]\sp{i\sb{j}\otimes\Id} && \cu{\bZ}
\ar[rrrd]\sp{\cu{f\sb{j}}}&&& \\
(\bZ\amalg\bZ)\otimes\Du \ar[d]\sb{(i\sb{0}\bot i\sb{1})\otimes\Id}
\ar[rrr]\sb(0.6){(f\lo{0}\bot f\lo{1})\otimes\Id}
&&&
\bY\otimes\Du \ar@{->>}[rr]\sb{\simeq}\sp{p\sb{\bY}} && \cu{\bY}\ar[d]\sp{\bv}\\
(\bZ\otimes I)\otimes\Du \ar[rrrrr]\sp(0.6){H} &&&&& \Wu
}
\noindent for \w[.]{j=0,1} Applying \w{\Tot} yields a cylinder object
\w{\Tot((\bZ\otimes I)\otimes\Du)}
for \w[,]{\Tot((\bZ\amalg\bZ)\otimes\Du)} and a homotopy \w{\Tot H} between
\w{\bv\sb{\ast}\circ\fu{0}\circ\Tot(p\sb{\bZ})} and
\w[.]{\bv\sb{\ast}\circ\fu{1}\circ\Tot(p\sb{\bZ})}
Since \w{\Tot(p\sb{\bZ})} is a weak equivalence and
\w{\bv\sb{\ast}:\bY\to\Tot\Wu} is an $R$-equivalence, we see that \w{\fu{0}}
and \w{\fu{1}} are $R$-equivalent\vsn.

(c)$\Rightarrow$(b): \ Let $\cW$ be any cofibrant sequential
realization for $\bY$ with associated \w[.]{\bv:\bY\to\Wu} By Definition \ref{dscr},
\w{\Wu} is Reedy fibrant. Thus \w{\Tot\Wu} is an $R$-complete Kan complex, with
\w{\bv\sb{\ast}:\bY\to\Tot\Wu} the $R$-completion map, and so the $R$-equivalent
maps \w{\bv\sb{\ast}\circ\fu{0}} and \w{\bv\sb{\ast}\circ\fu{1}} are actually
homotopic (see \cite[I, Lemma 5.5]{BKanH}). We may therefore choose a homotopy
\w{F:\bZ\otimes I\to\Tot\Wu} between them, whose adjoint is the map of cosimplicial
spaces \w{\widetilde{F}:\bZ\otimes I\otimes\Du\to\Wu} (see \cite[I, 3.3]{BKanH}).
Composing $\widetilde{F}$ with the structure maps \w{\Wu\to\W{n}} for the limit of
\wref{eqtower} yields a compatible sequence of cosimplicial maps
\w[.]{\Huk{}{n}:\bZ\otimes I\otimes\Du\to\W{n}}

This defines compatible $n$-strands for $\cW$ and all \w[,]{n\geq 1} showing
that the system \w{\lrf} of higher order operations vanishes by
Proposition \ref{pvanish}.
\end{proof}

\begin{corollary}\label{ccimaps}
If \w{\fu{0},\fu{1}:\bZ\to\bY} are maps between $R$-complete Kan complexes
inducing the same map of \TRal[s]  \w[,]{\psi:\HiR{\bY}\to \HiR{\bZ}} the
system of higher operations \w{(\fu{0},\fu{1})} is a complete set of
invariants for the homotopy classes \w{[\fu{0}]} and \w[.]{[\fu{1}]}
\end{corollary}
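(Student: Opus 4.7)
The plan is to derive Corollary \ref{ccimaps} directly from Theorem \ref{tvanishmap}, using the fact that for maps into an $R$-complete Kan complex $R$-equivalence collapses to ordinary homotopy. Concretely, I need to establish two things: (i) the system $\lrf$ is a homotopy invariant of the pair $(\fu{0},\fu{1})$, and (ii) coherent vanishing of $\lrf$ is equivalent to $\fu{0}\sim\fu{1}$.

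For (ii), I would first invoke Theorem \ref{tvanishmap} to conclude that $\lrf$ vanishes coherently for some (equivalently, every) cofibrant sequential realization $\cW$ of $\bY$ if and only if $\fu{0}$ and $\fu{1}$ are $R$-equivalent, i.e., $\LG\fu{0}\sim\LG\fu{1}$ as maps $\LG\bZ\to\LG\bY$. Now since $\bY$ and $\bZ$ are $R$-complete Kan complexes, the completion maps $\bZ\to\LG\bZ$ and $\bY\to\LG\bY$ are weak equivalences, so a zigzag argument (or \cite[I, Lemma 5.5]{BKanH}, as already cited inside the proof of Theorem \ref{tvanishmap} in the passage (c)$\Rightarrow$(b)) gives $\LG\fu{0}\sim\LG\fu{1}$ iff $\fu{0}\sim\fu{1}$ in $\Sa$. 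Combining these two equivalences yields the biconditional: $\lrf$ vanishes coherently for $(\cW,\fu{0},\fu{1})$ iff $[\fu{0}]=[\fu{1}]$.

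For (i), the homotopy invariance of $\lrf$ as a function of the pair needs a short remark: if $\fu{0}\sim\fu{0}'$ and $\fu{1}\sim\fu{1}'$, then the same homotopy $\Huk{0}{0}$ that witnesses $\bve{0}\circ\fu{0}\sim\bve{0}\circ\fu{1}$ can be modified (via concatenation with the given homotopies post-composed by $\bve{0}$) to yield a $0$-strand for $(\fu{0}',\fu{1}')$, and inductively the entire $\infty$-strand transports. This can also be seen more cheaply by noting that coherent vanishing is, by the just-proved biconditional, equivalent to $[\fu{0}]=[\fu{1}]$, which is manifestly a property of the homotopy classes.

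The only subtlety I anticipate is making clear that ``complete set of invariants'' is being interpreted as: $\lrf$ is well-defined on $([\fu{0}],[\fu{1}])$ and distinguishes distinct homotopy classes. The first half is handled as in (i); the second half is the content of the biconditional from (ii), with the non-vanishing of some $\lrfn{n}(\Htl{n-1})$ for every strand serving as the obstruction to $[\fu{0}]=[\fu{1}]$. No further calculation is required once Theorem \ref{tvanishmap} is in hand; the argument is essentially the one-line reduction ``$R$-complete $\Rightarrow$ $R$-equivalence $=$ homotopy equivalence,'' so the main (and only) step of substance is the invocation of \cite[I, Lemma 5.5]{BKanH} already used in the proof of the theorem.
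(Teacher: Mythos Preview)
Your proposal is correct and is exactly the intended argument: the paper states this as an immediate corollary of Theorem~\ref{tvanishmap} with no separate proof, and the only content is precisely the observation that for $R$-complete Kan complexes $R$-equivalence of maps coincides with ordinary homotopy.
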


\begin{mysubsect}{A rational example for a map}
\label{sratexm}

As in \S \ref{sratex}, we now consider an example of the obstruction to a map
\w{f:\bZ\to\bY} being rationally trivial when \w{f\sp{\ast}:\HiQ{\bY}\to\HiQ{\bZ}}
is the zero map:

Let \w{\bZ:=\bS{2n-1}\sb{\QQ}} and \w[,]{\bY:=(\bS{n}\vee\bS{n})\sb{\QQ}} for
\w{n>1} odd, with \w{f:=[\iota\sb{n},\iota'\sb{n}]:\bZ\to\bY} the Whitehead product
map. The free CDGA model for $\bY$ is \w{(A\sp{\ast},d)} given in degrees $\leq 2n$
by \w{A\sp{\ast}=\QQ\lra{x\sb{n},y\sb{n},u\sb{2n-1}}} with \w[,]{d(u)=xy} while
$\bZ$ has the formal CDGA model \w[.]{B\sp{\ast}=\KsQ{\bz\sb{2n-1}}} The CDGA model
for $f$ is \w{\varphi:A\sp{\ast}\to B\sp{\ast}} mapping $u$ to $z$.

Realizing the obvious minimal free algebraic resolution of \w[,]{\HiQ{\bY}}
we obtain the $1$-truncated augmented simplicial CDGA
\w{\tWln{\bullet}{1}\to A\sp{\ast}\to B\sp{\ast}} in degrees $\leq 2n$ \ depicted in
Figure \ref{fig5}.
%
%
\begin{figure}[htbp]
\begin{center}
\xymatrix@C=25pt{
\tWln{1}{1}~~= \ar@/_{0.2pc}/[d]\sb{d\sb{0}} \ar@/^{0.2pc}/[d]\sp{d\sb{1}} &&
\oWl{1}=\Lambda[\bu\sb{2n}] \ar[dl]\sb{d\sb{0}:\bu\mapsto\bx\by}
\ar[dr]\sp{d\sb{1}:\bu\mapsto \bu'} & \\
\Wln{0}{1}~~=\ar[d]\sb{\bv} &
\oWl{0}=\Lambda[\bx\sb{n},\by\sb{n}]\ar[d]\sp{\bv:\bx\mapsto x,\hsm\by\mapsto y}
&\amalg & C\oWl{1}= \Lambda(\bu'\sb{2n},\obu\sb{2n-1})
\ar[dll]\sp{\bv:\obu\,\mapsto -u,\hsm \bu'\mapsto xy}\\
A\sp{\ast}~~=\ar[d]\sb{\varphi}  & \Lambda[x,y,u]\ar[d]\sp{\varphi:\bu\mapsto z}\\
B\sp{\ast}~~=& \Lambda[z]\\
}
\end{center}
\caption[fig5]{$\tWln{\bullet}{1}$ \ in degrees $\leq 2n$}
\label{fig5}
\end{figure}

As noted above, the original map \w{f:\bZ\to\bY} is nullhomotopic
if and only if we can extend the composite \w{\tWln{\bullet}{1}\to B\sp{\ast}}
in the diagram in Figure \ref{fig5} to the cone \w[,]{C\tWln{\bullet}{1}\to B\sp{\ast}}
which by Theorem \ref{tvanishmap} is equivalent to the vanishing of the associated
system of higher operations.

The model for the cofiber of \w{\Wl{1}\hra C\oWl{1}}
(corresponding to the loop space \w[)]{\Omega\oW{1}} is the formal CDGA
\w[,]{\Sigma\oWl{1}:=\Lambda(\obu'\sb{2n-1})} and its cone
(corresponding to the path space \w[)]{P\Omega\oW{1}} is
\w[,]{C\Sigma\oWl{1}:=\Lambda(\obu''\sb{2n-1},\oobu\sb{2n-2})}
with \w[.]{d(\oobu\sb{2n-2})=\obu''\sb{2n-1}}

Thus in order to extend the given map
\w{\varphi\circ\bv:\Wln{0}{1}\to B\sp{\ast}} (sending \w{\obu\sb{2n-1}} to \w[)]{-z}
to its cone, we need a map \w{F:C\Sigma\oWl{1}\to B\sp{\ast}}
sending \w{\obu''\sb{2n-1}} to \w[.]{-z} However, since necessarily \w[,]{F(\oobu)=0}
this is impossible \wh that is, the secondary cohomology
operation does not vanish: its value is represented by the map
\w{\psi:\Sigma\oWl{1}\to B\sp{\ast}} defined \w[.]{\psi(\obu'\sb{2n-1})=z}
\end{mysubsect}

\appendix

%
%
\section{Proof of Theorem \protect{\ref{tres}}.}
\label{apfthm}

In this Appendix we state and prove Theorem \ref{tres} in a more general form
needed in \cite{BSenH}. For this purpose we recall the notion of a
\emph{mapping algebra}, which encodes the extra structure on the mapping spaces
\w{\mapa(\bY,\KR{n})} needed to recover the $R$-completion of $\bY$ from them
(see \cite{BSenH}).

\begin{defn}\label{deth}
An \emph{enriched sketch} \w{(\bT,\PP,\K)} is a small subcategory $\bT$
of a simplicial category $\C$ (see \S \ref{dsmcat}), with $\bT$ closed under a
given set of limits $\PP$ and under \w{(-)\sp{K}} for $K$ in a given subcategory
$\K$ of $\cS$.
We assume that all mapping spaces \w{\map\sb{\bT}(\bB,\bB')} are Kan complexes.

A \emph{\Tma} is a pointed simplicial functor \w{\fX:\bT\to\Sa} (written
\w[)]{\fX:\bB\mapsto\fX\lin{\bB}} which preserves the limits in $\PP$ and with
\w{\fX\lin{(\bB)\sp{K}}=(\fX\lin{\bB})\sp{K}} for any \w{\bB\in\bT} and
\w[.]{K\in\K} The category of \Tma[s] will be denoted by \w[.]{\MT}
See \cite[8]{BBlaC} and \cite[\S 1]{BSenH} for further details.

For any \w{\bY\in\C} we have a \emph{realizable} \Tma \w{\fMT\bY} defined for each
\w{\bB\in\bT} by \w[.]{\fMT\bY\lin{\bB}:=\map\sb{\C}(\bY,\bB)}
\end{defn}

\begin{remark}\label{rgalg}
Assume that in our enriched sketch \w[,]{(\bT,\PP,\K)} the category $\K$ includes
\w[,]{\Delta[0]\hra\Delta[1]} and $\PP$ includes all finite products and the
pullback squares
\mydiagram[\label{eqpathloop}]{
\ar @{} [drr] |(0.25){\framebox{\scriptsize{PB}}}
P\bB \ar@{^{(}->}[rr] \ar@{->>}[d]\sb{\simeq} &&
\bB\sp{\Delta[1]} \ar@{->>}[d]\sp{\simeq}\sb{\ev\sb{0}}&&
\ar @{} [drr] |(0.25){\framebox{\scriptsize{PB}}}
\Omega\bB \ar@{^{(}->}[rr]\sp{\iota\sb{\bB}} \ar@{->>}[d] &&
P\bB \ar@{->>}[d]\sb{\ev\sb{1}} \\
\ast \ar@{^{(}->}[rr] && \bB && \ast \ar@{^{(}->}[rr] && \bB
}
\noindent for any \w[.]{\bB\in\TsR} For each \Tma $\fX$ and objects $\bB$ and
\w{\{\bB\sb{i}\}\sb{i=1}\sp{n}} in $\bT$ we then have natural isomorphisms:
\begin{enumerate}
\renewcommand{\labelenumi}{(\alph{enumi})~}
\item \w[;]{i\sb{\Pi}:\fX\lin{\prod\sb{i=1}\sp{n}\,\bB\sb{i}}~\xra{\cong}~
\prod\sb{i=1}\sp{n}\,\fX\lin{\bB\sb{i}}}
\item \w[;]{i\sb{P}:\fX\lin{P\bB}~\xra{\cong}~P\fX\lin{\bB}}
\item \w[.]{i\sb{\Omega}:\fX\lin{\Omega\bB}~\xra{\cong}~\Omega\fX\lin{\bB}}
\end{enumerate}

We assume that the objects \w{B\in \bT} are fibrant in \w[.]{\C} In order to
simplify the proof of Theorem \ref{tresg} below, we make the following
ad hoc assumption: if \w{f:\Wu\to\Yu} is a map of cosimplicial object over $\C$
with each \w{\bW\sp{n}} and \w{\bY\sp{n}} in $\bT$ and \w{\Wu\to\Zu\to\Yu} is the
functorial factorization of $f$ as a (trivial) Reedy cofibration followed by a
(trivial) Reedy fibration, then each \w{\bZ\sp{n}} is in $\bT$, too.

This will hold, for instance, when \w{\C=\Sa} and $\bT$ consists of all simplicial
$R$-modules of cardinality $<\lambda$ (for some limit cardinal $\lambda$).
This is the example for which Case (1) of Theorem \ref{tresg} is needed in
\cite{BSenH}.
\end{remark}

We have the following enriched version of Lemma \ref{lfreeta}:

\begin{lemma}[see \protect{\cite[1.9]{BSenH}}]\label{lfreema}
For any \Tma $\fY$ and \w[,]{\bB\in\bT} there is a natural isomorphism
\w[.]{\Hom\sb{\MT}(\fMT\bB,\fY)~\xra{\cong}~\fY\lin{\bB}\sb{0}}
\end{lemma}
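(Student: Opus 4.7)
The result is an enriched (pointed simplicial) Yoneda lemma, so the plan is to construct mutually inverse maps
\[
\Phi: \Hom\sb{\MT}(\fMT\bB, \fY) \rightleftarrows \fY\lin{\bB}\sb{0} : \Psi.
\]
Define $\Phi(\eta) := \eta\sb{\bB}(\Id\sb{\bB})$, where $\Id\sb{\bB}$ is the distinguished vertex of $\fMT\bB\lin{\bB} = \map\sb{\C}(\bB,\bB)$. Define $\Psi(y)$ componentwise at each $\bB' \in \bT$ as the composite
\[
\map\sb{\C}(\bB,\bB')~\xra{\fY\sb{\bB,\bB'}}~\map\sb{\Sa}(\fY\lin{\bB},\,\fY\lin{\bB'})~\xra{\ev\sb{y}}~\fY\lin{\bB'},
\]
where $\fY\sb{\bB,\bB'}$ is the simplicial enrichment structure map of the functor $\fY$ and $\ev\sb{y}$ is evaluation at the vertex $y$.

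With these definitions, $\Phi\circ\Psi = \Id$ is immediate from simplicial functoriality, since $\Phi(\Psi(y)) = \fY(\Id\sb{\bB})(y) = y$. For $\Psi\circ\Phi = \Id$, given $\eta$, I would show that for any $k$-simplex $f \in \map\sb{\C}(\bB,\bB')\sb{k}$ (equivalently a map $\bB \otimes \Deln{k} \to \bB'$), the simplicial naturality of $\eta$ applied to $f$ yields $\eta\sb{\bB'}(f) = \fY(f)(\eta\sb{\bB}(\Id\sb{\bB}))$, which is precisely $\Psi(\Phi(\eta))\sb{\bB'}(f)$; this is a direct unwinding using the adjunction structure of Definition \ref{dsmcat}. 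Naturality in $\bB$ then follows formally from the functoriality of the assignment $\bB \mapsto \fMT\bB$.

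The only nontrivial point is verifying that $\Psi(y)$ actually lies in $\Hom\sb{\MT}$ — that is, that it is a pointed simplicial natural transformation preserving the limits in $\PP$ and the exponentials $(-)\sp{K}$ for $K \in \K$. Simplicial naturality is automatic since $\fY\sb{\bB,\bB'}$ is itself a map of simplicial sets and $\ev\sb{y}$ is simplicial; pointedness holds because $\fY$ sends the zero map to the constant map at the basepoint, so $\Psi(y)\sb{\bB'}$ sends the basepoint of $\map\sb{\C}(\bB,\bB')$ to the basepoint of $\fY\lin{\bB'}$. Preservation of the limits in $\PP$ and of $(-)\sp{K}$ reduces to the corresponding preservation properties already built into $\fY$: given e.g. $\bB' = \prod\sb{i} \bB\sb{i}$ in $\PP$, the structure map $\fY\sb{\bB, \bB'}$ factors through $\prod\sb{i} \fY\sb{\bB, \bB\sb{i}}$ via the isomorphism $i\sb{\Pi}$ of Remark \ref{rgalg}, and composition with $\ev\sb{y}$ preserves this factorization; the argument for path, loop, and exponential objects is identical, using $i\sb{P}$, $i\sb{\Omega}$, and the evident compatibility of $\fY\sb{\bB,\,(\bB')\sp{K}}$ with $(-)\sp{K}$. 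Thus the main obstacle is purely bookkeeping in the simplicial/pointed structure, with no essential new input beyond the definition of $\MT$.
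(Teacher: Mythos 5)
The paper itself does not prove this lemma; it cites \cite[1.9]{BSenH}, so there is no internal proof to compare against. Your argument is the standard pointed simplicial Yoneda lemma and is essentially correct: $\Phi(\eta)=\eta_{\bB}(\Id_{\bB})$, $\Psi(y)_{\bB'}=\ev_{y}\circ\fY_{\bB,\bB'}$, and both composites are identities by simplicial functoriality of $\fY$ (for $\Phi\Psi=\Id$) and by the simplicial naturality square for $\eta$ applied to the identity (for $\Psi\Phi=\Id$). Naturality in $\bB$ is formal.

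One small conceptual remark: you spend the last paragraph checking that $\Psi(y)$ ``preserves the limits in $\PP$ and the exponentials $(-)^{K}$,'' but this is not a condition on morphisms of $\MT$. By Definition \ref{deth}, a \Tma[] is a pointed simplicial functor satisfying those preservation conditions, and the morphisms of $\MT$ are simply the pointed simplicial natural transformations between such functors. Limit- and cotensor-preservation are properties of the \emph{objects} $\fMT\bB$ and $\fY$ (already guaranteed by the definition and by the observation following it that $\fMT\bY$ is realizable); a natural transformation between two such functors automatically commutes with the canonical structure maps $i_{\Pi}$, $i_{P}$, $i_{\Omega}$ of Remark \ref{rgalg}, precisely because those are natural isomorphisms. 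So the only things that genuinely need checking are that $\Psi(y)$ is a simplicial natural transformation (which you give) and that it is pointed (which you give). The extra verification is harmless but not needed, and one should be careful not to phrase it as though morphisms in $\MT$ carry an additional limit-preservation constraint.
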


\begin{defn}\label{dgalg}
To any enriched sketch \w{(\bT,\PP,\K)} we associate a sketch
\w{\pi\sb{0}\bT} as in \S \ref{rskhtpy}, with the same objects and products
as $\bT$, where \w[.]{\Hom\sb{\Theta}(\bB,\bB'):=\pi\sb{0}\map\sb{\bT}(\bB,\bB')}
A map of \Tma[s] \w{f:\fX\to\fY} is a \emph{weak equivalence} if it induces a
weak equivalence \w{f\sb{B}:\fX\lin{\B}\to\fY\lin{\B}} for any \w[.]{\B\in\bT}
This means that $f$ induces an isomorphism
\w{f\sb{\#}:\pi\sb{0}\fX\to\pi\sb{0}\fY} of the corresponding \pTal[s.]
\end{defn}

\begin{example}\label{etsr}
For any commutative ring  $R$, let \w{\TsR\subseteq\Sa} denote the enriched sketch
whose objects are finite type $R$-GEMs of the form
\w{\prod\sb{i=1}\sp{\infty}\,\KP{V\sb{i}}{m\sb{i}}} for \w[,]{m\sb{i}\geq 0}
with \w{V\sb{i}} a finite dimensional free $R$-module
where $\K$ is as above and $\PP$ includes all such finite type products.
Since each \w{\bB\in\TsR} is an $R$-module object in \w[,]{\Sa} the same is true
of \w[,]{\fX\lin{\bB}} so \ww{\TsR}-\ma[s] actually take value in simplicial
$R$-modules. The enriched sketch \w{\TsR\sp{\lambda}} is defined analogously (see
\S \ref{egrtal}).

Note that \w{\pi\sb{0}\TsR} includes also $0$-dimensional Eilenberg-Mac~Lane spaces,
while the algebraic sketch \w{\TR} consists of
$R$-GEMs in dimensions $\geq 1$ (to avoid having to deal with the non-reduced
cohomology of non-connected spaces).  Thus \w[,]{\TR\subset\pi\sb{0}\TsR} which
motivates the following:
\end{example}

\begin{defn}\label{dscrm}
For $\bT$ an enriched sketch in $\C$ and $\fX$ a \Tma[,] let
\w{\Theta\subseteq\pi\sb{0}\bT} be a sub-algebraic sketch (still closed under
finite products, but not necessarily under loops), and let
\w{\Vd\to\pi\sb{0}\fX} be a CW resolution of the corresponding
\Tal[.] A \emph{sequential realization}
\w{\cW=\lra{\W{n},\,\tWu{n}}\sb{n\in\NN}} of \w{\Vd} for $\fX$
consists of a tower of Reedy fibrant and cofibrant cosimplicial objects as in
\wref[,]{eqtower} such that:
\begin{enumerate}
\renewcommand{\labelenumi}{(\alph{enumi})~}
\item We have an augmentation \w{\vn{n}:\fMT\Wn{0}{n}\to\fX} for the simplicial
\Tma \w[,]{\fMT\W{n}} realizing \w{\Vd\to\pi\sb{0}\fX} through simplicial
dimension $n$ \wh i.e., we have a natural isomorphism as in \wref[.]{eqnatisom}
\item The augmentation \w{\vn{n-1}:\fMT\Wn{0}{n-1}\to\fX} extends along the
\Tma map \w{\prn{n}\sp{\ast}:\fMT\Wn{0}{n-1}\to\fMT\Wn{0}{n}} to
\w[.]{\vn{n}:\fMT\Wn{0}{n}\to\fX}
\item Each \w{\W{n}}  is obtained from \w{\W{n-1}} as in \S \ref{dscr}(c).
\end{enumerate}
\end{defn}

\begin{remark}\label{rscrm}
We do not require the sequential realization
\w{\cW=\lra{\W{n},\,\tWu{n}}\sb{n\in\NN}} for $\fX$ to be cofibrant, as
in  \S \ref{dscr}(d). Therefore, in the explicit description
of \S \ref{senso} we may take the fibration sequences \wref{eqmodpathloop}
in step \textbf{(i)} to be the standard path-loop fibrations, so
\begin{myeq}\label{eqstloop}
\ooW{j}{n}~:=~\Omega\sp{j}\oW{n}
\end{myeq}
\noindent for all \w[.]{0\leq j\leq n-1} Thus \wref{eqdopb} becomes simply
\begin{myeq}\label{eqdopbg}
\tWn{k}{n}~=~\Wn{k}{n-1}\times P\Omega\sp{n-k-1}\oW{n}
\end{myeq}
\noindent for all \w{0\leq k\leq n} (with convention \wref{eqtoptw} still holding
for \w[).]{k=n}
\end{remark}

\begin{thm}\label{tresg}
Let $\bT$ be an enriched sketch $\bT$ in a model category $\C$ as in
\S \ref{rgalg}, and \w{\Theta\subseteq\pi\sb{0}\bT} an algebraic sketch:
\begin{enumerate}
\renewcommand{\labelenumi}{(\arabic{enumi})~}
\item If $\fX$ is a \Tma[,] and \w{\Vd} a CW resolution of \w{\Gamma:=\pi\sb{0}\fX}
with CW basis \w{(\oV{n})\sb{n\in\NN}} such that each \w{\oV{n}}
is realizable by an object \w[,]{\oW{n}\in\bT} then there is a sequential realization
\w{\cW=\lra{\W{n},\,\tWu{n}}\sb{n\in\NN}} of \w{\Vd} as in Definition \ref{dscrm}.
\item If $\Theta$ is allowable (\S \ref{dallowsk}), \w[,]{\bY\in\C} and \w{\Vd}
is any CW resolution of \w[,]{\Gamma:=\HiT{\bY}=\pi\sb{0}\fMT\bY} then \w{\Vd} has
a cofibrant sequential realization \w{\cW=\lra{\W{n},\,\tWu{n}}\sb{n\in\NN}}
of \w{\Vd} for $\bY$.
\end{enumerate}
Thus in both cases \w{\Vd} is realizable by \w[.]{\Wu=\holim\sb{n}\,\W{n}}
In case (2), if $\Theta$ is contained in some class of injective models $\G$
in $\C$, then \w{\bY\to\Wu} is a weak $\G$-resolution (cf. \S \ref{sgcge}).
\end{thm}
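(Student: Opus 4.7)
The plan is to prove both parts of Theorem \ref{tresg} by induction on $n$, constructing $\W{n}$ from $\W{n-1}$ via the machinery of Definition \ref{dscr}(c), which reduces to producing a suitable cochain map $F: \cMs\W{n-1} \to \bDs$ realizing the algebraic attaching map $\odz{n}: \oV{n} \to V\sb{n-1}$. I start by setting $\W{0} := \cu{\oW{0}}$, where $\oW{0}$ realizes $\oV{0}$ (given in case (1); obtained via allowability and Lemma \ref{lallow} in case (2) as a finite-type product of $R$-GEMs). For the inductive step, I first choose a realization $\oW{n} \in \bT$ of the CW basis object $\oV{n}$, and build the left Reedy fibrant replacement $\bDs$ of $\oW{n}\ouS{n-1}$ by the downward induction of \S \ref{senso}(i): in case (1) via the standard path-loop fibrations $\PoW{j}{n} := P\Omega\sp{j}\oW{n}$, whereas in case (2) via the modified path-loop versions, ensuring that the to-be-constructed $\Fk{k}$ can be taken as left Reedy cofibrations.

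The heart of the argument is constructing the cochain map $F = (\Fk{k})\sb{k=-1}\sp{n-1}$ by downward induction on $k$. At the top dimension, the attaching map $\odz{n}: \oV{n} \to V\sb{n-1}$ factors through $Z\sb{n-1}\Vd$, so its image corresponds to a Moore $(n-1)$-cycle in the simplicial abelian group $[\W{n-1}, \oW{n}]$; Lemma \ref{lmoore}(a) then realizes this cycle by $\Fk{n-1}: \cM{n-1}\W{n-1} \to \oW{n}$. For the downward step from $k+1$ to $k$, given $\Fk{k+1}, \dotsc, \Fk{n-1}$ as a partial cochain map, Lemma \ref{linducea} produces the obstruction $\ak{k-1}: \cZ{k-1}\cMs\W{n-1} \to \ooW{n-k-1}{n}$; extending $F$ amounts to lifting $\ak{k-1} \circ w\sp{k-1}$ through the path fibration $\pp{n-k-2}: \PoW{n-k-2}{n} \to \ooW{n-k-2}{n}$, equivalently, choosing a nullhomotopy of $\ip{n-k-1} \circ \ak{k-1} \circ w\sp{k-1}$. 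The main obstacle is verifying that the required nullhomotopy exists: the key observation is that the homotopy class of $\ak{k-1} \circ w\sp{k-1}$ computes an iterated Moore chain differential of the algebraic attaching map inside the simplicial abelian group $[\W{n-1}, \oW{n}]$, and since $\Vd$ is a CW resolution of $\pi\sb{0}\fX$ (or of $\HiT{\bY}$), its positive Moore homotopy vanishes, so the isomorphism of \wref{eqcommmoor} forces this class to be zero. Lemma \ref{lmoore}(b) then provides a choice of nullhomotopy, realized by $\Fk{k}: \cM{k}\W{n-1} \to \PoW{n-k-2}{n}$.

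Given $F$, applying Definition \ref{dscr}(c) and the explicit construction of \S \ref{senso}(ii)-(iii) yields $\W{n}$, and Proposition \ref{pnstep} ensures that properties (a)-(b) of Definition \ref{dscrm} hold automatically. In case (2), at each inductive step I further arrange $\Fk{k}$ to be a left Reedy cofibration by a functorial cofibration-trivial fibration factorization in the left Reedy model structure on $\Chn{\C}{n-1}$; allowability ensures that the modified path-loop targets and cofibrant replacements remain within the algebraic sketch $\Theta$, since finite-type products there correspond to finite coproducts in $\TAlg$ via \wref{eqextyoneda}. Finally, $\Wu := \holim\sb{n} \W{n}$ is Reedy fibrant as an inverse limit of Reedy fibrations, and realizes $\Vd$ level-wise since each $\prnk{n}{k}: \Wn{k}{n} \to \Wn{k}{n-1}$ is a trivial fibration for $k < n$. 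When $\Theta$ is contained in a class of injective models $\G$, every $\bW\sp{k}$ lies in $\G$, so $\Wu$ is weakly $\G$-fibrant; combined with the coaugmentation $\bY \to \Wu$ realizing $\vare$, this gives the weak $\G$-resolution asserted in the final sentence.
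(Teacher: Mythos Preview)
Your overall architecture matches the paper's: a double induction, with the inner descending induction on $k$ producing the cochain map $F$ one level at a time via Lemma \ref{linducea} and Lemma \ref{lmoore}. But there is a real gap in the core of the downward step.

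You assert that the obstruction class $[\ak{k-1}\circ w\sp{k-1}]$ vanishes because ``positive Moore homotopy vanishes'' in the resolution $\Vd$. Acyclicity does not give you this. For a \emph{generic} choice of nullhomotopy $\hFk{k}$ of $\ak{k}\circ w\sp{k}$, the induced map $\hak{k-1}$ represents a Moore $(k-1)$-cycle in $\Vd\lin{\Omega\sp{n-k-1}\oW{n}}$, and acyclicity only tells you this cycle is a \emph{boundary}: there is a $\gamma\sb{k}\in C\sb{k}\Vd\lin{\Omega\sp{n-k-1}\oW{n}}$ with $\partial\sb{0}(\gamma\sb{k})=[\hak{k-1}]$. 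A boundary need not be the zero chain, so you cannot simply invoke Lemma \ref{lmoore}(b). The missing idea is to realize $\gamma\sb{k}$ by a map $g\sp{k}:\cM{k}\W{n-1}\to\Omega\sp{n-k-1}\oW{n}$ and use it to \emph{correct} the nullhomotopy, replacing $\hFk{k}$ by $\Fk{k}:=\hFk{k}\star(\iota\circ g\sp{k})\sp{-1}$. Only with this corrected $\Fk{k}$ does the new obstruction $\ak{k-1}$ become nullhomotopic (as in \wref{eqnullhtpy}), so that the descent can continue. Without this step the induction stalls at $k=n-2$.

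There is a second gap, specific to case (1). Your sketch runs the descending induction uniformly down to $k=-1$ via a coaugmentation to $\bY$, but in case (1) there is no object $\bY$ in $\C$, only the abstract \Tma\ $\fX$. The steps $k=1$ and $k=0$ therefore require a genuinely different argument: the role of the coaugmentation is played by the augmentation $\vn{n-1}:\fMT\W{n-1}\to\fX$ of simplicial \Tma[s], and one must verify that the relevant obstruction simplices vanish in $\fX\lin{\Omega\bB}\sb{0}$ using only the simplicial identity $\vn{n-1}\circ(d\sp{0})\sp{\ast}=\vn{n-1}\circ(d\sp{1})\sp{\ast}$ together with the isomorphisms $i\sb{P}$ and $i\sb{\Omega}$ of Remark \ref{rgalg}. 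This is precisely where the enriched-sketch hypotheses on $\bT$ enter, and it is not a formality; your outline does not account for it.
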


\begin{proof}
In case (2), for each \w{n\geq 0} we choose an object \w{\oW{n}\in\Theta}
realizing \w[.]{\oV{n}} In both cases, we then construct a sequential realization
$\cW$ by a double induction, where in the outer induction \w{\W{n}} is obtained
from \w{\W{n-1}} as in Definition \ref{dscr}(c), using an inner descending
induction on \w[\vsm:]{0\leq k\leq n}

\noindent\textbf{I.~Step $\mathbf{n=0}$ of the outer induction\vsn.}

We start the induction with \w{\W{0}:=\cu{\oW{0}}} (the constant cosimplicial
object), which is weakly $\G$-fibrant.
Because \w{\oV{0}} is a free \Tal[,] in case (1) the \Tal augmentation
\w{\vare:\oV{0}\to\Gamma} corresponds by Lemma \ref{lfreeta} to a unique element in
\w[,]{[\svn{0}]\in\Gamma\lin{\oW{0}}=\pi\sb{0}\fX\lin{\oW{0}}} for which
we may choose a representative \w[,]{\svn{0}\in\fX\lin{\oW{0}}\sb{0}}
corresponding to a map of \Tma[s] \w{\vn{0}:\fMT\oW{0}\to\fX} by Lemma
\ref{lfreema}.  In case (2), we may realize $\vare$ by a map
\w{\bv:\bY\to\oW{0}} by Lemma \ref{lfreeta}  and \wref[,]{eqextyoneda}
since $\Theta$ is allowable\vsm.

\noindent\textbf{II.~Step $\mathbf{n=1}$ of the outer induction\vsn.}

We choose a map \w{C\sp{0}\W{0}=\oW{0}\to\oW{1}} realizing the first attaching map
\w[,]{\odz{1}:\oV{1}\to V\sb{0}=\oV{0}} with \w{\W{1}} given in dimensions
$\leq 1$ by
\mydiagram[\label{eqwone}]{
\Wn{0}{1} \ar@/_{1.5pc}/[d]\sb{\dz{0}} \ar@/^{1.5pc}/[d]\sp{d\sp{1}\sb{0}} &=&
\oW{0} \ar@/_{0.5pc}/[d]\sb{\dz{0}=d\sp{1}\sb{0}=\Id}
\ar[drr]\sp{\udz{0}} && \times &&
P\oW{1} \ar[dll]\sb{d\sp{1}\sb{0}=p}
\ar@/^{0.5pc}/[d]\sp{\dz{0}=d\sp{1}\sb{0}=\Id} \\
\Wn{1}{1} \ar[u]\sp{s\sp{0}} &=& \oW{0}\ar@/_{0.5pc}/[u]\sb{=} &\times &
\oW{1} & \times & P\oW{1}.\ar@/^{0.5pc}/[u]\sp{=}
}

In case (1), to define the augmentation \w{\svn{1}} as a $0$-simplex in
\w{\fX\lin{\Wn{0}{1}}\sb{0}} extending
\w[,]{\svn{0}\in\fX\lin{\Wn{0}{0}}=\fX\lin{\oW{0}}} we use the
fact that \w[,]{\fX\lin{\Wn{0}{1}}=\fX\lin{\oW{0}}\times P\fX\lin{\oW{1}}}
by \S \ref{rgalg}(a)-(b), so we need only to find a $0$-simplex $H$
in \w{P\fX\lin{\oW{1}}} \wwh which, by \wref[,]{eqpathloop}
is a $1$-simplex in \w{\fX\lin{\oW{1}}} with \w[.]{d\sb{1}H=0}

In order to qualify as an augmentation \w{\fMT\Wn{0}{1}\to\fX} of simplicial
\Tma[s,]  \w{\vn{1}} must satisfy the simplicial identity
\begin{myeq}\label{eqaugid}
\vn{1}\circ d\sb{0}~=~\vn{1}\circ d\sb{1}~:~\fMT\Wn{1}{1}~\to~\fX
\end{myeq}
\noindent as maps of \Tma[s] \wwh or equivalently, these must correspond to
the same $0$-simplex in
\w[.]{\fX\lin{\Wn{1}{1}}=\fX\lin{\oW{0}}\times\fX\lin{\oW{1}}\times P\fX\lin{\oW{1}}}
In the first and third factors this obviously holds, so we need only
consider the two $0$-simplices in \w[:]{\fX\lin{\oW{1}}} in other words,
since the path fibration $p$ in \wref{eqwone} (induced by the inclusion
\w[)]{\Delta[0]\hra\Delta[1]} becomes \w{d\sb{0}} in \w[,]{\fX\lin{\oW{1}}}
we must choose $H$ so that \w{d\sb{0}H} is the $0$-simplex
\w[.]{(\udz{0})\sb{\#}\svn{0}}
By \wref[,]{eqaugmzero} \w{\vare\circ\odz{0}=0} in \w[,]{\TAlg}
which implies (by our choices of \w{\udz{0}} and \w{\svn{0}} representing
\w{\odz{0}} and $\vare$, respectively) that \w{(\udz{0})\sb{\#}\svn{0}}
is nullhomotopic, so the required $H$ exists.

In case (2), we choose a nullhomotopy for \w{\udz{0}\circ\bv} to extend \w{\bve{0}}
to the factor \w[,]{P\oW{1}} and thus define \w[\vsm.]{\bve{1}:\bY\to\Wn{0}{1}}

\noindent\textbf{III.~Step $\mathbf{n}$ of the outer induction
\wb[\vsn:]{n\geq 2}}

To construct \w{\W{n}} given \w[,]{\W{n-1}} by Proposition \ref{pnstep} it suffices
to produce a cochain map \w{F:\cMs\W{n-1}\to\bDs} as in \S \ref{senso}
(where the left Reedy fibrant
replacement \w{\bDs} for \w{\oW{n}\ouS{n-1}} is given by \wref{eqstloop} and
\wref[).]{eqtau} We do so by a descending induction on the cosimplicial dimension
\w[,]{0\leq k\leq n-1} starting with step \textbf{IV} below for \w[,]{k=n-1}
under the following induction hypotheses:

In stage $k$ we assume the existence of
\w{\Fk{j}:C\sp{j}\W{n-1}\to P\Omega\sp{n-j-2}\oW{n}} (constituting a cochain map for
\w[)]{k+1\leq j\leq n-1} and
\w{\ak{j}:Z\sp{j}\W{n-1}\to\Omega\sp{n-j-2}\oW{n}} for \w[,]{k\leq j\leq n-1}
with a nullhomotopy \w{\hFk{k}:C\sp{k}\W{n-1}\to P\Omega\sp{n-k-2}\oW{n}}
such that
\begin{myeq}\label{eqhhvanish}
\ak{k}\circ w\sp{k}~=~p\circ \hFk{k}:\cM{k}\W{n-1}\to\Omega\sp{n-k-2}\oW{n}~,
\end{myeq}
\noindent as in \wref[.]{eqak} By Lemma \ref{linducea}, \w{\hFk{k}} induces
\w{\hak{k-1}:Z\sp{k-1}\W{n-1}\to\Omega\sp{n-k-1}\oW{n}} with
\begin{myeq}\label{eqhikak}
\iota\circ\hak{k-1}\circ w\sp{k-1}~=~\hFk{k}\circ\delta\sp{k-1}
\end{myeq}
\noindent as in \wref[,]{eqikak} where
\w{\iota=\ip{n-k-1}:\Omega\sp{n-k-1}\oW{n}\hra P\Omega\sp{n-k-2}\oW{n}}
is the inclusion. However, we do not assume in our induction hypothesis that
\w{\hak{k-1}\circ w\sp{k-1}} is nullhomotopic\vsn.

\noindent\textbf{IV.~Step $\mathbf{k=n-1}$ of the inner induction\vsn:}

To define \w[,]{\Fk{n-1}:C\sp{n-1}\W{n-1}\to\oW{n}} note that the simplicial space
\w{\Ud:=\map\sb{\C}(\W{n-1},\oW{n})} is Reedy fibrant, since \w{\W{n-1}} is Reedy
cofibrant. Moreover, since
\w{\HiT{\Wn{k}{n-1}}\cong V\sb{k}} for all \w{0\leq k<n} by \S \ref{dscr}(a),
the attaching map \w{\odz{n}:\oV{n}\to V\sb{n-1}} yields a homotopy class
\begin{myeq}\label{eqmoorecyc}
\begin{split}
\alpha~\in&~[\Wn{n-1}{n-1},\,\oW{n}]
~=~\pi\sb{0}\fMT\Wn{n-1}{n-1}\lin{\oW{n}}~=~V\sb{n-1}\lin{\oW{n}}\\
~=&~\Hom\sb{\TAlg}(\HiT{\oW{n}},\,V\sb{n-1})
~=~\Hom\sb{\TAlg}(\oV{n},\,V\sb{n-1})~,
\end{split}
\end{myeq}
\noindent where the next to last equality follows from Lemma \ref{lfreeta},
as extended in \wref[.]{eqextyoneda}

This $\alpha$ is a Moore chain in \w{\pi\sb{0}\Ud} by Definition \ref{dscwo},
so by Lemma \ref{lmoore}(a), it can be represented by a
map \w[,]{\Fk{n-1}:C\sp{n-1}\W{n-1}\to\oW{n}} which induces
\w{\ak{n-2}:Z\sp{n-2}\W{n-1}\to\oW{n}} by Lemma \ref{linducea} with
\w[.]{\ak{n-2}\circ w\sp{n-2}~=~\Fk{n-1}\circ\delta\sp{n-2}}
Moreover, by \wref{eqattach} $\alpha$ is in fact a Moore \wwb{n-1} cycle, so
we have a nullhomotopy \w{\hFk{n-2}:C\sp{n-2}\to P\oW{n}} for
\w{\ak{n-2}\circ w\sp{n-2}} as in \wref[\vsm.]{eqhhvanish}

\noindent\textbf{V.~Step $\mathbf{k}$ of the inner induction \wb[\vsn:]{1<k\leq n-2}}

\noindent Let \w[.]{\bA:=\Omega\sp{n-k-2}\oW{n}} By assumption (Step \textbf{III}),
we have a nullhomotopy
\w{\hFk{k}:C\sp{k}\W{n-1}\to P\bA} for \w[,]{\ak{k}\circ w\sp{k}} and
\w{\hFk{k}\circ\delta\sp{k-1}} determines
\w{\hak{k-1}:Z\sp{k-1}\W{n-1}\to\Omega\bA}
satisfying \wref[,]{eqhikak} which is thus a \wwb{k-1}cycle for the Reedy
fibrant bisimplicial set \w[.]{\map\sb{\C}(\W{n-1},\,\Omega\bA)}

Since \w{\W{n-1}} realizes \w{\Vd} through simplicial dimension
\w[,]{n-1>k} by \S \ref{dscr}(a),
\w{\hak{k-1}\circ w\sp{k-1}\circ v\sp{k-1}:\W{n-1}\to\Omega\bA}
represents a \wwb{k-1}cycle \w{[\hak{k-1}]} for \w[,]{\Vd\lin{\Omega\bA}}
as in \wref[.]{eqmoorecyc}
Because \w{\Vd\to\Gamma} is a resolution, and thus acyclic, there is a
Moore chain \w{\gamma\sb{k}} in \w{C\sb{k}\Vd\lin{\Omega\bA}} with
\w[.]{\partial\sb{0}\sp{V\sb{k}}(\gamma\sb{k})=[\hak{k-1}]} Since
\w[,]{V\sb{k}\lin{\Omega\bA}\cong\pi\sb{0}\fMT\Wn{k}{n-1}\lin{\Omega\bA}=
[\Wn{k}{n-1},\,\Omega\bA]}
by Lemma \ref{lmoore}(a) we can represent \w{\gamma\sb{k}}
by a map \w[,]{g\sp{k}:C\sp{k}\W{n-1}\to\Omega\bA}
while by Lemma \ref{lmoore}(b) we have a homotopy
\begin{myeq}\label{eqhkmone}
G:g\sp{k}\circ \delta\sp{k-1}\sim\hak{k-1}\circ w\sp{k}:
C\sp{k-1}\W{n-1}\to\Omega\bA~.
\end{myeq}
\noindent Next, the concatenation of homotopies
provides an action of \w{\Hom(\bC,\,\Omega\bA)} on \w{\Hom(\bC,\,P\bA)}
(see \cite[\S 1]{SpanS}), which we use to define a new nullhomotopy
\begin{myeq}\label{eqnewnullh}
\Fk{k}~:=~\hFk{k}\star(\iota\circ g\sp{k})\sp{-1}~:~C\sp{k}\W{n-1}~\to~P\bA
\end{myeq}
\noindent for \w{\ak{k}\circ w\sp{k}} (where \w{\iota:\Omega\bA\hra P\bA} is
the inclusion).

By Lemma \ref{linducea}, \w{\Fk{k}} induces a map
\w{\ak{k-1}:Z\sp{k-1}\W{n-1}\to\Omega\bA} satisfying
\begin{myeq}\label{eqhiikak}
\begin{split}
\iota&\circ\wa\circ w\sp{k-1}~=~\wF\circ\delta\sp{k-1}~=~
(\hFk{k}\star(\iota\circ g\sp{k})\sp{-1})\circ\delta\sp{k-1}\\
~=&~(\hFk{k}\circ\delta\sp{k-1})\star(\iota\circ g\sp{k}\circ\delta\sp{k-1})\sp{-1}
~=~\iota\circ\left[(\hak{k-1}\circ w\sp{k-1})\star
(g\sp{k}\circ\delta\sp{k-1})\sp{-1})\right]
\end{split}
\end{myeq}
\noindent by \wref[,]{eqikak} \wref[,]{eqnewnullh} \wref[,]{eqhikak} and
the fact that the H-space structure $\star$ and \w{(-)\sp{-1}}
commutes with precomposition of maps into \w[.]{\Omega\bA}

Since $\iota$ is a monomorphism, we conclude that
\begin{myeq}\label{eqnullhtpy}
\begin{split}
\ak{k-1}\circ w\sp{k-1}~=&~(\hak{k-1}\circ w\sp{k-1})\star
(g\sp{k}\circ\delta\sp{k-1})\sp{-1})\\
~\sim&~(\hak{k-1}\circ w\sp{k-1})\star(\hak{k-1}\circ w\sp{k-1})\sp{-1}~
\sim~0~,
\end{split}
\end{myeq}
\noindent by \wref[,]{eqhkmone} so \w{\ak{k-1}} satisfies
\wref{eqhhvanish} for \w[\vs.]{k-1}

In case (2), the last two steps of the downward induction are no different
from those for \w[,]{k\geq 2} if we set \w[,]{\tWn{-1}{n}=\Wn{-1}{n-1}:=\bY} with
\w{\tdz{-1}:\tWn{-1}{n}\to\tWn{0}{n}} as the coaugmentation \w[.]{\tve{n}}
However, in case (1) we no longer have an object \w{\Wn{k-1}{n-1}}
in $\C$ for \w[,]{k=0} so we must modify our construction somewhat,
using the language of \Tma[s,] as follows\vsm:

\noindent\textbf{VI.~Step $\mathbf{k=1}$ of the descending
induction\vsn:}

By the descending induction hypotheses \textbf{III} for \w{k=1} we have
some nullhomotopy \w{\hFk{1}:\ak{1}\circ w\sp{1}\sim 0} and \w{\hak{0}} with
\w{\iota\circ\hak{0}\circ w\sp{0}=\hFk{1}\circ \dz{0}} by \wref[.]{eqhikak}
Let \w[.]{\bB:=\Omega\sp{n-3}\oW{n}}

We can think of \w{\ak{1}\circ w\sp{1}\circ v\sp{1}} as a $0$-simplex
\w[,]{\uak{1}\in\fMT\Wn{1}{n-1}\lin{\bB}\sb{0}} of
\w{\hak{0}\circ w\sp{0}\circ v\sp{0}} as a $0$-simplex
\w[,]{\uhak{0}\in\fMT\Wn{0}{n-1}\lin{\Omega\bB}\sb{0}}
and of \w{\hFk{1}\circ v\sp{1}} as a $1$-simplex
\w[,]{\uhFk{1}\in\fMT\Wn{1}{n-1}\lin{\bB}\sb{1}}
implicitly using  the natural isomorphism
 \w{i\sb{P}:\fMT\Wn{0}{n-1}\lin{P\bB}\cong P\fMT\Wn{0}{n-1}\lin{\bB}}
of  \S \ref{rgalg}(b),  and the inclusion of \w{j:(PK)\sb{i}\subseteq K\sb{i+1}}
for any Kan complex $K$  and \w{i\geq 0} (see \cite[\S 23.3]{MayS}). Thus
\w{\hFk{1}:\ak{1}\circ w\sp{1}\sim 0} means \w{d\sb{0}\uhFk{1}=\uak{1}} and
\w{d\sb{1}\uhFk{1}=0} (simplicial face maps in the mapping space).
The fact that the domain of \w{\hFk{1}} is \w{C\sp{1}\W{n-1}} implies that
\begin{myeq}\label{eqhfkvanishkone}
(d\sp{1})\sp{\ast}(\uhFk{1})~=~0 \hs\text{in}\hs
\fMT\Wn{0}{n-1}\lin{\bB}\sb{1}~,
\end{myeq}
\noindent and equation \wref{eqhikak} becomes:
\begin{myeq}\label{eqhikaone}
j\circ \iota\circ i\sb{\Omega}(\uhak{0})~=~(d\sp{0})\sp{\ast}(\uhFk{1})
\hs\text{in}\hs  \fMT\Wn{0}{n-1}\lin{\bB}\sb{1}
\end{myeq}
\noindent again using  \w[,]{i\sb{P}} the isomorphism
 \w{i\sb{\Omega}} of  \S \ref{rgalg}(c), the inclusion
\w[,]{\iota:\Omega K\hra PK} and \w{j:(PK)\sb{0}\subseteq K\sb{1}}
as above for \w[.]{K= \fMT\Wn{0}{n-1}\lin{\bB}}

Moreover, we have an augmentation of simplicial \Tma[s]
\w{\vn{n-1}:\fMT\W{n-1}\to\fX} by Definition \ref{dscr}(b) in the outer induction
hypothesis. Thus \w{\vn{n-1}} satisfies the simplicial identity
\begin{myeq}\label{eqsimpidaug}
\vn{n-1}\circ(d\sp{0})\sp{\ast}~=~\vn{n-1}\circ(d\sp{1})\sp{\ast}~.
\end{myeq}
\noindent In order to apply Lemma \ref{linducea}, since $\bY$ is not available in
cosimplicial dimension \w[,]{-1} we need to verify that
\w{\uhak{0}} is a ``Moore $0$-cycle'' in the augmented simplicial \Tma
\w{\fMT\W{n-1}\to\fX} \wwh that is:
\begin{myeq}\label{eqfhakvanishz}
\vn{n-1}(\uhak{0})~=~0 \hs\text{in}\hs  \fX\lin{\Omega\bB}\sb{0}~,
\end{myeq}
\noindent so after post-composition with the monic maps
$$
 \fX\lin{\Omega\bB}\sb{0}~\xra{\iota\sb{\Omega}}~\Omega\fX\lin{\bB}\sb{0}
~\xra{\iota}~P\fX\lin{\bB}\sb{0}~\xra{j}~\fX\lin{\bB}\sb{1}
$$
\noindent it suffices to show:
\begin{myeq}\label{eqhavanishz}
j\circ\iota\circ\iota\sb{\Omega}\circ\vn{n-1}(\uhak{0})~=~0~,
\end{myeq}
\noindent Note that the following diagram of sets commutes:
\mydiagram[\label{eqzeroonecoface}]{
& \uhFk{1}\in \fMT\Wn{1}{n-1}\lin{\bB}\sb{1}
\ar[d]^{(\dz{1})\sp{\ast}} \ar[rd]^{(d\sb{1}\sp{1})\sp{\ast}} &\\
\uhak{0}\in\fMT\Wn{0}{n-1}\lin{\Omega\bB}\sb{0} \ar[d]^{\vn{n-1}}
\ar[r]^{j\circ\iota\circ\iota\sb{\Omega}}&
\fMT\Wn{0}{n-1}\lin{\bB}\sb{1} \ar[d]^{\vn{n-1}}
& \fMT\Wn{0}{n-1}\lin{\bB}\sb{1} \ar[ld]^{\vn{n-1}}\\
\fX\lin{\Omega\bB}\sb{0} \ar[r]^{j\circ\iota\circ\iota\sb{\Omega}}&
\fX\lin{\bB}\sb{1} &
}
\noindent where the left hand square commutes by naturality of
\w[,]{j\circ\iota\circ\iota\sb{\Omega}} and the right hand side
commutes by \wref[.]{eqsimpidaug} By \wref[,]{eqhikaone} the simplices
\w{\uhFk{1}} and \w{\uhak{0}} thus map to the same value in
\w[,]{\fX\lin{B}\sb{1}} which is zero by \wref[,]{eqhfkvanishkone}
proving that \wref[,]{eqhavanishz} and thus \wref[,]{eqfhakvanishz}
in fact hold.

By the outer induction hypothesis that Definition \ref{dscr}(b)  hold,
the augmented simplicial abelian group
\begin{myeq}\label{eqacaugabgp}
\Vd\lin{\Omega\bB}~\xra{\vare}~
\pi\sb{0}\fX\lin{\Omega\bB}~=~\Gamma\lin{\Omega\sp{n-2}\oW{n}}
\end{myeq}
\noindent is realized by \w{\W{n-1}} through simplicial dimension \w[,]{n-1}
so that
\begin{myeq}\label{eqrealizeasag}
V\sb{k}\lin{\Omega\bB}~\cong~\pi\sb{0}\fMT\Wn{k}{n-1}\lin{\Omega\bB}~=~
[\Wn{k}{n-1},\,\Omega\bB]
\end{myeq}
\noindent for \w{k=1} and $0$.

Since by \wref{eqfhakvanishz} \w{[\uhak{0}]} is a $0$-cycle in
\wref[,]{eqacaugabgp} which is acyclic, there is a Moore $1$-chain
\w{\gamma\sb{1}\in V\sb{1}\lin{\Omega\bB}} with
\w[.]{\partial\sb{0}\sp{V\sb{1}}(\gamma\sb{1})~=~[\hak{0}]}
As in \wref{eqnewnullh} of Step \textbf{V}, this can be used
to produce a map \w{\Fk{1}:C\sp{1}\W{n-1}\to P\bB} such that
\begin{myeq}\label{eqdonefone}
\Fk{1}\circ d\sp{1}~=~0~,
\end{myeq}
\noindent  yielding \w{\ak{0}:\Wn{0}{n-1}\to\Omega\bB} (by Step \textbf{III})
having a nullhomotopy \w{\hFk{0}:\ak{0}\sim 0} (again, as in Step \textbf{V}).
Moreover, \wref{eqdonefone} and \wref{eqsimpidaug} together imply that
(after replacing \w{\hFk{1}} by \w{\Fk{1}} in \wref[)]{eqzeroonecoface}
one can deduce that
\begin{myeq}\label{eqfakvanishz}
\vn{n-1}(\uak{0})~=~0\hsp \text{in}\hsm \fX\lin{\Omega\bB}~,
\end{myeq}
\noindent as required\vsm.

\noindent\textbf{VII.~Step $\mathbf{k=0}$ of the descending induction\vsn:}

For \w[,]{k=0} let \w[.]{\bC:=\Omega\sp{n-2}\oW{n}}  The nullhomotopy
\w{\hFk{0}:\ak{0}\circ w\sp{0}\sim 0} is represented as in Step \textbf{VI}
by a $1$-simplex \w{\uhFk{0}\in\fMT\Wn{0}{n-1}\lin{\bC}\sb{1}}
with \w[,]{d\sb{0}\uhFk{0}=\uak{0}} and \w[.]{d\sb{1}\uhFk{0}=0}
Thus for \w{\vn{n-1}(\uhFk{0})\in\fX\lin{\bC}\sb{1}}
we have \w[,]{d\sb{0}(\vn{n-1}(\uhFk{0}))=\vn{n-1}(\uak{0})=0} by
\wref[,]{eqfakvanishz} as well as \w[.]{d\sb{1}(\vn{n-1}(\uhFk{0}))=0}

Therefore, as in \wref{eqhikaone} we obtain \w{\uhak{}\in\fX\lin{\Omega\bC}\sb{0}}
with \w[.]{j\circ\iota\circ\iota\sb{\Omega}(\uhak{})~=~\vn{n-1}(\uhFk{0})}

Note that this argument fails if we do not have the mapping algebra $\fX$
(with $\bT$ as in \S \ref{rgalg}), which allows us to obtain an element
\w{[\uhak{}]\in\Gamma\lin{\Omega\bC}=\Gamma\lin{\Omega\sp{n-1}\oW{n}}}
from the nullhomotopy \w[.]{\hFk{0}:\Wn{0}{n-1}\to P\bC}

Again we use the acyclicity of \wref{eqacaugabgp} to deduce the existence of
a Moore $0$-chain \w{\gamma\sb{0}\in V\sb{0}\lin{\Omega\bC}} with
\begin{myeq}\label{eqzerovar}
\vare(\gamma\sb{0})~=~[\uhak{}]\in\Gamma\lin{\Omega\bC}~.
\end{myeq}
\noindent We no longer require Lemma \ref{lmoore} to deduce that
we can represent \w{\gamma\sb{0}} by a map
\w[,]{g\sp{0}:\Wn{0}{n-1}\to\Omega\bC} corresponding to a
$0$-simplex \w[,]{\uetk{0}\in\fMT\Wn{0}{n-1}\lin{\Omega\bC}\sb{0}}
and thus a $1$-simplex
\begin{myeq}\label{eqhnullh}
\uH~:=~(j\circ\iota\circ\iota\sb{\Omega})(\uetk{0})~\in~
\fMT\Wn{0}{n-1}\lin{\bC}\sb{1}\hsm\text{with}\hsm d\sb{0}\uH=d\sb{1}\uH=0~.
\end{myeq}
\noindent Since \w{\fMT\Wn{0}{n-1}\lin{\bC}} is a homotopy group object in
\w[,]{\Sa} with homotopy group structure $\star$ and inverse \w{(-)\sp{-1}}
induced from those of \w[,]{\bC\in\Theta\subset\C} we may define a new $1$-simplex
\begin{myeq}\label{eqzerohtpy}
\uFk{0}:=\uhFk{0}\star\uH\sp{-1}\in\fMT\Wn{0}{n-1}\lin{\bC}\sb{1}\hs
\text{with}\hs d\sb{0}\uFk{0}=\uak{0} \hs \text{and}\hs d\sb{1}\uFk{0}=0~.
\end{myeq}
\noindent Since \w[,]{\vn{n-1}(\uak{0})=0} the $1$-simplex \w{\vn{n-1}(\uFk{0})\in\fX\lin{\bC}\sb{1}}
is in the image of the composite inclusion
$$
\fX\lin{\Omega\bC}\sb{0}~\xra{\iota\sb{\Omega}}~\Omega\fX\lin{\bC}\sb{0}~\xra{\iota}
~P\fX\lin{\bC}\sb{0}~\xra{j}~\fX\lin{\bC}\sb{1}~.
$$
\noindent Thus, there is a $0$-simplex \w{\uak{}\in\fX\lin{\Omega\bC}\sb{0}} with
\begin{myeq}\label{eqaugfa}
(j\circ\iota\circ\iota\sb{\Omega})(\uak{})~=~\vn{n-1}(\uFk{0})~.
\end{myeq}
\noindent Moreover, since \w{(j\circ\iota\circ\iota\sb{\Omega})} and \w{\vn{n-1}}
commute with the homotopy group structure,
$$
(j\circ\iota\circ\iota\sb{\Omega})(\uak{})~=~
\vn{n-1}(\uhFk{0})\star(\vn{n-1}(\uH)\sp{-1})~=~
[(j\circ\iota\circ\iota\sb{\Omega})(\uhak{})]\star
[(j\circ\iota\circ\iota\sb{\Omega})(\vn{n-1}\uetk{0})]\sp{-1}~,
$$
\noindent by \wref{eqhnullh} and \wref[.]{eqaugfa} Since
\w{j\circ\iota\circ\iota\sb{\Omega}} is monic, this implies that:
\begin{myeq}\label{eqzeroht}
\uak{}~=~\uhak{}\star(\vn{n-1}\uetk{0})\sp{-1}~.
\end{myeq}
\noindent By \wref{eqzerovar} there is a $1$-simplex \w{G\in\fX\lin{\Omega\bC}}
with \w{d\sb{0}G=\uhak{}} and \w{d\sb{1}G=\vn{n-1}(\uetk{0})} in
\w[.]{\fX\lin{\Omega\bC}\sb{0}} Therefore,
\w{G':=G\star(s_{0}\vn{n-1}(\uetk{0}))\sp{-1}\in\fX\lin{\Omega\bC}\sb{1}}
satisfies
\begin{myeq}\label{eqnegonehtpy}
d\sb{0}G'=\uhak{}\star(\vn{n-1}(\uetk{0}))\sp{-1} \hs\text{and}\hs
d\sb{1}G'=\vn{n-1}(\uetk{0})\star(\vn{n-1}(\uetk{0}))\sp{-1}~.
\end{myeq}

Since \w{\Omega\bC} is a homotopy group object, there is a $1$-simplex
\w{K\in\fX\lin{\Omega\bC}\sb{1}} with
\begin{myeq}\label{eqhtpyinverse}
d\sb{0}K=\vn{n-1}(\uetk{0})\star(\vn{n-1}(\uetk{0}))\sp{-1}\hsm\text{and}\hsm
d\sb{1}K=0~.
\end{myeq}
\noindent Since \w{\fX\lin{\Omega\bC}} is a Kan complex, we have
\w{\sigma\in\fX\lin{\Omega\bC}\sb{2}} with \w{d\sb{0}\sigma=G'} and
\w[,]{d\sb{1}\sigma=K} so if we set \w[,]{\uhFk{}:=d\sb{2}\sigma} we find
\begin{myeq}\label{eqhtpymove}
d\sb{0}\uhFk{}=\uhak{}\star(\vn{n-1}(\uetk{0}))\sp{-1}\hsm\text{and}\hsm
d\sb{1}\uhFk{}=0~.
\end{myeq}
\noindent We deduce from \wref{eqzeroht} that \w{\uhFk{}} is a nullhomotopy
for \w{\uak{}} in \w[,]{\fX\lin{\Omega\bC}} as required in Step \textbf{III}, thus
completing the construction of the restricted cosimplicial object \w[.]{\tWu{n}}

Since making the cosimplicial object \w{\vWu{k}} as in \S \ref{senso}\textbf{(iii)}
Reedy fibrant or cofibrant does not change \w[,]{\vWn{0}{k}} we see by induction
from \wref{eqsmatch} that
\begin{myeq}\label{eqwzero}
\tWn{0}{n}~=~\Wn{0}{n-1}\times P\Omega\bC~=~
\prod\sb{k=0}\sp{n}\ P\Omega\sp{k-1}\oW{k}
\end{myeq}
\noindent (using the convention of \wref[).]{eqconvent} Note that all the factors
on the right hand side of \wref{eqwzero} are in $\bT$, by assumption \ref{rgalg},
so \w{\tWn{0}{n}} is, too.

Thus by Lemma \ref{lfreema}, the augmentation \w{\tvn{n}:\fMT\tWn{0}{n}\to\fX}
is determined by the choice of a suitable $0$-simplex
$$
e=(e',e'')\in\fX\lin{\tWn{0}{n}}\sb{0}=
\fX\lin{\Wn{0}{n-1}\times P\Omega\bC}\sb{0}
=\fX\lin{\Wn{0}{n-1}}\sb{0}\times P\fX\lin{\Omega\bC}\sb{0}
$$
\noindent by \wref[,]{eqdopbg} using \S \ref{rgalg}(a), where the component
\w{e'\in\fX\lin{\Wn{0}{n-1}}} corresponds to the given
\w[.]{\vn{n-1}:\fMT\Wn{0}{n-1}\to\fX}

On the other hand, as before  \w{e''} corresponds to the
nullhomotopy \w[,]{\uhFk{}\in\fX\lin{\Omega\bC}\sb{1}} which actually lands in
\w[.]{P\fX\lin{\Omega\bC}\sb{0}}

The cosimplicial identity
\begin{myeq}\label{eqsimpidaugm}
\tvn{n}\circ(\dz{0})\sp{\ast}~=~\tvn{n}\circ(d\sp{1}\sb{0})\sp{\ast}~:~
\fMT\tWn{1}{n}\to\fX
\end{myeq}
\noindent may be verified using Lemma \ref{lfreema} by representing both sides
of the equation by elements in
$$
\fX\lin{\tWn{1}{n}}\sb{0}~=~
\fX\lin{\Wn{1}{n-1}}\sb{0}\times\fX\lin{P\bC}\sb{0}~,
$$
\noindent where the components in \w{\fX\lin{\Wn{1}{n-1}}\sb{0}} agree
because \w{(\bve{n-1})\sp{\ast}:\fMT\W{n-1}\to\fX} is an augmented simplicial \Tma[,]
by the outer induction hypothesis. Here we are using the last assumption in
\S \ref{rgalg} to deduce that \w[.]{\Wn{1}{n-1}\in\bT}

Thus it remains to identify the corresponding components in
\w[.]{\fX\lin{P\bC}\sb{0}}
On the one hand, \w{\qk{1}{n}\circ\dz{0}:\tWn{0}{n-1}\to P\bC}
is the map \w[,]{\Fk{0}\circ\psn{0}{n}} and
\w{\tvn{n}\circ(\psn{0}{n})\sp{\ast}:\fMT\Wn{0}{n-1}\to\fX} is the given
\w[,]{\vn{n-1}} by construction, and since
\w{(\Fk{0})\sp{\ast}:\fMT P\bC\to\fMT\Wn{0}{n-1}} is represented
according to Lemma \ref{lfreema} by \w[,]{\uFk{0}\in\fMT\Wn{0}{n-1}\lin{P\bC}}
we see that the composite \w{\tvn{n}\circ(\qk{1}{n}\circ\dz{0})\sp{\ast}} is
represented in \w{\fX\lin{P\bC}\sb{0}} by
$$
\vn{n-1}(\uFk{0})~=~(j\circ\iota\sb{\Omega})(\uak{})~,
$$
\noindent according to \wref[.]{eqaugfa}

On the other hand,
\w{\qk{1}{n}\circ d\sp{1}\sb{0}:\tWn{0}{n-1}\to P\bC} equals
\w[,]{\iota\circ p\circ\qk{0}{n}}
so
$$
\tvn{n}\circ(d\sp{1}\sb{0})\sp{\ast}~=~
\tvn{n}\circ(\qk{0}{n})\sp{\ast}\circ(\iota\circ p)\sp{\ast}
$$
\noindent where
\w{\tvn{n}\circ(\qk{0}{n})\sp{\ast}:\fMT P\Omega\bC\to\fX}
is represented by \w[,]{\uhFk{}\in\fX\lin{\Omega\bC}\sb{1}}
so \w{\tvn{n}\circ(\qk{0}{n})\sp{\ast}\circ(\iota\circ p)\sp{\ast}} is
represented by
$$
(j\circ\iota\sb{\Omega})(d\sb{0}\uhFk{})~=~
(j\circ\iota\sb{\Omega})(\uak{})~\in~\fX\lin{P\bC}\sb{0}
$$
\noindent as above\vsn.

This completes the $n$-th step of the outer induction in case (1)\vsm.

\noindent\textbf{VIII.~Making $\mathbf{\cW}$ cofibrant in case (2)\vsn:}

Note that when we represent the attaching map
\w{\odz{n}:\oV{n}\to\cZl{n-1}\Vd} for \w{\Vd} by a map
\w{\phi:\oV{n}\olS{n-1}\to\cMls\Vd} as in \S \ref{rscwo},
$\phi$ extends to a map of \emph{augmented} \wwb{n-1}truncated
chain complexes in \w[,]{\TAlg} ending in \w[.]{\cMl{-1}\Vd:=\Gamma}
Moreover, in case (2) the cochain map \w{F:\cMs\W{n-1}\to\bDs} realizing $\phi$
which we have constructed in the outer induction above also extends to a map
of coaugmented \wwb{n-1}truncated cochain complexes, with
\w[,]{\cM{-1}\W{n-1}:=\bY} \w{\dif{-1}} given by \w[,]{\bve{n-1}} and
\w{\Fk{-1}} given by Lemma \ref{lvanish}.

In the left Reedy model category structure on the category \w{\Chnm{\C}{n-1}}
of \wwb{n-1}truncated coaugmented cochain complexes in $\C$,
factor $F$ as a cofibration \w{G:\cMs\W{n-1}\hra\bEs} followed by a
trivial fibration \w[.]{T:\bEs\epic\bDs} Applying the functorial construction
of \S \ref{dscr}(c) to $G$ yields \w{\W{n}} as required for the cofibrant
sequential realization \w[.]{\cW} The vertical trivial fibrations
of \wref{eqmodpathloopve} are induced by $T$.
\end{proof}


\begin{thebibliography}{DKSt2}
%
\bibitem[A]{AdamsN}
J.F.~Adams,
"On the non-existence of elements of Hopf invariant one," \hsm
\textit{Ann.\ Math.} \textbf{72} (1960), pp.~20-104.
%
\bibitem[BB]{BBlaC}
H.-J.~Baues \& D.~Blanc
``Comparing cohomology obstructions'',\hsm
\textit{J.\ Pure \& Appl.\ Alg.} \textbf{215} (2011), pp.~1420-1439.
%
\bibitem[BBG]{BBGondH}
H.-J.~Baues, D.~Blanc, \& S.~Gondhali,
``Higher Toda brackets and Massey pro\-ducts'',\hsm
\textit{J.\ Homotopy \& Rel.\ Struct.} \textbf{11} (2016), pp.~643-677.
%
\bibitem[BJ]{BJiblSL}
H.-J.~Baues \& M.A.~Jibladze,
``Suspension and loop objects in theories and cohomo\-logy'',\hsm
\textit{Georgian Math.\ J.} \textbf{8} (2001), pp.~697-712.
%
\bibitem[BRS]{BRStelR}
G.~Biedermann, G.~Raptis, \& M.~Stelzer,
``The realization space of an unstable coalgebra'',\hsm
preprint, 2014, {\tt arXiv:1409.0410}.
%
\bibitem[Bl1]{BlaH}
D.~Blanc,
``Higher homotopy operations and the realizability of homotopy groups'',\hsm
\textit{Proc.\ London Math.\ Soc.~(3)} \textbf{70} (1995), pp.\ 214-240.
%
\bibitem[Bl2]{BlaCW}
D.~Blanc,
``CW simplicial resolutions of spaces, with an application to loop spaces'',\hsm
\textit{Top.\ \& Appl.} \textbf{100} (2000), pp.~151-175.
%
\bibitem[Bl3]{BlaS}
D.~Blanc,
``Realizing coalgebras over the Steenrod algebra'',\hsm
\textit{Topology} \textbf{40} (2001), pp.~993-1016.
%
\bibitem[Bl4]{BlaHR}
D.~Blanc,
``Homotopy operations and rational homotopy type'',\hsm
in G.Z.~Arone, J.R.~Hubbuck, R.~Levi, \& M.S.~Weiss, eds.,
\textit{Categorical decomposition techniques in algebraic topology
(Isle of Skye, 2001)}, Prog.\ Math. \textbf{215}
Birkha\"{u}ser, Boston, 2004, pp.~47-75.
%
\bibitem[BDG]{BDGoeR}
D.~Blanc, W.G.~Dywer, \& P.G.~Goerss,
``The realization space of a $\Pi$-algebra: a moduli problem in algebraic
topology'',\hsm
\textit{Topology} \textbf{43} (2004), pp.~857-892.
%
\bibitem[BJT1]{BJTurnR}
D.~Blanc, M.W.~Johnson, \& J.M.~Turner,
``On realizing diagrams of $\Pi$-algebras'',\hsm
\textit{Alg.\ \& Geom.\ Top.} \textbf{6} (2006), pp.~763-807.
%
\bibitem[BJT2]{BJTurnHH}
D.~Blanc, M.W.~Johnson, \& J.M.~Turner,
``Higher homotopy operations and cohomology'',
\textit{J.\ $K$-Theory} \textbf{5} (2010), pp.~167-200.
%
\bibitem[BJT3]{BJTurnHA}
D.~Blanc, M.W.~Johnson, \& J.M.~Turner,
``Higher homotopy operations and Andr\'{e}-Quillen cohomology'',
\textit{Adv.\ Math.} \textbf{230} (2012), pp.~777-817.
%
\bibitem[BJT4]{BJTurnHI}
D.~Blanc, M.W.~Johnson, \& J.M.~Turner,
``Higher invariants for spaces and maps'',
preprint, 2015.
%
\bibitem[BM]{BMarkH}
D.~Blanc \& M.~Markl,
``Higher homotopy operations'',\hsm
\noindent{Math.\ Zeit.} \textbf{345} (2003), pp.~1-29.
%
\bibitem[BP]{BPescF}
D.~Blanc \& G.~Peschke,
``The fiber of functors of algebraic theories'',\hsm
\textit{J.\ Pure \& Appl.\ Alg.} \textbf{207} (2006), pp.~687-715.
%
\bibitem[BS]{BSenH}
D.~Blanc \& D.~Sen,
``Mapping spaces and $R$-completion'',\
preprint 2013, {\tt arXiv:1304.5928}.
%
\bibitem[Bor]{BorcH2}
F.~Borceux,
\textit{Handbook of Categorical Algebra, Vol.\ 2: Categories and Structures},\hsm
Encyc.\ Math.\ \& its Appl.\ \textbf{51},
Cambridge U.\ Press, Cambridge, UK, 1994.
%
\bibitem[Bou]{BousC}
A.~K.~Bousfield,
``Cosimplicial resolutions and homotopy spectral sequences in model categories'',\hsm
\textit{Geom.\ Topol.},\textbf{7} (2003), pp.~1001-1053.
%
\bibitem[BK]{BKanH}
A.K.~Bousfield \& D.M.~Kan,
\textit{Homotopy Limits, Completions, and Lo\-ca\-li\-za\-tions},\hsm
Sprin\-ger \textit{Lec.\ Notes Math.} \textbf{304}, Berlin-\-New York, 1972.
%
\bibitem[CS]{CScheH}
W.~Chach{\'{o}}lski \& J.~Scherer,
\textit{Homotopy theory of diagrams},\hsm
\textit{Mem.\ AMS} \textbf{736}, American Mathematical Society,
Providence, RI, 2002.
%
\bibitem[E]{EhreET}
C.~Ehresmann,
``Esquisses et types des structures alg{\'{e}}briques'',\hsm
\textit{Bul.\ Inst.\ Politehn.\ Ia\c{s}i (N.S.)} \textbf{14} (1968), pp.~1-14.
%
\bibitem[F]{FelM}
Y.~F{\'{e}}lix,
``Mod{\`{e}}les bifiltr{\'{e}}s: une plaque tournant en homotopie rationelle'',\hsm
\textit{Can.\ J.\ Math.} \textbf{33} (1981), pp.~1448-1458.
%
\bibitem[FHT]{FHThR}
Y.~F{\'{e}}lix, S.~Halperin \& J.-C.~Thomas,
\textit{Rational homotopy theory},\hsm
Springer-\-Verlag, Berlin-\-New York, 2001.
%
\bibitem[GJ]{GJarS}
P.G.~Goerss \& J.F.~Jardine,
\textit{Simplicial Homotopy Theory},\hsm
Progress in Mathematics \textbf{179}, Birkh\"{a}user, Basel-Boston, 1999.
%
\bibitem[Ha]{HarpSC}
J.R.~Harper,
\textit{Secondary cohomology operations},\hsm
AMS, Providence, RI, 2002.
%
\bibitem[He]{HessR}
K.P.~Hess,
``Rational homotopy theory: a brief introduction'',\hsm
in L.L.~Avramov, J.D.\ Christensen, W.G.~Dwyer, M.A.~Mandell, \& B.E.~Shipley, eds.,
\textit{Interactions between homotopy theory and algebra}, Contemp.\ Math.\
\textbf{436}, AMS, Providence, RI, 2008, pp.~175-202.
%
\bibitem[Hi]{PHirM}
P.S.~Hirschhorn,
\textit{Model Categories and their Localizations},\hsm
Math.\ Surveys \& Monographs \textbf{99}, AMS, Providence, RI, 2002.
%
\bibitem[L]{LawvF}
F.W.~Lawvere,
``Functorial semantics of algebraic theories'',\hsm
\textit{Proc.\ Nat.\ Acad.\ Sci.\ USA} \textbf{50} (1963), pp.~869-872.
%
\bibitem[Mau]{MaunCO}
C.R.F.~Maunder,
``Cohomology operations of the $N$-th kind'',\hsm
\textit{Proc.\ Lond.\ Math.\ Soc.} \textbf{12} (1963), pp.~125-154.
%
\bibitem[May]{MayS}
J.P.~May,
\textit{Simplicial Objects in Algebraic Topology},\hsm
U.\ Chicago, Chicago-\-London, 1967.
%
\bibitem[Q1]{QuiH}
D.G.~Quillen,
\textit{Homotopical Algebra},\hsm
Springer-\-Verlag \textit{Lec.\ Notes Math.} \textbf{43}, Berlin-\-New York,
1967.
%
\bibitem[Q2]{QuiR}
D.G.~Quillen,
``Rational homotopy theory'',\hsm
\textit{Ann.\ Math.} \textbf{90} (1969), pp.~205-295.
%
\bibitem[Sc]{SchwU}
L.~Schwartz,
\textit{Unstable Modules over the Steenrod Algebra and Sullivan's Fixed Point
Set Conjecture},\hsm
U.~Chicago Press, Chicago-\-London, 1994.
%
\bibitem[Sp1]{SpanS}
E.H.~Spanier,
``Secondary operations on mappings and cohomology'',\hsm
\textit{Ann.\ Math.\ (2)} \textbf{75} (1962), pp.~260-282.
%
\bibitem[Sp2]{SpanH}
E.H.~Spanier,
``Higher order operations'',\hsm
\textit{Trans.\ AMS} \textbf{109} (1963), pp.~509-539.
%
\bibitem[T]{TodC}
H.~Toda,
\textit{Composition methods in the homotopy groups of spheres},\hsm
Adv.\ in Math.\ Study \textbf{49}, Princeton U.\ Press, Princeton, 1962.
%
\bibitem[W]{WeibHA}
C.A.~Weibel,
\textit{An Introduction to Homological Algebra},\hsm
Camb.\ U.\ Press, Cambridge, UK, 1994.
%
\end{thebibliography}
\end{document}